\definecolor{dullmagenta}{rgb}{0.4,0,0.4}   
\definecolor{darkblue}{rgb}{0,0,0.4}
\newtheorem{theorem}{Theorem}[section]
\newtheorem{lemma}[theorem]{Lemma}
\newtheorem{definition}[theorem]{Definition}
\newtheorem{corol}[theorem]{Corollary}
\newenvironment{remark}%
  {\par\medbreak\refstepcounter{theorem}%
    \noindent\textbf{Remark~\thetheorem. }}%
  {\par\medskip}
\newcommand{\vz}[1]{\ensuremath{\mathbb{#1}}}
\newcommand{\R}{{\vz R}}
\newcommand{\N}{{\vz N}}
\newcommand{\dvg}{\text{div}\,}
\newcommand{\TV}{\text{TV}}
\newcommand{\argmin}{\mathrm{argmin}}
\DeclareMathOperator{\supp}{supp}
\def\vol#1{\mathrm{vol}\left(#1\right)}
\let\e\varepsilon
\newcounter{hdps}
\newenvironment{asm}[1]
  {\refstepcounter{hdps}%
    \begin{algorithm}}
  {\end{algorithm}\addtocounter{algocf}{-1}}
\newcommand*\bigcdot{\mathpalette\bigcdot@{.5}}
\newcommand*\bigcdot@[2]{\mathbin{\vcenter{\hbox{\scalebox{#2}{$\m@th#1\bullet$}}}}}
\title{An MBO scheme for minimizing the graph Ohta-Kawasaki functional}
\author{Yves van Gennip}
\begin{document}

\maketitle

\begin{abstract}
We study a graph based version of the Ohta-Kawasaki functional, which was originally introduced in a continuum setting to model pattern formation in diblock copolymer melts and has been studied extensively as a paradigmatic example of a variational model for pattern formation.

Graph based problems inspired by partial differential equations (PDEs) and varational methods have been the subject of many recent papers in the mathematical literature, because of their applications in areas such as image processing and data classification. This paper extends the area of PDE inspired graph based problems to pattern forming models, while continuing in the tradition of recent papers in the field.

We introduce a mass conserving Merriman-Bence-Osher (MBO) scheme for minimizing the graph Ohta-Kawasaki functional with a mass constraint. We present three main results: (1) the Lyapunov functionals associated with this MBO scheme $\Gamma$-converge to the Ohta-Kawasaki functional (which includes the standard graph based MBO scheme and total variation as a special case); (2) there is a class of graphs on which the Ohta-Kawasaki MBO scheme corresponds to a standard MBO scheme on a transformed graph and for which generalized comparison principles hold; (3) this MBO scheme allows for the numerical computation of (approximate) minimizers of the graph Ohta-Kawasaki functional with a mass constraint.
\end{abstract}

{\bf Keywords: }  PDEs on graphs, Ohta-Kawasaki functional, MBO scheme, threshold dynamics, $\Gamma$-convergence

{\bf MSC 2010 codes: } 05C82, 34A33 , 34B45, 35A15, 35B36, 49N99

{\let\thefootnote\relax\footnote{Abbreviated title: MBO for minimizing graph Ohta-Kawasaki}}

\section{Introduction}

In this paper we study the minimization problem
\[
\underset{u}\min\, \TV(u) + \frac\gamma2 \|u-\mathcal{A}(u)\|_{H^{-1}}^2
\]
on undirected graphs. Here $\TV$ and $\|\cdot\|_{H^{-1}}^2$ are graph based analogues of the continuum total variation seminorm and continuum $H^{-1}$ Sobolev norm, respectively, and $u$ is allowed to vary over the set of node functions with prescrbed mass. These concepts will be made precise later in the paper, culminating in formulation \eqref{eq:minimprobsF0} of the minimization problem. The main contributions of this paper are the introduction of the graph Ohta-Kawasaki functional into the literature, the development of an algorithm to produce (approximate) minimizers, and the study of that algorithm, which leads to, among other results, further insight into the connection between the graph Merriman-Bence-Osher (MBO) method and the graph total variation, following on from initial investigations in \cite{vanGennipGuillenOstingBertozzi14}.

There are various reasons to study this minimization problem. First of all, it is the graph based analogue of the continuum Ohta-Kawasaki variational model \cite{OhtaKawasaki86,KawasakiOhtaKohrogui88}. This model was originally introduced as a model for pattern formation in diblock copolymer systems and has become a paradigmatic example of a variational model which exhibits pattern formation. It spawned a large mathematical literature which explores its properties analytically and computationally. A complete literature overview for this area is outside the scope of this paper; for a sample of mathematical papers on this topic, see for example \cite{RenWei00,RenWei02,RenWei03a,RenWei03b,ChoksiRen03,ChoksiRen05,ChoksiSternberg06,vanGennipPeletier08,vanGennipPeletier09,ChoksiPeletierWilliams09,glasner2009coarsening,le2010convergence,peletier2010stripe,choksi2010small,choksi2011small,ChoksiMarasWilliams11,vanGennipPeletier11,choksi2012global,bourne2014hexagonal,ren2014double,ren2017spectrum,glasner2017multilayered}. For a brief overview of the continuum Ohta-Kawasaki model, see Appendix~\ref{sec:continuumOK}. The problem studied in this paper thus follows in the footsteps of a rich mathematical heritage, but at the same time, being the graph analogue of the continuum functional, connects with the recent interest in discrete PDE inspired problems.

Recently there has been a growing enthusiasm in the mathematical literature for graph based variational methods and graph based dynamics which mimic continuum based variational methods and partial differential equations (PDEs), respectively. This is partly driven by novel applications of such methods in data science and image analysis \cite{ta2011nonlocal,elmoataz2012non,BertozziFlenner12,merkurjev2013anmboscheme,hu2013method,CCCMulticlass2014,calatroni2017graph,bosch2016generalizing,merkurjevmodified17,elmoataz_desquesnes_toutain_2017} and partly by theoretical interest in the new connections between graph theory and PDEs \cite{vanGennipBertozzi12,vanGennipGuillenOstingBertozzi14,GarciaTrillosSlepcev16}. Broadly speaking these studies fall into one (or more) of three categories: papers connecting graph problems with continuum problems, for example through a limiting process \cite{
vanGennipBertozzi12,GarciaTrillosSlepcev16,trillos2016consistency}, papers adapting a PDE approach to a graph context in order to tackle a graph problem such as graph clustering and classification \cite{bertozzi2016diffuse,bresson2014incremental,merkurjev2016semi}, maximum cut computations \cite{KeetchvanGennip17}, and bipartite matching \cite{caracciolo2014scaling,caraccioloscaling2015}, and papers studying the graph analogue of a PDE or variational problem that has interesting properties in the continuum, to explore what (potentially similar) properties are present in the graph based version of the problem \cite{vanGennipGuillenOstingBertozzi14,Luo2017,ELMOATAZ2017177}. This paper mostly falls in the latter category.

The study of the graph based Ohta-Kawasaki model is also of interest, because it connects with graph methods, concepts, and questions that have recently attracted attention, such as the graph MBO method (also known as threshold dynamics), graph curvature, and the question how these concepts relate to each other. 

The MBO scheme was originally introduced (in a continuum setting) to approximate motion by mean curvature \cite{MBO1992,MBO1993,MerrimanBenceOsher94}. It is an iterative scheme, which alternates between a short-time diffusion step and a threshold step. Not only have these dynamics been proven to converge to motion by mean curvature \cite{Evans93,BarlesGeorgelin95,swartz2017convergence}, but they have been a very useful basis for numerical schemes as well, both in the continuum and on graphs. Without aiming for completeness, we mention some of the papers that investigate or use the MBO scheme: \cite{Mascarenhas92,ruuth1998,ruuth1998b,chambolle2006convergence,Esedoglu2008,Esedoglu2008,ruuth2010diffusion,hu2013method,merkurjev2013anmboscheme,merkurjev2014graph,hu2015multi,CPA:CPA21527}. 

In this paper we study two different MBO schemes, \ref{alg:OKMBO} and \ref{alg:massOKMBO}. The former is an extension of the standard graph MBO scheme \cite{vanGennipGuillenOstingBertozzi14} in the sense that it replaces the diffusion step in the scheme with a step whose dynamics are related to the Ohta-Kawasaki model and reduce to diffusion in the special case when $\gamma=0$. The latter uses the same dynamics as the former in the first step, but incorporates mass conservation in the threshold step. The \ref{alg:massOKMBO} scheme produces approximate graph Ohta-Kawasaki minimizers and is the one we use in our simulations which are presented in Section~\ref{sec:numerical}. The scheme \ref{alg:OKMBO} is of interest both as a precursor to \ref{alg:massOKMBO} and as an extension of the standard graph MBO scheme. In \cite{vanGennipGuillenOstingBertozzi14} it was conjectured that the standard graph MBO scheme is related to graph mean curvature flow and minimizers of the graph total variation functional. This paper furthers the study of that conjecture (but does not provide a definitive answer): in Section~\ref{sec:Gammaconvergence} it is shown that the Lyapunov functionals associated with the \ref{alg:OKMBO} $\Gamma$-converge to the the graph Ohta-Kawasaki functional (which reduces to the total variation functional in the case when $\gamma=0$). Moreover, in Section~\ref{sec:specialclasses} we introduce a special class of graphs, $\mathcal{C}_\gamma$, dependent on $\gamma$. For graphs from this class the \ref{alg:OKMBO} scheme can be interpreted as the standard graph MBO scheme on a transformed graph. For such graphs we extend existing elliptic and parabolic comparison princpiples for the graph Laplacian and graph diffusion equation to our new Ohta-Kawasaki operator and dynamics (Lemmas~\ref{lem:compellgeneralization} and~\ref{lem:comprincII}).

A significant role in the analysis presented in this paper is played by the equilibrium measures associated to a given node subset \cite{BENDITO2000155,BenditoCarmonaEncinas03}, especially in the construction of the aforementioned class $\mathcal{C}^0$. In Section~\ref{sec:DirPois} we study these equilibrium measures and the role they play in constructing Green's functions for the graph Dirichlet and Poisson problems. The Poisson problem in particular, is an important ingredient in the definition of the graph $H^{-1}$ norm and the graph Ohta-Kawasaki functional as they are introduced in Section~\ref{sec:graphOKfunctional}. Both the equilibrium measures and the Ohta-Kawasaki functional itself are related to the graph curvature, which was introduced in \cite{vanGennipGuillenOstingBertozzi14}, as is shown in Lemma~\ref{lem:nuandkappa} and Corollary~\ref{cor:OKexpressions}, respectively.

The structure of the paper is as follows. In Section~\ref{sec:setup} we define our general setting. Section~\ref{sec:DirPois} introduces the equilibrium measures from \cite{BenditoCarmonaEncinas03} into the paper (the terminology is derived from potential theory; see e.g. \cite{simon2007equilibrium} and references therein) and uses them to study the Dirichlet and Poisson problems on graphs, generalizing some results from \cite{BenditoCarmonaEncinas03}. In Section~\ref{sec:graphOKfunctional} we define the $H^{-1}$ inner product and norm and use those to construct the object at the centre of our paper: the (sharp interface) Ohta-Kawasaki functional on graphs, $F_0$. We also briefly consider $F_\e$, a diffuse interface version of the Ohta-Kawasaki functional and its relationship with $F_0$. Moreover, in this section we start using tools from spectral analysis to study $F_0$. These tools will be one of the main ingredients in the remainder of the paper. In Section~\ref{sec:MBO} the algorithms \ref{alg:OKMBO} and \ref{alg:massOKMBO} are introduced and analysed. It is shown that both these algorithms have an associated Lyapunov functional (which extends a result from \cite{vanGennipGuillenOstingBertozzi14} and that these functionals $\Gamma$-converge to $F_0$ in the limit when $\tau$ (the time parameter associated with the first step in the MBO iteration) goes to zero. We introduce the class $\mathcal{C}^0$ in Section~\ref{sec:specialclasses} and prove that the Ohta-Kawasaki dynamics (i.e. the dynamics used in the first steps of both \ref{alg:OKMBO} and \ref{alg:massOKMBO}) on graphs from this class corresponds to diffusion on a transformed graph. We also prove comparison principles for these graphs. In Section~\ref{sec:numerical} we then use \ref{alg:massOKMBO} to numerically construct (approximate) minimizers of $F_0$, before ending with a discussion of potential future research directions in Section~\ref{sec:discussthefuture}. Throughout the paper we will use the example of an unweighted star graph to illustrate many of the concepts and results that are introduced and proven.

\section{Setup}\label{sec:setup}

In this paper we consider graphs $G\in \mathcal{G}$, where
$\mathcal{G}$ is the set consisting of all finite, simple\footnote{By `simple' we mean here `without multiple edges between the same pair of vertices and without self-loops'}, connected, undirected, edge-weighted graphs $(V,E,\omega)$ with $n:= |V| \geq 2$ nodes. Here $E\subset V\times V$ and $\omega: E\to (0,\infty)$. 
Because $G\in\mathcal{G}$ is undirected, we identify $(i,j)\in E$ with $(j,i)\in E$. If we want to consider an unweighted graph, we view it as a weighted graph with $\omega=1$ on $E$.

Let $\mathcal{V}$ be the set of node functions $u:V\to \R$ and $\mathcal{E}$ the set of skew-symmetric\footnote{In the literature, the condition of skew-symmetry (i.e. $\varphi_{ij} = -\varphi_{ji}$) is often, but not always included in definitions of the edge function space. We follow that convention, but it does not hinder or help us, except for simplifying a few expressions, such as that of the divergence below.} edge functions $\varphi: E\to \R$. For $i\in V$, $u\in \mathcal{V}$, we write $u_i:=u(i)$ and for $(i,j)\in E$, $\varphi\in \mathcal{E}$ we write $\varphi_{ij}:=\varphi((i,j))$. To simplify notation, we extend each $\varphi\in\mathcal{E}$ to a function $\varphi\colon V^2 \to \R$ (without changing notation) by setting $\varphi_{ij} = 0$ if $(i,j)\not\in E$. The condition that $\varphi$ is skew-symmetric means that, for all $i,j\in V$, $\varphi_{ij}=-\varphi_{ji}$. Similarly, for the edge weights
we write $\omega_{ij}:=\omega((i,j))$ 
and we extend $\omega$ (without changing notation) to a function $\omega: V^2 \to [0,\infty)$ by setting $\omega_{ij} = 0$ if and only if $(i,j)\not\in E$. Because $G\in\mathcal{G}$ is undirected, we have for all $i,j\in V$, $\omega_{ij} = \omega_{ji}$. 

The \textit{degree} of node $i\in V$ is $\displaystyle d_i:=\sum_{j\in V} \omega_{ij}$ and the minimum and maximum degrees of the graph are defined as
$\displaystyle d_-:= \underset{1\leq i\leq n}\min d_i$  and $\displaystyle d_+:= \underset{1\leq i\leq n}\max d_i$,
respectively. Because $G\in\mathcal{G}$ is connected and $n\geq 2$, there are no isolated nodes and thus $d_-,d_+>0$.

For a node $i\in V$, we denote the set of its neighbours by
\begin{equation}\label{eq:neighbours}
\mathcal{N}(i) := \{j\in V: \omega_{ij}>0\}.
\end{equation}

For simplicity of notation we will assume that the nodes of a given graph $G\in\mathcal{G}$ are labeled such that $V=\{1, \ldots, n\}$. For definiteness and to avoid confusion we specify that we consider $0\not\in \N$, i.e. $\N=\{1, 2, 3, \ldots \}$, and when using the notation $A\subset B$ we allow for the possibility of $A=B$. For a node set $S\subset V$, we denote its characteristic function (or indicator function) by
\[
(\chi_S)_i := \begin{cases} 1 & \text{if } i\in S,\\ 0, & \text{otherwise}.\end{cases}
\]
If $S = \{i\}$, we can use the Kronecker delta to write\footnote{When beneficial for the readability, we will also write $\delta_{i,j}$ for $\delta_{ij}$.}: $\displaystyle (\chi_{\{i\}})_j = \delta_{ij} := \begin{cases} 1, &\text{if } i=j,\\0, & \text{otherwise.}\end{cases}
$

As justified in earlier work \cite{HeinAudibertvonLuxburg07,vanGennipBertozzi12,vanGennipGuillenOstingBertozzi14} we introduce the following inner products,
\[
\langle u, v \rangle_{\mathcal{V}} := \sum_{i\in V} u_i v_i d_i^r, \quad
\langle \varphi, \psi \rangle_{\mathcal{E}} := \frac12 \sum_{i,j\in V} \varphi_{ij} \psi_{ij} \omega_{ij}^{2q-1},
\]
for parameters $q\in[1/2,1]$ and $r\in[0,1]$\footnote{Note that the powers $2q-1$ and $1-q$ in the $\mathcal{E}$ inner product 
and in the gradient are zero for the admissible choices $q=\frac12$ and $q=1$ respectively. In these cases we define $\omega_{ij}^0=0$ whenever $\omega_{ij}=0$, so as not to make the $\mathcal{E}$ inner product 
(or the gradient) nonlocal on the graph.}. 
We define the gradient $\nabla: \mathcal{V} \to \mathcal{E}$ by, for all $i,j\in V$,
\[
(\nabla u)_{ij} := \begin{cases} \omega_{ij}^{1-q} (u_j-u_i), & \text{if } \omega_{ij}>0,\\ 0, &\text{otherwise,}\end{cases}
\]

Note that $\langle \cdot, \cdot\rangle_{\mathcal{V}}$ is indeed an inner product on $\mathcal{V}$ if $G$ has no isolated nodes (i.e. if $d_i>0$ for all $i\in V$), as is the case for $G\in\mathcal{G}$. Furthermore $\langle \cdot, \cdot \rangle_{\mathcal{E}}$ is an inner product on $\mathcal{E}$ (since functions in $\mathcal{E}$ are either only defined on $E$ or are required to be zero on $V^2\setminus E$, depending on whether we consider them as edge functions or as extended edge functions, as explained above).

Using these building blocks, we define the divergence as the adjoint of the gradient the and (graph) Laplacian as the divergence of the gradient, leading to\footnote{Note that for the divergence we have used the assumption that $\varphi$ is skew-symmetric}, for all $i\in V$,
\[
(\dvg \varphi)_i := \frac1{d_i^r} \sum_{j\in V} \omega_{ij}^q \varphi_{ji}, \quad
(\Delta u)_i :=\left(\dvg (\nabla u)\right)_i =  d_i^{-r} \sum_{j\in V} \omega_{ij} (u_i-u_j),
\]
as well as the following norms:
\begin{align*}
 \|u\|_{\mathcal{V}} &:= \sqrt{\langle u,u\rangle_{\mathcal{V}}}, \quad \|\varphi\|_{\mathcal{E}} := \sqrt{\langle \varphi, \varphi\rangle_{\mathcal{E}}},\\ 
\|u\|_{\mathcal{V},\infty} &:= \max\{|u_i|\colon i\in V\}, \quad \|\varphi\|_{\mathcal{E},\infty} := \max\{|\varphi_{ij}|\colon i,j \in V\}.
\end{align*}
Note that we indeed have, for all $u\in \mathcal{V}$ and all $\psi\in \mathcal{E}$,
\begin{equation}\label{eq:adjoint}
\langle \nabla u,\psi\rangle_{\mathcal{E}} = \langle u, \dvg\psi\rangle_{\mathcal{V}}.
\end{equation}
In \cite[Lemma 2.2]{vanGennipGuillenOstingBertozzi14} it is proven that, for all $u\in \mathcal{V}$,
\begin{equation}\label{eq:normineq}
d_-^{\frac{r}2} \|u\|_{\mathcal{V},\infty} \leq \|u\|_{\mathcal{V}} \leq \sqrt{\vol{V}} \|u\|_{\mathcal{V},\infty}.
\end{equation}

For a function $u \in \mathcal{V}$, we define its \textit{support} as 
$
\supp(u) := \{i\in V: u_i \neq 0\}.
$ 
The \textit{mass} of a function $u\in \mathcal{V}$ is
\[
\mathcal{M}(u):= \langle u, \chi_V\rangle_{\mathcal{V}} = \sum_{i\in V}d_i^r u_i,
\]
and the \textit{volume} of a node set $S\subset V$ is
\[
\vol{S}  := \mathcal{M}(\chi_S) = \| \chi_S \|_{\mathcal{V}}^2 = \sum_{i\in S} d_i^r.
\]
Note that, if $r=1$, then $\vol{S} = |S|$, where $|S|$ denotes the number of elements in $S$. Using \eqref{eq:adjoint}, we find the useful property that, for all $u\in \mathcal{V}$,
\begin{equation}\label{eq:Laplacemass}
\mathcal{M}(\Delta u) = \langle \Delta u, \chi_V\rangle_{\mathcal{V}} = \langle \nabla u, \nabla \chi_V\rangle_{\mathcal{E}} = 0.
\end{equation}

For $u\in \mathcal{V}$, define the \textit{average mass function} of $u$ as
\[
\mathcal{A}(u) := \frac{\mathcal{M}(u)}{\vol{V}} \chi_V.
\]
Note in particular that
\begin{equation}\label{eq:massu-Au}
\mathcal{M}(u-\mathcal{A}(u)) = 0.
\end{equation}

We also define the \textit{Dirichlet energy} of a function $u\in \mathcal{V}$, 
\begin{equation}\label{eq:Dirichletenergy}
\frac12 \|\nabla u\|_{\mathcal{E}}^2 = \frac{1}{4} \sum_{i,j\in V} \omega_{ij}(u_i - u_j)^2,
\end{equation}
and the \textit{total variation} of $u\in \mathcal{V}$,
\[
\TV(u) := \max\left\{\langle \dvg \varphi, u\rangle_{\mathcal{V}}: \varphi\in \mathcal{E}, \, \|\varphi\|_{\mathcal{E},\infty}\leq 1\right\} = \frac12 \sum_{i,j\in V} \omega_{ij}^q |u_i-u_j|.
\]

\begin{remark}
We have introduced two parameters, $q\in [1/2,1]$ and $r\in [0,1]$, in our definitions so far. As we will see later in this paper, the choice $q=1$ is the natural one for our purposes. In those cases where we do not require $q=1$, however, we do keep the parameter $q$ unspecified, because there are papers in the literature in which the choice $q=1/2$ is made, such as \cite{GilboaOsher2009}. One reason for the choice $q=1/2$ is that in that case $\omega_{ij}$ appears in the graph gradient, graph divergence, and graph total variation with the same power ($1/2$), allowing one to think of $\sqrt{\omega_{ij}}$ as analogous to a reciprocal distance. 

The parameter $r$ is the more interesting one of the two, as the choices $r=0$ and $r=1$ lead to two different graph Laplacians that appear in the spectral graph theory literature under the names combinatorial (or unnormalized) graph Laplacian and random walk (or normalized, or non-symmetrically normalized) graph Laplacian, respectively. Many of the results in this paper hold for all $r\in [0,1]$ and we will clearly indicate if and when further assumptions on $r$ are required

We note that, besides the graph Laplacian, also the mass of a function depends on $r$, whereas the total variation of a function does not depend on $r$, but does depend on $q$. The Dirichlet energy depends on neither parameter.

Unless we explicitly mention any further restrictions on $q$ or $r$, only the conditions $q\in [1/2,1]$ and $r\in [0,1]$ are implicitly assumed.
\end{remark}

\begin{definition}
Given a graph $G=(V,E,\omega)\in \mathcal{G}$, we define the following useful subsets of $\mathcal{V}$:
\begin{itemize}
\item the subset of node functions with a given mass $M\in \R$,
\begin{equation}\label{eq:VM}
\mathcal{V}_M := \{u\in \mathcal{V}: \mathcal{M}(u)= M\};
\end{equation}
\item the subset of nonnegative node functions,
\[
\mathcal{V}_+ := \{u \in \mathcal{V}: \forall i\in V\,\, u_i \geq 0\};
\]
\item the subset of $\{0,1\}$-valued binary node functions,
\begin{equation}\label{eq:Vb}
\mathcal{V}^b := \{u\in \mathcal{V}: \forall i\in V \ \ u_i \in \{0,1\} \};
\end{equation}
\item the subset of $\{0,1\}$-valued binary node functions with a given mass $M\geq 0$,
\[
\mathcal{V}^b_M := \mathcal{V}_M \cap \mathcal{V}^b;
\]
\item the subset of $[0,1]$-valued node functions,
\begin{equation}\label{eq:setK}
\mathcal{K} := \{u\in \mathcal{V}: \forall i\in V\ u_i\in [0,1]\},
\end{equation} 
\item the subset of $[0,1]$-valued node functions with a given mass $M\geq 0$,
\begin{equation}\label{eq:setKM}
\mathcal{K}_M :=  \mathcal{V}_M \cap \mathcal{K}.
\end{equation} 
\end{itemize}
\end{definition}
The space of zero mass node functions, $\mathcal{V}_0$ will play an important role, as it is the space of admissible `right hand side' functions in the Poisson problem \eqref{eq:Poisson}. Note that every $u\in \mathcal{V}^b$ is of the form $u=\chi_S$ for some $S\subset V$. 

Observe that for $M>\vol{V}$, $\mathcal{V}^b_M = \emptyset$. In fact, for a given finite graph there are only finitely many $M\in [0,\vol{V}]$ such that $\mathcal{V}_M^b \neq \emptyset$. For a given graph, we define the (finite) set of admissable masses as
\begin{equation}\label{eq:admissmass}
\mathfrak{M} := \{M\in [0,\vol{V}]: \mathcal{V}^b_M \neq \emptyset\}.
\end{equation}

Throughout the paper we will use the example of a star graph to illustrate various ideas, because it is amenable to analytical calculations. We therefore give its definition here and introduce the notation we will be using for it. 

\begin{definition}\label{def:bipartstar}
A (weighted) undirected simple graph $G=(V,E,\omega)$ is \textit{complete} if, for all $i,j\in V$, $i\neq j$ implies $\omega_{ij}>0$.

A (weighted) undirected simple graph $G=(V,E,\omega)$ is \textit{bipartite} if there are disjoint subsets $V_1$ and $V_2$ of $V$ such that $V=V_1 \cup V_2$ and for all $i,j\in V_1$ and for all $k,l\in V_2$, $\omega_{ij}=\omega_{kl} = 0$. In this case we write $V=V_1|V_2$.

A bipartite graph with node set $V=V_1|V_2$ is called a \textit{complete bipartite graph} if, for all $i\in V_1$ and for all $j\in V_2$, $\omega_{ij}>0$.

A (weighted) undirected simple graph $G=(V,E,\omega)$ is a (weighted) \textit{star graph} if it is a complete bipartite graph with $V=V_1|V_2$ and $|V_1|=1$ or $|V_2|=1$. The single node in $V_1$ or $V_2$, respectively, is the \textit{centre node} or \textit{internal node}. The other nodes, in $V_2$ or $V_1$, respectively, are \textit{leave nodes}.

For a (weighted) star graph we will use the notational convention that $1\in V$ is the centre node and $\{2, \ldots, n\}$ is the set of leaves, i.e. for all $i\in V$, $\omega_{ii} = 0$, for all $j\in V\setminus\{1\}$, $\omega_{1j}=1$, and if $i,j\in V\setminus\{1\}$, then $\omega_{ij}=0$.
\end{definition}
See Figure~\ref{fig:stargraph} for an example of a star graph. 

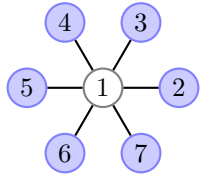
\begin{figure}[b]
\centering
\begin{tikzpicture} 
[inner sep=.8mm,
dot/.style={circle,draw=blue!50,fill=blue!20,thick},
redDot/.style={circle,draw=black!50,thick}] 
\node (1) at (0,0)[redDot] {1};
\node (2) at (1,0)[dot]{2}
	edge[-,thick] node[auto,swap] {} (1);
\node (3) at (0.5,0.866)[dot] {3}
	edge[-,thick] node[auto,swap] {} (1);
\node (4) at (-0.5,0.866)[dot]{4}
	edge[-,thick] node[above left,swap] {} (1);	
\node (5) at (-1,0)[dot]{5}
	edge[-,thick] node[above left,swap] {} (1);
\node (6) at (-0.5,-0.866)[dot]{6}
	edge[-,thick] node[above left,swap] {} (1);
\node (7) at (0.5,-0.866)[dot]{7}
	edge[-,thick] node[above left,swap] {} (1);
\end{tikzpicture}
\caption{An example of a star graph with six leave nodes} \label{fig:stargraph}
\end{figure}

The following lemma describes $\mathfrak{M}$ for the star graph and shows that the mass condition in $\mathcal{V}_M^b$ can be quite restrictive, especially if $r\neq 0$.

\begin{lemma}\label{lem:starmass}
Let $G=(V,E,\omega) \in\mathcal{G}$ be an unweighted star graph as in Definition~\ref{def:bipartstar} with $n\geq 3$ nodes and let $q=1$. Let $\mathfrak{M}$ be the set of admissable masses as in \eqref{eq:admissmass}, then
\[
\mathfrak{M} = \{0, 1, \ldots, n-1, (n-1)^r, (n-1)^r+1, \ldots, (n-1)^r + n -1\}.
\]
\begin{itemize}
\item If $(n-1)^r \not\in \N$ and $M\in \mathfrak{M}\cap \N$, then for all $\chi_S \in \mathcal{V}^b_M$ (with $S\subset V$), $1\not\in S$ and $|S|=M$.
\item If $(n-1)^r \not\in \N$ and $M\in \mathfrak{M}\cap \left(\R\setminus\N\right)$, then for all $\chi_S\in \mathcal{V}^b_M$, $1\in S$ and $|S\setminus\{1\}|=M-1$.
\item If $M \in \mathfrak{M} \cap [0, (n-1)^r)$, then for all $\chi_S\in \mathcal{V}_M^b$, $1\not\in S$.
\item If $M \in \mathfrak{M}\cap (n-1, \vol{V}]$, then for all $\chi_S\in \mathcal{V}_M^b$, $1 \in S$.
\item If $M \in \mathfrak{M}$ and $\chi_S, \chi_{\tilde S} \in \mathcal{V}_M^b$ are such that $\left(\chi_S\right)_1 = \left(\tilde \chi_S\right)_1$, then $|S|=|\tilde S|$.
\end{itemize}
\end{lemma}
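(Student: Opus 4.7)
The proof is a direct case analysis based on whether the centre node $1$ lies in $S$. First I would observe that on the unweighted star graph of Definition~\ref{def:bipartstar}, $d_1 = n-1$ and $d_i = 1$ for each leaf $i \in \{2, \ldots, n\}$, so $d_1^r = (n-1)^r$ and $d_i^r = 1$ for every leaf.

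For any $S \subset V$ one has $\mathcal{M}(\chi_S) = \sum_{i \in S} d_i^r$, which splits into two sub-cases. If $1 \notin S$ then $\mathcal{M}(\chi_S) = |S|$, and as $S$ ranges over subsets of $\{2, \ldots, n\}$ this produces masses in $\{0, 1, \ldots, n-1\}$. If $1 \in S$ then $\mathcal{M}(\chi_S) = (n-1)^r + |S\setminus\{1\}|$, and as $S\setminus\{1\}$ ranges over subsets of $\{2, \ldots, n\}$ this produces masses in $\{(n-1)^r, (n-1)^r + 1, \ldots, (n-1)^r + n-1\}$. Taking the union of these two ranges gives the claimed description of $\mathfrak{M}$.

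The five subsequent assertions then follow by reading off this mass formula. If $(n-1)^r \notin \N$, then the second range consists entirely of non-integers while the first consists entirely of integers, so the two ranges are disjoint and a given $M \in \mathfrak{M}$ unambiguously specifies whether $1 \in S$; this yields the first two assertions and pins down $|S|$ or $|S\setminus\{1\}|$ through the appropriate mass formula. For the third and fourth assertions, if $M < (n-1)^r$ then $M$ cannot lie in the second range (whose minimum is $(n-1)^r$), forcing $1 \notin S$; symmetrically, if $M > n-1$ then $M$ cannot lie in the first range, forcing $1 \in S$. For the final assertion, once $(\chi_S)_1$ is known the mass formula in the relevant case determines $|S|$ uniquely, so two such $S$ and $\tilde S$ with the same $M$ must have equal cardinality.

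The main obstacle, such as it is, is purely bookkeeping: verifying the two mass ranges have the asserted form and invoking the hypothesis $(n-1)^r \notin \N$ only where needed to separate the integer and non-integer regimes. There is no analytic content beyond Definition~\ref{def:bipartstar} and the definition of $\mathcal{M}$, and no result from earlier sections is needed.
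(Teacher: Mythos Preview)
Your proposal is correct and follows essentially the same approach as the paper's proof: both compute $\mathcal{M}(\chi_S) = (n-1)^r(\chi_S)_1 + |S\setminus\{1\}|$, split into the two cases $1\in S$ and $1\notin S$ to obtain the two ranges of admissible masses, and then read off each of the five assertions directly from this formula. The paper's argument for the final bullet is written as an explicit subtraction $|S|-|\tilde S|=\mathcal{M}(\chi_S)-\mathcal{M}(\chi_{\tilde S})=0$, but this is the same observation you make in words.
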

\begin{proof}
Let $u\in \mathcal{V}_M^b$, then $\mathcal{M}(u) = (n-1)^r u_1 + \sum_{i=2}^n u_i$, from which the expression for $\mathfrak{M}$ immediately follows. 

If $(n-1)^r \not\in \N$ and $M\in \mathfrak{M}\cap \N$, then $M\in \{0, 1, \ldots, n-1\}$, hence $1\not\in S$ and $|S|=M$. If on the other hand $(n-1)^r \not\in \N$ and $M\in \mathfrak{M}\cap \left(\R\setminus\N\right)$, then $M\in \{(n-1)^r, (n-1)^r+1, \ldots, (n-1)^r+n-1\}$ and thus $1\in S$ and $|S\setminus\{1\}|=M-1$.

If $M\in \mathfrak{M}$ satisfies $M<(n-1)^r$, then $\mathcal{M}(\chi_S) = M$ implies $1\not\in S$. If on the other hand $M>n-1$ and $1 \not \in S$, then $\mathcal{M}(\chi_S) \leq n-1 < M$, hence $\chi_S\not\in \mathcal{V}_M^b$.

If $\mathcal{M}(\chi_S) = \mathcal{M}(\chi_{\tilde S})$ and $\left(\chi_S\right)_1 = \left(\chi_{\tilde S}\right)_1$, then
\[
|S| - |\tilde S| = \left(\chi_S\right)_1 - \sum_{i=2}^n \left(\chi_S\right)_i - \left(\chi_{\tilde S}\right)_1 - \sum_{i=2}^n \left(\chi_{\tilde S}\right)_i =  \sum_{i=2}^n \left(\chi_S\right)_i  - \sum_{i=2}^n \left(\chi_{\tilde S}\right)_i = \mathcal{M}(\chi_S) - \mathcal{M}(\chi_{\tilde S})=0.
\]
\end{proof}

\begin{remark}
If $r=0$, the assumptions in the first four bullet points of Lemma~\ref{lem:starmass} cannot be satisfied and the condition $M\in \mathfrak{M}$ is less restrictive than in the case $r\in (0,1]$.
\end{remark}

\section{Dirichlet and Poisson equations}\label{sec:DirPois}

\subsection{A comparison principle}

\begin{lemma}[Comparison principle I]\label{lem:compell}
Let $G=(V,E,\omega) \in \mathcal{G}$, let $V'$ be a proper subset of $V$, and let $u, v \in \mathcal{V}$ be such that, for all $i\in V'$,
$(\Delta u)_i \geq (\Delta v)_i$ and, for all $i\in V\setminus V'$, $u_i\geq v_i$. Then, for all $i\in V$, $u_i \geq v_i$.
\end{lemma}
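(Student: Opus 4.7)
The natural move is to reduce to a statement about a single function by setting $w := u - v \in \mathcal{V}$. By linearity of $\Delta$ the hypotheses translate into $(\Delta w)_i \geq 0$ for every $i \in V'$ and $w_i \geq 0$ for every $i \in V \setminus V'$, and the conclusion becomes $w \geq 0$ on all of $V$.

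I would argue by contradiction: suppose $m := \min_{i \in V} w_i < 0$ and let $M := \{i \in V : w_i = m\}$. The boundary hypothesis forces $M \subset V'$, so pick any $i^* \in M$. The crucial observation is a one-step propagation of the minimum. Since $\omega_{i^* j} \geq 0$ and $w_{i^*} = m \leq w_j$ for every $j$, each summand in
\[
0 \leq (\Delta w)_{i^*} = d_{i^*}^{-r} \sum_{j \in V} \omega_{i^* j}(w_{i^*} - w_j)
\]
is nonpositive, so the inequality can hold only if every summand vanishes. Hence $w_j = m$ for all $j \in \mathcal{N}(i^*)$, i.e.\ $\mathcal{N}(i^*) \subset M$.

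Finally I would globalise this via connectivity. Because $V' \subsetneq V$ and $G \in \mathcal{G}$ is connected, there exists a path $i^* = i_0, i_1, \dots, i_\ell$ in $G$ with $i_\ell \in V \setminus V'$; let $k := \min\{s \geq 1 : i_s \in V \setminus V'\}$, so that $i_0,\dots,i_{k-1}$ all lie in $V'$. Applying the one-step propagation inductively at $i_0, i_1, \dots, i_{k-1}$ (each of which lies in $M \cap V'$, which is exactly where the inequality $\Delta w \geq 0$ is available) yields $i_k \in M$, hence $w_{i_k} = m < 0$, contradicting $w_{i_k} \geq 0$ from the boundary hypothesis.

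The only real subtlety is making sure the inductive propagation is carried out strictly inside $V'$ until the very last step; choosing $k$ as the first exit index above handles this cleanly. Everything else is routine discrete maximum-principle bookkeeping, using only nonnegativity of $\omega_{ij}$ and $d_{i^*}^{-r}$ and the definition of $\Delta$ given in Section~\ref{sec:setup}.
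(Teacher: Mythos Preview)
Your proof is correct and follows essentially the same approach as the paper's: reduce to $w=u-v$, let $M$ (the paper writes $U$) be the set where $w$ attains its minimum, assume this minimum is negative so that $M\subset V'$, and exploit connectivity via a path argument to reach a contradiction. The only cosmetic difference is that the paper takes a path from $M$ to $V\setminus M$, locates the first exit $k^*$ from $M$, and shows directly that $(\Delta w)_{j^*}<0$ at its predecessor $j^*\in M\subset V'$; you instead take a path from $M$ to $V\setminus V'$ and propagate $M$-membership forward along the path until you hit the boundary---these are contrapositive phrasings of the same one-step observation.
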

\begin{proof}
The result follows as a special case of the comparision principle for uniformly elliptic partial differential equations on graphs with Dirichlet boundary conditions in \cite[Theorem 1]{manfredi2015nonlinear}. For completeness we provide the proof of this special case here. In particular, we will prove that if $w\in \mathcal{V}$ is such that, for all $i\in V'$, $(\Delta w)_i  \geq 0$, and, for all $i\in V\setminus V'$, $w_i\geq 0$, then
then for all $i\in V$, $w_i \geq 0$. Applying this to $w=u-v$ gives the desired result.

If $V'=\emptyset$, the result follows trivially. In what follows we assume that $V'\neq\emptyset$.

Define the set $U := \left\{i\in V:  w_i = \min_{j\in V} w_j\right\}$. Note that $U\neq \emptyset$. For a proof by contradiction, assume $\min_{j\in V} w_j < 0$, then $U\subset V'$. By assumption $V'\neq V$, hence $\emptyset \neq V\setminus V' \subset V\setminus U$. Let $i^*\in V\setminus U$. Since $G$ is connected, there is a path from $U$ to $i^*$\footnote{By a path from $U$ to $i^*$ we mean a finite sequence of nodes $\{i_j\}_{j=1}^k$, such that $i_1\in U$, $i_k = i^*$, and for all $j\in \{2, \ldots, k\}$, $(i_j, i_{j+1})\in E$.} . Fix such a path and let $k^*$ be the first node along this path such that $k^*\in V\setminus U$ and let $j^*\in U$ be the node immediately preceeding $k^*$ in the path. Then, for all $k\in V$, $(\nabla w)_{kj^*}  \leq 0$, and
$
(\nabla w)_{k^*j^*} = \omega_{k^*j^*}^{1-q} (w_{j^*}-w_{k^*}) < 0.
$
Thus
$
d_{j^*}^r (\Delta w)_{j^*} = \sum_{k\in V} \omega_{j^*k}^q (\nabla w)_{kj^*} < 0.
$
Since $j^* \in V'$, this contradicts one of the assumptions on $w$, hence $\min_{i\in V} w_i \geq 0$ and the result is proven.
\end{proof}

We will see a generalization of Lemma~\ref{lem:compell} as well as another comparison principle in Section~\ref{sec:comppinning}, but their proofs require some groundwork which is interesting in its own right as well. That is the topic of Section~\ref{sec:graphtransformation}.

\subsection{Equilibrium measures}

Let $G=(V,E,\omega)\in \mathcal{G}$. Given a proper\footnote{The subset $S$ is proper if $S\neq V$. Note that, by \eqref{eq:Laplacemass}, the equation $\Delta u = f$ on $V$ can only have a solution $u$ if $f$ has zero mass. If $S=V$, this necessary zero mass solvability condition is not satisfied by equation \eqref{eq:Laplacemass}.} subset $S\subset V$, consider the equation
\begin{equation}\label{eq:equilibrium}
\begin{cases}
(\Delta \nu^S)_i = 1, &\text{if } i\in S,\\
\nu^S_i = 0, &\text{if } i \in V\setminus S.
\end{cases}
\end{equation}

We recall some properties that are proven in \cite[Section 2]{BenditoCarmonaEncinas03}.

\begin{lemma}\label{lem:equimeasprops}
Let $G=(V,E,\omega)\in \mathcal{G}$. The following results and properties hold:
\begin{enumerate}
\item\label{item:posdefLapl} The Laplacian $\Delta$ is positive semidefinite on $\mathcal{V}$ and positive definite on $\mathcal{V}_0$.
\item\label{item:maximumprinc} The Laplacian satisfies a maximum principle: for all $u\in \mathcal{V}_+$,
$\displaystyle 
\max_{i\in V} (\Delta u)_i = \max_{i\in \supp(u)} (\Delta u)_i.
$
\item\label{item:suppprop} For each proper subset $S\subset V$, \eqref{eq:equilibrium} has a unique solution in $\mathcal{V}$. If $\nu^S$ is this solution, then $\nu^S \in \mathcal{V}_+$ and $\supp(\nu^S) = S$.
\item\label{item:subsetsnu} If $R \subset S$ are both proper subsets of $V$ and $\nu^R, \nu^S \in \mathcal{V}_+$ are the corresponding solutions of \eqref{eq:equilibrium}, then $\nu^S \geq \nu^R$.
\end{enumerate}
\end{lemma}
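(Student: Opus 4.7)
The four claims are tightly linked, and I would treat them in the listed order, using the adjoint relation \eqref{eq:adjoint} and the comparison principle (Lemma~\ref{lem:compell}) as the main workhorses.

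For item~\ref{item:posdefLapl}, the plan is to apply \eqref{eq:adjoint} directly: $\langle \Delta u, u\rangle_{\mathcal{V}} = \langle \nabla u, \nabla u\rangle_{\mathcal{E}} = \|\nabla u\|_{\mathcal{E}}^2 \geq 0$, which gives positive semidefiniteness on $\mathcal{V}$. If this quantity vanishes, then $(\nabla u)_{ij}=0$ whenever $\omega_{ij}>0$, so $u$ is constant on each connected component; since $G\in\mathcal{G}$ is connected, $u$ is globally constant, and combining with the zero-mass condition $\mathcal{M}(u)=0$ forces $u\equiv 0$, giving positive definiteness on $\mathcal{V}_0$.

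For item~\ref{item:maximumprinc}, the key observation is that when $u\in\mathcal{V}_+$ and $u_i = 0$, one has $(\Delta u)_i = -d_i^{-r}\sum_j \omega_{ij} u_j \leq 0$. Combined with \eqref{eq:Laplacemass}, which yields $\sum_i d_i^r (\Delta u)_i = 0$ and therefore forces at least one value of $\Delta u$ to be nonnegative, this shows $\max_{V}\Delta u \geq 0 \geq \max_{V\setminus\supp(u)}\Delta u$, so the global maximum must be attained on $\supp(u)$. I would handle the degenerate case $u\equiv 0$ (where $\supp(u)=\emptyset$) separately, noting that the statement is trivially vacuous or verified by convention.

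For item~\ref{item:suppprop}, existence follows from a standard finite-dimensional argument: \eqref{eq:equilibrium} is a square linear system for $\nu^S$ restricted to $S$, so surjectivity is equivalent to injectivity. Injectivity (and hence uniqueness) comes from applying Lemma~\ref{lem:compell} twice with $V'=S$ to any two solutions (in both orderings). Nonnegativity $\nu^S\geq 0$ comes from comparing $\nu^S$ with the zero function via Lemma~\ref{lem:compell}, since $\Delta \nu^S = 1 \geq 0 = \Delta 0$ on $S$ and $\nu^S=0$ on $V\setminus S$. To pin down $\supp(\nu^S)=S$, suppose for contradiction that $\nu^S_i = 0$ for some $i\in S$; then $i$ is a minimizer of the nonnegative function $\nu^S$, so $(\Delta \nu^S)_i = -d_i^{-r}\sum_j \omega_{ij}\nu^S_j \leq 0$, contradicting $(\Delta \nu^S)_i=1$.

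For item~\ref{item:subsetsnu}, I would simply invoke Lemma~\ref{lem:compell} once more with $u=\nu^S$, $v=\nu^R$, and $V'=R$: on $R$ both Laplacians equal $1$, while on $V\setminus R$ we have $\nu^S\geq 0 = \nu^R$ (using $\nu^R$ vanishes off $R$ together with the nonnegativity established in item~\ref{item:suppprop}). There is no single hard step here; the main subtlety is the support identification in item~\ref{item:suppprop}, which requires the extra contradiction argument rather than following directly from Lemma~\ref{lem:compell}, and the passage from uniqueness to existence in that same item, which is a linear-algebra observation rather than an analytic one.
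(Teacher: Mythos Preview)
Your argument is correct throughout. Items~\ref{item:posdefLapl} and the support identification in~\ref{item:suppprop} match the paper exactly. In item~\ref{item:maximumprinc} you use $\mathcal{M}(\Delta u)=0$ to force $\max_V \Delta u \geq 0$, whereas the paper instead exhibits a specific node $l\in\supp(u)$ (the one where $u$ is maximal on its support) at which $(\Delta u)_l\geq 0$; both routes work, though your deduction ``$\max_V\geq 0\geq \max_{V\setminus\supp(u)}$ hence the max is attained on $\supp(u)$'' tacitly uses that in the boundary case $\max_V=0$ the zero-mass identity forces $\Delta u\equiv 0$.

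Where your approach genuinely diverges is in the existence part of~\ref{item:suppprop} and in~\ref{item:subsetsnu}. The paper proves existence variationally, minimizing the Dirichlet energy over $\{u\in\mathcal{V}_+\cap\mathcal{V}_1:\supp(u)\subset S\}$ and extracting the equilibrium measure from the KKT conditions; this simultaneously delivers positivity. Your route---observing that \eqref{eq:equilibrium} is a square linear system on $S$, so the uniqueness already established yields surjectivity, and then obtaining $\nu^S\geq 0$ by a separate application of Lemma~\ref{lem:compell} against the zero function---is shorter and avoids the optimization machinery entirely. Similarly, for~\ref{item:subsetsnu} the paper takes $V'=S$ and must invoke the maximum principle~\ref{item:maximumprinc} to control $\Delta\nu^R$ on $S\setminus R$; your choice $V'=R$ sidesteps this, since on $V\setminus R$ you only need $\nu^S\geq 0=\nu^R$, which you already have. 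Both of your simplifications are sound; the paper's variational argument has the virtue of connecting to the energy interpretation of equilibrium measures, but for the bare statement your proofs are more economical.
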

\begin{proof}
These results are proven in \cite[Section 2]{BenditoCarmonaEncinas03} for the case $r=0$. The same proofs, mutatis mutandis, work for general $r\in [0,1]$. Because the equilibrium measures play an important role in the current paper, however, we will provide our own proofs here, which deviate slightly from those in \cite[Section 2]{BenditoCarmonaEncinas03} in places.

To prove \ref{item:posdefLapl}, we note that, by \eqref{eq:adjoint}, for all $u\in V$, $\langle \Delta u, u\rangle_{\mathcal{V}} = \|\nabla u\|_{\mathcal{E}}^2 \geq 0$ and thus $\Delta$ is positive semidefinite on $\mathcal{V}$. Moreover, equality is achieved if and only if $\nabla u = 0$. Because $G$ is connected $\nabla u=0$ if and only if $u$ is constant. If $u\in \mathcal{V}_0$ then $u$ is constant if and only if $u=0$. Hence, if $u\in \mathcal{V}_0$ and $u\neq 0$, then $\|\nabla u\|_{\mathcal{E}}^2 > 0$ and thus $\Delta$ is positive definite on $\mathcal{V}_0$.

To prove \ref{item:maximumprinc}, let $u\in \mathcal{V}_+$. We first observe that the result follows trivially if $\supp(u)=V$. Hence we now assume that $\supp(u)\neq V$. 
If $j\in V\setminus\supp(u)$, then $(\Delta u)_j = -d_j^{-r} \sum_{k\in V} \omega_{jk} u_k \leq 0$. 
Hence $\max_{i\in V\setminus\supp(u)} (\Delta u)_i \leq 0$. Now let $l\in \supp(u)$ be such that, for all $k\in \supp(u)$, $u_l \geq u_k$. Then $(\Delta u)_l = d_l^{-r} \sum_{k\in V} \omega_{lk} (u_l - u_k) \geq d_l^{-r} \sum_{k\in V\setminus \supp(u)} \omega_{lk} (u_l-u_k) \geq 0$. Hence $\max_{i\in \supp(u)} (\Delta u)_i \geq 0 \geq \max_{i\in V\setminus\supp(u)} (\Delta u)_i$ and the result follows.

Let $S$ be a proper subset of $V$. To prove the uniqueness claim in \ref{item:suppprop}, assume that $\nu_1^S, \nu_2^S\in \mathcal{V}_+$ are both solutions of \eqref{eq:equilibrium}. Define $\nu := \nu_1^S-\nu_2^S$, then $\Delta \nu = 0$ on $S$ and $\nu=0$ on $S^c$. Let $V'=S$ and apply Lemma~\ref{lem:compell} twice, once with $u=\nu$, $v=0$ and once with $u=0$, $v=\nu$. This shows that $\nu=0$ and thus $\nu_1^S=\nu_2^S$.

Next we show that \eqref{eq:equilibrium} has a solution in $\mathcal{V}_+$. Let $S$ be a proper subset of $V$.
Alll norms on finite dimensional vector spaces are topologically equivalent and if we interpret $u\mapsto \frac12 \|\nabla u\|_{\mathcal{E}}^2$ as a function from the Euclidean space $\R^n$ to $\R$, it is continuous (being a polynomial in $n$ variables). Hence it is also continuous as a functional on $\mathcal{V}$. The set $\mathcal{V}_{+,1}(S) := \{u \in \mathcal{V}_+ \cap \mathcal{V}_1: \supp(u) \subset S\}$ interpreted as subset of $\R^n$ is closed and bounded and thus compact. Hence it is also compact as subset of $\mathcal{V}$. Thus there is a $u^*\in \mathcal{V}_{+,1}(S)$ such that, for all $u\in \mathcal{V}_{+,1}(S)$, $\frac12 \|\nabla u^*\|_{\mathcal{E}}^2\leq \frac12 \|\nabla u\|_{\mathcal{E}}^2$. In other words, $u^*$ is the solution to the minimization problem
\begin{align*}
&\min_{u\in V} \frac12 \|\nabla u\|_{\mathcal{E}}^2\\
&\text{subject to } \forall i\in V\,\, u_i \geq 0, \quad \forall j\in V\setminus S\,\, u_j = 0, \quad \text{and} \quad \mathcal{M}(u)=1.
\end{align*}
Thus $u^*$ satisfies the Karush-Kuhn-Tucker (KKT) optimality conditions \cite[Section 5.5.3]{boyd2004convex} \cite[Theorem 12.1]{NocedalWright99} for this minimization problem, which give us the existence of $\chi_1: V\to [0,\infty)$, $\chi_2: V\setminus S \to \R$, and $\chi_3\in \R$, such that, for all $i\in V$ and all $j\in V\setminus S$,
\begin{align}
d_i^r \left(\Delta u^*\right)_i - \sum_{k\in V} \left(\chi_1\right)_k \delta_{ik} + \sum_{k\in V\setminus S} \left(\chi_2\right)_k \delta_{ik} + \chi_3 d_k^r = 0,\label{eq:KKT}\\
u^*_i \geq 0, \quad \left(\chi_1\right)_i\geq 0, \quad \left(\chi_1\right)_i u^*_i = 0,\notag\\
u^*_j = 0, \quad
\mathcal{M}\left(u^*\right) = 1.\notag
\end{align}
Hence
\begin{align*}
0 &= \sum_{k\in V} \left(\chi_1\right)_k u^*_k = \sum_{k\in S} \left(\chi_1\right)_k u^*_k = \sum_{k\in S} d_k^r \left(\left(\Delta u^*\right)_k + \chi_3\right) u^*_k \\
&= \sum_{k\in V} d_k^r \left(\left(\Delta u^*\right)_k + \chi_3\right) u^*_k = \langle \Delta u^*, u^*\rangle_{\mathcal{V}} + \chi_3 \mathcal{M}\left(u^*\right).
\end{align*}
Thus, using \eqref{eq:adjoint}, we find $\chi_3 = - \|\nabla u^*\|_{\mathcal{E}}^2$.

Assume $i\in S$ and $u^*_i=0$, then $\left(\chi_1\right)_i > 0$. Moreover, $d_i^r \left(\Delta u^*\right)_i = -d_i^r \sum_{k\in V}\omega_{ik} u_k \leq 0$ and $\chi_3 d_i^r = -d_i^r \|\nabla u^*\|_{\mathcal{E}}^2 \leq 0$. Hence, by the first KKT condition above in \eqref{eq:KKT}, $-\left(\chi_1\right)_i \geq 0$, which is a contradiction. Hence, if $i\in S$, then $u^*_i > 0$. In that case the KKT conditions give $\left(\chi_1\right)_i=0$ and thus $\left(\Delta u^*\right)_i = \|\nabla u^*\|_{\mathcal{E}}^2$. We see that $\nu^S := \frac{u^*}{\|\nabla u^*\|_{\mathcal{E}}^2} \in \mathcal{V}_+$ is a solution of \eqref{eq:equilibrium}.

To prove the final statement in \ref{item:suppprop}, assume $\nu^S\in \mathcal{V}_+$ solves \eqref{eq:equilibrium}. Then clearly $\supp(\nu^S) \subset S$. Assume there is a $j\in S$ such that $\nu^S_j = 0$, then (as in the proof of property~\ref{item:maximumprinc} above) $(\Delta \nu^S)_j \leq 0$. which contradicts \eqref{eq:equilibrium}. Hence $S \subset \supp(u)$ and \ref{item:suppprop} is proven.

Note that by \ref{item:maximumprinc} we have that, for all $j\in V\setminus R$, $(\Delta \nu^R)_j \leq \max_{i\in \supp(\nu^R)} (\Delta \nu^R)_i = 1$. To prove \ref{item:subsetsnu}, we define $\tilde \nu := \nu^S-\nu^R$. Then $\tilde \nu = 0$ on $V\setminus S$, $\Delta\tilde \nu = 0$ on $R$ and $\Delta \tilde \nu = 1 - \Delta \nu^R \geq 0$ on $S\setminus R$. Hence, by Lemma~\ref{lem:compell}, we have that $\tilde \nu \geq 0$ on $V$.
\end{proof}

Using property~\ref{item:suppprop} in Lemma~\ref{lem:equimeasprops} we can now define the concept of the equilibrium measure of a node subset $S$.

\begin{definition}\label{def:equilibrium}
Let $G=(V,E,\omega)\in \mathcal{G}$. For any proper subset $S\subset V$, the \textit{equilibrium measure for S}, $\nu^S$, is the unique function in $\mathcal{V}_+$ which satisfies, for all $i\in V$, the equation in \eqref{eq:equilibrium}.
\end{definition}

The following lemmas give examples of explicitly constructed equilibrium measures on a bipartite graph and a star graph.

\begin{lemma}\label{lem:bipartequil}
Let $G=(V,E,\omega) \in \mathcal{G}$ be a bipartite graph with $V=V_1|V_2$. Let $S\subset V_1$ and let $\nu^S$ be the equilibrium measure for $S$, as in Definition~\ref{def:equilibrium}. Then $\nu^S_i = d_i^{r-1} (\chi_S)_i$.
\end{lemma}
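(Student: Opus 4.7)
The plan is to use the uniqueness statement in Lemma~\ref{lem:equimeasprops}~\ref{item:suppprop} to reduce the problem to a direct verification: I propose the candidate $\nu_i := d_i^{r-1} (\chi_S)_i$ and check that it lies in $\mathcal{V}_+$ and satisfies both parts of \eqref{eq:equilibrium}; uniqueness then forces $\nu = \nu^S$.

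First I would note that $d_i > 0$ for all $i \in V$ (since $G \in \mathcal{G}$ has no isolated nodes), so $\nu \in \mathcal{V}_+$ is well-defined. The boundary condition $\nu_i = 0$ for $i \in V \setminus S$ is immediate from the factor $(\chi_S)_i$. The heart of the argument is the verification that $(\Delta \nu)_i = 1$ for $i \in S$. Fix $i \in S \subset V_1$. The bipartite structure gives $\omega_{ij} = 0$ whenever $j \in V_1$, so
\[
(\Delta \nu)_i = d_i^{-r} \sum_{j \in V} \omega_{ij}(\nu_i - \nu_j) = d_i^{-r} \sum_{j \in V_2} \omega_{ij}(\nu_i - \nu_j).
\]
Since $S \subset V_1$, every $j \in V_2$ satisfies $j \notin S$, and hence $\nu_j = 0$. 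Therefore
\[
(\Delta \nu)_i = d_i^{-r} \nu_i \sum_{j \in V_2} \omega_{ij} = d_i^{-r} \nu_i \sum_{j \in V} \omega_{ij} = d_i^{-r} \cdot d_i^{r-1} \cdot d_i = 1,
\]
where I used bipartiteness again to extend the sum back to all of $V$ without change, followed by the definition of $d_i$ and of $\nu_i$.

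Having verified that $\nu \in \mathcal{V}_+$ solves \eqref{eq:equilibrium}, the uniqueness part of Lemma~\ref{lem:equimeasprops}~\ref{item:suppprop} yields $\nu^S = \nu$, as desired. There is no genuine obstacle here; the only thing worth flagging is that the argument relies on both $S \subset V_1$ (so that no neighbour of a node $i \in S$ contributes a nonzero $\nu_j$) and on the bipartite identity $\sum_{j \in V_2} \omega_{ij} = d_i$ for $i \in V_1$, which together make the computation collapse so cleanly. An analogous formula would, of course, hold by symmetry if instead $S \subset V_2$.
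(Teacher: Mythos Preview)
Your proof is correct and follows essentially the same approach as the paper: both exploit bipartiteness to reduce the Laplacian sum at $i\in S\subset V_1$ to a sum over $V_2$, where $\nu$ vanishes, collapsing the equation to $d_i^{1-r}\nu_i=1$. The only cosmetic difference is direction---the paper starts from the known equilibrium measure and reads off its values from the equation, while you propose the candidate and verify it satisfies \eqref{eq:equilibrium}, invoking uniqueness; the underlying computation is identical.
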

\begin{proof}
Since $n\geq 2$ and $G$ is connected, $S$ is a proper subset of $V$. Per definition we have, for all $i\in S^c$, $\nu^S_i = 0$. In particular this holds for all $i\in V_2\subset S^c$, hence, for all $j\in S\subset V_1$ we compute
\[
1=(\Delta \nu^S)_j = d_j^{-r} \sum_{k\in V_2} \omega_{jk} \left(\nu^S_j - \nu^S_k\right) = d_j^{1-r} \nu^S_j.
\]
\end{proof}

\begin{lemma}\label{lem:starequil}
Let $G=(V,E,\omega) \in \mathcal{G}$ be an unweighted star graph with $n\geq 3$ nodes as in Definition~\ref{def:bipartstar}. If $j=1\in V$, then the equilibrium measure for $V\setminus\{1\}$, as defined in Definition~\ref{def:equilibrium}, is given by, for all $i\in V$,
\[
\nu^{V\setminus\{1\}}_i = \begin{cases} 0, &\text{if } i=1,\\ 1, &\text{if } i\neq 1. \end{cases}
\]
If $j\in V\setminus\{1\}$, the equilibrium measure for $V\setminus\{j\}$ is given by
\[
\nu^{V\setminus\{j\}}_i = \begin{cases} 0, &\text{if } i=j,\\ \vol{V}-1 = (n-1)^r+n-2, &\text{if } i=1,\\ \vol{V}=(n-1)^r+n-1, &\text{if } i\neq j \text{ and } i\neq 1. \end{cases}
\]
\end{lemma}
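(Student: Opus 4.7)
The plan is simple verification: by the uniqueness part of Lemma~\ref{lem:equimeasprops}(\ref{item:suppprop}), it suffices to exhibit an element of $\mathcal{V}_+$ satisfying \eqref{eq:equilibrium}, and the two candidates in the statement are the natural guesses. Recall that on the unweighted star graph we have $d_1=n-1$, $d_i=1$ for $i\geq 2$, and $\vol{V}=(n-1)^r+n-1$.

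For the case $j=1$, I would simply invoke Lemma~\ref{lem:bipartequil}: the star graph is bipartite with bipartition $V_1=\{2,\ldots,n\}$, $V_2=\{1\}$, and the set $S=V\setminus\{1\}$ coincides with $V_1$. The lemma then gives $\nu^{V\setminus\{1\}}_i=d_i^{r-1}(\chi_S)_i$, which evaluates to $1$ for every leaf (since $d_i=1$) and to $0$ at the centre, matching the claim.

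For $j\in V\setminus\{1\}$, I would plug in the candidate $u_j=0$, $u_1=\vol{V}-1$, and $u_i=\vol{V}$ for $i\in V\setminus\{1,j\}$, and verify $(\Delta u)_i=1$ on $V\setminus\{j\}$. For a leaf $i\in V\setminus\{1,j\}$ the only neighbour is the centre, so
\[
(\Delta u)_i = d_i^{-r}\omega_{i1}(u_i-u_1) = \vol{V}-(\vol{V}-1)=1.
\]
For the centre every leaf is a neighbour, so splitting off the contribution from $j$:
\[
(\Delta u)_1 = (n-1)^{-r}\Bigl[(u_1-u_j)+\sum_{k\in V\setminus\{1,j\}}(u_1-u_k)\Bigr] = (n-1)^{-r}\bigl[(\vol{V}-1)-(n-2)\bigr] = (n-1)^{-r}(n-1)^r = 1,
\]
using $\vol{V}-1-(n-2)=(n-1)^r$. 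Since the candidate lies in $\mathcal{V}_+$ and solves \eqref{eq:equilibrium}, uniqueness identifies it as $\nu^{V\setminus\{j\}}$.

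There is no genuine obstacle here, only bookkeeping: the one delicate point is to track the normalization $(n-1)^{-r}$ at the centre and see that the single $j$-leaf (which contributes $u_1$ to the Laplacian sum) compensates the $n-2$ non-vanishing leaves (which each contribute $-1$) so as to produce exactly $(n-1)^r$ before normalization. Everything else is immediate from the definition of $\Delta$ on a star.
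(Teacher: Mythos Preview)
Your proof is correct and follows essentially the same route as the paper: invoke the bipartite lemma for the $j=1$ case, then directly verify the Laplacian equation at the centre and at a generic leaf for $j\neq 1$. You are slightly more explicit than the paper in citing uniqueness from Lemma~\ref{lem:equimeasprops}(\ref{item:suppprop}) to conclude, which is a good habit.
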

\begin{proof}
In the case where $j=1$, the result follows immediately from Lemma~\ref{lem:bipartequil} with $d_i^{r-1}=1$ for $i\neq 1$ and $r\in [0,1]$.

Next let $j\neq 1$, then
\begin{align*}
\left(\Delta \nu^{V\setminus\{j\}}\right)_1 &= (n-1)^{-r} \left((n-1) \nu^{V\setminus\{j\}}_1 - \nu^{V\setminus\{j\}}_j - \sum_{k\in V\setminus\{1,j\}} \nu^{V\setminus\{j\}}_k\right)\\
 &= (n-1)^{-r} \left((n-1)^{r+1} + (n-1)(n-2) - (n-2)(n-1)^r-(n-2)(n-1)\right)\\ 
&= (n-1) - (n-2) = 1,
\end{align*}
where we used that $d_1=n-1$. Moreover, if $i\neq 1 \neq j$, then
\[
\left(\Delta \nu^{V\setminus\{j\}}\right)_i = d_i^{-r} \left((n-1)^r+n-1-(n-1)^r-n+2\right) = 1,
\]
since $d_i=1$. Thus $\nu^{V\setminus\{j\}}$ solves \eqref{eq:equilibrium} for $S=V\setminus\{j\}$.

Finally we note that 
$
\vol{V} = d_1^r + \sum_{i\in V\setminus\{1\}} d_i^r = (n-1)^r + n-1.
$
\end{proof}

\subsection{Graph curvature}

We recall the concept of graph curvature, which was introduced in \cite[Section 3]{vanGennipGuillenOstingBertozzi14}.

\begin{definition}\label{def:graphcurvature}
Let $G\in \mathcal{G}$ and $S\subset V$. Then we define the {\em graph curvature} of the set $S$ by, for all $i\in V$,
\[
(\kappa_S^{q,r})_i := d_i^{-r} \begin{cases} \sum_{j\in V\setminus S} \omega_{ij}^q, &\text{if } i\in S,\\
-\sum_{j\in S} \omega_{ij}^q, &\text{if } i\in V\setminus S.
\end{cases}
\]
We are mainly interested in the case $q=1$ in this paper and in any given situation, if there are any restrictions on $r\in [0,1]$, they will be clear from the context. Hence for notational simplicity, we will write $\kappa_S := \kappa_S^{1,r}$. 

For future use we also define
\begin{equation}\label{eq:kappaS+}
\kappa_S^+ := \max_{i\in V} \left(\kappa_S\right)_i.
\end{equation}
\end{definition}

The following lemma collects some useful properties of the graph curvature.
\begin{lemma}
Let $G\in \mathcal{G}$, $S\subset V$, and let $\kappa_S^{q,r}$ and $\kappa_S$ be the graph curvatures from Definition~\ref{def:graphcurvature}.
Then
\noindent\begin{minipage}{0.42\linewidth}
\vspace{\abovedisplayskip}
\begin{equation}
\TV(\chi_S) = \langle\kappa_S^{q,r}, \chi_S\rangle_{\mathcal{V}} \label{eq:TVintermsofkappa}\vspace{\belowdisplayskip}
\end{equation}
\end{minipage}
\hspace{0.5cm} and
\begin{minipage}{0.42\linewidth}
\vspace{\abovedisplayskip}
\begin{equation}
\Delta \chi_S = \kappa_S,  \label{eq:Laplcurv}\vspace{\belowdisplayskip}
\end{equation}
\end{minipage}

Moreover, if $\kappa_S^+$ is as in \eqref{eq:kappaS+}, then $\kappa_S^+ = \max_{i\in S} \left(\kappa_S\right)_i$.
\end{lemma}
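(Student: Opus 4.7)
My proof plan is to verify each of the three claims by direct computation, splitting the nontrivial case analysis on whether a node lies in $S$ or in $V\setminus S$.

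For identity \eqref{eq:TVintermsofkappa}, I would start from the definition of total variation and observe that $|(\chi_S)_i - (\chi_S)_j|$ equals $1$ exactly when precisely one of $i,j$ lies in $S$, and $0$ otherwise. Using the symmetry $\omega_{ij}=\omega_{ji}$, the prefactor $\tfrac12$ is absorbed when restricting the double sum to the pairs $(i,j)$ with $i\in S$, $j\in V\setminus S$, giving
\[
\TV(\chi_S) \;=\; \sum_{i\in S}\sum_{j\in V\setminus S} \omega_{ij}^q.
\]
Then I would expand the inner product $\langle \kappa_S^{q,r},\chi_S\rangle_{\mathcal{V}}=\sum_{i\in V}(\kappa_S^{q,r})_i (\chi_S)_i\, d_i^r$, note that only terms with $i\in S$ survive, substitute the first branch of the definition of $\kappa_S^{q,r}$, and see that the factor $d_i^{-r}$ cancels against $d_i^r$, yielding exactly the same double sum.

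For identity \eqref{eq:Laplcurv}, the plan is to evaluate $(\Delta\chi_S)_i = d_i^{-r}\sum_{j\in V}\omega_{ij}\bigl((\chi_S)_i-(\chi_S)_j\bigr)$ in two cases. When $i\in S$, $(\chi_S)_i=1$ and the sum reduces to $d_i^{-r}\sum_{j\in V\setminus S}\omega_{ij}$, matching the top branch of $\kappa_S=\kappa_S^{1,r}$. When $i\in V\setminus S$, $(\chi_S)_i=0$ and the sum becomes $-d_i^{-r}\sum_{j\in S}\omega_{ij}$, matching the bottom branch. This is where the specialization $q=1$ is used, since the weights in $\Delta$ appear with exponent $1$.

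For the final assertion $\kappa_S^+=\max_{i\in S}(\kappa_S)_i$, I would simply observe the sign structure built into the definition of $\kappa_S$: for $i\in V\setminus S$ one has $(\kappa_S)_i=-d_i^{-r}\sum_{j\in S}\omega_{ij}\le 0$, while for $i\in S$ one has $(\kappa_S)_i=d_i^{-r}\sum_{j\in V\setminus S}\omega_{ij}\ge 0$. Assuming $S\neq\emptyset$ (the only nontrivial case, and implicit in the statement since the maximum over an empty index set is undefined), the maximum over $V$ is nonnegative and is therefore attained on $S$, giving the claimed equality. None of the three parts presents a real obstacle; the main care needed is simply to keep track of the factors $d_i^{\pm r}$ and to exploit symmetry of $\omega$ when folding the double sums.
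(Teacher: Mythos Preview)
Your proposal is correct and follows exactly the approach the paper indicates: the paper states that \eqref{eq:TVintermsofkappa} and \eqref{eq:Laplcurv} ``can be checked by a direct computation'' (noting that the latter requires $q=1$), and that the $\kappa_S^+$ claim ``follows from the fact that $\kappa_S$ is nonnegative on $S$ and nonpositive on $S^c$.'' You have simply written out those direct computations and that sign observation in full.
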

\begin{proof}
The properties in \eqref{eq:TVintermsofkappa} and \eqref{eq:Laplcurv} are proven in \cite[Section 3]{vanGennipGuillenOstingBertozzi14} and can be checked by a direct computation. Note that the latter requires $q=1$. The property for $\kappa_S^+$ follows from the fact that $\kappa_S$ is nonnegative on $S$ and nonpositive on $S^c$.
\end{proof}

We can use Lemma~\ref{lem:compell} to connect the equilibrium measures from \eqref{eq:equilibrium} with the graph curvature.

\begin{lemma}\label{lem:nuandkappa}
Assume $G=(V,E,\omega)\in \mathcal{G}$ and let $S$ be a proper subset of $V$. Let $\nu^S$ be the equilibrium measure for $S$ from \eqref{eq:equilibrium} and let $\kappa_S$ be the graph cuvature of $S$ (for $q=1$) and $\kappa_S^+$ its maximum value, as in Definition~\ref{def:graphcurvature}. Then, for all $i\in S$, $\nu^S_i \geq \left(\kappa_S^+\right)^{-1}$.
\end{lemma}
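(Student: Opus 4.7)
The plan is to apply the comparison principle (Lemma~\ref{lem:compell}) with the obvious candidate lower barrier built from the characteristic function of $S$, rescaled by the reciprocal of $\kappa_S^+$.

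Concretely, I would set $w := (\kappa_S^+)^{-1}\chi_S \in \mathcal{V}$ and check the two hypotheses of Lemma~\ref{lem:compell} with $V' = S$ (which is a proper subset of $V$ because $S$ is proper), $u = \nu^S$, and $v = w$. On $V \setminus S$ we have $\nu^S_i = 0 = w_i$, so the boundary inequality $u_i \geq v_i$ holds (with equality). On $S$, using \eqref{eq:Laplcurv}, $(\Delta w)_i = (\kappa_S^+)^{-1}(\kappa_S)_i$, and since the lemma just before asserts that $\kappa_S^+ = \max_{i \in S}(\kappa_S)_i$, we have $(\kappa_S)_i \leq \kappa_S^+$ for every $i \in S$, so $(\Delta w)_i \leq 1 = (\Delta \nu^S)_i$. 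Thus the hypotheses of Lemma~\ref{lem:compell} are satisfied, and the lemma yields $\nu^S_i \geq w_i$ for all $i \in V$. For $i \in S$ this reads $\nu^S_i \geq (\kappa_S^+)^{-1}$, which is exactly the claim.

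I do not expect any real obstacle here: the argument is essentially a one-line comparison once the right barrier is identified. The only subtle point is a sign/positivity check — namely that $\kappa_S^+ > 0$ so that $(\kappa_S^+)^{-1}$ makes sense and the inequality has the stated sign. Since $S$ is a nonempty proper subset of a connected graph $G \in \mathcal{G}$, at least one node of $S$ has a neighbour in $V \setminus S$, so $(\kappa_S)_i > 0$ for that node and hence $\kappa_S^+ > 0$. This can be noted briefly before invoking the comparison principle.
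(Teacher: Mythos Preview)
Your proof is correct and is essentially identical to the paper's own argument: both define the barrier $(\kappa_S^+)^{-1}\chi_S$, use \eqref{eq:Laplcurv} to check $\Delta$ of this barrier is at most $1$ on $S$, and then invoke the comparison principle Lemma~\ref{lem:compell}. Your explicit remark that $\kappa_S^+>0$ (because $S$ is a nonempty proper subset of a connected graph) matches the paper's justification that $x$ is well-defined.
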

\begin{proof}
Define $x:= \min_{i\in S} \left(\kappa_S\right)_i^{-1} = \left(\kappa_S^+\right)^{-1}$. Since $G$ is connected and $S$ is a proper subset of $V$, $\max_{i\in S} \left(\kappa_S\right)_i > 0$, and hence $x$ is well-defined. Using \eqref{eq:Laplcurv}, we compute $\Delta\left(x \chi_S\right) = x \kappa_S \leq 1$ on $V$ (and in particular on $S$). Hence, for $i\in S$, $\left(\Delta\left(x \chi_S\right)\right)_i \leq 1 = \left(\Delta \nu^S_i\right)_i$. Furthermore, for $i\in V\setminus S$, $x \left(\chi_S\right)_i = 0 = \nu^S_i$. Thus, by Lemma~\ref{lem:compell}, for all $i\in S$, $x = x (\chi_S)_i \leq \nu^S_i$.
\end{proof}

\begin{remark}
We can use the equilibrium measure we computed for the bipartite graph and star graph in Lemma~\ref{lem:bipartequil} and Lemma~\ref{lem:starequil}, respectively, to illustrate the result from Lemma~\ref{lem:nuandkappa}. 

For the bipartite graph from Lemma~\ref{lem:bipartequil} and $S\subset V_1$, we compute, for all $i\in S$, $\left(\kappa_S\right)_i = d_i^{-r+1} = \left(\nu^S_i\right)^{-1}$. This shows that the result from Lemma~\ref{lem:nuandkappa} is sharp, in the sense that there is no greater lower bound for $\nu^S$ on $S$ which holds for every $G\in \mathcal{G}$.

For the star graph from Lemma~\ref{lem:starequil} we compute, for $i\in V\setminus\{1\}$, $\left(\kappa_{V\setminus\{1\}}\right)_i = d_i^{-r+1}  = 1$. It is not surprising that this is another occasion in which equality is achieved in the bound from Lemma~\ref{lem:nuandkappa}, as this situation is a special case of the bipartite graph result. The case when $j\neq 1$, however, shows that equality is not always achieved. In this case, if $i\in V\setminus\{j\}$, then $\left(\kappa_{V\setminus\{j\}}\right)_i = d_i^{-r} \omega_{ij}$. Since $\omega_{1j}=1$ and, if $i\neq 1$, $\omega_{ij}=0$, we have $\kappa_{V\setminus\{j\}}^+ = (n-1)^{-r}$, so $\nu^{V\setminus\{j\}} > \left(\kappa_{V\setminus\{j\}}^+\right)^{-1}$ on $V\setminus\{j\}$.
\end{remark}

\subsection{Green's functions}\label{sec:Greensfunctions}

Next we use the equilibrium measures to construct Green's functions for Dirichlet and Poisson problems, following the discussion in \cite[Section 3]{BenditoCarmonaEncinas03}; see also \cite{BenditoCarmonaEncinas00, ChungYau2000}.

In this section all the results assume the context of a given graph $G\in \mathcal{G}$. In this section and in some select places later in the paper we will also denote Green's functions by the symbol $G$. It will always be very clear from the context whether $G$ denotes a graph or a Green's function in any given situation.

\begin{definition}
For a given subset $S\subset V$, we denote by $\mathcal{V}(S)$ the set of all real-valued node functions whose domain is $S$.
Note that $\mathcal{V}(V)=\mathcal{V}$.

Given a nonempty, proper subset $S\subset V$ and a function $f \in \mathcal{V}(S)$, the \textit{(semi-homogeneous) Dirichlet problem} is to find $u\in \mathcal{V}$ such that, for all $i\in V$,
\begin{equation}\label{eq:Dirichlet}
\begin{cases}
(\Delta u)_i = f_i, &\text{if  } i\in S,\\
u_i = 0, &\text{if } i\in V\setminus S.
\end{cases}
\end{equation}

Given $k\in V$ and $f\in \mathcal{V}_0$, the \textit{Poisson problem} is to find $u\in \mathcal{V}$ such that, 
\begin{equation}\label{eq:Poisson}
\begin{cases}
\Delta u = f,\\
u_k = 0.
\end{cases}
\end{equation}
\end{definition}
\begin{remark}
Note that a general Dirichlet problem which prescribes $u=g$ on $V\setminus S$, for some $g\in \mathcal{V}(V\setminus S)$, can be transformed into a semi-homogeneous problem by considering the function $u-\tilde g$, where, for all $i\in S$, $\tilde g_i = 0$ and for all $i\in V\setminus S$, $\tilde g_i = g_i$.
\end{remark}

\begin{lemma}\label{lem:uniquesolution}
Let $S\subset V$ be a nonempty, proper subset, and $f\in \mathcal{V}(S)$. Then the Dirichlet problem \eqref{eq:Dirichlet} has at most one solution. Similarly, given $k\in V$ and $f\in \mathcal{V}_0$, the Poisson problem \eqref{eq:Poisson} has at most one solution.
\end{lemma}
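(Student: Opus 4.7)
The plan is to reduce both statements to the standard trick: take the difference of two putative solutions and show it must vanish, using either the comparison principle Lemma~\ref{lem:compell} or the positive (semi-)definiteness of $\Delta$ from Lemma~\ref{lem:equimeasprops}\ref{item:posdefLapl}.

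For the Dirichlet problem, I would suppose $u_1,u_2\in\mathcal{V}$ both solve \eqref{eq:Dirichlet} and set $w:=u_1-u_2$. By linearity, $(\Delta w)_i=0$ for all $i\in S$, and $w_i=0$ for all $i\in V\setminus S$. Since $S$ is a proper subset of $V$, we can apply Lemma~\ref{lem:compell} with $V'=S$ twice: once with $u=w$, $v=0$ (which yields $w\geq 0$ on $V$), and once with $u=0$, $v=w$ (which yields $w\leq 0$ on $V$). Hence $w=0$, proving uniqueness. Alternatively, and perhaps more cleanly, one can note that $\langle \Delta w, w\rangle_{\mathcal{V}}=\sum_{i\in S} d_i^r (\Delta w)_i w_i + \sum_{i\in V\setminus S} d_i^r (\Delta w)_i w_i = 0$, since on $S$ the Laplacian vanishes and on $V\setminus S$ the function $w$ vanishes. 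By \eqref{eq:adjoint} this equals $\|\nabla w\|_{\mathcal{E}}^2$, so $\nabla w=0$. Connectedness of $G$ forces $w$ to be constant, and the boundary condition on the nonempty set $V\setminus S$ fixes that constant to be $0$.

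For the Poisson problem, the argument is nearly identical but uses the second, stronger half of Lemma~\ref{lem:equimeasprops}\ref{item:posdefLapl}. If $u_1,u_2$ both satisfy \eqref{eq:Poisson}, then $w:=u_1-u_2$ satisfies $\Delta w=0$ on $V$ and $w_k=0$. Taking the $\mathcal{V}$-inner product with $w$ and using \eqref{eq:adjoint} gives $\|\nabla w\|_{\mathcal{E}}^2=\langle \Delta w,w\rangle_{\mathcal{V}}=0$, so $w$ is constant on the connected graph $G$; the normalization $w_k=0$ then pins the constant to zero.

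I do not anticipate any serious obstacle: both parts are direct consequences of tools already established in the preceding text. The only small subtlety worth being explicit about is that in the Dirichlet case, $V\setminus S$ being nonempty (because $S$ is proper) is essential to turn ``$w$ is constant'' into ``$w=0$'' if one uses the energy argument, while in the Poisson case the point-condition $w_k=0$ plays the analogous role. The comparison-principle route for the Dirichlet part has the merit that it makes no use of connectedness beyond what Lemma~\ref{lem:compell} already encapsulates, so either presentation works.
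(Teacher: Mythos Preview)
Your proposal is correct and matches the paper's approach: take the difference of two putative solutions and reduce to the homogeneous problem, then invoke the comparison principle Lemma~\ref{lem:compell} (the paper points to the identical argument already carried out in the uniqueness part of Lemma~\ref{lem:equimeasprops}\ref{item:suppprop}). Your alternative energy argument via $\langle \Delta w, w\rangle_{\mathcal{V}}=\|\nabla w\|_{\mathcal{E}}^2$ is also valid and arguably cleaner, especially for the Poisson case, but it is not the route the paper takes.
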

\begin{proof}
Given two solutions $u$ and $v$ to the Dirichlet problem, we have $\Delta (u-v) = 0$ on $S$ and $u-v=0$ on $V\setminus S$. Since the graph is connected, this has as unique solution $u-v=0$ on $V$ (see the uniqueness proof in point~\ref {item:suppprop} of Lemma~\ref{lem:equimeasprops}, which uses the comparison principle of Lemma~\ref{lem:compell}). A similar argument proves the result for the Poisson problem.
\end{proof}

Next we will show that solutions to both the Dirichlet and Poisson problem exist, by explicitly constructing them using Green's functions.  

\begin{definition}
Let $S$ be a nonempty, proper subset of $V$. The function $G: V \times S \to \R$ is a \textit{Green's function for the Dirichlet equation}, \eqref{eq:Dirichlet}, if, for all $f\in \mathcal{V}(S)$, the function $u \in \mathcal{V}$ which is defined by, for all $i\in V$,
\begin{equation}\label{eq:Greensexpansion}
u_i  := \sum_{j\in S} d_j^r G_{ij} f_j,
\end{equation}
satisfies \eqref{eq:Dirichlet}. 

Let $k\in V$. The function $G: V \times V\to \R$ is a \textit{Green's function for the Poisson equation}, \eqref{eq:Poisson}, if, for all $f\in \mathcal{V}_0$, \eqref{eq:Poisson} is satisfied by the function $u\in \mathcal{V}$ which is defined by, for all $i\in V$, 
\begin{equation}\label{eq:Greensexpansion2}
u_i   := \sum_{j\in V} d_j^r G_{ij} f_j = \langle G_{i\bigcdot}, f\rangle_{\mathcal{V}},
\end{equation}
where, for all $i\in V$, $G_{i\cdot}: V \to \R$\footnote{We can rewrite the Green's function for the Dirichlet equation in terms of the $\mathcal{V}$ inner product as well, if we extend either $G_{i\cdot}$ or $f$ to be zero on $V\setminus S$ and extend the other function in any desired way to all of $V$.}.

In either case, for fixed $j\in S$ (Dirichlet) or fixed $j\in V$ (Poisson), we define $G^j: V\to \R$, by, for all $i\in V$,
\begin{equation}\label{eq:fixj}
G^j_i := G_{ij}.
\end{equation}
\end{definition}

\begin{lemma}
Let $S$ be a nonempty, proper subset of $V$ and let $G:V\times S \to \R$. Then $G$ is a Green's function for the Dirichlet equation, \eqref{eq:Dirichlet}, if and only if, for all $i\in V$ and for all $j\in S$,
\begin{equation}\label{eq:GreensproblemDirichlet}
\begin{cases}
(\Delta G^j)_i = d_j^{-r} \delta_{ij}, &\text{if } i\in S,\\
G^j_i = 0, &\text{if } i\in V\setminus S.
\end{cases}
\end{equation}

Let $k\in V$ and $G:V\times V \to \R$. Then $G$ is a Green's function for the Poisson equation, \eqref{eq:Poisson}, if and only if there is a $q\in \mathcal{V}$ which satisfies 
\begin{equation}\label{eq:qcondition}
\mathcal{M}(q) = -1
\end{equation}
and there is a $C\in \R$, such that $G$ satisfies, for all $i,j\in V$,
\begin{equation}\label{eq:GreensproblemPoisson}
\begin{cases}
(\Delta G^j)_i = d_j^{-r} \delta_{ij} + q_i\\
G^j_k = C.
\end{cases}
\end{equation}
\end{lemma}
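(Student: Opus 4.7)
The plan is to split both statements into forward and reverse implications, with each implication reducing to a short verification that exploits linearity of $\Delta$ and the defining expansion.

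For the Dirichlet direction the reverse implication $(\Leftarrow)$ is the cleanest: assume $G$ satisfies \eqref{eq:GreensproblemDirichlet}, let $f\in\mathcal{V}(S)$, and define $u$ by \eqref{eq:Greensexpansion}. Since $\Delta$ acts only on the $i$-variable, I can interchange $\Delta$ with the finite sum over $j\in S$ to obtain $(\Delta u)_i = \sum_{j\in S} d_j^r f_j (\Delta G^j)_i$; on $S$ the inner factor equals $d_j^{-r}\delta_{ij}$, collapsing the sum to $f_i$, while on $V\setminus S$ every $G^j$ vanishes so $u_i=0$. For the forward direction $(\Rightarrow)$ I would test the Green's function property against the specific right-hand side $f_j := d_{j_0}^{-r}\delta_{jj_0}$ for each fixed $j_0\in S$; the expansion then produces $u = G^{j_0}$, and the conclusion of \eqref{eq:Dirichlet} immediately supplies \eqref{eq:GreensproblemDirichlet}.

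For the Poisson direction the reverse implication is again an interchange computation: defining $u$ by \eqref{eq:Greensexpansion2} and using \eqref{eq:GreensproblemPoisson} yields $(\Delta u)_i = f_i + q_i\mathcal{M}(f)$ and $u_k = C\mathcal{M}(f)$, both of which reduce to the required values because $f\in\mathcal{V}_0$ forces $\mathcal{M}(f)=0$. The real subtlety, and the main obstacle, is the forward direction: unlike the Dirichlet case one cannot test against the pure delta $d_{j_0}^{-r}\chi_{\{j_0\}}$ because it has mass one, so it is not an admissible right-hand side for \eqref{eq:Poisson}. My plan is to fix once and for all an auxiliary $p\in\mathcal{V}$ with $\mathcal{M}(p)=1$ (for instance $p=\vol{V}^{-1}\chi_V$) and feed the admissible function $f_{j_0} := d_{j_0}^{-r}\chi_{\{j_0\}} - p \in \mathcal{V}_0$ into the Green's function expansion. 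This yields $u_i = G^{j_0}_i - h_i$, where $h_i := \sum_{j\in V} d_j^r G_{ij} p_j$ is a single function of $i$ that does \emph{not} depend on $j_0$. Applying $\Delta$ and using $\Delta u = f_{j_0}$ on $V$ then forces $(\Delta G^{j_0})_i = d_{j_0}^{-r}\delta_{ij_0} + q_i$ with the $j_0$-independent choice $q := \Delta h - p$, while the condition $u_k=0$ forces $G^{j_0}_k = h_k =: C$, also independent of $j_0$.

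It remains to check the mass condition \eqref{eq:qcondition}: by \eqref{eq:Laplacemass} we have $\mathcal{M}(\Delta h) = 0$, so $\mathcal{M}(q) = \mathcal{M}(\Delta h) - \mathcal{M}(p) = -1$, as required. Two points deserve emphasis in the write-up: the $j$-independence of both $q$ and $C$, which is exactly what makes \eqref{eq:GreensproblemPoisson} a meaningful single system rather than a family parametrized by $j$; and the legitimacy of commuting $\Delta$ with the finite sum over $j$, which is clear because $\Delta$ differentiates only in the $i$-variable and the sums are finite.
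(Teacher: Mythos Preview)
Your proposal is correct. The Dirichlet part and both reverse implications match the paper's approach essentially verbatim.

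For the Poisson forward direction, however, you take a genuinely different and cleaner route. The paper first tests against pairwise differences $d_{l_1}^{-r}\chi_{\{l_1\}} - d_{l_2}^{-r}\chi_{\{l_2\}}$ to show that $j\mapsto(\Delta G^j)_i$ is constant on $V\setminus\{i\}$ (thereby producing $q$), then separately tests against $\chi_{\{l\}}-\mathcal{A}(\chi_{\{l\}})$ to compute $(\Delta G^i)_i$, and finally uses $\langle\Delta G^j,\chi_V\rangle_{\mathcal V}=0$ to extract $\mathcal{M}(q)=-1$; the constant $C$ is obtained by a further pairwise-difference test. Your approach collapses all of this into a single family $f_{j_0}=d_{j_0}^{-r}\chi_{\{j_0\}}-p$ with one fixed mass-one auxiliary $p$: the resulting $u=G^{j_0}-h$ immediately exhibits $q=\Delta h-p$ and $C=h_k$ as manifestly $j_0$-independent objects, and the mass condition drops out of $\mathcal{M}(\Delta h)=0$. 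The paper's route avoids introducing the auxiliary $p$ but pays for it with several separate test-function computations; your route is shorter and makes the $j$-independence of $q$ and $C$ structurally transparent rather than something to be argued for.
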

\begin{proof}
For the Dirichlet case, let $u$ be given by \eqref{eq:Greensexpansion}, then, for all $i\in S$,
$
(\Delta u)_i = \sum_{j\in S} d_j^r (\Delta G^j)_i f_j.
$
If the function $G$ is a Green's function, then, for all $f\in \mathcal{V}(S)$ and for all $i\in S$, $(\Delta u)_i = f_i$. In particular, if we apply this to $f=\chi_{\{j\}}$ for $j\in S$, we find, for all $i, j\in S$,
$
d_j^r (\Delta G^j)_i = \delta_{ij}.
$
Moreover, for all $f\in \mathcal{V}(S)$ and for all $i\in V\setminus S$, $u_i=0$. Applying this again to $f=\chi_{\{j\}}$ for $j\in S$, we find for all $i\in V\setminus S$ that $d_j^r G^j_i = 0$. Hence, for all $i\in V\setminus S$, and for all $j\in S$, $G^j_i = 0$. This gives us \eqref{eq:GreensproblemDirichlet}.

Next assume $G$ satisfies \eqref{eq:GreensproblemDirichlet}. By substituting $G$ into \eqref{eq:Greensexpansion} we find that $u$ satisfies \eqref{eq:Dirichlet} and thus $G$ is a Green's function.

Now we consider the Poisson case and we let $u$ be given by \eqref{eq:Greensexpansion2}. Let $q$ satisfy \eqref{eq:qcondition}. If $G$ is a Green's function, then, for all $f\in \mathcal{V}_0$, $\Delta u = f$. Let $l_1, l_2 \in V$ and apply $\Delta u = f$ to $f=d_{l_1}^r \chi_{\{l_1\}} - d_{l_2}^r \chi_{\{l_2\}}$. It follows that, for all $i\in V$, $\left(\Delta G^{l_1}\right)_i - \left( \Delta G^{l_2}\right)_i = d_{l_1}^{-r} \left((\chi_{\{l_1\}}\right)_i - d_{l_2}^{-r} \left(\chi_{\{l_2\}}\right)_i$. In particular, if $l_1\neq i \leq l_2$ the right hand side in this equality is zero and thus for all $i\in V$, $j\mapsto \left(\Delta G^j\right)_i$ is constant on $V\setminus\{i\}$. In other words, there is a $q\in \mathcal{V}$, such that, for all $i\in V$ and for all $j\in V\setminus \{j\}$, $\left(\Delta G^j\right)_i = q_i$.

Next let $l\in V$ and apply $\Delta u = f$ to the function $f=\chi_{\{l\}} - \mathcal{A}\left(\chi_{\{l\}}\right)$. We compute $\left(\Delta u\right)_i = \left(\Delta G^k\right)_i d_k^r - \frac{d_k^r d_i^r}{\vol V} \left(\Delta G^i\right)_i - \frac{d_k^r}{\vol V} q_i (\vol V - d_i^r)$. Hence, if $l=i$, $\Delta u = f$ reduces to
$
\left(\Delta G^i\right)_i d_i^r \left(1-\frac{d_i^r}{\vol V}\right) - d_i^r q_i + \frac{d_i^{2r}}{\vol V} q_i = 1 - \frac{d_i^r}{\vol V}.
$ 
We solve this for $\left(\Delta G^i\right)_i$ to find $\left(\Delta G^i\right)_i = d_i^{-r} + q_i$. If $l\in V\setminus\{i\}$, $\Delta u = f$ reduces to
$
\left(\Delta G^k\right)_i \left(d_l^r-d_l^r + \frac{d_l^r d_i^r}{\vol V}\right) - \frac{d_l^r d_i^r}{\vol V}  \left(\Delta G^i\right)_i  = 0 - \frac{d_l^r}{\vol V}.
$ 
Using the expression for $\left(\Delta G^i\right)_i$ that we found above, we solve for $\left(\Delta G^k\right)_i$ to find $\left(\Delta G^i\right)_i = q_i$.

Combining the above, we find, for all $i,j\in V$,
$
(\Delta G^j)_i = d_j^{-r} \delta_{ij} + q_i.
$
Now we compute, for each $j \in V$,
\[
0 = \langle \Delta G^j, \chi_V \rangle_{\mathcal{V}} = \langle d_j^{-r} \chi_{\{j\}} + q, \chi_V\rangle_{\mathcal{V}} = 1+ \langle q, \chi_V\rangle_{\mathcal{V}} = 1+\mathcal{M}(q),
\]
thus $\mathcal{M}(q) = -1$.

The `boundary condition' $u_k=0$ for a fixed $k\in V$ in \eqref{eq:Poisson}, holds for all $f\in \mathcal{V}_0$. Applying this again for $f=d_{l_1}^{-r} \chi_{\{l_1\}} - d_{l_2}^{-r} \chi_{\{l_2\}}$ we find $G_k^{l_1} - G_k^{l_2} = 0$. Hence there is a constant $C\in \R$ such that, for all $j\in V$, $G^j_k = C$. This gives us \eqref{eq:GreensproblemPoisson}.

Next assume $G$ satisfies \eqref{eq:GreensproblemPoisson}. By substituting $G$ into \eqref{eq:Greensexpansion2} we find that $u$ satisfies \eqref{eq:Poisson}. In particular, remember that $f\in \mathcal{V}_0$. Thus, since $q$ does not depend on $j$ we have  
$\langle q, f \rangle_{\mathcal{V}} = 0$ and moreover $u_k = C \mathcal{M}(f) = 0$. Thus $G$ is a Green's function.
\end{proof}

\begin{remark}
Any choice of $q$ in \eqref{eq:GreensproblemPoisson} consistent with \eqref{eq:qcondition} will lead to a valid Green's function for the Poisson equation and hence to the same (and only) solution $u$ of the Poisson problem \eqref{eq:Poisson} via \eqref{eq:Greensexpansion2}. We make the following convenient choice: for all $i\in V$,
\begin{equation}\label{eq:qchoice}
q_i = -d_k^{-r} \delta_{ik}.
\end{equation}
In Lemma~\ref{lem:symmetry} below we will see that this choice of $q$ leads to a symmetric Green's function.

Also any choice of $C\in \R$ in \eqref{eq:GreensproblemPoisson} will lead to a valid Green's function for the Poisson equation. The function $\tilde G$ satisfies \eqref{eq:GreensproblemPoisson} with $C=\tilde C \in \R$ if and only if $\tilde G-\tilde C$ satisfies \eqref{eq:GreensproblemPoisson} with $C=0$. Hence in Lemma~\ref{lem:Greensfunctions} we will give a Green's function for the Poisson equation for the choice
\begin{equation}\label{eq:Cchoice}
C=0.
\end{equation}
\end{remark}

\begin{corol}
For a given nonempty, proper subset $S\subset V$, if there is a solution to \eqref{eq:GreensproblemDirichlet}, it is unique. Moreover, for given $k\in V$, $q\in \mathcal{V}_{-1}$, and $C\in \R$, if there is a solution to \eqref{eq:GreensproblemPoisson}, it is unique.
\end{corol}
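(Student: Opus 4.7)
The plan is to reduce both uniqueness statements to Lemma~\ref{lem:uniquesolution} by slicing the Green's function into a family of node functions indexed by its second argument. Since a Green's function is determined by the collection of its ``columns'' $G^j$ (with $j$ ranging over $S$ in the Dirichlet case and over $V$ in the Poisson case) as defined in \eqref{eq:fixj}, showing that each $G^j \in \mathcal{V}$ is uniquely determined will suffice.

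For the Dirichlet case, fix $j \in S$. Reading off \eqref{eq:GreensproblemDirichlet} one sees that $G^j$ solves the semi-homogeneous Dirichlet problem \eqref{eq:Dirichlet} with the specific right-hand side $f = d_j^{-r}\chi_{\{j\}}|_S \in \mathcal{V}(S)$ (this restriction to $S$ makes sense precisely because $j \in S$). Lemma~\ref{lem:uniquesolution} then immediately yields uniqueness of $G^j$, and hence of $G$.

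For the Poisson case, fix $j \in V$, $q \in \mathcal{V}$ with $\mathcal{M}(q) = -1$, $k \in V$ and $C \in \R$. The obstacle is that \eqref{eq:GreensproblemPoisson} involves a constant $C$ in the ``boundary'' condition rather than $0$, so one cannot apply Lemma~\ref{lem:uniquesolution} directly. Introduce the translated function $\tilde G^j := G^j - C\chi_V \in \mathcal{V}$. Since $\Delta\chi_V = 0$ (constants are harmonic, as constants lie in the kernel of $\nabla$), we have
\[
\Delta \tilde G^j = \Delta G^j = d_j^{-r}\chi_{\{j\}} + q =: f,
\]
and $\tilde G^j_k = G^j_k - C = 0$. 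A short calculation gives
\[
\mathcal{M}(f) = d_j^{-r}\,d_j^r + \mathcal{M}(q) = 1 + (-1) = 0,
\]
so $f \in \mathcal{V}_0$ and $\tilde G^j$ solves the Poisson problem \eqref{eq:Poisson}. Uniqueness from Lemma~\ref{lem:uniquesolution} determines $\tilde G^j$, and hence $G^j = \tilde G^j + C\chi_V$, uniquely.

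No step involves real difficulty; the only things requiring care are (i) verifying the mass condition $\mathcal{M}(f) = 0$ to bring the Poisson case under the hypothesis of Lemma~\ref{lem:uniquesolution}, which uses the assumption $q \in \mathcal{V}_{-1}$, and (ii) recognising that translating by the constant $C\chi_V$ does not change the Laplacian, which is the standard harmonic-constant observation. Both uniqueness claims then follow.
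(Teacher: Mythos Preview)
Your proof is correct and follows essentially the same approach as the paper: slice the Green's function into its columns $G^j$ and invoke Lemma~\ref{lem:uniquesolution}. The only cosmetic difference is that the paper takes the difference $G^j - H^j$ of two hypothetical solutions (which immediately yields a homogeneous problem with zero boundary data, avoiding your translation by $C\chi_V$ and the explicit mass check), but the underlying idea is identical.
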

\begin{proof}
Let $j\in S$ (or $j\in V$). If $G^j$ and $H^j$ both satisfy \eqref{eq:GreensproblemDirichlet} (or \eqref{eq:GreensproblemPoisson}), then $G^j-H^j$ satisfies a Dirichlet (or Poisson) problem of the form \eqref{eq:Dirichlet} (or \eqref{eq:Poisson}). Hence by a similar argument as in the proof of Lemma~\ref{lem:uniquesolution}, $G^j-H^j = 0$.
\end{proof}

\begin{lemma}\label{lem:Greensfunctions}
Let $S$ be a nonempty, proper subset of $V$. The function $G: V\times S \to \R$, defined by, for all $i\in V$ and all $j\in S$,
\begin{equation}\label{eq:GreenDirichlet}
G_{ij} = \frac{\nu^S_j}{\mathcal{M}(\nu^S) - \mathcal{M}(\nu^{S\setminus\{j\}})} \left(\nu^S_i - \nu^{S\setminus\{j\}}_i\right),
\end{equation}
is the Green's function for the Dirichlet equation, satisfying \eqref{eq:GreensproblemDirichlet}.

Let $k\in V$. The function $G: V\times V \to \R$, defined by, for all $i,j\in V$,
\begin{equation}\label{eq:GreenPoisson}
G_{ij} = \frac1{\vol{V}} \left( \nu^{V\setminus\{k\}}_i + \nu^{V\setminus\{j\}}_k - \nu^{V\setminus\{j\}}_i\right),
\end{equation}
is the Green's function for the Poisson equation, satisfying \eqref{eq:GreensproblemPoisson} with \eqref{eq:qchoice} and \eqref{eq:Cchoice}.
\end{lemma}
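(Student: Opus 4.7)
The plan is to verify each formula directly against the characterization in \eqref{eq:GreensproblemDirichlet} (respectively \eqref{eq:GreensproblemPoisson}). Vanishing on the Dirichlet boundary, and at $k$ in the Poisson case, is nearly automatic from the support property in Lemma~\ref{lem:equimeasprops}~\ref{item:suppprop}; the real content is computing the action of $\Delta$ at the special node $j$.

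For the Dirichlet Green's function I would introduce $w:=\nu^S-\nu^{S\setminus\{j\}}$, so that $G^j$ is a scalar multiple of $w$. Then $w$ vanishes on $V\setminus S$, and on $S\setminus\{j\}$ we have $\Delta w = 1-1 = 0$ by \eqref{eq:equilibrium}, so the defining equation of $G^j$ holds on $V\setminus\{j\}$ essentially for free. The nontrivial step is to compute $(\Delta w)_j$, and for this I would exploit the self-adjointness of $\Delta$ (immediate from \eqref{eq:adjoint}) by testing against $\nu^S$:
\[
\langle \Delta w, \nu^S\rangle_{\mathcal{V}} = \langle w, \Delta \nu^S\rangle_{\mathcal{V}}.
\]
Since $\nu^S$ vanishes off $S$ and $\Delta w$ vanishes on $S\setminus\{j\}$, the left-hand side collapses to $d_j^r (\Delta w)_j\, \nu^S_j$. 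Since $w$ vanishes off $S$ and $\Delta \nu^S \equiv 1$ on $S$, the right-hand side collapses to $\sum_{i\in S} d_i^r w_i = \mathcal{M}(\nu^S)-\mathcal{M}(\nu^{S\setminus\{j\}})$ (using that both equilibrium measures are supported in $S$). Multiplying by the prefactor in \eqref{eq:GreenDirichlet} then delivers $(\Delta G^j)_j = d_j^{-r}$. The prefactor itself is well defined because Lemma~\ref{lem:equimeasprops}~\ref{item:subsetsnu} gives $\nu^S \geq \nu^{S\setminus\{j\}}$, with strict inequality at $j$ (where $\nu^S_j > 0$ but $\nu^{S\setminus\{j\}}_j = 0$), so the mass difference is strictly positive.

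For the Poisson Green's function the key observation is that $\mathcal{M}(\Delta \nu^{V\setminus\{l\}})=0$ by \eqref{eq:Laplacemass}, which together with $(\Delta \nu^{V\setminus\{l\}})_i = 1$ for $i\neq l$ pins down the remaining value and yields $(\Delta \nu^{V\setminus\{l\}})_i = 1 - \vol{V}\,d_l^{-r}\,\delta_{il}$ for all $i\in V$. Applying $\Delta$ in the index $i$ to \eqref{eq:GreenPoisson} annihilates the constant middle term $\nu^{V\setminus\{j\}}_k$, and substituting the above identity for $l=k$ and $l=j$ produces $(\Delta G^j)_i = d_j^{-r}\delta_{ij}-d_k^{-r}\delta_{ik}$, which matches \eqref{eq:GreensproblemPoisson} under the choice \eqref{eq:qchoice}. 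The boundary condition $G^j_k = 0$ follows by reading off \eqref{eq:GreenPoisson} at $i=k$: the first term vanishes because $\nu^{V\setminus\{k\}}_k = 0$, and the remaining two terms cancel. The main obstacle is really the Dirichlet identification of $(\Delta w)_j$, since a direct computation would require the values of $w$ at all neighbours of $j$, information not accessible from the defining equations of $\nu^S$ and $\nu^{S\setminus\{j\}}$; the self-adjointness trick sidesteps this by using only the support and Laplacian data we actually have in closed form.
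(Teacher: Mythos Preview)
Your proposal is correct and follows essentially the same approach as the paper: in the Dirichlet case both arguments use self-adjointness of $\Delta$ to convert the unknown value $(\Delta w)_j$ into a mass difference (the paper pairs $\nu^S$ with $\nu^{S\setminus\{j\}}$, you pair $\nu^S$ with $w=\nu^S-\nu^{S\setminus\{j\}}$, which is equivalent), and in the Poisson case both use \eqref{eq:Laplacemass} to pin down $(\Delta\nu^{V\setminus\{l\}})_l$ and then read off the result. Your explicit remark that the prefactor is well defined via Lemma~\ref{lem:equimeasprops}~\ref{item:subsetsnu} is a nice addition that the paper leaves implicit.
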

\begin{proof}
Remember the relation between $G_{ij}$ and $G^j_i$ from \eqref{eq:fixj}.

We start with the Dirichlet case. Let $j\in S$. If $i\in V\setminus S$, then $\nu^S_i = \nu^{S\setminus\{j\}}_i = 0$, hence the boundary condition is satisfied. Next we note that, for $i\in S$,
\begin{equation}\label{eq:DeltaGDirichlet}
\left(\Delta (\nu^S - \nu^{S\setminus\{j\}})\right)_i = \left(1-(\Delta \nu^{S\setminus\{j\}})_j\right) \delta_{ij}.
\end{equation}
Moreover,
\begin{align*}
\mathcal{M}(\nu^{S\setminus\{j\}}) &= \sum_{i\in S} d_i^r \nu^{S\setminus\{j\}}_i = \langle \Delta \nu^S, \nu^{S\setminus\{j\}}\rangle_{\mathcal{V}}\\
&= \langle \nu^S, \Delta \nu^{S\setminus\{j\}}\rangle_{\mathcal{V}} = \langle \nu^S, \chi_V\rangle_{\mathcal{V}} - \langle \nu^S, \chi_V-\Delta \nu^{S\setminus\{j\}}\rangle_{\mathcal{V}}\\
&= \mathcal{M}(\nu^S) - d_j^r \nu^S_j \left(1-(\Delta \nu^{S\setminus\{j\}})_j\right).
\end{align*}
Hence
$
\frac{\nu^S_j}{\mathcal{M}(\nu^S) - \mathcal{M}(\nu^{S\setminus\{j\}})} = d_j^{-r} \left(1-(\Delta \nu^{S\setminus\{j\}})_j\right),
$ 
which, combined with \eqref{eq:DeltaGDirichlet}, shows that, for all $i\in S$, $(\Delta G^j)_i = d_j^{-r}\delta_{ij}$. This proves the desired result in the Dirichlet case.

Next we consider the Poisson case. Since $\nu^{V\setminus\{k\}}_k = 0$, for all $j\in V$ the boundary condition $G_{k,j}=0$ is satisfied.

Let $j\in V$. Using \eqref{eq:Laplacemass}, we compute
$
0 = \langle \Delta \nu^{V\setminus\{j\}}, \chi_V\rangle_{\mathcal{V}} = \sum_{i\in V\setminus\{j\}} d_i^r + d_j^r (\Delta \nu^{V\setminus\{j\}})_j,
$ 
hence, for all $i\in V$,
\[
(\Delta \nu^{V\setminus\{j\}})_i =
\begin{cases}
1, &\text{if } i \neq j,\\
-d_j^{-r} \vol{V\setminus\{j\}}, &\text{if } i = j.
\end{cases}
\]
Note that
\begin{equation}\label{eq:volumeidentity}
d_j^{-r} \vol{V\setminus\{j\}} = d_j^{-r} (\vol{V} - d_j^r) = d_j^{-r} \vol{V} - 1.
\end{equation}

Since $\nu^{V\setminus\{j\}}_k$ is constant with respect to $i$, it does not contribute to $\Delta G^j$. Hence we consider, for all $i\in V$,
\begin{align*}
\left(\Delta \left(\nu^{V\setminus\{k\}} - \nu^{V\setminus\{j\}}\right)\right)_i &= \left(1+d_j^{-r} \vol{V\setminus\{j\}}\right) \delta_{ij} - \left(1+d_k^{-r} \vol{V\setminus\{k\}}\right) \delta_{ik}\\
&= \vol{V} (d_j^{-r} \delta_{ij} - d_k^{-r} \delta_{ik}),
\end{align*}
where we used \eqref{eq:volumeidentity}. This shows that, for all $i\in V$, $(\Delta G^j)_i = d_j^{-r} \delta_{ij} - d_k^{-r} \delta_{ik}$, which proves the result.
\end{proof}

\begin{remark}
Let $G$ be the Green's function from \eqref{eq:GreenPoisson} for the Poisson equation. As shown in Lemma~\ref{lem:Greensfunctions}, $G$ satisfies \eqref{eq:GreensproblemPoisson} with \eqref{eq:qchoice} and \eqref{eq:Cchoice}. Now let us try to find another Green's function satisfying \eqref{eq:GreensproblemPoisson} with \eqref{eq:Cchoice} and with a different choice of $q$. Fix $k\in V$ and define $\tilde q\in \mathcal{V}$, by, for all $i\in V$,
$
\tilde q_i := q_i + d_k^{-r} \delta_{ik}.
$
Then, by \eqref{eq:qcondition}, $\mathcal{M}(\tilde q) = 0$. Hence, using \eqref{eq:Greensexpansion2} with the Green's function $G$, we find a function $v\in \mathcal{V}$ which satisfies,
$\Delta v_i = \tilde q$ 
and 
$v_k = 0$. 
Hence, for all $i,j\in V$,
\[
\begin{cases}
(\Delta (G^j+v))_i = d_j^{-r} \delta_{ij} - d_k^{-r} \delta_{ik} + \tilde q_i,\\
(G^j+v)_k = 0.
\end{cases}
\]
So $G^j+v$ is the new Green's function we are looking for.
\end{remark}

\begin{lemma}\label{lem:symmetry}
Let $S$ be a nonempty, proper subset of $V$. If $G: V \times S \to \R$ is the Green's function for the Dirichlet equation satisfying \eqref{eq:GreensproblemDirichlet}, then $G$ is symmetric on $S\times S$, i.e., for all $i, j \in S$,
$
G_{ij} = G_{ji}.
$

Let $k\in V$. If $G: V \times V \to \R$ is the Green's function for the Poisson equation satisfying \eqref{eq:GreensproblemPoisson} with \eqref{eq:qchoice} (and any choice of $C\in \R$), then $G$ is symmetric, i.e., for all $i, j\in V$,
$
G_{ij} = G_{ji}.
$
\end{lemma}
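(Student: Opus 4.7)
The plan in both cases is to exploit self-adjointness of the graph Laplacian with respect to $\langle\cdot,\cdot\rangle_{\mathcal{V}}$: by \eqref{eq:adjoint} we have, for all $u,v\in\mathcal{V}$, $\langle \Delta u,v\rangle_{\mathcal{V}}=\langle\nabla u,\nabla v\rangle_{\mathcal{E}}=\langle u,\Delta v\rangle_{\mathcal{V}}$. I would apply this with $u=G^i$ and $v=G^j$, using the defining identities \eqref{eq:GreensproblemDirichlet} or \eqref{eq:GreensproblemPoisson} to read off each inner product, and then conclude $G_{ij}=G_{ji}$ by equating the two sides.

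For the Dirichlet case, fix $i,j\in S$. Because $G^j_l=0$ for $l\in V\setminus S$, the sum defining $\langle G^i,\Delta G^j\rangle_{\mathcal{V}}$ effectively reduces to $l\in S$, where $(\Delta G^j)_l=d_j^{-r}\delta_{lj}$, giving $\langle G^i,\Delta G^j\rangle_{\mathcal{V}}=d_j^r G^i_j\cdot d_j^{-r}=G_{ji}$. The symmetric computation yields $\langle\Delta G^i,G^j\rangle_{\mathcal{V}}=G_{ij}$, so self-adjointness delivers the result. (Note that only $i,j\in S$ are relevant, since $G_{ij}$ for $i\in V\setminus S$ is forced to be $0$ by the boundary condition and is not covered by the claim.)

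For the Poisson case with the choice \eqref{eq:qchoice}, the same computation gives
\begin{align*}
\langle G^i,\Delta G^j\rangle_{\mathcal{V}} &= \sum_{l\in V} d_l^r\, G^i_l\bigl(d_j^{-r}\delta_{lj}-d_k^{-r}\delta_{lk}\bigr) = G^i_j-G^i_k = G_{ji}-C,\\
\langle \Delta G^i,G^j\rangle_{\mathcal{V}} &= G^j_i - G^j_k = G_{ij}-C,
\end{align*}
using the boundary conditions $G^i_k=G^j_k=C$. Self-adjointness then cancels the $C$'s and yields $G_{ij}=G_{ji}$. The key observation driving the Poisson argument, and essentially the only nontrivial step, is that the specific choice \eqref{eq:qchoice} is precisely what is needed so that $q$ contributes a term of the form $-d_k^{-r}\delta_{\cdot k}$ which, when paired with the other Green's function, produces the same constant $C$ on both sides and cancels. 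For an arbitrary $q$ satisfying only $\mathcal{M}(q)=-1$, this cancellation would fail, which is why the lemma is stated for \eqref{eq:qchoice} rather than for a generic $q$. I expect no serious obstacle beyond correctly tracking the Kronecker deltas and the boundary conditions.
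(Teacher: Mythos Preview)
Your proof is correct and follows essentially the same route as the paper: both arguments pair $G^i$ with $G^j$, invoke self-adjointness of $\Delta$ with respect to $\langle\cdot,\cdot\rangle_{\mathcal{V}}$, and read off each side using the defining relations \eqref{eq:GreensproblemDirichlet} or \eqref{eq:GreensproblemPoisson} together with the boundary condition. Your handling of the Poisson case with general $C$ is in fact slightly more direct than the paper's (which first treats $C=0$ and then adds the constant), and in the Dirichlet case note that the factor you need to vanish on $V\setminus S$ in $\langle G^i,\Delta G^j\rangle_{\mathcal{V}}$ is $G^i_l$ rather than $G^j_l$---though of course both do, so the argument stands.
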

\begin{proof}
Let $G: V \times S \to \R$ be the Green's function for the Dirichlet equation, satisfying \eqref{eq:GreensproblemDirichlet}. Let $u\in \mathcal{V}$ be such that $u=0$ on $V\setminus S$. Let $i\in V$, then
\[
\langle \Delta G^i, u\rangle_{\mathcal{V}} = \sum_{j, k \in V} \omega_{jk} (G^i_j-G^i_k) u_j = \sum_{j\in S \atop k\in V} \omega_{jk} (G^i_j-G^i_k) u_j = \sum_{j\in S} d_j^r d_i^{-r} \delta_{ji} u_j = u_i,
\]
where the third equality follows from
$
d_i^{-r} \delta_{ji} (\Delta G^i)_j = d_j^{-r} \sum_{k\in V} \omega_{jk} (G^i_j-G^i_k).
$ 
Now let $i,j\in S$ and use the equality above with $u=G^j$ to deduce
\[
G_{ij} = G^j_i = \langle \Delta G^i, G^j\rangle_{\mathcal{V}} = \langle G^i, \Delta G^j\rangle_{\mathcal{V}} = G^i_j = G_{ji}.
\]

Next we consider the Poisson case with Green's function $G: V \times V\to \R$, satisfying \eqref{eq:GreensproblemPoisson} with \eqref{eq:qchoice} and \eqref{eq:Cchoice}. Let $k\in V$ and $u\in \mathcal{V}$ with $u_k=0$. Then, similar to the computation above, for $i\in V$, we find
\[
\langle \Delta G^i, u\rangle_{\mathcal{V}} = \sum_{j\in V} (\Delta G^i)_j u_j d_j^r = \sum_{j\in V} \left(d_i^{-r} \delta_{ji}+q_j\right) u_j d_j^r  = u_i-\sum_{j\in V} d_k^{-r} \delta_{jk}  u_j d_j^r
= u_i - u_k = u_i,
\]
where we used 
\eqref{eq:qchoice}.
If we use the identity above with $u=G^j$, we obtain, for $i,j\in V$,
\[
G_{ij} = G^j_i = \langle \Delta G^i, G^j\rangle_{\mathcal{V}} = \langle G^i, \Delta G^j\rangle_{\mathcal{V}} = G^i_j = G_{ji},
\]
where we have applied that $G^j_k=G^i_k=0$.

Finally, if $\tilde G: V\times V \to \R$ satisfies \eqref{eq:GreensproblemPoisson} with \eqref{eq:qchoice} and with $C\neq 0$, then $\tilde G = G+C$ and hence $\tilde G$ is also symmetric.
\end{proof}

\begin{remark}
The symmetry property of the Green's function for the Dirichlet equation from Lemma~\ref{lem:symmetry} allows us to note that, if we write the equilibrium measure $\nu^S$ which solves the Dirichlet problem in \eqref{eq:equilibrium} in terms of the Green's function for the Dirichlet equation from \eqref{eq:GreenDirichlet}, using \eqref{eq:Greensexpansion}, we find the consistent relationship, for $i\in S$,
\[
\nu^S_i = \sum_{j\in S} d_j^r G_{ij}(\chi_S)_j = \sum_{j\in V} d_j^r G_{ji} (\chi_S)_j  = \sum_{j\in V} d_j^r \nu^S_i \frac{\nu^S_j - \nu^{S\setminus\{i\}}_j}{\mathcal{M}(\nu^S) - \mathcal{M}(\nu^{S\setminus\{i\}})} (\chi_S)_j = \nu_i^S.
\]
For the last equality, we used that, by property~\ref{item:suppprop} from Lemma~\ref{lem:equimeasprops}, $\nu^S_j - \nu^{S\setminus\{i\}}_j=0$ if $j\in V\setminus S$, hence the factor $(\chi_S)_j$ can be replaced by $(\chi_V)_j$ without changing the value of the summation.
\end{remark}

\begin{remark}
Combining property~\ref{item:suppprop} from Lemma~\ref{lem:equimeasprops} regarding the support of the equilibrium measure with \eqref{eq:GreenDirichlet}, we see that we could consistently extent the Green's function for the Dirichlet equation to a function $V\times V$, by setting $G_{ij}=0$ for $j\in V\setminus S$. In that case \eqref{eq:Greensexpansion} takes the same form as in the Poisson case, \eqref{eq:Greensexpansion2}. The defining properties \eqref{eq:GreensproblemDirichlet} will still hold, as will the symmetry property from Lemma~\ref{lem:symmetry} (now for all $i,j\in V$).

In this paper we will stick to the original domain $V\times S$ for the Green's function for the Dirichlet equation.
\end{remark}

In Appendix~\ref{sec:randomwalk} we give a random walk interpretation for the Green's function for the Poisson equation.

\section{The graph Ohta-Kawasaki functional}\label{sec:graphOKfunctional}

\subsection{A negative graph Sobolev norm and Ohta-Kawasaki}

In analogy with the negative $H^{-1}$ Sobolev norm (and underlying inner product) in the continuum (see for example \cite{Evans02,adams2003sobolev,brezis1999analyse}), we introduce the graph $H^{-1}$ inner product and norm.

\begin{definition}\label{def:H-1}
The \textit{$H^{-1}$ inner product} of $u, v\in \mathcal{V}_0$ is given by
\[
\langle u, v \rangle_{H^{-1}} := \langle \nabla \varphi, \nabla \psi \rangle_{\mathcal{E}},
\]
where $\varphi, \psi \in \mathcal{V}$ are any functions such that
$\Delta \varphi = u$ and $\Delta \psi = v$ hold on $V$.
\end{definition}

\begin{remark}
The zero mass conditions on $u$ and $v$ in Definition~\ref{def:H-1} are necessary and sufficient conditions for the solutions $\varphi$ and $\psi$ to the Poisson equations above to exist as we have seen in Section~\ref{sec:Greensfunctions}. These solutions are unique up to an additive constant. Note that the choice of this constant does not influence the value of the inner product. 
\end{remark}

\begin{remark}
It is useful to realise we can rewrite the inner product from Definition~\ref{def:H-1} as
\begin{equation}\label{eq:H-1innerrewrite}
\langle u, v \rangle_{H^{-1}} = \langle \varphi, \Delta \psi\rangle_{\mathcal{V}} = \langle \varphi, v \rangle_{\mathcal{V}} \quad \text{or} \quad \langle \varphi, v \rangle_{\mathcal{V}} = \langle \Delta \varphi, v \rangle_{H^{-1}}.
\end{equation}
\end{remark}

\begin{remark}
Note that for a connected graph the expression in Definition~\ref{def:H-1} indeed defines an inner product on $\mathcal{V}_0$, as $\langle u,u\rangle_{H^{-1}} = 0$ implies that $(\nabla \varphi)_{ij}=0$ for all $i,j\in V$ for which $\omega_{ij}>0$. Hence, by connectivity, $\varphi$ is constant on $V$ and thus $u=\Delta \varphi = 0$ on $V$. 

The $H^{-1}$ inner product then also gives us the \textit{$H^{-1}$ norm}:
\[
\|u\|_{H^{-1}}^2 := \langle u, u \rangle_{H^{-1}} = \|\nabla \varphi\|_{\mathcal{E}}^2 = \langle u, \varphi \rangle_{\mathcal{V}}.
\]
\end{remark}

Let $k\in V$. By \eqref{eq:massu-Au}, if $u\in \mathcal{V}$, then $u-\mathcal{A}(u) \in \mathcal{V}_0$, hence there exists a unique solution to the Poisson problem
\begin{equation}\label{eq:varphiinH-1}
\begin{cases}
\Delta \varphi = u-\mathcal{A}(u),\\
\varphi_k = 0,
\end{cases}
\end{equation}
which can be expressed using the Green's function from \eqref{eq:GreenPoisson}. We say that this solution $\varphi$ \textit{solves \eqref{eq:varphiinH-1} for $u$}. Because the kernel of $\Delta$ contains only the constant functions, the solution $\varphi$ for any other choice of $k$ will only differ by an additive constant. Hence the norm
\[
\|u-\mathcal{A}(u)\|_{H^{-1}}^2 = \|\nabla \varphi\|_{\mathcal{E}}^2 = \frac12 \sum_{i,j\in V} \omega_{ij} (\varphi_i-\varphi_j)^2,
\]
is independent of the choice of $k$. Note also that this norm in general \textit{does} depend on $r$, since $\varphi$ does. Contrast this with the Dirichlet energy in \eqref{eq:Dirichletenergy} which is independent of $r$. The norm does not depend on $q$.

Using the Green's function expansion from \eqref{eq:Greensexpansion2} for $\varphi$, with $G$ being the Green's function for the Poisson equation from \eqref{eq:GreenPoisson}, we can also write
\[
\|u-\mathcal{A}(u)\|_{H^{-1}}^2 = \langle u-\mathcal{A}(u), \varphi\rangle_{\mathcal{V}}^2 = \sum_{i,j\in V} \left(u_i-\mathcal{A}(u)\right) d_i^r G_{ij} d_j^r \left(u_j - \mathcal{A}(u)\right).
\]
Note that this expression seems to depend on the choice of $k$, via $G$, but by the discussion above we know in fact that it does not depend on $k$. This can also be seen as follows. A different choice for $k$, leads to an additive constant change in the function $G$, which leaves the norm unchanged, since $\sum_{i\in V} d_i^r \left(u_i-\mathcal{A}(u)\right) = 0$.

Let $W: \R \to \R$ be the double well potential defined by, for all $x\in \R$,
\begin{equation}\label{eq:doublewell}
W(x):= x^2 (x-1)^2.
\end{equation}
Note that $W$ has wells of equal depth located at $x=0$ and $x=1$. 

\begin{definition}
For $\e>0$, $\gamma\geq 0$ and $u\in\mathcal{V}$, we now define both the \textit{(epsilon) Ohta-Kawasaki functional} (or \textit{diffuse interface Ohta-Kawasaki functional})
\begin{equation}\label{eq:epsOK}
F_\e(u) := \frac12 \|\nabla u\|_{\mathcal{E}}^2 + \frac1\e \sum_{i\in V} W(u_i) + \frac\gamma2 \|u-\mathcal{A}(u)\|_{H^{-1}}^2,
\end{equation}
and the \textit{limit Ohta-Kawasaki functional} (or \textit{sharp interface Ohta-Kawasaki functional})
\begin{equation}\label{eq:limitOK}
F_0(u) := \TV(u) + \frac\gamma2 \|u-\mathcal{A}(u)\|_{H^{-1}}^2.
\end{equation}
\end{definition}
The nomenclature and notation is justified by the fact that $F_0$ (with its domain restricted to $\mathcal{V}^b$, see \eqref{eq:Vb}) is the $\Gamma$-limit of $F_\e$ for $\e\to 0$ (this is shown by a straightforward adaptation of the results and proofs in \cite[Section 3]{vanGennipBertozzi12}; see Appendix~\ref{sec:gammaconvergence}).

There are two minimization problems of interest here:
\noindent\begin{minipage}{0.5\linewidth}
\vspace{\abovedisplayskip}
\begin{equation}
\min_{u\in \mathcal{V}_M}F_e(u),  \label{eq:minimprobsFe}\vspace{\belowdisplayskip}
\end{equation}
\end{minipage}
\begin{minipage}{0.5\linewidth}
\vspace{\abovedisplayskip}
\begin{equation}
\min_{u\in \mathcal{V}^b_M}F_0(u), \label{eq:minimprobsF0}\vspace{\belowdisplayskip}
\end{equation}
\end{minipage}
\vspace{\belowdisplayskip}
for a given $M\in \R$ for the first problem and a given $M\in \mathfrak{M}$ for the second. In this paper we will mostly be concerned with the second problem, \eqref{eq:minimprobsF0}.

The following lemma describes a symmetry in $F_0$ when the underlying graph is the star graph we have seen before; in a sense it is an extension of the last statement in Lemma~\ref{lem:starmass}. It will come in handy later.

\begin{lemma}\label{lem:starmass2}
Let $G=(V,E,\omega)$ be an unweighted star graph as in Definition~\ref{def:bipartstar} with $n\geq 3$ nodes and let $q=1$. Let $\mathfrak{M}$ be the set of admissable masses as in \eqref{eq:admissmass}.
If $S, \tilde S\subset V$ are such that $|S|=|\tilde S|$ and $\left(\chi_S\right)_1 = \left( \chi_{\tilde S}\right)_1$, then $F_0\left(\chi_S\right) = F_0\left(\chi_{\tilde S}\right)$, where $F_0$ is the limit Ohta-Kawasaki functional from \eqref{eq:limitOK}.
\end{lemma}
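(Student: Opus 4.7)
The plan is to exploit the large automorphism group of the star graph: every permutation of the leaf nodes $\{2,\dots,n\}$ is a graph automorphism (all leaves play symmetric roles, since each is connected only to the centre $1$ with edge weight $1$). All the ingredients of $F_0$ are intrinsic to the graph structure (degrees and edge weights only), so they will be invariant under such automorphisms.

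First, I would set up the pullback by automorphisms. Given any permutation $\sigma$ of $V$ that is a graph automorphism (i.e.\ $\omega_{\sigma(i)\sigma(j)}=\omega_{ij}$ for all $i,j$, which also forces $d_{\sigma(i)}=d_i$), define $T_\sigma \colon \mathcal{V}\to \mathcal{V}$ by $(T_\sigma u)_i := u_{\sigma(i)}$. A short verification shows $T_\sigma$ preserves the $\mathcal{V}$ inner product, commutes with $\nabla$ (up to the analogous edge pullback), commutes with $\Delta$, preserves $\mathcal{M}$, and hence commutes with $\mathcal{A}$. In particular, $\TV(T_\sigma u) = \TV(u)$, and if $\varphi$ solves \eqref{eq:varphiinH-1} for $u-\mathcal{A}(u)$, then $T_\sigma\varphi$ solves \eqref{eq:varphiinH-1} for $T_\sigma u - \mathcal{A}(T_\sigma u)$ (up to the additive constant, which is irrelevant for the $H^{-1}$ norm), so
\[
\|T_\sigma u - \mathcal{A}(T_\sigma u)\|_{H^{-1}}^2 = \|u-\mathcal{A}(u)\|_{H^{-1}}^2.
\]
Consequently $F_0(T_\sigma u) = F_0(u)$ for every automorphism $\sigma$.

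Second, I would show that whenever $S,\tilde S\subset V$ satisfy $|S|=|\tilde S|$ and $(\chi_S)_1 = (\chi_{\tilde S})_1$, there is an automorphism $\sigma$ of the star with $\sigma(S) = \tilde S$. If $1\notin S$ and $1\notin \tilde S$, then $S,\tilde S\subset \{2,\dots,n\}$ have the same cardinality, so any bijection between them extends to a permutation of $\{2,\dots,n\}$; if $1\in S$ and $1\in\tilde S$, then $S\setminus\{1\}$ and $\tilde S\setminus\{1\}$ have equal cardinality and the same argument applies while fixing $1$. In either case $\sigma$ permutes the leaves and is therefore a graph automorphism. Then $T_\sigma \chi_{\tilde S} = \chi_{\sigma^{-1}(\tilde S)} = \chi_S$, and the invariance from the first step yields $F_0(\chi_S)=F_0(\chi_{\tilde S})$.

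The only genuinely non-trivial step is checking that the $H^{-1}$ piece is automorphism-invariant; everything else (TV, mass, $\mathcal{A}$) is immediate from the explicit formulas. The $H^{-1}$ invariance reduces to the commutation $\Delta T_\sigma = T_\sigma \Delta$ together with the gauge freedom in the Poisson problem (which allows us to absorb the shift in the choice of pinned node $k$), so the obstacle is really just bookkeeping rather than substantive.
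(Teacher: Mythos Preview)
Your proof is correct and follows essentially the same approach as the paper: both rely on the invariance of $F_0$ under the automorphism group of the star graph (permutations of the leaves), with the paper stating this in a single sentence while you spell out the pullback $T_\sigma$ and verify invariance of each ingredient. The only difference is level of detail; the underlying idea is identical.
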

\begin{proof}
Let $S, \tilde S\subset V$ be such that $|S|=|\tilde S|$ and $\left(\chi_S\right)_1 = \left( \chi_{\tilde S}\right)_1$. Because the unweighted star graph $G$ is symmetric under permutations of its leaves (i.e. the nodes $\{2, \ldots, n\}$) for any $u\in \mathcal{V}^b$ the value of $F_0(u)$ depends only on the value of $u_1$ and the number of leaves $i$ for which $u_i=0$. Hence $F_0(\chi_S) = F_0(\chi_{\tilde S})$.
\end{proof}

\subsection{Ohta-Kawasaki in spectral form}\label{sec:GraLapspectrum}

Because of the role the graph Laplacian plays in the Ohta-Kawasaki energies, it it useful to consider its spectrum. As is well known (see for example \cite{Chung97,Luxburg:2007}), for any $r\in[0,1]$, the eigenvalues of $\Delta$, which we will denote by
\begin{equation}\label{eq:eigenvalues}
0=\lambda_0 \leq \lambda_1 \leq \ldots \leq \lambda_{n-1},
\end{equation}
are real and nonnegative. The multiplicity of $0$ as eigenvalue is equal to the number of connected components of the graph and the corresponding eigenspace is spanned by the indicator functions of those components. If $G\in \mathcal{G}$, then $G$ is connected, and thus, for all $m\in \{1, \ldots, n-1\}$, $\lambda_m>0$. We consider a set of corresponding $\mathcal{V}$-orthonormal eigenfunctions $\phi^m\in \mathcal{V}$, i.e., for all $m,l \in \{0, \ldots, n-1\}$,
\begin{equation}\label{eq:eigenfunctions}
\Delta \phi^m = \lambda_m \phi^m, \quad \text{and} \quad \langle \phi^m, \phi^l\rangle_{\mathcal{V}} = \delta_{ml},
\end{equation}
where $\delta_{ml}$ denotes the Kronecker delta. Note that, since $\Delta$ and $\langle \cdot, \cdot \rangle_{\mathcal{V}}$ depend on $r$, but not on $q$, so do the eigenvalues $\lambda_m$ and the eigenfunctions $\phi^m$.

For definiteness we choose\footnote{As opposed to the equally valid choice $\phi^0 =-(\vol{V})^{-1/2}.$}
\begin{equation}\label{eq:zerotheigvec}
\phi^0 := (\vol{V})^{-1/2} \chi_V.
\end{equation}
The eigenfunctions form an $\mathcal{V}$-orthonormal basis for $\mathcal{V}$, hence, for any $u\in \mathcal{V}$, we have
\begin{equation}\label{eq:expansion}
u = \sum_{m=0}^{n-1} a_m \phi^m, \quad \text{where } a_m := \langle u, \phi^m\rangle_{\mathcal{V}}.
\end{equation}
As an example we revisit the star graph from Definition~\ref{def:bipartstar}.
\begin{lemma}\label{lem:starspectrum}
Let $G=(V,E,\omega)$ be an unweighted star graph as in Definition~\ref{def:bipartstar} with $n\geq 3$ nodes. The eigenvalues are\footnote{If all the edges are given the same weight $\omega>0$ instead of $1$, it is quickly checked that all eigenvalues are multiplied by $\omega^{1-r}$, because the Laplacian is multiplied by the same factor. Since in that case the factor $d_i^r$ in the $\mathcal{V}$-inner product changes by a factor $\omega^r$, the eigenfunctions all acquire a multiplicative factor $\omega^{-r/2}$.}
\[
\lambda_0 = 0, \qquad \lambda_m = 1 \quad (m\in \{1, \ldots, n-2\}), \qquad \lambda_{n-1} = (n-1)^{1-r}+1.
\]
A corresponding $\mathcal{V}$-orthonormal system of eigenfunctions is given by, for $ m\in \{1, \ldots, n-2\}$ and $i\in \{1, \ldots, n\}$,
\begin{align*}
\phi^0 &= \left[(n-1)^{1-r}+n-1\right]^{-1/2} \chi_V,\\
\phi^m_i &= \left[(n-m-1)^2+n-m-1\right]^{-1/2} \begin{cases} 0, &\text{if }1\leq i\leq m,\\ n-m-1, &\text{if } i=m+1,\\ -1, &\text{if } m+2 \leq i \leq n, 
\end{cases}\\
\phi^{n-1}_i &= \left[(n-1)^{2-r}+n-1\right]^{-1/2} \begin{cases} (n-1)^{1-r}, &\text{if } i=1,\\ -1, &\text{if } 2 \leq i\leq n,
\end{cases}
\end{align*}
where the subscript $i$ 
indicates the component of the vector.
\end{lemma}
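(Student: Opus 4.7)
The plan is to solve the eigenvalue equation $\Delta \phi = \lambda \phi$ directly, exploiting the fact that the unweighted star graph has just two orbits under its automorphism group: the center and the leaves. Writing the Laplacian out explicitly produces two types of equations: for the center node, $(\Delta u)_1 = (n-1)^{-r}\bigl[(n-1)u_1 - \sum_{j=2}^n u_j\bigr]$, and for any leaf $i \geq 2$, $(\Delta u)_i = u_i - u_1$.

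I would first dispatch the degenerate eigenvalues. For $\lambda = 0$, connectedness of $G$ (recalled in \eqref{eq:eigenvalues}) already yields a one-dimensional kernel spanned by $\chi_V$. For $\lambda = 1$, the leaf equation $u_i - u_1 = u_i$ forces $u_1 = 0$, whereupon the center equation reduces to $\sum_{j=2}^n u_j = 0$. This cuts out an eigenspace of dimension exactly $n-2$, matching the stated multiplicity.

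Since $1 + (n-2) = n-1$ dimensions have been accounted for, exactly one eigenvalue remains. Its eigenvector must be $\mathcal{V}$-orthogonal to the two spaces already produced, forcing it to be constant on the leaves and nonzero at the center. Plugging the ansatz $u_1 = b$, $u_i = a$ for $i \geq 2$ into both equations gives the $2\times 2$ linear system $a - b = \lambda a$ and $(n-1)^{1-r}(b-a) = \lambda b$; elimination yields $\lambda_{n-1} = 1 + (n-1)^{1-r}$ together with the ratio $b/a = -(n-1)^{1-r}$, which is precisely the direction encoded in $\phi^{n-1}$.

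Finally, I would construct the explicit $\mathcal{V}$-orthonormal representatives. The constants for $\phi^0$ and $\phi^{n-1}$ follow from direct normalization against $\langle\cdot,\cdot\rangle_{\mathcal{V}}$, using $d_1 = n-1$, $d_i = 1$ for $i \geq 2$, and $\vol{V} = (n-1)^r + n - 1$. For the $(n-2)$-dimensional eigenspace at $\lambda = 1$, one must choose a concrete basis, and the telescoping family in the lemma is convenient: $\supp(\phi^{m})\supsetneq \supp(\phi^{m+1})\supsetneq\cdots\supsetneq\supp(\phi^{n-2})$, and on its support each $\phi^l$ has vanishing leaf-sum while $\phi^m$ is constantly $-1$ there, so pairwise orthogonality is immediate without any Gram--Schmidt step. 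The norm $\|\phi^m\|_{\mathcal{V}}^2 = (n-m-1)^2 + (n-m-1)$ then fixes the stated constants. Nothing here is a serious obstacle; the only mild subtleties are the bookkeeping with the weight $d_i^r$ in the middle eigenbasis and the counting argument that confirms we have produced all $n$ eigenfunctions.
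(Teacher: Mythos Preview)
Your argument is correct and more constructive than the paper's. The paper does not derive the spectrum at all: it simply states that ``a direct computation suffices to show that $\langle \phi^m, \phi^l\rangle_{\mathcal{V}} = \delta_{ml}$ and $\Delta \phi^m = \lambda_m \phi^m$,'' i.e.\ it proposes verifying the claimed eigenfunctions and eigenvalues by substitution (referring to \cite[Section~6.2]{vanGennipGuillenOstingBertozzi14} for how one would discover them). You instead solve the eigenvalue problem from scratch: you write out $(\Delta u)_1$ and $(\Delta u)_i$ explicitly, read off the $\lambda=0$ and $\lambda=1$ eigenspaces, use a dimension count to isolate the last eigenvalue, and then normalize. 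Your telescoping-support argument for the orthogonality of the $\phi^m$ ($1\le m\le n-2$) is clean and avoids any Gram--Schmidt; since these are supported on leaves where $d_i^r=1$, the $r$-bookkeeping you flag as a subtlety is in fact absent there. The verification route in the paper is shorter to write down but gives no insight into where the answer comes from; your derivation explains the structure and would generalize more readily if one perturbed the graph.
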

\begin{proof}
The eigenvalues and eigenvectors were found following a computation similar to that in \cite[Section 6.2]{vanGennipGuillenOstingBertozzi14}, but for this proof a direct computation suffices to show that $\langle \phi^m, \phi^l\rangle_{\mathcal{V}} = \delta_{ml}$ and $\Delta \phi^m = \lambda_m \phi^m$.
\end{proof}

\begin{lemma}\label{lem:varphi}
Let $u\in \mathcal{V}$, $k\in V$, then $\varphi$ satisfies $\Delta \varphi = u-\mathcal{A}(u)$, if and only if
\begin{equation}\label{eq:varphieigenexpansion}
\varphi = \mathcal{A}(\varphi) + \sum_{m=1}^{n-1}\lambda_m^{-1} \langle u, \phi^m\rangle_{\mathcal{V}}\, \phi^m.
\end{equation}
\end{lemma}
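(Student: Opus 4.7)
The plan is to expand both sides in the $\mathcal{V}$-orthonormal eigenbasis $\{\phi^m\}_{m=0}^{n-1}$ of $\Delta$ supplied by \eqref{eq:eigenfunctions}, and then match coefficients. Write $\varphi = \sum_{m=0}^{n-1} b_m \phi^m$ with $b_m = \langle \varphi, \phi^m\rangle_\mathcal{V}$, in accordance with \eqref{eq:expansion}. Applying $\Delta$ term by term and using $\lambda_0 = 0$ yields
\[
\Delta \varphi \;=\; \sum_{m=1}^{n-1} \lambda_m\, b_m\, \phi^m.
\]

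Next I would compute the expansion of $u - \mathcal{A}(u)$. Setting $a_m := \langle u, \phi^m\rangle_\mathcal{V}$ and using the normalisation \eqref{eq:zerotheigvec}, we have $a_0 = (\vol V)^{-1/2}\, \mathcal{M}(u)$ and $\mathcal{A}(u) = \mathcal{M}(u)(\vol V)^{-1}\chi_V = (\vol V)^{-1/2} \mathcal{M}(u)\,\phi^0 = a_0\phi^0$. Therefore
\[
u - \mathcal{A}(u) \;=\; \sum_{m=1}^{n-1} a_m\, \phi^m \;=\; \sum_{m=1}^{n-1} \langle u, \phi^m\rangle_\mathcal{V}\, \phi^m.
\]
Equating coefficients of $\phi^m$ in $\Delta\varphi = u - \mathcal{A}(u)$ for $m \geq 1$, and using that $\lambda_m > 0$ for $m \geq 1$ since $G$ is connected, I obtain $b_m = \lambda_m^{-1}\langle u,\phi^m\rangle_\mathcal{V}$. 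The zeroth coefficient $b_0$ is unconstrained, reflecting the one-dimensional kernel of $\Delta$ spanned by $\chi_V$. Applying the same identification that led to $\mathcal{A}(u) = a_0 \phi^0$, but now for $\varphi$, gives $b_0 \phi^0 = \mathcal{A}(\varphi)$, which is exactly the additive constant that appears in \eqref{eq:varphieigenexpansion}.

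For the converse direction, I would simply apply $\Delta$ to the right-hand side of \eqref{eq:varphieigenexpansion}: the $\mathcal{A}(\varphi)$ term lies in $\ker \Delta$ and vanishes, while the remaining sum produces $\sum_{m=1}^{n-1} \langle u, \phi^m\rangle_\mathcal{V}\,\phi^m$, which by the calculation above equals $u - \mathcal{A}(u)$. There is no genuine obstacle here; the only point requiring care is the bookkeeping that identifies the $\phi^0$-component of $\varphi$ with $\mathcal{A}(\varphi)$ rather than leaving it as an abstract scalar, so that the formula takes precisely the form stated in the lemma.
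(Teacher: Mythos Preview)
Your proof is correct and follows essentially the same approach as the paper: expand both $\varphi$ and $u-\mathcal{A}(u)$ in the eigenbasis, match coefficients (noting $\lambda_m>0$ for $m\geq 1$), and identify the $\phi^0$-component of $\varphi$ with $\mathcal{A}(\varphi)$. Your direct observation that $\mathcal{A}(u)=a_0\phi^0$, and hence $u-\mathcal{A}(u)=\sum_{m\geq 1}a_m\phi^m$, is slightly cleaner than the paper's route of separately verifying $\langle \mathcal{A}(u),\phi^m\rangle_{\mathcal{V}}=0$ for $m\geq 1$, but the content is the same.
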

\begin{proof}
Let $\varphi$ satisfy $\Delta \varphi = u - \mathcal{A}(u)$. Using expansions as in \eqref{eq:expansion} for $\varphi$ and $u-\mathcal{A}(u)$, we have
\[
\Delta \left(\sum_{m=0}^{n-1} a_m \phi^m\right) = \Delta \varphi = u-\mathcal{A}(u) = \sum_{m=0}^{n-1} b_m \phi^m,
\]
where, for all $m\in \{0, \ldots, n-1\}$, $a_m := \langle \varphi, \phi^m\rangle_{\mathcal{V}}$ and $b_m := \langle u-\mathcal{A}(u), \phi^m\rangle_{\mathcal{V}}$.
Hence
$
 \sum_{m=0}^{n-1} a_m \lambda_m \phi^m = \sum_{m=0}^{n-1} b_m \phi^m
$
and therefore, for any $l \in \{0, \ldots, n-1\}$,
\[
a_l \lambda_l = \left\langle \sum_{m=0}^{n-1} a_m \lambda_m \phi^m, \phi^l\right\rangle_{\mathcal{V}} = \left\langle \sum_{m=0}^{n-1} b_m \phi^m, \phi^l \right\rangle_{\mathcal{V}} = b_l.
\]
In particular, if $m\geq 1$, then $a_m = \lambda_m^{-1} b_m$. Because $\lambda_0=0$, the identity above does not constrain $a_0$.
Because, for $m\geq 1$, $\langle \phi^0, \phi^m\rangle=0$, it follows that, for $m\in \{1, \ldots, n-1\}$,
\begin{align}
b_m = \langle u-\mathcal{A}(u), \phi^m\rangle_{\mathcal{V}} &= \langle u, \phi^m\rangle_{\mathcal{V}} - \frac{\mathcal{M}(u)}{\vol{V}} \langle \chi_V, \phi^m\rangle = \langle u, \phi^m\rangle_{\mathcal{V}} - \frac{\mathcal{M}(u)}{\vol{V}} (\vol{V})^{1/2} \langle \phi^0, \phi^m\rangle \notag \\ &= \langle u, \phi^m\rangle_{\mathcal{V}} \label{eq:Aphimiszero}.
\end{align}
and therefore, for all $m\in \{1, \ldots, n-1\}$,
$
a_m = \lambda_m^{-1} \langle u, \phi^m\rangle_{\mathcal{V}}.
$ 
Furthermore
\[
a_0 = \langle \varphi, \phi^0\rangle_{\mathcal{V}} = (\vol{V})^{-1/2} \langle \varphi, \chi_V\rangle_{\mathcal{V}} = (\vol{V})^{-1/2} \mathcal{M}(\varphi).
\]
Substituting these expressions for $a_0$ and $a_m$ into the expansion of $\varphi$, we find that $\varphi$ is as in \eqref{eq:varphieigenexpansion}.

Conversely, if $\varphi$ is as in \eqref{eq:varphieigenexpansion}, a direct computation shows that $\Delta \varphi = u-\mathcal{A}(u)$.
\end{proof}

\begin{remark}\label{rem:Moore-Penrose}
From Lemma~\ref{lem:varphi} we see that we can write $\varphi-\mathcal{A}(\varphi) = \Delta^\dagger (u-\mathcal{A}(u))$, where $\Delta^\dagger$ is the Moore-Penrose pseudoinverse of $\Delta$ \cite{dresden1920,bjerhammer1951application,penrose_1955}.
\end{remark}

\begin{lemma}\label{lem:TVrewrite}
Let $q\in [1/2,1]$, $S\subset V$, and let $\kappa_S^{q,r}$, $\kappa_S$ be the graph curvatures from Definition~\ref{def:graphcurvature}, then
\[
\TV(\chi_S) = \sum_{m=1}^{n-1} \langle \kappa_S^{q,r}, \phi^m\rangle_{\mathcal{V}}\, \langle \chi_S, \phi^m\rangle_{\mathcal{V}}.
\]

Furthermore, if $q=1$, then
\begin{align}
\TV(\chi_S) &= \sum_{m=1}^{n-1} \lambda_m \langle \chi_s, \phi^m\rangle_{\mathcal{V}}^2\label{eq:TVchiphi}\\
&= \sum_{m=1}^{n-1} \lambda_m^{-1} \langle \kappa_s, \phi^m\rangle_{\mathcal{V}}^2.\label{eq:TVcurvphi}
\end{align}
\end{lemma}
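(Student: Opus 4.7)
The plan is to start from the identity \eqref{eq:TVintermsofkappa}, namely $\TV(\chi_S) = \langle \kappa_S^{q,r}, \chi_S\rangle_{\mathcal{V}}$, and then expand $\chi_S$ in the $\mathcal{V}$-orthonormal eigenbasis via \eqref{eq:expansion}. By orthonormality this will immediately give
\[
\TV(\chi_S) = \sum_{m=0}^{n-1} \langle \kappa_S^{q,r}, \phi^m\rangle_{\mathcal{V}}\, \langle \chi_S, \phi^m\rangle_{\mathcal{V}}.
\]
The first key step is then to show that the $m=0$ term vanishes. Using \eqref{eq:zerotheigvec} this amounts to computing $\langle \kappa_S^{q,r}, \chi_V\rangle_{\mathcal{V}}$; writing out the definition of $\kappa_S^{q,r}$ (where the factor $d_i^{-r}$ is cancelled by the $d_i^r$ in the $\mathcal{V}$-inner product) gives
\[
\sum_{i\in S}\sum_{j\in V\setminus S} \omega_{ij}^q - \sum_{i\in V\setminus S}\sum_{j\in S} \omega_{ij}^q,
\]
which is zero by symmetry $\omega_{ij}=\omega_{ji}$. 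This establishes the first formula.

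For the case $q=1$, I would invoke \eqref{eq:Laplcurv} to write $\kappa_S = \Delta \chi_S$, and then move $\Delta$ across the inner product using self-adjointness (which follows from \eqref{eq:adjoint}) together with the eigenvalue equation \eqref{eq:eigenfunctions}:
\[
\langle \kappa_S, \phi^m\rangle_{\mathcal{V}} = \langle \Delta \chi_S, \phi^m\rangle_{\mathcal{V}} = \langle \chi_S, \Delta \phi^m\rangle_{\mathcal{V}} = \lambda_m \langle \chi_S, \phi^m\rangle_{\mathcal{V}}.
\]
Substituting into the first formula gives \eqref{eq:TVchiphi}. Substituting in the other direction (solving for $\langle \chi_S, \phi^m\rangle_{\mathcal{V}} = \lambda_m^{-1} \langle \kappa_S, \phi^m\rangle_{\mathcal{V}}$, which is legitimate for $m\geq 1$ since $\lambda_m>0$ on a connected graph) yields \eqref{eq:TVcurvphi}.

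There is no real obstacle here: everything is essentially bookkeeping on top of \eqref{eq:TVintermsofkappa}, \eqref{eq:Laplcurv}, and the spectral expansion. The only point that deserves a line of justification is the vanishing of the $m=0$ term, since this is what makes it legitimate to start the sums at $m=1$ in the subsequent formulas. The passage to the $\lambda_m^{-1}$ form additionally relies on the connectedness of $G\in\mathcal{G}$ to guarantee $\lambda_m>0$ for $m\geq 1$, which is already recorded in the paragraph preceding \eqref{eq:eigenfunctions}.
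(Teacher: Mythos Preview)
Your proposal is correct and follows essentially the same route as the paper: expand $\chi_S$ in the $\mathcal{V}$-orthonormal eigenbasis, use $\TV(\chi_S)=\langle\kappa_S^{q,r},\chi_S\rangle_{\mathcal{V}}$, verify directly that $\langle\kappa_S^{q,r},\phi^0\rangle_{\mathcal{V}}=0$ by the symmetry of $\omega$, and for $q=1$ use $\kappa_S=\Delta\chi_S$ with self-adjointness and $\Delta\phi^m=\lambda_m\phi^m$ to pass between the $\chi_S$- and $\kappa_S$-coefficients. Your explicit remark that $\lambda_m>0$ for $m\geq1$ (by connectedness) is needed for \eqref{eq:TVcurvphi} is a point the paper leaves implicit.
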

\begin{proof}
Using an expansion as in \eqref{eq:expansion} for $\chi_S$ together with \eqref{eq:TVintermsofkappa}, we find
\[
\TV(\chi_S) =  \langle\kappa_S^{q,r}, \sum_{m=0}^{n-1} \langle \chi_S, \phi^m\rangle_{\mathcal{V}}\, \phi^m \rangle_{\mathcal{V}} = 
\sum_{m=0}^{n-1} \langle \kappa_S^{q,r}, \phi^m\rangle_{\mathcal{V}}\, \langle \chi_S, \phi^m\rangle_{\mathcal{V}} = \sum_{m=1}^{n-1} \langle \kappa_S^{q,r}, \phi^m\rangle_{\mathcal{V}}\, \langle \chi_S, \phi^m\rangle_{\mathcal{V}},
\]
where the last equality follows from
\[
\langle \kappa_S^{q,r}, \phi^0\rangle_{\mathcal{V}} = (\vol{V})^{-1/2} \left(\sum_{i\in S} \sum_{j\in S^c} \omega_{ij}^q (\chi_V)_i - \sum_{i\in S^c} \sum_{j\in S} \omega_{ij}^q (\chi_V)_i \right) = 0.
\]

Moreover, we use \eqref{eq:Laplcurv} to find
\[
 \langle \chi_S, \lambda_m \phi^m\rangle_{\mathcal{V}} =  \langle \chi_S, \Delta \phi^m\rangle_{\mathcal{V}} =  \langle \Delta \chi_S, \phi^m\rangle_{\mathcal{V}} = \langle \kappa_s, \phi^m\rangle_{\mathcal{V}},
\]
hence
\begin{equation}\label{eq:curvtochi}
\langle\chi_S, \phi^m\rangle_{\mathcal{V}} = \lambda_m^{-1} \langle \kappa_s, \phi^m\rangle_{\mathcal{V}},
\end{equation}
If $q=1$, such that $\kappa_S^{q,r} = \kappa_S$, then \eqref{eq:TVchiphi} and \eqref{eq:TVcurvphi} follow.
\end{proof}

\begin{lemma}\label{lem:H-1rewrite}
Let $q\in [1/2,1]$, $S\subset V$, and let $\kappa_S$ be the graph curvature (with $q=1$) from Definition~\ref{def:graphcurvature}, then
\begin{align*}
\|\chi_S-\mathcal{A}(\chi_S)\|_{H^{-1}}^2 &= \sum_{m=1}^{n-1} \lambda_m^{-1} \langle \chi_S, \phi^m\rangle_{\mathcal{V}}^2\\
&= \sum_{m=1}^{n-1} \lambda_m^{-3} \langle \kappa_S, \phi^m\rangle_{\mathcal{V}}^2.
\end{align*}
\end{lemma}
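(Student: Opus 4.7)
The plan is to use the spectral expansion of the solution $\varphi$ to the Poisson problem $\Delta \varphi = \chi_S - \mathcal{A}(\chi_S)$ provided by Lemma~\ref{lem:varphi}, combined with the identity
\[
\|\chi_S-\mathcal{A}(\chi_S)\|_{H^{-1}}^2 = \langle \chi_S - \mathcal{A}(\chi_S),\, \varphi\rangle_{\mathcal{V}},
\]
which follows from the rewriting in \eqref{eq:H-1innerrewrite}. Essentially the whole statement is a direct computation in the eigenbasis $\{\phi^m\}$.

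First I would apply Lemma~\ref{lem:varphi} with $u=\chi_S$ to get
\[
\varphi = \mathcal{A}(\varphi) + \sum_{m=1}^{n-1}\lambda_m^{-1}\langle \chi_S,\phi^m\rangle_{\mathcal{V}}\,\phi^m.
\]
Next I would expand $\chi_S - \mathcal{A}(\chi_S)$ in the eigenbasis. By \eqref{eq:Aphimiszero}, the $\phi^m$-coefficient for $m\geq 1$ equals $\langle \chi_S,\phi^m\rangle_{\mathcal{V}}$, while the $\phi^0$-coefficient vanishes since $\chi_S - \mathcal{A}(\chi_S) \in \mathcal{V}_0$ and $\phi^0$ is a multiple of $\chi_V$. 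Taking the $\mathcal{V}$-inner product of $\chi_S - \mathcal{A}(\chi_S)$ with the expansion of $\varphi$, I observe that $\mathcal{A}(\varphi)$ is a scalar multiple of $\chi_V = (\vol{V})^{1/2}\phi^0$ and therefore contributes $0$ to the inner product by $\mathcal{V}$-orthogonality of the eigenbasis. The remaining terms collapse by $\langle \phi^m,\phi^l\rangle_{\mathcal{V}}=\delta_{ml}$ to
\[
\|\chi_S-\mathcal{A}(\chi_S)\|_{H^{-1}}^2 = \sum_{m=1}^{n-1}\lambda_m^{-1}\langle \chi_S,\phi^m\rangle_{\mathcal{V}}^2,
\]
which is the first claimed identity.

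For the second identity, I would invoke the relation \eqref{eq:curvtochi} established inside the proof of Lemma~\ref{lem:TVrewrite}, namely
\[
\langle \chi_S,\phi^m\rangle_{\mathcal{V}} = \lambda_m^{-1}\langle \kappa_S,\phi^m\rangle_{\mathcal{V}} \qquad (m\geq 1),
\]
which is obtained by moving $\Delta$ onto $\phi^m$ in $\langle \chi_S, \Delta \phi^m\rangle_{\mathcal{V}}$ and using \eqref{eq:Laplcurv}. Squaring and substituting replaces $\lambda_m^{-1}$ by $\lambda_m^{-3}$ and $\langle\chi_S,\phi^m\rangle_{\mathcal{V}}^2$ by $\langle\kappa_S,\phi^m\rangle_{\mathcal{V}}^2$, giving the second equality.

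There is no real obstacle; the only subtle points are to notice that the mass-subtraction kills the $m=0$ mode so that the otherwise undefined $\lambda_0^{-1}$ never appears in the sums, and that $\mathcal{A}(\varphi)$ drops out of the inner product by orthogonality, so the answer is well-defined regardless of the undetermined additive constant in $\varphi$.
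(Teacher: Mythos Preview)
Your proof is correct and follows essentially the same approach as the paper: both use the spectral expansion of $\varphi$ from Lemma~\ref{lem:varphi}, eliminate the $\mathcal{A}(\varphi)$ term via orthogonality to $\chi_S-\mathcal{A}(\chi_S)$, invoke \eqref{eq:Aphimiszero} to reduce $\langle \chi_S-\mathcal{A}(\chi_S),\phi^m\rangle_{\mathcal{V}}$ to $\langle \chi_S,\phi^m\rangle_{\mathcal{V}}$ for $m\geq 1$, and then apply \eqref{eq:curvtochi} for the curvature form. The only cosmetic difference is that the paper phrases the vanishing of the constant term as $\langle \mathcal{A}(\varphi),\chi_S-\mathcal{A}(\chi_S)\rangle_{\mathcal{V}}=0$ directly from the zero-mass property rather than via eigenbasis orthogonality.
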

\begin{proof}
Let $k\in V$ and let $\varphi \in \mathcal{V}$ solve
\[
\begin{cases}
\Delta \varphi = \chi_S-\mathcal{A}(\chi_S),\\
\varphi_k = 0.
\end{cases}
\]
Using Lemma~\ref{lem:varphi}, we have
$
\varphi - \mathcal{A}(\varphi) = \sum_{m=1}^{n-1}\lambda_m^{-1} \langle \chi_S, \phi^m\rangle_{\mathcal{V}}\, \phi^m.
$ 
Because
$
\langle \mathcal{A}(\varphi), \chi_S - \mathcal{A}(\chi_S)\rangle_{\mathcal{V}} = 0,
$ 
we have
\[
\|\chi_S-\mathcal{A}(\chi_S)\|_{H^{-1}}^2 = \langle \varphi - \mathcal{A}(\varphi), \chi_S-\mathcal{A}(\chi_S)\rangle_{\mathcal{V}} = \sum_{m=1}^{n-1} \lambda_m^{-1} \langle \chi_S, \phi^m\rangle_{\mathcal{V}}\,  \langle \phi^m, \chi_S-\mathcal{A}(\chi_S)\rangle_{\mathcal{V}}.
\]
As in \eqref{eq:Aphimiszero} (with $u$ replaced by $\chi_S$), we have, for $m\geq 1$, $\langle \phi^m, \mathcal{A}(\chi_S)\rangle_{\mathcal{V}} = 0$, and thus
\[
\|\chi_S-\mathcal{A}(\chi_S)\|_{H^{-1}}^2 = \sum_{m=1}^{n-1} \lambda_m^{-1} \langle \chi_S, \phi^m\rangle_{\mathcal{V}}^2.
\]

We use \eqref{eq:curvtochi} to write
$
\langle \chi_S, \phi^m\rangle_{\mathcal{V}}^2 = \lambda_m^{-2} \langle \kappa_S, \phi^m\rangle_{\mathcal{V}}^2,
$ 
and therefore
\[
\|\chi_S-\mathcal{A}(\chi_S)\|_{H^{-1}}^2 = \sum_{m=1}^{n-1} \lambda_m^{-3} \langle \kappa_S, \phi^m\rangle_{\mathcal{V}}^2.
\]
\end{proof}

\begin{remark}
Note that $\|\chi_S-\mathcal{A}(\chi_S)\|_{H^{-1}}^2$ is independent of $q$ and thus the results from Lemma~\ref{lem:H-1rewrite} hold for all $q\in [1/2,1]$. However, the formulation involving the graph curvature relies on \eqref{eq:curvtochi} and thus on the identiy \eqref{eq:Laplcurv} which holds for $\kappa_S$ only, not for any $\kappa_S^{q,r}$. If $q\neq 1$ this leads to the somewhat unnatural situation of using $\kappa_S$ (which corresponds to the case $q=1$) in a situation where $q\neq 1$. Hence the curvature formulation in Lemma~\ref{lem:H-1rewrite} is more natural, in this sense, when $q=1$.
\end{remark}

\begin{corol}\label{cor:OKexpressions}
Let $q=1$, $S\subset V$, and let $F_0$ be the limit Ohta-Kawasaki functional from \eqref{eq:limitOK}, then
\begin{align}
F_0(\chi_S) &= \sum_{m=1}^{n-1} \left(\lambda_m + \gamma \lambda_m^{-1}\right) \langle \chi_s, \phi^m\rangle_{\mathcal{V}}^2 \label{eq:OKexpression2}\\
&=\sum_{m=1}^{n-1} \left(\lambda_m^{-1} + \gamma \lambda_m^{-3}\right) \langle \kappa_s, \phi^m\rangle_{\mathcal{V}}^2.\notag
\end{align}
\end{corol}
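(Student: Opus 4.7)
The statement is a direct corollary that follows by substituting the two spectral identities already established into the definition of $F_0$, so there is essentially no work left to do beyond a bookkeeping step.

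My plan is as follows. First, recall from \eqref{eq:limitOK} that
\[
F_0(\chi_S) = \TV(\chi_S) + \tfrac{\gamma}{2}\|\chi_S - \mathcal{A}(\chi_S)\|_{H^{-1}}^2.
\]
Under the standing assumption $q=1$, identity \eqref{eq:TVchiphi} of Lemma~\ref{lem:TVrewrite} gives
\[
\TV(\chi_S) = \sum_{m=1}^{n-1}\lambda_m\,\langle \chi_S,\phi^m\rangle_{\mathcal{V}}^2,
\]
and the first identity of Lemma~\ref{lem:H-1rewrite} gives
\[
\|\chi_S-\mathcal{A}(\chi_S)\|_{H^{-1}}^2 = \sum_{m=1}^{n-1}\lambda_m^{-1}\,\langle \chi_S,\phi^m\rangle_{\mathcal{V}}^2.
\]
Adding these with the prescribed weights yields the first form \eqref{eq:OKexpression2} (up to the overall $\gamma/2$ factor in $F_0$; one collects the two sums term by term, which is legitimate because both index sets are $\{1,\ldots,n-1\}$).

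For the second equality, the plan is to use \eqref{eq:TVcurvphi} of Lemma~\ref{lem:TVrewrite} together with the second identity of Lemma~\ref{lem:H-1rewrite}, both of which re-express the same coefficients via the graph curvature $\kappa_S$ using the relation \eqref{eq:curvtochi}, i.e.\ $\langle \chi_S,\phi^m\rangle_{\mathcal{V}} = \lambda_m^{-1}\langle \kappa_S,\phi^m\rangle_{\mathcal{V}}$ for $m\geq 1$. Substituting this identity into the previous line and combining the resulting sums produces the second stated expression.

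The only possible obstacle is sign/normalization bookkeeping (in particular ensuring the $\gamma/2$ prefactor of the $H^{-1}$ term is carried consistently through the identification in \eqref{eq:OKexpression2}). Since the $m=0$ term vanishes in both sums, the $(\vol V)^{-1/2}\chi_V$ eigenfunction does not appear, which is why the sums start at $m=1$; this is consistent with the fact that both $\TV(\chi_S)$ and $\|\chi_S-\mathcal{A}(\chi_S)\|_{H^{-1}}^2$ are unchanged by additive constants in the relevant scalar product, and is already baked into the two source lemmas. No further argument is required.
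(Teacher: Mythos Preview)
Your approach is correct and is exactly the paper's approach: the paper's proof is the single sentence ``This follows directly from the definition in \eqref{eq:limitOK} and Lemmas~\ref{lem:TVrewrite} and~\ref{lem:H-1rewrite},'' and you have merely spelled out that substitution. Your instinct about the $\gamma/2$ prefactor is also well placed: taken literally, \eqref{eq:limitOK} together with Lemma~\ref{lem:H-1rewrite} yields a coefficient $\tfrac{\gamma}{2}\lambda_m^{-1}$ rather than $\gamma\lambda_m^{-1}$, so the discrepancy you flag is a genuine factor-of-two inconsistency in the paper's statements rather than a gap in your argument.
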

\begin{proof}
This follows directly from the definition in \eqref{eq:limitOK} and Lemmas~\ref{lem:TVrewrite} and~\ref{lem:H-1rewrite}.
\end{proof}

Corollary~\ref{cor:OKexpressions} allows us to explicitly give the Ohta-Kawasaki functional for our star graph example from Definition~\ref{def:bipartstar}.
\begin{lemma}\label{lem:starOKexpress}
Let $G=(V,E,\omega)$ be an unweighted star graph as in Definition~\ref{def:bipartstar} with $n\geq 3$ and let $q=1$. Let $S\subset V$. For $l\in \N$ define $N_l(S) := \left|\{i\in S: i\geq l\}\right|$. Then
\begin{align*}
F_0(\chi_S) &= \sum_{l=2}^{n-1} \frac{1+\gamma}{(n-l)(n-l+1)} \left[(n-l) \left(\chi_S\right)_l - N_{l+1}(S)\right]^2\\
& \hspace{0.1cm} + \frac1{n-1} \left(1+\frac\gamma{\left((n-1)^{1-r}+1\right)^2}\right) \left[(n-1)\left(\chi_S\right)_1 - N_2(S)\right]^2.
\end{align*}
\end{lemma}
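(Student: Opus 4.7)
The plan is to apply Corollary~\ref{cor:OKexpressions}, which already gives the spectral expansion
\[
F_0(\chi_S) = \sum_{m=1}^{n-1}\bigl(\lambda_m + \gamma\lambda_m^{-1}\bigr)\langle \chi_S,\phi^m\rangle_{\mathcal{V}}^2,
\]
and then simply evaluate each term using the explicit eigenpairs of the unweighted star graph that were written down in Lemma~\ref{lem:starspectrum}. Since $\lambda_m = 1$ for $1\le m\le n-2$ while $\lambda_{n-1} = (n-1)^{1-r}+1$, the sum splits naturally into the two pieces appearing on the right hand side of the claim, with $m$ in the range $1,\dots,n-2$ producing the first sum (after reindexing $l = m+1$) and the single term $m = n-1$ producing the second.

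The work therefore reduces to computing $\langle \chi_S,\phi^m\rangle_{\mathcal{V}}$ in closed form, and the only subtlety is that $d_1^r = (n-1)^r$ while $d_i^r = 1$ for $i\ne 1$, so the central node carries a different weight. For $m\in\{1,\dots,n-2\}$, I would observe that $\phi^m_1 = 0$, so only the leaves contribute and
\[
\langle \chi_S,\phi^{l-1}\rangle_{\mathcal{V}} = \frac{1}{\sqrt{(n-l)(n-l+1)}}\Bigl[(n-l)(\chi_S)_l - N_{l+1}(S)\Bigr],
\]
because the eigenfunction takes the value $n-l$ at node $l$ and $-1$ at each node $i\ge l+1$. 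Squaring and multiplying by $\lambda_{l-1}+\gamma\lambda_{l-1}^{-1} = 1+\gamma$ gives exactly the $l$th summand in the first sum.

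For $m = n-1$, I would compute, using $d_1^r(\chi_S)_1 \phi^{n-1}_1 = (n-1)^r(\chi_S)_1 \cdot (n-1)^{1-r}/\sqrt{(n-1)^{2-r}+(n-1)}$, that the factors of $(n-1)^r$ and $(n-1)^{-r}$ cancel cleanly and
\[
\langle \chi_S,\phi^{n-1}\rangle_{\mathcal{V}}^2 = \frac{1}{(n-1)\bigl[(n-1)^{1-r}+1\bigr]}\Bigl[(n-1)(\chi_S)_1 - N_2(S)\Bigr]^2.
\]
Multiplying by $\lambda_{n-1}+\gamma\lambda_{n-1}^{-1} = (n-1)^{1-r}+1 + \gamma/\bigl((n-1)^{1-r}+1\bigr)$ produces the second term, once one factors $(n-1)^{1-r}+1$ out of the bracket to obtain $\tfrac{1}{n-1}\bigl(1 + \gamma/((n-1)^{1-r}+1)^2\bigr)$.

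I do not expect any genuine obstacle: the whole argument is a substitution into the spectral formula. The only place where a misstep is easy is keeping track of the degree exponent $r$ on the centre node in the $m = n-1$ term, so I would perform that cancellation explicitly rather than implicitly.
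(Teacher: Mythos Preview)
Your proposal is correct and follows essentially the same route as the paper: both start from the spectral formula in Corollary~\ref{cor:OKexpressions}, insert the eigenvalues and eigenfunctions from Lemma~\ref{lem:starspectrum}, and split the sum into the block $m\in\{1,\dots,n-2\}$ (reindexed by $l=m+1$) and the single term $m=n-1$. The only cosmetic difference is that you compute $\langle \chi_S,\phi^m\rangle_{\mathcal{V}}$ first and then square, whereas the paper expands $\langle \chi_S,\phi^m\rangle_{\mathcal{V}}^2$ directly as a double sum with Kronecker deltas; your presentation is in fact the cleaner of the two.
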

\begin{proof}
This follows by combining the eigenfunctions and eigenvalues we found in Lemma~\ref{lem:starspectrum} with \eqref{eq:OKexpression2}. Define, for $k\in \N$, $I(k) := \{i\in \N: k\leq i\leq n\}$. Then we compute, for $m\in \{1, \ldots, n-2\}$,
\begin{align*}
\langle \chi_S, \phi^m\rangle_{\mathcal{V}}^2 &= \frac1{(n-m-1)(n-m)} \sum_{i,j\in S} \left[(n-m-1)^2 \delta_{i,m+1}\delta_{j,m+1} + \left(\chi_{I(m+2)}\right)_i \left(\chi_{I(m+2)}\right)_j\right. \\&\hspace{4cm} \left. - (n-m-1) \left(\delta_{i,m+1} \left(\chi_{I(m+2)}\right)_j + \delta_{j,m+1} \left(\chi_{I(m+2)}\right)_i\right)\right]\\
&= \frac{n-m-1}{n-m} \left(\chi_S\right)_{m+1} + \frac1{(n-m-1)(n-m)} \left(\sum_{i=m+2}^n \left(\chi_S\right)_i\right)^2\\ &\hspace{4cm} - \frac2{n-m} \left(\chi_S\right)_{m+1} \sum_{i=m+2}^n \left(\chi_S\right)_i,\\
\langle \chi_S, \phi^{n-1}\rangle_{\mathcal{V}}^2 &= \frac1{(n-1)\left((n-1)^{1-r}+1\right)} \sum_{i,j\in S} d_i^rd_j^r \left[(n-1)^{2-2r} \delta_{i1}\delta_{j1} + \left(\chi_{I(2)}\right)_i \left(\chi_{I(2)}\right)_j \right.\\ &\hspace{4cm} \left.- (n-1)^{1-r} \left(\delta_{i1} \left(\chi_{I(2)}\right)_j + \delta_{j1} \left(\chi_{I(2)}\right)_i\right)\right]\\
&= \frac{n-1}{(n-1)^{1-r}+1} \left(\chi_S\right)_1 + \frac1{(n-1)\left((n-1)^{1-r}+1\right)} \left(\sum_{i=2}^n \left(\chi_S\right)_i\right)^2\\
&\hspace{4cm} - \frac2{(n-1)^{1-r}+1} \left(\chi_S\right)_1 \sum_{i=2}^n \left(\chi_S\right)_i.
\end{align*}
Substituting these into \eqref{eq:OKexpression2} and noting that $N_l(S) = \sum_{i=l}^n \left(\chi_S\right)_i$ gives the desired result.
\end{proof}

\begin{corol}\label{cor:OKstarminimizers}
Let $G=(V,E,\omega)$ be an unweighted star graph as in Definition~\ref{def:bipartstar} with $n\geq 3$ nodes, with spectrum as in Lemma~\ref{lem:starspectrum}, and let $q=1$. Let $\mathfrak{M}$ be the set of admissable masses as in \eqref{eq:admissmass}.
Let $M\in \mathfrak{M}$ be such that there are $u, \tilde u\in \mathcal{V}_M^b$ with $u_1=0$ and $\tilde u_1 = 1$. Consider the minimization problem
from \eqref{eq:minimprobsF0}. 
We have
\begin{itemize}
\item if $M=\frac12 \vol{V}$ or $\gamma = \lambda_{n-1}$, then all $u\in \mathcal{V}_M^b$ are minimizers of \eqref{eq:minimprobsF0}; 
\item if $(\vol{V}-2M)(\gamma-\lambda_{n-1}) <0$, then $u\in \mathcal{V}_M^b$ is a minimizer of 
\eqref{eq:minimprobsF0} if and only if $u_1=0$;
\item if $(\vol{V}-2M)(\gamma-\lambda_{n-1}) >0$, then $u\in \mathcal{V}_M^b$ is a minimizer of 
\eqref{eq:minimprobsF0} if and only if $u_1=1$.,
\end{itemize}
\end{corol}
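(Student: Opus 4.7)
The plan is to reduce the problem to comparing just two scalars by exploiting the symmetry of the star graph recorded in Lemma~\ref{lem:starmass2}: for any $u\in\mathcal{V}_M^b$, the value of $F_0(u)$ depends only on $u_1$ and on $|S|$, where $u=\chi_S$. By Lemma~\ref{lem:starmass}, the fifth bullet, the value of $u_1$ together with $M$ already determines $|S|$, so in fact $F_0(u)$ only depends on $u_1\in\{0,1\}$ once $M$ is fixed. Hence the minimization over $\mathcal{V}_M^b$ reduces to comparing the common value of $F_0$ on the subclass $\{u\in\mathcal{V}_M^b:u_1=0\}$ with that on $\{u\in\mathcal{V}_M^b:u_1=1\}$; by hypothesis both subclasses are nonempty.

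Next I would pick convenient representatives and apply Lemma~\ref{lem:starOKexpress} directly. For the $u_1=0$ class, take $S=\{2,\dots,M+1\}$ (note $M\in\{0,\dots,n-1\}$ in this case); for the $u_1=1$ class, take $\tilde S=\{1,2,\dots,L+1\}$ with $L:=M-(n-1)^r$ (the admissible number of leaves in $\tilde S$). For each representative, the inner bracket $(n-l)(\chi_S)_l-N_{l+1}(S)$ is easily computed: it equals $n-M-1$ for $l\in\{2,\dots,M+1\}$ and vanishes elsewhere in the first case, and equals $n-L-1$ on $\{2,\dots,L+1\}$ and vanishes elsewhere in the second. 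The coefficient sum telescopes:
\[
\sum_{l=2}^{M+1}\frac{1}{(n-l)(n-l+1)}=\frac{1}{n-M-1}-\frac{1}{n-1},
\]
and the boundary term in Lemma~\ref{lem:starOKexpress} contributes $M^2$ or $(n-1-L)^2$ respectively. A short rearrangement yields the closed forms
\[
F_0(\chi_S)=M+\frac{\gamma M(n-1-M)}{n-1}+\frac{\gamma M^2}{(n-1)\lambda_{n-1}^2},\qquad F_0(\chi_{\tilde S})=K+\frac{\gamma K(n-1-K)}{n-1}+\frac{\gamma K^2}{(n-1)\lambda_{n-1}^2},
\]
where $K:=n-1-L=n-1-M+(n-1)^r$, so that $K+M=\vol{V}$ and $K-M=\vol{V}-2M$.

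The main (and only) real obstacle is the algebraic simplification of the difference. Using $K^2-M^2=(K-M)(K+M)$ and $K(n-1-K)-M(n-1-M)=(K-M)[(n-1)-(K+M)]=-(K-M)(n-1)^r$, one factors out $(K-M)$ and gets
\[
F_0(\chi_{\tilde S})-F_0(\chi_S)=(\vol{V}-2M)\left[1-\frac{\gamma(n-1)^r}{n-1}+\frac{\gamma\,\vol{V}}{(n-1)\lambda_{n-1}^2}\right].
\]
Using $(n-1)^{r-1}=1/(\lambda_{n-1}-1)$ and $\vol{V}/(n-1)=\lambda_{n-1}/(\lambda_{n-1}-1)$, the bracket collapses to $(\lambda_{n-1}-\gamma)/\lambda_{n-1}$, giving
\[
F_0(\chi_{\tilde S})-F_0(\chi_S)=-\frac{(\vol{V}-2M)(\gamma-\lambda_{n-1})}{\lambda_{n-1}}.
\]

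The three bullets follow immediately by reading off the sign: vanishing difference when $M=\tfrac12\vol{V}$ or $\gamma=\lambda_{n-1}$, positive difference (so $u_1=0$ strictly optimal) when $(\vol{V}-2M)(\gamma-\lambda_{n-1})<0$, and negative difference (so $u_1=1$ strictly optimal) when the product is positive. Combined with Lemma~\ref{lem:starmass2}, which ensures equality of $F_0$ across each subclass, this characterizes the minimizers of \eqref{eq:minimprobsF0} in the three regimes of the corollary.
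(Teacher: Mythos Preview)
Your proof is correct and follows essentially the same approach as the paper: reduce via Lemma~\ref{lem:starmass2} (and Lemma~\ref{lem:starmass}) to comparing two values of $F_0$, compute each via Lemma~\ref{lem:starOKexpress}, and read off the sign of the difference. Your algebra is in fact cleaner than the paper's --- the substitution $K=\vol{V}-M$ and the factorization through $K-M$ and $K+M$ avoid the messier intermediate expressions the paper carries before arriving at the equivalent formula $F_0(u)-F_0(\tilde u)=(2M-\vol{V})(1-\gamma/\lambda_{n-1})$.
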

\begin{proof}
For $w\in \mathcal{V}$, define $\ell(w):= |\{i\in \{2, \ldots, n\}: w_i=1\}|$, i.e. $\ell(w)$ is the number of leave nodes on which $w$ takes the value $1$. By Lemma~\ref{lem:starmass2} we know that $F_0(w)=F_0(u)$ if $w_1=0$ and $F_0(w)=F_0(\tilde u)$ if $w_1=1$. Thus, for each $w\in \mathcal{V}$ there is a $\hat w\in \mathcal{V}$ such that $F_0(w)=F_0(\hat w)$, $\ell(w) = \ell(\hat w)$, for all $i\in \{2, \ldots, \ell(w)+1\}$ $\hat w_i = 1$, and (if $\ell(w)+2\leq n$) for all $i\in \{\ell(w)+2, \ldots, n\}$ $\hat w_i=0$. Hence, we assume without loss of generality that $w$ satisfies the properties prescribed for $\hat w$ above. In particular, in the notation of Lemma~\ref{lem:starOKexpress}, if $S=\{i\in V: w_i=1\}$, then for $2\leq l\leq n$, $N_l = \max(0, \ell(w)-(l-2))$. Substituting this in the expression for $F_0$ in Lemma~\ref{lem:starOKexpress} we find
\begin{align*}
F_0(w) &= \sum_{l=2}^{\ell(w)+1} \frac{1+\gamma}{(n-l)(n-l+1)} \left((n-l) - (\ell(w)-(l-1))\right)^2\\
&\hspace{2cm} + \frac1{n-1} \left(1+\frac\gamma{\left((n-1)^{1-r}+1\right)^2}\right) \left((n-1) w_1 - \ell(w)\right)^2\\
&= \frac1{n-1} \left[(1+\gamma) \ell(w) (n-1-L) + \left(1+\frac\gamma{\left((n-1)^{1-r}+1\right)^2}\right) \left((n-1)w_1-\ell(w)\right)^2\right],
\end{align*} 
where for the second equality we used that
\begin{equation}\label{eq:moresums}
\sum_{l=2}^{\ell(w)+1} \frac{1+\gamma}{(n-l)(n-l+1)} = \frac{\ell(w)}{(n-1)(n-\ell(w)-1)},
\end{equation}
which in turn follows from Corollary~\ref{cor:moresums}. Note that
\begin{equation}\label{eq:volumeell}
M=(n-1)^r w_1 + \ell(w),
\end{equation}
hence 
\begin{align*}
F_0(w) &= \frac1{n-1}\left[(1+\gamma) \left(n-1+(n-1)^r M w_1 - M\right) \left(M-(n-1)^r w_1\right)\right.\\
&\hspace{2cm} \left. + \left(1+\frac\gamma{\left((n-1)^{1-r}+1\right)^2}\right) \left((n-1)w_1 + (n-1)^r w_1 - M\right)^2\right].
\end{align*}
If $w_1 = 0$ we compute
\[
(n-1)F_0(u)=(n-1) F_0(w) = M^2 \left(-(1+\gamma) + 1 + \frac\gamma{\left((n-1)^{1-r}+1\right)^2}\right) + (1+\gamma)(n-1)M.
\]
If $w_1=1$ on the other hand, then
\begin{align*}
&(n-1)F_0(\tilde u) = (n-1)F_0(w) = \\
 &\hspace{0.3cm} M^2 \left(-(1+\gamma)+1+\frac\gamma{\left((n-1)^{1-r}+1\right)^2}\right)\\
&\hspace{0.3cm} + M\left[-n+1 + \gamma \left((n-1)^r \left(1+(n-1)^{1-r}\right)\left(1-\frac2{\left((n-1)^{1-r}+1\right)^2}\right) + (n-1)^r\right)\right]\\
&\hspace{0.3cm} + (1-\gamma)(n-1)^{1+r} + (n-1)^2.
\end{align*}
A short computation then shows that
\[
F_0(u)-F_0(\tilde u) = \left(2M-\left((n-1)^r+n-1\right)\right) \left(1-\frac\gamma{(n-1)^{1-r}+1}\right).
\]
Since $\vol{V}=(n-1)^r+n-1$ and $\lambda_{n-1} = (n-1)^{1-r}+1 >0$ the results follow.
\end{proof}

\begin{remark}
We can easily understand the critical role that the value $M=\frac12 \vol{V}$ plays in Corollary~\ref{cor:OKstarminimizers}. For any $S\subset V$ we have $\mathcal{M}(\chi_S) = \vol{V} - \mathcal{M}(\chi_{V\setminus S})$ and $F_0(\chi_S) = F_0(\chi_{V\setminus S})$, thus $\chi_S$ is a minimizer of \eqref{eq:minimprobsF0} for a given $M$, if and only if $\chi_{V\setminus S}$ is a minimizer for $\tilde M = \vol{V}-M$. We have $\tilde M = M$ if and only if $M=\frac12\vol{V}$.

Furthermore, in Corollary~\ref{cor:OKstarminimizers} we found that $\gamma=\lambda_{n-1}$ is a critical value for the star graph at which the minimizer of $F_0$ changes its value at the internal node $1$. This can heuristically be understood as the value of $\gamma$ for which, for $m\in \{1, \ldots, n-2\}$,  $\lambda_m+\frac\gamma{\lambda_m} = \lambda_{n-1}+\frac\gamma{\lambda_{n-1}}$, and so the influence of $\phi^{n-1}$ ---which is the eigenfunction that distinguishes node $1$ from the other nodes--- in \eqref{eq:OKexpression2} becomes noticeable. It is not clear to what degree this heuristic can be applied to other graphs as well.
\end{remark}

\begin{remark}
Note that in the star graph setting of Corollary~\ref{cor:OKstarminimizers} we assume that $M\in \mathfrak{M}$ is such that $\mathcal{V}_M^b$ contains both functions which take the value $0$ on node $1$ and functions which take the value $1$ on node $1$. If $M$ were such that all functions in $\mathcal{V}_M^b$ took the same value on node $1$, then minimizers of \eqref{eq:minimprobsF0} would be necessarily restricted to that class of functions and the `if and only if' statements in the corollary would have to be weakened.

Notice, however, that this assumption can be quite restrictive. For example, when $r=1$ we have that, if $M>n-1$, then all $u\in \mathcal{V}_M^b$ satisfy $u_1=1$, and if $M<n-1$, then all $u\in \mathcal{V}_M^b$ satisfy $u_1=0$. Hence, if $r=1$, then the assumption from the corollary is satisfied if and only if $M=n-1=\frac12 \vol{V}$, in which case the corollary tells us that all $u\in \mathcal{V}_M^b$ are minimizers of \eqref{eq:minimprobsF0}.

In order to obtain a larger set of admissable masses with interesting behaviour, one could consider minimising $F_0(\chi_S)$ over all $\chi_S \in \mathcal{V}^b$ for which $|S|=M$, for a given $M\in (0,n)\cap \N$. Note that this problem is equivalent to \eqref{eq:minimprobsF0} if $r=0$, but even if $r\neq 0$, any choice of $M\in (0,n)\cap \N$ will allow for admissible $u\in\mathcal{V}^b$ with $u_1=0$ and admissible $u\in \mathcal{V}^b$ with $u_1=1$. Of course it is a somewhat unnatural mixture of conditions to set $r=0$ in the mass condition, but not in the functional $F_0$. If we repeat the computation from the proof of Corollary~\ref{cor:OKstarminimizers} in this case, i.e. with $M=w_1+L$ instead of \eqref{eq:volumeell}, and define
\[
g(\gamma) := (n-2M) \left[\gamma \left(1-\frac{n}{\lambda_{n-1}^2}\right) -  (n-1)\right],
\]
we find that if $g(\gamma)=0$ all admissible $u$ are minimizers; if $g(\gamma)<0$ any admissible $u$ is a minimizer if and only if $u_1=1$; and if $g(\gamma)>0$ any admissible $u$ is a minimizer if and only if $u_1=0$.
\end{remark}

\section{Graph MBO schemes}\label{sec:MBO}

\subsection{The graph Ohta-Kawasaki MBO scheme}\label{sec:OKMBOscheme}

One way in which we can attempt to solve the $F_\e$ minimization problem in \eqref{eq:minimprobsFe} is via a gradient flow. In Appendix~\ref{sec:gradflows} we derive gradient flows with respect to the $\mathcal{V}$ inner product (which, if $r=0$ and each $u\in \mathcal{V}$ is identified with a vector in $\R^n$, is just the Euclidean inner product on $\R^n$) and with respect to the $H^{-1}$ inner product which leads to the graph Allen-Cahn and graph Cahn-Hilliard type systems of equations, respectively. In our simulations later in the paper, however, we do not use these gradient flows, but we use the MBO approximation. 

Heuristically, graph MBO type schemes (originally introduced in the continuum setting in \cite{MBO1992,MBO1993}) can be seen as approximations to graph Allen-Cahn type equations (as in \eqref{eq:AllenCahn}), obtained by replacing the double well potential term in that equation by a hard thresholding step. This leads to the algorithm~\ref{alg:OKMBO}. In the algorithm we have used the set $\mathcal{V}_\infty$, which we define to be the set of all functions $u: [0, \infty) \times V \to \R$ which are continuously differentiable in their first argument (which we will typically denote by $t$). For such functions, we will use the notation $u_i(t) := u(t,i)$. We note that where before $u$ and $\varphi$ denoted functions in $\mathcal{V}$, here these same symbols are used to denote functions in $\mathcal{V}_\infty$.

For reasons that will be explored below in Remark~\ref{rem:whynewvarphi}, in the algorithm we use a variation of \eqref{eq:varphiinH-1}: for given $u\in \mathcal{V}$, if $\varphi\in \mathcal{V}$ satisfies
\begin{equation}\label{eq:zeromassvarphiequation}
\begin{cases}
\Delta \varphi = u-\mathcal{A}(u),\\
\mathcal{M}(\varphi) = 0,
\end{cases}
\end{equation}
we say $\varphi$ \textit{solves \eqref{eq:zeromassvarphiequation} for $u$}.

If $\varphi \in \mathcal{V}$ solves \eqref{eq:zeromassvarphiequation} for a given $u\in \mathcal{V}$ and $\tilde \varphi \in \mathcal{V}$ solves \eqref{eq:varphiinH-1} for the same $u$, then $\Delta(\varphi-\tilde \varphi)=0$, hence there exists a $C\in \R$, such that $\varphi = \tilde \varphi + C\chi_V$. Because $\tilde \varphi_k=0$, we have $C= \varphi_k$. In particular, because \eqref{eq:varphiinH-1} has a unique solution, so does \eqref{eq:zeromassvarphiequation}.

For a given $\gamma\geq 0$, we define the operator $L: \mathcal{V} \to \mathcal{V}$ as follows. For $u\in \mathcal{V}$, let
\begin{equation}\label{eq:defofL}
Lu := \Delta u + \gamma \varphi,
\end{equation}
where $\varphi\in \mathcal{V}$ is the solution to \eqref{eq:zeromassvarphiequation}.

\begin{asm}{(OKMBO)}
\KwData{An initial node subset $S^0 \subset V$, a parameter $\gamma\geq 0$, a parameter $r\in [0,1]$, 
a time step $\tau > 0$,
and the number of iterations $N\in \N\cup\{\infty\}$. }
\KwOut{A sequence of node sets $\{S^k\}_{k=1}^N$, which is the \ref{alg:OKMBO} evolution of $S^0$. }
\For{$k = 1 \ \KwTo \  N$,}{
{\bf ODE step.} Compute $u\in \mathcal{V}_\infty$ by solving
\begin{equation}\label{eq:MBOstepa}
\begin{cases}
\frac{\partial u}{\partial t} = -Lu, &\text{on } (0,\infty) \times V,\\
u(0) = u_0, &\text{on } V,\\
\end{cases}
\end{equation}
where $u_0=\chi_{S^{k-1}}$ and $L$ is as in \eqref{eq:defofL} with $\varphi\in \mathcal{V}_\infty$ being such that, for all $t\in [0,\infty)$, $\varphi(t)$ solves \eqref{eq:zeromassvarphiequation} for $u(t)$.\\ \medskip

{\bf Threshold step.} Define the subset $S^k \subset V$ to be
\[
S^k := \left\{ i\in V \colon u(\tau) \geq \frac12 \right\}.
\]
}
\caption{\label{alg:OKMBO} The graph Ohta-Kawasaki Merriman-Bence-Osher algorithm}
\end{asm}

\begin{remark}\label{rem:uniquesolution} 
Since $L$, as defined in \eqref{eq:defofL}, is a continuous linear operator from $\mathcal{V}$ to $\mathcal{V}$ (see \eqref{eq:Leigexp}), by standard ODE theory (\cite[Chapter 1]{Hale2009}, \cite[Chapter 1]{coddington1984theory}) there exists a unique, continuously-differentiable-in-$t$, solution $u$ of \eqref{eq:MBOstepa} on $(0,\infty)\times V$. In the threshold step of \ref{alg:OKMBO}, however, we only require $u(\tau)$, hence it suffices to compute the solution $u$ on $(0,\tau]$.
\end{remark}

\begin{remark}\label{rem:whynewvarphi} Note that we cannot always find a $k\in V$ such that the solution to \eqref{eq:varphiinH-1} is also a solution to \eqref{eq:zeromassvarphiequation}. In other words, the solution to \eqref{eq:zeromassvarphiequation} with $\mathcal{M}(\varphi) = 0$ may have nonzero value at every node in $V$.
We could keep definition \eqref{eq:varphiinH-1} for $\varphi$, instead of \eqref{eq:zeromassvarphiequation}, but then we would need to replace the term $-\gamma \varphi$ in \eqref{eq:MBOstepa} (with \eqref{eq:defofL}) by $-\gamma (\varphi - \mathcal{A}(\varphi))$. For simplicity we choose the formulation as laid out in \eqref{eq:MBOstepa} with \eqref{eq:zeromassvarphiequation}, but this has as consequence that $\varphi$ from \eqref{eq:zeromassvarphiequation} cannot necessarily always be obtained via the Green's function approach outlined in \eqref{eq:Greensexpansion2},
\eqref{eq:GreenPoisson}. In general such $\varphi$ will have the form, for all $i\in V$,
$
\varphi_i  := \sum_{j\in S} d_j^r G_{ij} (u_j-\mathcal{A}(u)) + c,
$ 
where $G$ is as in 
\eqref{eq:GreenPoisson} and $c$ is a suitably chosen constant such that $\mathcal{M}(\varphi) = 0$. Of course, as remarked before, the value of $c$ will not  influence the value of $\|u-\mathcal{A}(u)\|_{H^{-1}}$.

Lemma~\ref{lem:sufficientnodezero} below gives a sufficient condition for $c$ to be zero.
\end{remark}

\begin{lemma}\label{lem:sufficientnodezero}
Let $k\in V$ and let $\varphi$ solve \eqref{eq:varphiinH-1}. If there exists an $l\in V$ such that, for all $j\in V$,
\[
\nu_l^{V\setminus\{k\}} - \nu_l^{V\setminus\{j\}} = \frac1{\vol{V}} \sum_{i\in V} d_i^r \left(\nu_i^{V\setminus\{k\}} - \nu_i^{V\setminus\{j\}}\right),
\]
then $\tilde \varphi := \varphi - \varphi_l$ satisfies both \eqref{eq:zeromassvarphiequation} and
$
\begin{cases}
\Delta \tilde\varphi = u-\mathcal{A}(u),\\
\tilde\varphi_l = 0.
\end{cases}
$
\end{lemma}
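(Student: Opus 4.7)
The plan is to observe that $\tilde\varphi = \varphi - \varphi_l \chi_V$ differs from $\varphi$ only by a constant function. Since $\Delta$ annihilates constants, we immediately get $\Delta\tilde\varphi = \Delta\varphi = u - \mathcal{A}(u)$. The boundary condition $\tilde\varphi_l = \varphi_l - \varphi_l = 0$ is also immediate. So the content of the lemma reduces to checking the zero-mass condition $\mathcal{M}(\tilde\varphi) = 0$, or equivalently $\vol{V}\,\varphi_l = \mathcal{M}(\varphi)$.

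Next I would express $\varphi$ via the Poisson Green's function from Lemma~\ref{lem:Greensfunctions}:
\[
\varphi_i = \sum_{j\in V} d_j^r G_{ij}(u_j - \mathcal{A}(u)_j).
\]
Then
\[
\vol{V}\,\varphi_l - \mathcal{M}(\varphi) = \sum_{j\in V} d_j^r (u_j - \mathcal{A}(u)_j)\left[\vol{V}\, G_{lj} - \sum_{i\in V} d_i^r G_{ij}\right].
\]
Since $u-\mathcal{A}(u)$ ranges over all of $\mathcal{V}_0$ as $u$ ranges over $\mathcal{V}$, and since we need the identity to hold for the particular $u$ given, it is natural to show the stronger statement that the bracketed coefficient $c_j := \vol{V}\, G_{lj} - \sum_i d_i^r G_{ij}$ is constant in $j$ (so that, being $\mathcal{V}$-orthogonal to $\mathcal{V}_0$, it kills the sum). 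That stronger statement is exactly what the hypothesis delivers.

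The final step is to substitute the explicit formula \eqref{eq:GreenPoisson} for $G_{ij}$ and simplify. The terms containing $\nu_k^{V\setminus\{j\}}$ appear with coefficient $\vol{V}$ in $\vol{V}\,G_{lj}$ and with coefficient $\sum_i d_i^r = \vol{V}$ in $\sum_i d_i^r G_{ij}$, and so cancel. What remains is
\[
c_j = \bigl(\nu_l^{V\setminus\{k\}} - \nu_l^{V\setminus\{j\}}\bigr) - \frac1{\vol{V}} \sum_{i\in V} d_i^r \bigl(\nu_i^{V\setminus\{k\}} - \nu_i^{V\setminus\{j\}}\bigr),
\]
which is precisely $0$ for every $j\in V$ by the standing hypothesis on $l$. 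Hence $c_j$ is constant (in fact identically zero), giving $\vol{V}\,\varphi_l = \mathcal{M}(\varphi)$, and the proof is complete.

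There is no substantial obstacle here: the only nontrivial step is the reformulation of the zero-mass requirement into a condition purely on the Green's function (using that the source $u-\mathcal{A}(u)$ lies in $\mathcal{V}_0$); once this reduction is made, the hypothesis has been tailored to match the resulting expression exactly. Symmetry of the Green's function (Lemma~\ref{lem:symmetry}) can optionally be invoked to shorten the bookkeeping, by rewriting $\sum_i d_i^r G_{ij} = \sum_i d_i^r G_{ji}$, but it is not essential.
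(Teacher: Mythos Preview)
Your proposal is correct and follows essentially the same route as the paper: both reduce the claim to showing $\vol{V}\,G_{lj} = \sum_{i\in V} d_i^r G_{ij}$ (equivalently, your $c_j=0$, or the paper's $G_{lj}=\mathcal{A}(G^j)$) by substituting the explicit formula \eqref{eq:GreenPoisson} and letting the $\nu_k^{V\setminus\{j\}}$ terms cancel, leaving precisely the hypothesized identity. Your framing via the coefficient $c_j$ is a slightly cleaner packaging, but the substance is identical.
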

\begin{proof}
Clearly $\tilde \varphi_l=0$ and, since $\varphi$ satisfies \eqref{eq:varphiinH-1}, $\Delta \tilde \varphi = u-\mathcal{A}(u)$.  Let $G$ be the Green's function from \eqref{eq:GreenPoisson}. If the conditions from the lemma hold, then, for all $j\in V$,
\begin{align*}
G_{lj} &= \frac1{\vol{V}} \left(\nu_l^{V\setminus\{k\}}+\nu_k^{V\setminus\{j\}} - \nu_l^{V\setminus\{j\}}\right)
= \frac1{\vol{V}} \left[ \nu_k^{V\setminus\{j\}} + \frac1{\vol{V}} \sum_{i\in V} d_i^r \left(\nu_i^{V\setminus\{k\}} - \nu_i^{V\setminus\{j\}}\right)\right]\\
&= \frac1{\vol{V}^2} \sum_{i\in V} d_i^r \left(\nu_i^{V\setminus\{k\}}+\nu_k^{V\setminus\{j\}} - \nu_i^{V\setminus\{j\}}\right) (\chi_V)_i
= \frac1{\vol{V}} \langle G^j, \chi_V\rangle_{\mathcal{V}} = \mathcal{A}(G^j).
\end{align*}
Therefore\footnote{The index $\bigcdot$ in $G_{l\bigcdot}$ and $\mathcal{A}(G^{\bigcdot})$ indicates the index over which is summed in the inner products; thus in the second inner product the summation in the mass $\mathcal{M}(G^j)$ is over the lower index of $G_i^j$ (for fixed $j$) and the summation in the inner product is over the upper index.},
$
\varphi_l = \langle G_{l\bigcdot}, u-\mathcal{A}(u)\rangle_{\mathcal{V}} = \langle \mathcal{A}(G^{\bigcdot}), u-\mathcal{A}(u)\rangle_{\mathcal{V}}.
$ 
Moreover,
\[
\mathcal{M}(\varphi) = \sum_{i\in V} \varphi_i d_i^r = \sum_{i,j\in V} G_{ij} (u_j-\mathcal{A}(u)) d_i^r d_j^r = \sum_{j\in V} \mathcal{M}(G_{\bigcdot j}) (u_j-\mathcal{A}(u)) d_j^r = \langle \mathcal{M}(G^{\bigcdot}), u-\mathcal{A}(u)\rangle_{\mathcal{V}}.
\]
Hence we conclude that $\varphi_l = \mathcal{A}(\varphi)$ and thus $\varphi-\varphi_l$ satisfies \eqref{eq:zeromassvarphiequation}.
\end{proof}

The next lemma will come in handy in various proofs later in the paper.

\begin{lemma}\label{lem:decreasingfunction}
Let $G=(V,E,\omega)\in \mathcal{G}$ and $u\in \mathcal{V}$, then the function
\begin{equation}\label{eq:decreasingfunction}
[0,\infty) \to \R, t\mapsto \left\langle e^{-t L}u, u\right\rangle_{\mathcal{V}}
\end{equation}
is decreasing. Moreover, if $u$ is not constant on $V$, then the function in \eqref{eq:decreasingfunction} is strictly decreasing.

\end{lemma}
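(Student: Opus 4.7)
The plan is to diagonalize $L$ in the eigenbasis $\{\phi^m\}_{m=0}^{n-1}$ of $\Delta$ from \eqref{eq:eigenfunctions} and then read off monotonicity from a sum of decaying exponentials.

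First I would determine the spectrum of $L$. Given $u\in\mathcal{V}$, expand $u = \sum_{m=0}^{n-1} a_m \phi^m$ with $a_m = \langle u,\phi^m\rangle_{\mathcal{V}}$. Since $\mathcal{A}(u) = a_0 (\vol V)^{-1/2}\chi_V$ via \eqref{eq:zerotheigvec}, we have $u-\mathcal{A}(u) = \sum_{m=1}^{n-1} a_m\phi^m$. By Lemma~\ref{lem:varphi}, the function $\tilde\varphi := \sum_{m=1}^{n-1}\lambda_m^{-1}a_m\phi^m$ satisfies $\Delta\tilde\varphi = u-\mathcal{A}(u)$, and because $\langle\phi^m,\chi_V\rangle_{\mathcal{V}} = 0$ for $m\geq 1$, we have $\mathcal{M}(\tilde\varphi)=0$. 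By the uniqueness of solutions to \eqref{eq:zeromassvarphiequation} noted in the paragraph following that equation, $\varphi = \tilde\varphi$. Therefore
\begin{equation}\label{eq:Leigexp}
Lu \;=\; \sum_{m=1}^{n-1}\bigl(\lambda_m + \gamma\lambda_m^{-1}\bigr)\,a_m\,\phi^m,
\end{equation}
so that $L$ has the same eigenfunctions as $\Delta$, with eigenvalues $\mu_0 = 0$ and $\mu_m := \lambda_m + \gamma\lambda_m^{-1} > 0$ for $m\in\{1,\dots,n-1\}$. In particular $L$ is self-adjoint on $(\mathcal{V},\langle\cdot,\cdot\rangle_{\mathcal{V}})$ and positive semidefinite.

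Next I would use this spectral representation to evaluate the pairing. Since $\{\phi^m\}$ is a $\mathcal{V}$-orthonormal basis and $e^{-tL}\phi^m = e^{-t\mu_m}\phi^m$,
\[
\bigl\langle e^{-tL}u,u\bigr\rangle_{\mathcal{V}} \;=\; \sum_{m=0}^{n-1} e^{-t\mu_m}\,a_m^2 \;=\; a_0^2 + \sum_{m=1}^{n-1} e^{-t\mu_m}\,a_m^2.
\]
Differentiating in $t$ term by term yields
\[
\frac{d}{dt}\bigl\langle e^{-tL}u,u\bigr\rangle_{\mathcal{V}} \;=\; -\sum_{m=1}^{n-1}\mu_m\,e^{-t\mu_m}\,a_m^2 \;\leq\; 0,
\]
which already shows that the function is (weakly) decreasing on $[0,\infty)$.

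For the strict statement, note that the derivative above vanishes at some (equivalently, every) $t\geq 0$ if and only if $a_m = 0$ for all $m\in\{1,\dots,n-1\}$, i.e.\ $u = a_0\phi^0$ is a multiple of $\chi_V$. Thus if $u$ is not constant on $V$, at least one $a_m$ with $m\geq 1$ is nonzero, the derivative is strictly negative on $[0,\infty)$, and the function is strictly decreasing. I do not anticipate a real obstacle: the only subtlety is confirming that the $\varphi$ defined by \eqref{eq:zeromassvarphiequation} (rather than \eqref{eq:varphiinH-1}) has the clean eigen-expansion \eqref{eq:Leigexp}, and this is exactly where the normalization $\mathcal{M}(\varphi)=0$ is needed so that no $\phi^0$-component enters $L$.
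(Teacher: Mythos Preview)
Your proposal is correct and follows essentially the same route as the paper: both expand $u$ in the $\mathcal{V}$-orthonormal eigenbasis $\{\phi^m\}$, write $\langle e^{-tL}u,u\rangle_{\mathcal{V}} = \sum_{m=0}^{n-1} e^{-t\Lambda_m}\langle u,\phi^m\rangle_{\mathcal{V}}^2$, and read off (strict) monotonicity from the fact that $\Lambda_m>0$ for $m\geq 1$. The only cosmetic differences are that you derive the spectrum of $L$ inline (the paper defers this to a separate remark and lemma) and you differentiate in $t$ explicitly, whereas the paper simply notes each summand is (strictly) decreasing.
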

\begin{proof}
Using the expansion in \eqref{eq:expansion} for $u$, we have
\begin{align}
\left\langle e^{-t L}u, u\right\rangle_{\mathcal{V}} &=  \left\langle \sum_{m=0}^{n-1} e^{-t \Lambda_m} \langle u,\phi^m\rangle_{\mathcal{V}}\, \phi^m, \sum_{l=0}^{n-1} \langle u,\phi^l\rangle_{\mathcal{V}} \phi^l\right\rangle_{\mathcal{V}}\notag\\
&= \sum_{m,l=0}^{n-1} e^{-t\Lambda_m} \langle u,\phi^m\rangle_{\mathcal{V}} \langle u,\phi^l\rangle_{\mathcal{V}}\, \delta_{ml} = \sum_{m=0}^{n-1} e^{-t\Lambda_m} \langle u,\phi^m\rangle_{\mathcal{V}}^2.\label{eq:etauLuu}
\end{align}
Since, for each $m\in \{0, \ldots, n-1\}$, the function $t\mapsto e^{-t\Lambda_m}$ is decreasing, the function in \eqref{eq:decreasingfunction} is decreasing. Moreoever, for each $m\in \{1, \ldots, n-1\}$, the function $t\mapsto e^{-t\Lambda_m}$ is strictly decreasing; thus the function in \eqref{eq:decreasingfunction} is strictly decreasing unless for all  $m\in \{1, \ldots, n-1\}$, $ \langle u,\phi^m\rangle_{\mathcal{V}}=0$.

Assume that for all  $m\in \{1, \ldots, n-1\}$, $ \langle u,\phi^m\rangle_{\mathcal{V}}=0$. Then, by the expansion in \eqref{eq:expansion} and the expression in \eqref{eq:zerotheigvec}, we have $u= \langle u, \phi^0\rangle_{\mathcal{V}}\, \phi^0 = \vol{V}^{-1} \mathcal{M}(u) \chi_V$. Hence $u$ is constant. Thus, if $u$ is not constant, then the function in \eqref{eq:decreasingfunction} is strictly decreasing.
\end{proof}

The following lemma introduces a Lyapunov functional for the \ref{alg:OKMBO} scheme.

\begin{lemma}\label{lem:Lyapunov}
Let $G=(V,E,\omega) \in \mathcal{G}$, $\gamma\geq 0$, and $\tau>0$. Define $J_\tau: \mathcal{V} \to \R$ by
\begin{equation}\label{eq:Jtau}
J_\tau(u) := \langle \chi_V-u, e^{-\tau L} u \rangle_\mathcal{V}.
\end{equation}
Then the functional $J_\tau$ is strictly concave and Fr\'echet differentiable, with directional derivative at $u\in \mathcal{V}$ in the direction $v\in \mathcal{V}$ given by
\begin{equation}\label{eq:dJtau}
dJ_\tau^u(v) := \langle \chi_V-2 e^{-\tau L} u, v\rangle_{\mathcal{V}}.
\end{equation}
Furthermore, if $S^0\subset V$ and $\{S^k\}_{k=1}^N$ is a sequence generated by \ref{alg:OKMBO}, then for all $k\in \{1, \ldots, N\}$,
\begin{equation}\label{eq:MBOminimiz}
\chi_{S^k} \in \underset{v\in \mathcal{K}}\argmin\, dJ_\tau^{\chi_{S^{k-1}}}(v),
\end{equation}
where $\mathcal{K}$ is as defined in \eqref{eq:setK}. Moreover, $J_\tau$ is a Lyapunov functional for the \ref{alg:OKMBO} scheme in the sense that, for all $k\in \{1,\ldots, N\}$, $J_\tau(\chi_{S^k}) \leq J_\tau(\chi_{S^{k-1}})$, with equality if and only if $S^k=S^{k-1}$.
\end{lemma}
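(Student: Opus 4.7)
The plan is to first establish the key spectral structure of the operator $e^{-\tau L}$ and then derive everything from it. I would begin by verifying that $L$ defined in \eqref{eq:defofL} is self-adjoint on $\mathcal{V}$ with respect to $\langle\cdot,\cdot\rangle_\mathcal{V}$: the map $u\mapsto\varphi$ coming from \eqref{eq:zeromassvarphiequation} is self-adjoint because, for $u_1\mapsto\varphi_1$ and $u_2\mapsto\varphi_2$, the zero-mass condition on $\varphi_1,\varphi_2$ lets us write $\langle\varphi_1,u_2\rangle_\mathcal{V}=\langle\varphi_1,\Delta\varphi_2+\mathcal{A}(u_2)\chi_V\rangle_\mathcal{V}=\langle\Delta\varphi_1,\varphi_2\rangle_\mathcal{V}=\langle u_1,\varphi_2\rangle_\mathcal{V}$. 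I would then diagonalise $L$ in the eigenbasis $\{\phi^m\}$ from \eqref{eq:eigenfunctions}: since $\mathcal{A}(\phi^m)=0$ for $m\geq 1$, the corresponding $\varphi$ equals $\lambda_m^{-1}\phi^m$, giving $L\phi^m=(\lambda_m+\gamma\lambda_m^{-1})\phi^m$, while $L\phi^0=0$ because $\chi_V-\mathcal{A}(\chi_V)=0$. In particular $e^{-\tau L}\chi_V=\chi_V$ and $e^{-\tau L}$ is symmetric positive definite on $\mathcal{V}$.

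Using $T:=e^{-\tau L}$ and $T\chi_V=\chi_V$, I would rewrite
\begin{equation*}
J_\tau(u)=\langle\chi_V,Tu\rangle_\mathcal{V}-\langle u,Tu\rangle_\mathcal{V}=\langle\chi_V,u\rangle_\mathcal{V}-\langle u,Tu\rangle_\mathcal{V},
\end{equation*}
which is a polynomial in the components of $u$ and hence Fr\'echet differentiable. A direct computation using self-adjointness of $T$ yields the stated directional derivative $dJ_\tau^u(v)=\langle\chi_V-2Tu,v\rangle_\mathcal{V}$. A second-order Taylor expansion (using self-adjointness) gives the exact identity
\begin{equation*}
J_\tau(u)-J_\tau(u_0)-dJ_\tau^{u_0}(u-u_0)=-\langle u-u_0,T(u-u_0)\rangle_\mathcal{V},
\end{equation*}
and since $T$ is positive definite the right-hand side is strictly negative for $u\neq u_0$, establishing strict concavity.

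For the threshold-step characterisation \eqref{eq:MBOminimiz}, I would note that $u(\tau)=e^{-\tau L}\chi_{S^{k-1}}$ is precisely the solution of \eqref{eq:MBOstepa}, so
\begin{equation*}
dJ_\tau^{\chi_{S^{k-1}}}(v)=\sum_{i\in V}d_i^r\bigl(1-2u(\tau)_i\bigr)v_i.
\end{equation*}
Minimising each term separately over $v_i\in[0,1]$ forces $v_i=1$ when $u(\tau)_i>\tfrac12$, $v_i=0$ when $u(\tau)_i<\tfrac12$, and leaves $v_i$ free when $u(\tau)_i=\tfrac12$; the thresholding rule $S^k=\{i\colon u(\tau)_i\geq\tfrac12\}$ produces a choice in this argmin set.

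Finally the Lyapunov property chains two inequalities: strict concavity gives
\begin{equation*}
J_\tau(\chi_{S^k})-J_\tau(\chi_{S^{k-1}})\leq dJ_\tau^{\chi_{S^{k-1}}}(\chi_{S^k}-\chi_{S^{k-1}}),
\end{equation*}
with equality only when $\chi_{S^k}=\chi_{S^{k-1}}$, while $\chi_{S^{k-1}}\in\mathcal{K}$ and \eqref{eq:MBOminimiz} give $dJ_\tau^{\chi_{S^{k-1}}}(\chi_{S^k})\leq dJ_\tau^{\chi_{S^{k-1}}}(\chi_{S^{k-1}})$, hence the right-hand side above is nonpositive. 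Combining, $J_\tau(\chi_{S^k})\leq J_\tau(\chi_{S^{k-1}})$, and equality forces equality in the concavity step, i.e.\ $S^k=S^{k-1}$. The main subtlety I anticipate is the self-adjointness of $L$ together with the fact that the zero-mass normalisation in \eqref{eq:zeromassvarphiequation} (rather than a pointwise pinning) is exactly what makes $u\mapsto\varphi$ symmetric; once that is clear the rest is essentially linear algebra.
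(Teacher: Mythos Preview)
Your proposal is correct and follows essentially the same route as the paper: compute the first variation using self-adjointness of $e^{-\tau L}$ and $e^{-\tau L}\chi_V=\chi_V$, establish strict concavity via the (negative definite) quadratic term, minimise the linear functional $dJ_\tau^{\chi_{S^{k-1}}}$ coordinatewise to identify the threshold step, and chain concavity with the argmin property for the Lyapunov inequality. The only cosmetic difference is that you re-derive the spectral properties of $L$ (self-adjointness, eigenvalues $\lambda_m+\gamma\lambda_m^{-1}$, $L\chi_V=0$) inside the proof, whereas the paper records these separately in Remark~\ref{rem:varphiandL} and Lemma~\ref{lem:Lspectrum} and simply cites them.
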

\begin{proof}
This follows immediately from the proofs of \cite[Lemma 4.5, Proposition 4.6]{vanGennipGuillenOstingBertozzi14} (which in turn were based on the continuum case established in \cite{CPA:CPA21527}), as replacing $\Delta$ in those proofs by $L$ does not invalidate any of the statements. It is useful, however, to reproduce the proof here, especially with an eye to incorporating a mass constraint into the \ref{alg:OKMBO} scheme in Section~\ref{sec:OKMBOmass}.

First let $u,v \in \mathcal{V}$ and $s\in \R$, then we compute
\[
\left.\frac{d J_\tau(u+sv)}{ds}\right|_{s=0}  = \langle \chi_V - u, e^{-\tau L} v\rangle_{\mathcal{V}} - \langle v, e^{-\tau L} u\rangle_{\mathcal{V}} = \langle \chi_V - 2 e^{-\tau L} u, v\rangle_{\mathcal{V}},
\]
where we used that $e^{-\tau L}$ is a self-adjoint operator and $e^{-\tau L} \chi_V = \chi_V$. Moreover, if $v\in \mathcal{V}\setminus\{0\}$, then
\[
\left.\frac{d^2 J_\tau(u+sv)}{ds^2}\right|_{s=0} = -2\langle v, e^{-\tau L} v\rangle_{\mathcal{V}} < 0,
\]
where the inequality follows for example from the spectral expansion in \eqref{eq:etauLuu}. 
Hence $J_\tau$ is strictly concave.

To construct a minimizer $v$ for the linear functional $dJ_\tau^{\chi_{S^{k-1}}}$ over $\mathcal{K}$, we set $v_i = 1$ whenever $1-2\left(e^{-\tau L} \chi_{S^{k-1}}\right)_i \leq 0$ and $v_i = 0$ for those $i\in V$ for which $1-2\left(e^{-\tau L} \chi_{S^{k-1}}\right)_i > 0$\footnote{Note that the arbitrary choice for those $i$ for which $1-2\left(e^{-\tau L} \chi_{S^{k-1}}\right)_i = 0$ introduces non-uniqueness into the minimzation problem \eqref{eq:MBOminimiz}.}. The sequence $\{S^k\}_{k=1}^N$ generated in this way by setting $S^k = \{i\in V: v_i=1\}$ corresponds exactly to the sequence generated by \ref{alg:OKMBO}.

Finally we note that, since $J_\tau$ is strictly concave and $dJ_\tau^{\chi_{S^{k-1}}}$ is linear, we have, if $\chi_{S^{k+1}} \neq \chi_{S^k}$, then
\[
J_\tau\left(\chi_{S^{k+1}}\right) - J_\tau\left(\chi_{S^k}\right) < dJ_\tau^{\chi_{S^k}}\left(\chi_{S^{k+1}}-\chi_{S^k}\right) = dJ_\tau^{\chi_{S^k}}\left(\chi_{S^{k+1}}\right) -dJ_\tau^{\chi_{S^k}}\left(\chi_{S^k}\right) \leq 0,
\]
where the last inequality follows because of \eqref{eq:MBOminimiz}. Clearly,  if $\chi_{S^{k+1}} = \chi_{S^k}$, then $J_\tau\left(\chi_{S^{k+1}}\right) - J_\tau\left(\chi_{S^k}\right) = 0$.
\end{proof}

\begin{remark}\label{rem:sequentiallinearprogramming}
It is worth elaborating briefly on the underlying reason why \eqref{eq:MBOminimiz} is the right minimization problem to consider in the setting of Lemma~\ref{lem:Lyapunov}. As is standard in sequential linear programming the minimization of $J_\tau$ over $\mathcal{K}$ is attempted by approximating $J_\tau$ by its linearization,
\[
J_\tau(u) \approx J_\tau\left(\chi_{S^k}\right) + d J_\tau^{\chi_{S^{k-1}}}\left(u-\chi_{S^k}\right) = J_\tau\left(\chi_{S^k}\right) + d J_\tau^{\chi_{S^{k-1}}}\left(u\right)-d J_\tau^{\chi_{S^{k-1}}}\left(\chi_{S^k}\right),
\]
and minimizing this linear approximation over all admissible $u\in \mathcal{K}$.
\end{remark}

We can use Lemma~\ref{lem:Lyapunov} to prove that the \ref{alg:OKMBO} scheme converges in a finite number of steps to stationary state in sense of the following corollary.

\begin{corol}\label{cor:finiteconvergence}
Let $G=(V,E,\omega) \in\mathcal{G}$, $\gamma\geq 0$, and $\tau>0$. If $S^0\subset V$ and $\{S^k\}_{k=1}^N$ is a sequence generated by \ref{alg:OKMBO}, then there is a $K\geq 0$ such that, for all $k\geq K$, $S^k = S^K$.
\end{corol}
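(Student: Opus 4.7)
The plan is to combine the finiteness of the orbit space with the strict Lyapunov property of $J_\tau$ established in Lemma~\ref{lem:Lyapunov}. Observe first that one iteration of the scheme defines a deterministic map $\Phi:2^V\to 2^V$ by $\Phi(S):=\{i\in V:(e^{-\tau L}\chi_S)_i\geq 1/2\}$, since the ODE \eqref{eq:MBOstepa} has a unique solution (Remark~\ref{rem:uniquesolution}). In particular $S^k=\Phi(S^{k-1})$ for every $k\geq 1$, so the orbit is fully determined by $S^0$.

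Since $\{S^k\}_{k\geq 0}$ lies in the finite set $2^V$, pigeonhole yields indices $0\leq K<K'$ with $S^K=S^{K'}$. Setting $p:=K'-K\geq 1$ and iterating $\Phi$, we obtain $S^{k+p}=S^k$ for all $k\geq K$; the orbit is thus eventually $p$-periodic, and we may take $p$ to be the minimal such period.

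It remains to show $p=1$. Suppose for contradiction $p\geq 2$. Then within one period $\{S^K,\dots,S^{K+p-1}\}$ at least one transition must be non-trivial: otherwise $S^{K+1}=S^K$, giving an actual period $1$ and contradicting the minimality of $p$. So there exists $k^\star\in\{K,\dots,K+p-1\}$ with $S^{k^\star+1}\neq S^{k^\star}$. By Lemma~\ref{lem:Lyapunov}, this strict transition yields the strict inequality $J_\tau(\chi_{S^{k^\star+1}})<J_\tau(\chi_{S^{k^\star}})$, while every other transition along the period only satisfies the non-strict inequality $J_\tau(\chi_{S^{k+1}})\leq J_\tau(\chi_{S^k})$. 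Telescoping across one full period gives
\[
J_\tau(\chi_{S^{K+p}})\leq J_\tau(\chi_{S^{k^\star+1}})<J_\tau(\chi_{S^{k^\star}})\leq J_\tau(\chi_{S^K}),
\]
which contradicts $S^{K+p}=S^K$. Hence $p=1$, i.e.\ $S^{k+1}=S^k$ for every $k\geq K$, which is the desired conclusion.

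The main (and essentially only) non-routine point is ruling out non-trivial cycles; this relies crucially on the \emph{strict} descent of the Lyapunov functional along genuine transitions in Lemma~\ref{lem:Lyapunov}, not merely on its monotonicity. Everything else is a pigeonhole-plus-determinism argument in the finite state space $2^V$.
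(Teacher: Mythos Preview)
Your proof is correct and follows essentially the same strategy as the paper's own argument: pigeonhole in the finite state space $2^V$ to force an eventual repeat, then use the strict Lyapunov descent of $J_\tau$ from Lemma~\ref{lem:Lyapunov} to rule out any non-trivial cycle. Your version is in fact slightly more careful in one respect: you only need a single strict inequality along the putative cycle (from one genuine transition) and non-strict inequalities elsewhere, whereas the paper writes a full chain of strict inequalities, which implicitly requires that \emph{all} consecutive states in the minimal cycle differ (true, but not spelled out there). One small omission: you tacitly assume $N=\infty$ so that pigeonhole applies; the paper explicitly disposes of the finite-$N$ case first (just take $K=N$), which you should also mention for completeness.
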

\begin{proof}
If $N\in \N$ the statement is trivially true, so now assume $N=\infty$. Because $|V|<\infty$, there are only finitely many different possible subsets of $V$, hence there exists $K, k'\in \N$ such that $k' > K'$ and $S^K=S^{k'}$. Hence the set in $l:=\min\{l'\in \N: S^K=S^{K+l'}\}$\footnote{Remember that we use the convention $0\not\in \N$.} is not empty and thus $l\geq 1$. If $l \geq 2$, then by Lemma~\ref{lem:Lyapunov} we know that
\[
J_\tau(\chi_{S^{K+l}}) < J_\tau(\chi_{S^{K+l-1}}) < \ldots < J_\tau(\chi_{S^K}) = J_\tau(\chi_{S^{K+l}}).
\]
This is a contradiction, hence $l=1$ and thus $S^K=S^{K+1}$. Because equation \eqref{eq:MBOstepa} has a unique solution (as noted in Remark~\ref{rem:uniquesolution}), we have, for all $k \geq K$,  $S^k = S^K$.
\end{proof}

\begin{remark}
For given $\tau>0$, the minimization problem
\begin{equation}\label{eq:Jtauminimization}
u \in \underset{v\in \mathcal{K}}\argmin\, J_\tau(v)
\end{equation}
has a solution $u\in \mathcal{V}^b$, because $J_\tau$ is strictly concave and $\mathcal{K}$ is compact and convex. This solution is not unique; for example, if $\tilde u = \chi_V-u$, then, since $e^{-\tau L}$ is self-adjoint, we have
\[
J_\tau(u) = \langle \tilde u, e^{-tL}(\chi_V-u)\rangle_{\mathcal{V}} = \langle \chi_V-\tilde u, e^{-\tau L} \tilde u\rangle_{\mathcal{V}} = J_\tau(\tilde u).
\]
Lemma~\ref{lem:Lyapunov} shows that $J_\tau$ does not increase in value along a sequence $\{S^k\}_{k=1}^N$ of sets generated by the \ref{alg:OKMBO} algorithm, but this does not guarantee that \ref{alg:OKMBO} converges to the solution of the minimization problem in \eqref{eq:Jtauminimization}. In fact, we will see in Lemma~\ref{lem:dynamicsbounds} and Remark~\ref{rem:butwhatabouttheemptyandfullset} that for every $S^0\subset V$ there is a value $\tau_\rho(S^0)$ such that $S^1=S$ if $\tau<\tau_\rho(S^0)$. Hence, unless $S^0$ happens to be a solution to \eqref{eq:Jtauminimization}, if  $\tau<\tau_\rho(S^0)$ the \ref{alg:OKMBO} algorithm will not converge to a solution. This observation and related issues concerning the minimization of $J_\tau$ will become important in Section~\ref{sec:Gammaconvergence}, see for example Remarks~\ref{rem:trivialminimizers} and~\ref{rem:whataboutmass?}.
\end{remark}

\subsection{The spectrum of $L$}\label{sec:spectrumofL}

In this section we will have a closer look at the spectrum of the operator $L$ from \eqref{eq:defofL}, which will play a role in our further study of \ref{alg:OKMBO}.

\begin{remark}\label{rem:varphiandL}
Remembering from Remark~\ref{rem:Moore-Penrose} the Moore-Penrose pseudoinverse of $\Delta$, which we denote by $\Delta^\dagger$, we see that the condition $\mathcal{M}(\varphi) = 0$ in \eqref{eq:zeromassvarphiequation} allows us to write $\varphi = \Delta^\dagger (u-\mathcal{A}(u))$. In particular, if $\varphi$ satisfies \eqref{eq:zeromassvarphiequation}, then
\begin{equation}\label{eq:varphispectral}
\varphi  = \sum_{m=1}^{n-1}\lambda_m^{-1} \langle u, \phi^m\rangle_{\mathcal{V}} \ \phi^m,
\end{equation}
where $\lambda_m$ and $\phi^m$ are the eigenvalues of $\Delta$ and corresponding eigenfunctions, respectively, as in \eqref{eq:eigenvalues}, \eqref{eq:eigenfunctions}.
Hence, if we expand $u$ as in \eqref{eq:expansion}  
and $L$ is the operator defined in \eqref{eq:defofL}, then
\begin{equation}\label{eq:Leigexp}
L(u) = \sum_{m=1}^{n-1} \left(\lambda_m+ \frac\gamma{\lambda_m}\right)  \langle u, \phi^m\rangle_{\mathcal{V}} \ \phi^m.
\end{equation}
In particular, $L:\mathcal{V}\to \mathcal{V}$ is a continuous, linear, bounded, self-adjoint, operator and for every $c\in \R$, $L(c\chi_V)=0$. If, given a $u_0\in \mathcal{V}$, $u\in \mathcal{V}_\infty$ solves \eqref{eq:MBOstepa}, 
then we have that $u(t) = e^{-tL}u_0$. Note that the operator $e^{-tL}$ is self-adjoint, because $L$ is self-adjoint.
\end{remark}

\begin{lemma}\label{lem:Lspectrum}
Let $G=(V,E,\omega)\in \mathcal{G}$, $\gamma\geq 0$, and let $L: \mathcal{V} \to \mathcal{V}$ be the operator defined in \eqref{eq:defofL}, then $L$ has $n$ eigenvalues $\Lambda_m$ ($m\in \{0, \ldots, n-1\}$), given by
\begin{equation}\label{eq:eigvalsofL}
\Lambda_m = \begin{cases}
0, & \text{if } m=0,\\
\lambda_m + \frac{\gamma}{\lambda_m}, & \text{if } m\geq 1,
\end{cases}
\end{equation}
where the $\lambda_m$ are the eigenvalues of $\Delta$ as 
in \eqref{eq:eigenvalues}. The set $\{\phi^m\}_{m=0}^{n-1}$ from \eqref{eq:eigenfunctions} is a set of corresponding eigenfunctions. In particular, $L$ is positive semidefinite.
\end{lemma}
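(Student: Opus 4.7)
The plan is to read off the spectral data of $L$ directly from the expansion \eqref{eq:Leigexp} already established in Remark~\ref{rem:varphiandL}. That identity expresses $L$ as a linear combination of rank-one projections onto the eigenfunctions $\phi^m$ of $\Delta$, so the eigenfunctions of $\Delta$ are automatically candidates for eigenfunctions of $L$.

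Concretely, I would proceed as follows. First, apply \eqref{eq:Leigexp} with $u = \phi^l$ for each $l \in \{0, 1, \ldots, n-1\}$. For $l = 0$, since $\phi^0 = (\vol{V})^{-1/2} \chi_V$ is $\mathcal{V}$-orthogonal to all $\phi^m$ with $m \geq 1$, every inner product $\langle \phi^0, \phi^m\rangle_{\mathcal{V}}$ with $m\geq 1$ vanishes, so $L\phi^0 = 0$, confirming that $\Lambda_0 = 0$ with eigenfunction $\phi^0$. (Alternatively, one can observe that $\Delta \phi^0 = 0$ and that the unique zero-mass solution to $\Delta \varphi = \phi^0 - \mathcal{A}(\phi^0) = 0$ is $\varphi = 0$, so $L\phi^0 = 0$ directly from \eqref{eq:defofL}.) For $l \geq 1$, $\mathcal{V}$-orthonormality of $\{\phi^m\}$ yields $\langle \phi^l, \phi^m\rangle_{\mathcal{V}} = \delta_{lm}$, so \eqref{eq:Leigexp} collapses to $L\phi^l = (\lambda_l + \gamma/\lambda_l)\phi^l$, establishing $\Lambda_l = \lambda_l + \gamma/\lambda_l$ with eigenfunction $\phi^l$.

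This produces $n$ eigenvalue/eigenfunction pairs with the $\phi^m$ forming a $\mathcal{V}$-orthonormal basis of $\mathcal{V}$, so by dimension count these exhaust the spectrum of $L$. Finally, positive semidefiniteness is immediate: $\Lambda_0 = 0$, and for $m \geq 1$, $\lambda_m > 0$ (since $G$ is connected, by \eqref{eq:eigenvalues} and the remark below it) and $\gamma \geq 0$, so $\Lambda_m = \lambda_m + \gamma/\lambda_m > 0$. Equivalently, for any $u \in \mathcal{V}$ expanded as in \eqref{eq:expansion}, $\langle Lu, u\rangle_{\mathcal{V}} = \sum_{m=1}^{n-1} \Lambda_m \langle u, \phi^m\rangle_{\mathcal{V}}^2 \geq 0$.

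There is essentially no obstacle: all the work has been done in establishing \eqref{eq:Leigexp} in Remark~\ref{rem:varphiandL} (which itself uses the Moore-Penrose representation \eqref{eq:varphispectral} of $\varphi$). The lemma is a short bookkeeping corollary of that expansion together with $\mathcal{V}$-orthonormality of the $\phi^m$.
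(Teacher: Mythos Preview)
Your proof is correct and follows essentially the same approach as the paper: the paper's proof is a one-line reference to \eqref{eq:Leigexp} together with the fact that $\lambda_0=0$ and $\lambda_m>0$ for $m\geq 1$, and you have simply written out the details of substituting $u=\phi^l$ into that expansion and reading off the eigenpairs.
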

\begin{proof}
This follows from \eqref{eq:Leigexp} and the fact that $\lambda_0=0$ and, for all $m\geq 1$, $\lambda_m>0$.
\end{proof}

In the remainder of this paper we use the notation $\lambda_m$ for the eigenvalues of $\Delta$ and $\Lambda_m$ for the eigenvalues of $L$, with corresponding eigenfunctions $\phi^m$, as in \eqref{eq:eigenvalues}, \eqref{eq:eigenfunctions}, and Lemma~\ref{lem:Lspectrum}.

\begin{remark}\label{rem:eigorder}
Note that in the notation of Lemma~\ref{lem:Lspectrum}, the eigenvalues $\Lambda_m$ are not necessarily labelled in non-decreasing order. In fact, the function $f: (0,\infty)\to (0,\infty), x\mapsto x+\frac\gamma{x}$ achieves its unique minimum on $(0,\infty)$ at $x=\sqrt\gamma$ and is decreasing for $0<x<\sqrt\gamma$ and increasing for $x>\sqrt\gamma$. Hence, if $\lambda_{n-1} \leq \sqrt\gamma$, then the $\Lambda_m$ are in non-increasing order, except for $\Lambda_0$, which is always the smallest eigenvalue. On the other hand, if $\lambda_1 \geq \sqrt\gamma$, then the $\Lambda_m$ are in non-decreasing order. If neither of these conditions on $\lambda_1$ or $\lambda_{n-1}$ is met, the order is not guaranteed to be monotone.
\end{remark}

\begin{definition}\label{def:spectralbounds}
Let $G=(V,E,\omega)\in \mathcal{G}$, $\gamma\geq 0$, and 
let $\Lambda_m$ ($m\in \{0, \ldots, n-1\}$) be the eigenvalues of $L$ as in \eqref{eq:eigvalsofL}. 
The \textit{smallest nonzero eigenvalue of $L$} is
$\displaystyle
\Lambda_- := \underset{1 \leq m \leq n-1}\min \Lambda_m,
$ 
and the \textit{largest eigenvalue (or \textit{spectral radius}) of $L$} is
$\displaystyle
\Lambda_+ := \underset{0\leq m \leq n-1}\max |\Lambda_m|.
$
\end{definition}

 Lemma~\ref{lem:minLambda} characterizes the smallest nonzero eigenvalue and the largest eigenvalue of $L$. These eigenvalues will be of importance in Section~\ref{sec:pinningandspreading}.

\begin{lemma}\label{lem:minLambda}
Let $G=(V,E,\omega) \in \mathcal{G}$ 
and let $\lambda_m$ ($m\in \{0, \ldots, n-1\}$) be the eigenvalues of $\Delta$ as in \eqref{eq:eigenvalues}. 
If $\lambda_1 \leq \sqrt\gamma \leq \lambda_{n-1}$, we define $\lambda_* := \max\{\lambda_m: 1\leq m \leq n-1 \text{ and } \lambda_m \leq \sqrt\gamma\}$ and $\gamma^* := \min\{\lambda_m: 1\leq m \leq n-1 \text{ and }  \lambda_m \geq \sqrt\gamma\}$. Then the value of $\Lambda_- $ from Definition~\ref{def:spectralbounds} is given by
\[
\Lambda_- = \begin{cases}
\lambda_1 + \frac\gamma{\lambda_1}, &\text{if } \lambda_1 > \sqrt\gamma,\\
\lambda_*+ \frac\gamma{\lambda_*}, &\text{if } \lambda_1 \leq \sqrt\gamma < \sqrt{\lambda_*\lambda^*} \leq \lambda_{n-1},\\
\lambda^* + \frac\gamma{\lambda^*}, &\text{if } \lambda_1 \leq \sqrt{\lambda_*\lambda^*} \leq \sqrt\gamma \leq \lambda_{n-1},\\
\lambda_{n-1} + \frac\gamma{\lambda_{n-1}}, &\text{if } \lambda_{n-1} < \sqrt\gamma.
\end{cases}
\]
Moreover, $\gamma \mapsto \Lambda_-$ is continuous.
\end{lemma}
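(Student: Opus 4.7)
My plan is to reduce the computation of $\Lambda_-$ to the analysis of the one-variable function $f_\gamma : (0,\infty) \to (0,\infty)$, $f_\gamma(x) := x + \gamma/x$, since by \eqref{eq:eigvalsofL} we have $\Lambda_- = \min_{1\leq m\leq n-1} f_\gamma(\lambda_m)$. As already observed in Remark~\ref{rem:eigorder}, $f_\gamma$ attains its unique minimum on $(0,\infty)$ at $x=\sqrt{\gamma}$, is strictly decreasing on $(0,\sqrt{\gamma}]$, and strictly increasing on $[\sqrt{\gamma},\infty)$. The four cases in the lemma correspond exactly to the possible configurations of the set $\{\lambda_1,\ldots,\lambda_{n-1}\}$ relative to the point $\sqrt{\gamma}$.

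First I would dispose of the two extreme cases. If $\lambda_1>\sqrt{\gamma}$, then every $\lambda_m$ ($m\geq 1$) lies in the increasing region of $f_\gamma$, so the minimum is achieved at the smallest argument $\lambda_1$; symmetrically, if $\lambda_{n-1}<\sqrt{\gamma}$, every $\lambda_m$ lies in the decreasing region and the minimum is achieved at $\lambda_{n-1}$. This settles the first and fourth formulas.

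For the intermediate regime $\lambda_1\leq \sqrt{\gamma}\leq \lambda_{n-1}$, the numbers $\lambda_*$ and $\lambda^*$ are well defined (with the obvious reading that $\gamma^*$ in the statement should be $\lambda^*$). Because $f_\gamma$ is monotone on each side of $\sqrt{\gamma}$, the minimum of $f_\gamma$ over $\{\lambda_m : 1\leq m\leq n-1\}\cap (0,\sqrt{\gamma}]$ is $f_\gamma(\lambda_*)$, and the minimum over $\{\lambda_m:1\leq m\leq n-1\}\cap [\sqrt{\gamma},\infty)$ is $f_\gamma(\lambda^*)$, so $\Lambda_- = \min\{f_\gamma(\lambda_*),f_\gamma(\lambda^*)\}$. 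The hard (but short) step is the explicit comparison: a direct manipulation yields
\[
f_\gamma(\lambda_*)-f_\gamma(\lambda^*) = (\lambda_*-\lambda^*)\Bigl(1-\tfrac{\gamma}{\lambda_*\lambda^*}\Bigr),
\]
and since $\lambda_*-\lambda^*\leq 0$, we have $f_\gamma(\lambda_*)\leq f_\gamma(\lambda^*)$ if and only if $\lambda_*\lambda^*\geq \gamma$, i.e.\ $\sqrt{\lambda_*\lambda^*}\geq \sqrt{\gamma}$. This gives the second and third formulas.

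Finally, for the continuity of $\gamma\mapsto \Lambda_-$, the cleanest route is to bypass the case analysis entirely: $\Lambda_-(\gamma) = \min_{1\leq m\leq n-1} f_\gamma(\lambda_m)$ is the pointwise minimum of finitely many functions, each affine (and hence continuous) in $\gamma$, so $\gamma\mapsto\Lambda_-(\gamma)$ is continuous on $[0,\infty)$. I expect the only real obstacle to be bookkeeping at the transition points $\sqrt{\gamma}\in\{\lambda_m\}$ and at $\sqrt{\gamma}=\sqrt{\lambda_*\lambda^*}$; this can either be checked directly (the boundary conditions on the cases have been written to overlap precisely on these values, and a substitution shows both adjacent formulas yield the same value there) or subsumed by the pointwise-minimum argument.
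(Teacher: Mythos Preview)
Your proof is correct and follows essentially the same approach as the paper: both reduce to analyzing $f_\gamma(x)=x+\gamma/x$, use its monotonicity on either side of $\sqrt\gamma$ to isolate the extreme cases, and then compare $f_\gamma(\lambda_*)$ with $f_\gamma(\lambda^*)$ via the same algebraic equivalence with $\gamma\lessgtr\lambda_*\lambda^*$. Your continuity argument---observing that $\Lambda_-(\gamma)$ is the pointwise minimum of finitely many affine functions of $\gamma$---is slightly cleaner than the paper's, which instead verifies directly that the piecewise formulas agree at the transition values of $\gamma$.
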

\begin{proof}
First note that, since $G$ is connected, for all $m\in \{1, \ldots, n-1\}$, $\Lambda_m >0$, hence $\Lambda_->0$. Furthermore, if $\lambda_1 \leq \sqrt\gamma\leq \lambda_{n-1}$, then the sets in the definitions of $\lambda_*$ and $\lambda^*$ are nonempty and thus $\lambda_*$ and $\lambda^*$ are well-defined. 
Following from the discussion in Remark~\ref{rem:eigorder} we know that $x\mapsto x+\frac\gamma{x}$ is either non-increasing, non-decreasing, or it achieves its unique minimum on $(0,\infty)$ at $x=\sqrt\gamma$, depending on the value of $\gamma$. Hence the minimum value of $\Lambda_m$ ($m\geq 1$) is achieved when either $m=1$, $m=n-1$, or $m$ is such that $\lambda_m = \lambda_*$ or $\lambda_m=\lambda^*$. By the argument in Remark~\ref{rem:eigorder} we know that the first two cases occur when $\lambda_1 \geq \sqrt\gamma$ or $\lambda_{n-1} \leq \sqrt\gamma$, respectively. The other two cases follow from
\[
\lambda_* + \frac\gamma{\lambda^*} < \lambda^* + \frac\gamma{\lambda^*} \Leftrightarrow \gamma < \frac{\lambda_*-\lambda^*}{\frac1{\lambda^*}-\frac1{\lambda_*}} = \lambda_*\lambda^*.
\]
Note that if $\lambda_1=\sqrt\gamma$, then $\sqrt{\lambda_*\lambda^*} = \lambda_1$; if $\lambda_{n-1} = \sqrt\gamma$, then $\sqrt{\lambda_*\lambda^*} = \lambda_{n-1}$; if $\lambda_*\lambda^* = \gamma$, then $\lambda_*+\frac\gamma{\lambda_*} = \lambda^* + \frac\gamma{\lambda_*}$. Thus $\gamma\mapsto \Lambda_-$ is continuous.
\end{proof}

\begin{lemma}\label{lem:specrad}
Let $G=(V,E,\omega) \in \mathcal{G}$ 
and let $\lambda_m$ ($m\in \{0, \ldots, n-1\}$) be the eigenvalues of $\Delta$ as in \eqref{eq:eigenvalues}. 
Then the spectral radius of $L$ as defined in Definition~\ref{def:spectralbounds} is
\[
\Lambda_+ = \begin{cases}
\lambda_1 + \frac\gamma{\lambda_1}, &\text{if } \lambda_1\lambda_{n-1} < \gamma,\\
\lambda_{n-1} + \frac\gamma{\lambda_{n-1}}, &\text{if } \lambda_1\lambda_{n-1} \geq \gamma.
\end{cases}
\]
Moreover, $\gamma\mapsto \Lambda_+$ is continuous.
\end{lemma}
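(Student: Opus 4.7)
The plan is to reduce to analysing the real-valued function $f(x) = x + \gamma/x$ on $(0,\infty)$ evaluated at the spectrum of $\Delta$. First I would invoke Lemma~\ref{lem:Lspectrum} to list the eigenvalues of $L$: $\Lambda_0=0$ and $\Lambda_m = \lambda_m + \gamma/\lambda_m$ for $m\in\{1,\ldots,n-1\}$. Since $L$ is positive semidefinite, each $\Lambda_m\geq 0$, hence $|\Lambda_m|=\Lambda_m$ and $\Lambda_+ = \max_{0\leq m\leq n-1}\Lambda_m$. Because $G$ is connected, $\lambda_1>0$ and so $\Lambda_m\geq 2\sqrt{\gamma}\geq 0 = \Lambda_0$ for every $m\geq 1$, reducing the problem to computing $\max_{1\leq m\leq n-1} f(\lambda_m)$.

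The next step is to exploit the convexity of $f$ on $(0,\infty)$ (immediate from $f''(x) = 2\gamma/x^3 \geq 0$). Since $\{\lambda_1,\ldots,\lambda_{n-1}\}\subset[\lambda_1,\lambda_{n-1}]$, a convex function on a compact interval attains its maximum at an endpoint; therefore
\[
\Lambda_+ = \max\bigl\{f(\lambda_1),\,f(\lambda_{n-1})\bigr\}.
\]
The final task is to determine, in terms of $\gamma$, which of the two endpoint values dominates. A direct computation gives
\[
f(\lambda_1)-f(\lambda_{n-1}) = (\lambda_1-\lambda_{n-1})\Bigl(1-\frac{\gamma}{\lambda_1\lambda_{n-1}}\Bigr),
\]
and since $\lambda_1-\lambda_{n-1}\leq 0$, this difference is nonnegative precisely when $\gamma\geq \lambda_1\lambda_{n-1}$. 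Combining the two cases yields the stated formula; note that at the threshold $\gamma=\lambda_1\lambda_{n-1}$ the two expressions coincide, so either case of the statement gives the correct common value.

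Continuity of $\gamma\mapsto \Lambda_+$ is then immediate: for each fixed $m\in\{0,\ldots,n-1\}$, the map $\gamma\mapsto \Lambda_m$ is continuous on $[0,\infty)$ (affine in $\gamma$ for $m\geq 1$, constant for $m=0$), and the maximum of finitely many continuous functions is continuous. Alternatively, continuity across the threshold $\gamma=\lambda_1\lambda_{n-1}$ follows from the explicit matching of the two branches at that value. I do not expect any real obstacle in this proof; the only place to be slightly careful is to confirm that $\Lambda_0=0$ does not contribute to the maximum, which is handled by the positive semidefiniteness of $L$ together with connectedness of $G$.
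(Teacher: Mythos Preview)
Your proof is correct and follows essentially the same route as the paper: reduce to the nonzero eigenvalues via positive semidefiniteness of $L$, observe that $f(x)=x+\gamma/x$ attains its maximum over $\{\lambda_1,\ldots,\lambda_{n-1}\}$ at one of the endpoints $\lambda_1$ or $\lambda_{n-1}$, and then compare those two values directly. The only cosmetic differences are that you invoke convexity of $f$ via $f''\geq 0$, whereas the paper appeals to the monotonicity analysis in Remark~\ref{rem:eigorder}, and that your continuity argument (maximum of finitely many continuous functions) is somewhat cleaner than the paper's observation that the two branches agree at $\gamma=\lambda_1\lambda_{n-1}$.
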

\begin{proof}
First we note that, by Lemma~\ref{lem:Lspectrum} all eigenvalues $\Lambda_m$ of $L$ are nonnegative. Since $G$ is connected and $n\geq 2$, there is at least one positive eigenvalue, so $\Lambda_+>0$. By the computation in Remark~\ref{rem:eigorder} the function $x\mapsto x+\frac\gamma{x}$ is either non-increasing, non-decreasing, or strictly convex on the domain $(0, \lambda_{n-1})$, depending on the value of $\gamma$. Thus, by the expression for $\Lambda_m$ in Lemma~\ref{lem:Lspectrum}, we see that the maximum value of $\Lambda_m$ is achieved when either $m=1$ or $m=n-1$, depending on the value of $\gamma$. If $\lambda_1=\lambda_{n-1}$, then $\Lambda_1=\Lambda_{n-1}$ and the result follows. If $\lambda_1\neq \lambda_{n-1}$, then $\frac1{\lambda_{n-1}} - \frac1{\lambda_1} < 0$, hence we compute
\[
\lambda_1 + \frac\gamma{\lambda_1} > \lambda_{n-1} + \frac\gamma{\lambda_{n-1}} \Leftrightarrow \gamma > \frac{\lambda_1-\lambda_{n-1}}{\frac1{\lambda_{n-1}}-\frac1{\lambda_1}} = \lambda_1 \lambda_{n-1}.
\]
Replacing the inequality by an equality, shows continuity of $\gamma\mapsto \Lambda_+$.
\end{proof}

\begin{lemma}\label{lem:massandsuch}
Let $G\in\mathcal{G}$, $\gamma \geq 0$, and $u_0\in \mathcal{V}$. If $u\in \mathcal{V}_\infty$ is a solution of \eqref{eq:MBOstepa}, 
with corresponding $\varphi\in \mathcal{V}_\infty$,  
then, for all $t>0$,
\[
\frac{d}{dt} \mathcal{M}(u(t)) = 0.
\]
Furthermore, for all $t>0$,
\[
\frac{d}{dt} \|u(t)\|_{\mathcal{V}}^2 = - 2 \left(\|\nabla u(t)\|_{\mathcal{E}}^2 + \gamma \|\nabla \varphi\|_{\mathcal{E}}^2 \right) \leq 0.
\]
In particular, for all $t\geq 0$, $\|u(t)\|_{\mathcal{V}} \leq \|u_0\|_{\mathcal{V}}$.

Moreover, if $\eta>0$ and
$
t'> \Lambda_-^{-1} \log\left(\eta^{-1} d_-^{-\frac{r}2} \|u_0 - \mathcal{A}(u_0)\|_{\mathcal{V}}\right),
$ 
where $\Lambda_-$ is as in Lemma~\ref{lem:minLambda}, then for all $t>t'$,
$
\|u(t)-\mathcal{A}(u(t))\|_{\mathcal{V},\infty} < \eta.
$
\end{lemma}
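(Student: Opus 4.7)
The four assertions can be addressed in turn, with only the last one requiring any real work.

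For the mass conservation statement, I would differentiate under $\mathcal{M}$ and use linearity to get $\frac{d}{dt}\mathcal{M}(u) = \mathcal{M}(-Lu) = -\mathcal{M}(\Delta u) - \gamma\mathcal{M}(\varphi)$. The first term vanishes by \eqref{eq:Laplacemass}, and the second vanishes because $\varphi$ solves \eqref{eq:zeromassvarphiequation}, where the mass of $\varphi$ is required to be zero. For the energy identity, I would compute $\frac{d}{dt}\|u\|_{\mathcal{V}}^2 = 2\langle u, -Lu\rangle_{\mathcal{V}} = -2\langle u, \Delta u\rangle_{\mathcal{V}} - 2\gamma\langle u, \varphi\rangle_{\mathcal{V}}$. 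The first inner product equals $\|\nabla u\|_{\mathcal{E}}^2$ by \eqref{eq:adjoint}. For the second, I would split $u = (u-\mathcal{A}(u)) + \mathcal{A}(u)$; since $\mathcal{A}(u)$ is a constant multiple of $\chi_V$ and $\mathcal{M}(\varphi)=0$, the constant part drops out, leaving $\langle u-\mathcal{A}(u), \varphi\rangle_{\mathcal{V}} = \langle \Delta\varphi, \varphi\rangle_{\mathcal{V}} = \|\nabla\varphi\|_{\mathcal{E}}^2$ by \eqref{eq:adjoint} and \eqref{eq:zeromassvarphiequation}. The monotonicity statement $\|u(t)\|_{\mathcal{V}} \leq \|u_0\|_{\mathcal{V}}$ then follows by integration.

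For the exponential decay, the natural tool is the spectral expansion of $e^{-tL}$ from Remark~\ref{rem:varphiandL}. I would expand $u_0 = \sum_{m=0}^{n-1} \langle u_0, \phi^m\rangle_{\mathcal{V}}\phi^m$; since $\Lambda_0 = 0$ and $\phi^0 = (\vol V)^{-1/2}\chi_V$, the $m=0$ term in $u(t) = e^{-tL}u_0$ is precisely $\mathcal{A}(u_0)$. By the mass conservation result already proven, $\mathcal{A}(u(t)) = \mathcal{A}(u_0)$, and therefore
\[
u(t) - \mathcal{A}(u(t)) = \sum_{m=1}^{n-1} e^{-t\Lambda_m} \langle u_0, \phi^m\rangle_{\mathcal{V}}\phi^m.
\]
Using $\mathcal{V}$-orthonormality of the $\phi^m$ and the definition of $\Lambda_-$, I would then estimate
\[
\|u(t)-\mathcal{A}(u(t))\|_{\mathcal{V}}^2 = \sum_{m=1}^{n-1} e^{-2t\Lambda_m}\langle u_0,\phi^m\rangle_{\mathcal{V}}^2 \leq e^{-2t\Lambda_-}\|u_0-\mathcal{A}(u_0)\|_{\mathcal{V}}^2,
\]
where the right-hand-side norm is evaluated using the same orthonormal expansion of $u_0 - \mathcal{A}(u_0)$.

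Finally, to pass from the $\mathcal{V}$ norm to the $\mathcal{V},\infty$ norm, I would apply the first inequality of \eqref{eq:normineq}, giving $\|u(t)-\mathcal{A}(u(t))\|_{\mathcal{V},\infty} \leq d_-^{-r/2}e^{-t\Lambda_-}\|u_0-\mathcal{A}(u_0)\|_{\mathcal{V}}$. Solving $d_-^{-r/2}e^{-t\Lambda_-}\|u_0-\mathcal{A}(u_0)\|_{\mathcal{V}} < \eta$ for $t$ yields exactly the stated threshold $t' > \Lambda_-^{-1}\log(\eta^{-1}d_-^{-r/2}\|u_0-\mathcal{A}(u_0)\|_{\mathcal{V}})$. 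The only point requiring care is the book-keeping around $\mathcal{A}(u(t)) = \mathcal{A}(u_0)$, which uses mass conservation proved at the start, so that the $m=0$ mode cancels cleanly out of the decay estimate; once this is in place, the rest is a direct spectral computation and no serious obstacle arises.
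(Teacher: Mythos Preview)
Your proposal is correct and follows essentially the same route as the paper's proof: the same use of \eqref{eq:Laplacemass} and \eqref{eq:zeromassvarphiequation} for mass conservation, the same splitting via $\mathcal{M}(\varphi)=0$ for the energy identity, and the same spectral expansion bounded by $e^{-t\Lambda_-}$ followed by \eqref{eq:normineq} for the decay estimate. Your explicit remark that $\mathcal{A}(u(t))=\mathcal{A}(u_0)$ via mass conservation is a helpful clarification that the paper leaves implicit.
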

\begin{proof}
This proof follows very closely the proof of \cite[Lemma 2.6(a), (b), and (c)]{vanGennipGuillenOstingBertozzi14}.

Using \eqref{eq:Laplacemass} and \eqref{eq:zeromassvarphiequation}, we find
\[
\frac{d}{dt} \mathcal{M}(u(t)) = \mathcal{M}\left(\frac{\partial}{\partial t} u(t)\right) = -\mathcal{M}(\Delta u(t)) - \gamma \mathcal{M}(\varphi) = 0.
\]

Next we compute
\[
\frac{d}{dt} \|u(t)\|_{\mathcal{V}}^2 = 2 \langle u(t), \frac{\partial}{\partial t} u(t)\rangle_{\mathcal{V}} = -2 \langle u(t), L\left(u(t)\right)\rangle_{\mathcal{V}} = -2 \left( \langle u(t), \Delta u(t)\rangle_{\mathcal{V}} + \gamma \langle u(t), \varphi(t)\rangle_{\mathcal{V}} \right).
\]
Since $\mathcal{M}(\varphi)=0$, we have $\langle \mathcal{A}(u(t)), \varphi\rangle_{\mathcal{V}}=0$ and thus 
$\langle u(t), \Delta u(t)\rangle_{\mathcal{V}} =  \langle \nabla u(t), \nabla u(t)\rangle_{\mathcal{V}}$ and
\[
\langle u(t), \varphi(t)\rangle_{\mathcal{V}} = \langle u(t)-\mathcal{A}(u(t)), \varphi(t)\rangle_{\mathcal{V}} = \langle \Delta\varphi(t), \varphi(t)\rangle_{\mathcal{V}} = \langle \nabla \varphi(t), \nabla \varphi(t)\rangle_{\mathcal{V}},
\]
from which the expression for $\frac{d}{dt} \|u(t)\|_{\mathcal{V}}^2$ follows.

To prove the final statement we expand 
$\displaystyle 
u(t)-\mathcal{A}(u(t)) = \sum_{m=1}^{n-1} e^{-t\Lambda_m} \langle u_0, \phi^m\rangle_{\mathcal{V}}\, \phi^m.
$ 
Let $t>0$. Recall that the eigenfunctions $\phi^m$ are pairwise $\mathcal{V}$-orthogonal, hence
\begin{align*}
\left\|\sum_{m=1}^{n-1} e^{-t\Lambda_m} \langle u_0, \phi^m\rangle_{\mathcal{V}}\, \phi^m \right\|_{\mathcal{V}}^2 &= \sum_{m=1}^{n-1} \left\|e^{-t\Lambda_m} \langle u_0, \phi^m\rangle_{\mathcal{V}}\, \phi^m \right\|_{\mathcal{V}}^2\\ 
&\leq e^{-2t \Lambda_-} \sum_{m=1}^{n-1} \left\| \langle u_0, \phi^m\rangle_{\mathcal{V}}\, \phi^m \right\|_{\mathcal{V}}^2
= e^{-2t \Lambda_-} \left\|\sum_{m=1}^{n-1}  \langle u_0, \phi^m\rangle_{\mathcal{V}}\, \phi^m \right\|_{\mathcal{V}}^2.
\end{align*}
Therefore
$\displaystyle
\|u(t) - \mathcal{A}(u(t))\|_{\mathcal{V}} \leq e^{-t \Lambda_-} \left\|\sum_{m=1}^{n-1} \langle u_0, \phi^m\rangle_{\mathcal{V}}\, \phi^m\right\|_{\mathcal{V}} = e^{-t\Lambda_-} \|u_0 - \mathcal{A}(u_0)\|_{\mathcal{V}}.
$
By \eqref{eq:normineq} we conclude that
$\displaystyle
\|u(t) - \mathcal{A}(u(t))\|_{\mathcal{V},\infty} \leq d_-^{-\frac{r}2} \|u(t) - \mathcal{A}(u(t))\|_{\mathcal{V}} \leq d_-^{-\frac{r}2} e^{-t\Lambda_-} \|u_0 - \mathcal{A}(u_0)\|_{\mathcal{V}} < \eta.
$
\end{proof}

\subsection{Pinning and spreading}\label{sec:pinningandspreading}

The following lemma and its proof use some of the results above and follow very closely \cite[Theorems 4.2, 4.3, 4.4]{vanGennipGuillenOstingBertozzi14}. The lemma gives sufficient bounds on the parameter $\tau$ for the \ref{alg:OKMBO} dynamics to be `uninteresting', i.e. for the evolution to be either pinned (i.e. each iteration gives back the initial set) or for the dynamics in \eqref{eq:MBOstepa} to spread the mass so widely that \ref{alg:OKMBO} arrives at a trivial (constant) stationary state in one iteration.  In the lemma's proof, we need an operator norm, which, for a linear operator $O: \mathcal{V}\to \mathcal{V}$, we define as
\[
\|O\|_o := \underset{\mathcal{V}\setminus\{0\}}\max \frac{\|Ou\|_{\mathcal{V}}}{\|u\|_{\mathcal{V}}}.
\]
A property of this norm is that, for all $u\in \mathcal{V}$, $\|Ou\|_{\mathcal{V}} \leq \|O\|_o \|u\|_{\mathcal{V}}$.

Since $L$ is self-adjoint, it follows from the Rayleigh quotient formulation of $L's$ eigenvalues, that $\|L\|_o=\Lambda_+$, where $\Lambda_+$ is the spectral radius of $L$ as in Lemma~\ref{lem:specrad} \cite[Theorem. VI.6]{ReedSimon1980}.

\begin{lemma}\label{lem:dynamicsbounds}
Let $G=(V,E,\omega) \in \mathcal{G}$. Let $\gamma\geq 0$ and let $\Lambda_-$ and $\Lambda_+$ be as in Lemma~\ref{lem:minLambda} and Lemma~\ref{lem:specrad}, respectively. Let $S\subset V$. If $S\neq \emptyset$, define
\[
\tau_\rho(S) := \Lambda_+^{-1} \log\left(1+\frac12 d_-^{\frac{r}2} (\vol{S})^{-\frac12}\right).
\]
If in addition $S \neq V$ and $\frac{\vol{S}}{\vol{V}} \neq \frac12$, also define
\[
\tau_t(S) := \Lambda_-^{-1} \log\left(\frac{\left(\vol{S}\right)^{\frac12} \left(\vol{S^c}\right)^{\frac12}}{\left(\vol{V}\right)^{\frac12} \left|\frac{\vol{S}}{\vol{V}}-\frac12\right| d_-^{\frac{r}2}}\right).
\]
Let $\gamma\geq 0$, $\tau>0$, and $S^1$ be the first set in the corresponding \ref{alg:OKMBO} evolution of the initial set $S^0=S$.
\begin{enumerate}
\item\label{item:bound1} If $\tau < \tau_\rho(S)$, then $S^1=S$. In particular, if $\tau < \Lambda_+^{-1} \log\frac32 \approx 0.4\Lambda_+^{-1}$, then $S^1=S$.
\item\label{item:bound3} If $\emptyset \neq S \neq V$, $\frac{\vol{S}}{\vol{V}} \neq \frac12$, and $\tau > \tau_t(S)$, 
then
$\displaystyle
S^1 = \begin{cases}
\emptyset, &\text{if } \frac{\vol{S}}{\vol{V}} < \frac12,\\
V, &\text{if } \frac{\vol{S}}{\vol{V}} > \frac12.
\end{cases}
$
\end{enumerate}
If $S=\emptyset$ or $S=V$, then $S^1=S$.

Moreover, if $\emptyset \neq S \neq V$, $\frac{\vol{S}}{\vol{V}} \neq \frac12$, and $\frac{\Lambda_-}{\Lambda_+} < \frac{\log\sqrt2}{\log\frac32} \approx 0.85$, then $\tau_\rho(S) < \tau_t(S)$.
\end{lemma}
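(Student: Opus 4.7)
The plan is to deduce the strict inequality $\tau_\rho(S)<\tau_t(S)$ by sandwiching the two quantities between universal bounds and then invoking the hypothesis on $\Lambda_-/\Lambda_+$. More precisely, I want to prove
\[
\tau_\rho(S) \leq \Lambda_+^{-1}\log\tfrac32 \quad\text{and}\quad \tau_t(S)\geq \Lambda_-^{-1}\log\sqrt2,
\]
after which the hypothesis $\Lambda_-/\Lambda_+ < \log\sqrt2/\log\tfrac32$ will immediately give
\[
\tau_\rho(S)\leq \Lambda_+^{-1}\log\tfrac32 < \Lambda_-^{-1}\log\sqrt2 \leq \tau_t(S).
\]

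The first bound is straightforward: since $S\neq\emptyset$, we have $\vol{S}=\sum_{i\in S}d_i^r\geq d_-^r$, so $d_-^{r/2}(\vol{S})^{-1/2}\leq 1$, and the bound on $\tau_\rho(S)$ follows from the explicit formula. This is precisely the parenthetical ``In particular'' remark already stated in part~\ref{item:bound1} of the lemma, so no new work is required.

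The second bound is the real content. Writing $a:=\vol{S}$ and $b:=\vol{S^c}$ and using $|a/\vol{V}-\tfrac12|=|a-b|/(2(a+b))$, the argument of the logarithm in $\tau_t(S)$ simplifies to $2\sqrt{(a+b)ab}/(|a-b|d_-^{r/2})$. So I need
\[
2(a+b)ab \geq (a-b)^2\,d_-^r.
\]
Since $\emptyset\neq S\neq V$, both $a,b\geq d_-^r$, hence $ab\geq d_-^r\max(a,b)\geq d_-^r(a+b)/2$. Therefore $2(a+b)ab\geq (a+b)^2 d_-^r \geq (a-b)^2 d_-^r$, where the last step uses $(a+b)^2-(a-b)^2=4ab\geq 0$. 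This yields the claimed lower bound on $\tau_t(S)$.

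The main (minor) obstacle is identifying the correct algebraic inequality to bound the argument of $\log$ in $\tau_t(S)$ from below; the AM--GM-style step $ab\geq d_-^r\max(a,b)\geq d_-^r(a+b)/2$ does the trick, and the rest is assembling the three displayed inequalities into the desired chain. No new analytic tools are required beyond the elementary observations that $\vol{S},\vol{S^c}\geq d_-^r$ whenever neither set is empty.
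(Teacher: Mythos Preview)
Your proposal addresses only the final ``Moreover'' claim, and for that claim it is correct and follows essentially the same strategy as the paper: both establish the sandwich bounds $\tau_\rho(S)\le \Lambda_+^{-1}\log\tfrac32$ and $\tau_t(S)\ge \Lambda_-^{-1}\log\sqrt2$ and then invoke the hypothesis $\Lambda_-/\Lambda_+<\log\sqrt2/\log\tfrac32$.

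The only difference is in the algebra behind the lower bound on $\tau_t(S)$. The paper bounds the two factors in the logarithm separately: it uses concavity of $x\mapsto x(\vol V-x)$ on $[d_-^r,\vol V-d_-^r]$ to get $\vol S\,\vol{S^c}\ge d_-^r(\vol V-d_-^r)$, and the crude bound $\bigl|\vol S/\vol V-\tfrac12\bigr|\le\tfrac12$ for the denominator, arriving at the intermediate expression $2\sqrt{1-d_-^r/\vol V}\ge\sqrt2$. You instead keep the $|a-b|$ term and prove the single inequality $2(a+b)ab\ge(a-b)^2 d_-^r$ via $ab\ge d_-^r\max(a,b)\ge d_-^r(a+b)/2$ and $(a+b)^2\ge(a-b)^2$. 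Your route is arguably a touch cleaner, but both arguments use the same basic ingredient ($\vol S,\vol{S^c}\ge d_-^r$) and land at the same bound.
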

\begin{proof}
The proof follows very closely the proofs of \cite[Theorems 4.2, 4.3, 4.4]{vanGennipGuillenOstingBertozzi14}, but we include it here for completeness.

To prove \ref{item:bound1}, first let $\tau<\tau_\rho(S)$. Let $\mathrm{Id}:\mathcal{V}\to\mathcal{V}$ be the identity operator, then we compute the operator norm
$\displaystyle
\|e^{-\tau L} - \mathrm{Id}\|_o \leq \sum_{j=1}^\infty \frac1{j!} \left(\tau \|L\|_o\right)^j = e^{\rho t}-1 < \frac12 d_-^{\frac{r}2} (\vol{S})^{-\frac12},
$
where we used the triangle inequality and submultiplicative property of $\|\cdot\|_o$ 
\cite{HornJohnson1990} for the first inequality. Hence, by \eqref{eq:normineq},
\begin{align*}
\|e^{-\tau L} \chi_S - \chi_S\|_{\mathcal{V},\infty} &\leq d_-^{-\frac{r}2} \|e^{-\tau L} \chi_S - \chi_S\|_{\mathcal{V}} \leq  d_-^{-\frac{r}2} \|e^{-\tau L} - \mathrm{Id} \|_{\mathcal{V}} \|\chi_S\|_{\mathcal{V}}\\
&= d_-^{-\frac{r}2} \|e^{-\tau L} - \mathrm{Id} \|_{\mathcal{V}} \sqrt{\vol{S}}
< \frac12.
\end{align*}
It follows from the thresholding step in \ref{alg:OKMBO} that $S^1=S$.

To prove \ref{item:bound3} (for any $r\in [0,1]$), we use Lemma~\ref{lem:massandsuch} with $\eta:=\left|\frac{\vol{S}}{\vol{V}}-\frac12\right|$ to find
\[
\left| \|u(\tau)\|_{\mathcal{V},\infty} - \mathcal{A}(\chi_S)\right| \leq \left\|u(\tau)-\mathcal{A}(\chi_S)\right\|_{\mathcal{V},\infty} < \left| \frac{\vol{S}}{\vol{V}}-\frac12\right|.
\]
If $\frac{\vol{S}}{\vol{V}}<\frac12$ this implies
\[
\|u(\tau)\|_{\mathcal{V},\infty} \leq \|u(\tau)-\mathcal{A}(\chi_S)\|_{\mathcal{V},\infty} + \|\mathcal{A}(\chi_S)\|_{\mathcal{V},\infty} < \left|\frac{\vol{S}}{\vol{V}}-\frac12\right| + \frac{\vol{S}}{\vol{V}} = \frac12.
\]
Alternatively, if $\frac{\vol{S}}{\vol{V}}> \frac12$, then
\[
\frac{\vol{S}}{\vol{V}} = \left\|\mathcal{A}(\chi_S)\right\|_{\mathcal{V},\infty} \leq \left\|u(\tau)-\mathcal{A}(\chi_S)\right\|_{\mathcal{V},\infty} + \|u(\tau)\|_{\mathcal{V},\infty}  
< \frac{\vol{S}}{\vol{V}}-\frac12 + \|u(\tau)\|_{\mathcal{V},\infty},
\]
and thus $\|u(\tau)\|_{\mathcal{V},\infty} > \frac12$. The result then follows from the thresholding step in \ref{alg:OKMBO}.

Since $L\chi_\emptyset = \chi_\emptyset$ and $L\chi_V = \chi_V$, the subsets $S=\emptyset$ and $S=V$ are stationary states of the ODE step in \ref{alg:OKMBO} and thus $S^1=S$.

To prove the final statement, we first note that, since $\emptyset\neq S \neq V$, we have $d_-^r \leq \vol{S} \leq \vol{V} - d_-^r$. Since $(\vol{S})(\vol{S^c}) = \vol{S}(\vol{V}-\vol{S})$ is concave as a function of $\vol{S}$, we find
\begin{align*}
(\vol{S})(\vol{S^c}) &\geq \min\{d_-^r \left(\vol{V}-d_-^r\right), \left(\vol{V}-d_-^r\right) \left(\vol{V} - \left(\vol{V}-d_-^r\right)\right)\}\\
&= d_-^r \left(\vol{V}-d_-^r\right).
\end{align*}
We also note that $\left|\frac{\vol{S}}{\vol{V}}-\frac12\right| \leq \frac12$. Then we find that
$
\tau_\rho(S) \leq \Lambda_+^{-1} \log\left(\frac32\right),
$ and 
$
\tau_t(S) \geq \Lambda_- \log\left(2 \sqrt{1-\frac{d_-r}{\vol{S}}}\right) \leq \Lambda^{-1} \log\sqrt2,
$ 
where the last inequality follows from $\vol{V} \geq n d_-^r \geq 2 d_-^r$. 
\end{proof}

\begin{remark}\label{rem:butwhatabouttheemptyandfullset}
The exclusion of the case $\vol{S^0} = \frac12 \vol{V}$ for the establishment of $\tau_t$ in Lemma~\ref{lem:dynamicsbounds} is a necessary one, as in this case symmetry could lead to pinning in \ref{alg:OKMBO}, such that $S^1=S^0$ (and thus $\emptyset\neq S^1 \neq V$). For example, consider an unweighted, completely connected graph and an initial set $S^0$ such that $\vol{S^0} = \frac12 \vol{V}$. By symmetry, no nontrivial dynamics can occur, regardless of the value of $\tau$; hence $e^{-\tau L} \chi_{S^0} = \chi_{S^0}$ and thus $S^1=S^0$.
\end{remark}

In Lemma~\ref{cor:OKstarminimizers} we saw that for the unweighted star graph the value of $u_1$ determines if $u\in \mathcal{V}$ is a minimizer of $F_0$ (with $q=1$) or not (unless $M=\frac12 \vol{V}$ or $\gamma = \lambda_{n-1}$). It is therefore interesting to investigate the pinning behaviour of \ref{alg:OKMBO} at the centre node of the star graph in more detail. In particular we are interested in the case where $1\in S$ and $\left(e^{-\tau L} \chi_S\right)_1 \geq \frac12$ and the case where $1\not\in S$ and $\left(e^{-\tau L} \chi_S\right)_1 < \frac12$, as those are the cases in which the status of node $1$ does not change after one iteration of \ref{alg:OKMBO} (i.e. if $S^0 = S$, then either $1\in S^0\cap S^1$ or $1\not\in S^0\cap S^1$). The following lemma gives explicit conditions on $\tau$ for these cases to occur.

\begin{lemma}\label{lem:node1changeinstar}
Let $G=(V,E,\omega) \in \mathcal{G}$ be an unweighted star graph as in Definition~\ref{def:bipartstar} with $n\geq 3$ nodes. 
Let $\gamma\geq 0$, let $\Lambda_{n-1}=\lambda_{n-1} + \frac\gamma{\lambda_{n-1}}$ be the eigenvalue of $L$ as in Lemma~\ref{lem:starspectrum} and \eqref{eq:eigvalsofL}, and let $S\subset V$. 
If $1\in S$, then $\left(e^{-\tau L} \chi_S\right)_1 \geq \frac12$ if and only if $\tau \geq 0$ is such that
\[
e^{-\Lambda_{n-1}\tau} \geq \frac12 \frac{\vol{V} - 2\mathcal{M}\left(\chi_S\right)}{\vol{V}-\mathcal{M}\left(\chi_S\right)}.
\]
Alternatively, if $1\not\in S$ then $\left(e^{-\tau L} \chi_S\right)_1 < \frac12$ if and only if $\tau \geq 0$ is such that 
\[
e^{-\Lambda_{n-1}\tau} > 1 - \frac12 \frac{\vol{V}}{\mathcal{M}\left(\chi_S\right)}.
\]
\end{lemma}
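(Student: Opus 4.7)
The plan is to exploit the spectral expansion of $e^{-\tau L}$ developed in Section~\ref{sec:spectrumofL}, together with the explicit eigendata for the unweighted star graph from Lemma~\ref{lem:starspectrum}. The key observation is that almost every eigenfunction vanishes at the centre node, so only two terms in the expansion contribute to $(e^{-\tau L}\chi_S)_1$.

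First I would write $\chi_S = \sum_{m=0}^{n-1} \langle \chi_S, \phi^m\rangle_{\mathcal{V}} \phi^m$ and apply $e^{-\tau L}$ termwise, as in the proof of Lemma~\ref{lem:decreasingfunction} and formula \eqref{eq:Leigexp}, to obtain
\[
(e^{-\tau L}\chi_S)_1 = \sum_{m=0}^{n-1} e^{-\tau \Lambda_m} \langle \chi_S, \phi^m\rangle_{\mathcal{V}}\, \phi^m_1 .
\]
From Lemma~\ref{lem:starspectrum}, $\phi^m_1=0$ for $m\in\{1,\ldots,n-2\}$, and $\Lambda_0=0$. Hence only the indices $m=0$ and $m=n-1$ survive, leaving
\[
(e^{-\tau L}\chi_S)_1 = \frac{\mathcal{M}(\chi_S)}{\vol{V}} + e^{-\Lambda_{n-1}\tau} \langle \chi_S, \phi^{n-1}\rangle_{\mathcal{V}}\, \phi^{n-1}_1 .
\]

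Next I would compute the two factors in the $(n-1)$-term explicitly. From the formula for $\phi^{n-1}$, I get $\phi^{n-1}_1 = (n-1)^{1-r}/\alpha$ with $\alpha^2 = (n-1)^{2-r} + (n-1) = (n-1)\lambda_{n-1}$. Splitting $S$ into whether it contains the centre, a direct calculation gives $\langle \chi_S, \phi^{n-1}\rangle_{\mathcal{V}} = (n - |S|)/\alpha$ if $1\in S$ and $\langle \chi_S, \phi^{n-1}\rangle_{\mathcal{V}} = -|S|/\alpha$ if $1\notin S$. Using $\mathcal{M}(\chi_S) = (n-1)^r + |S|-1$ in the first case and $\mathcal{M}(\chi_S)=|S|$ in the second, together with $\vol{V} = (n-1)^r + n -1$, these rewrite as $(\vol V - \mathcal{M}(\chi_S))/\alpha$ and $-\mathcal{M}(\chi_S)/\alpha$ respectively.

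Multiplying, the coefficient of $e^{-\Lambda_{n-1}\tau}$ reduces to $\pm\frac{(n-1)^{1-r}\,[\vol V - \mathcal{M}(\chi_S)\text{ or }\mathcal{M}(\chi_S)]}{(n-1)\lambda_{n-1}}$. Here I would use the identity $(n-1)^r \lambda_{n-1} = (n-1) + (n-1)^r = \vol{V}$, which collapses the prefactor $(n-1)^{1-r}/((n-1)\lambda_{n-1}) = 1/\vol{V}$. This gives the clean forms
\[
(e^{-\tau L}\chi_S)_1 = \frac{\mathcal{M}(\chi_S)}{\vol{V}} + e^{-\Lambda_{n-1}\tau}\,\frac{\vol{V}-\mathcal{M}(\chi_S)}{\vol{V}} \quad (1\in S),
\]
\[
(e^{-\tau L}\chi_S)_1 = \frac{\mathcal{M}(\chi_S)}{\vol{V}}\bigl(1 - e^{-\Lambda_{n-1}\tau}\bigr)\quad (1\notin S).
\]
In each case I would then solve the inequality $(e^{-\tau L}\chi_S)_1 \geq \tfrac12$, respectively $<\tfrac12$, by elementary rearrangement to obtain the two stated bounds on $e^{-\Lambda_{n-1}\tau}$.

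The only mildly delicate step is the algebraic simplification $(n-1)^{1-r}/((n-1)\lambda_{n-1}) = 1/\vol{V}$; without this the bounds would contain awkward $(n-1)^r$ factors. Otherwise the argument is a direct spectral computation, made tractable by the very sparse structure of the star-graph spectrum at the centre node.
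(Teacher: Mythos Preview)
Your proof is correct and follows essentially the same route as the paper: both use the spectral expansion of $e^{-\tau L}\chi_S$, exploit that $\phi^m_1=0$ for $m\in\{1,\ldots,n-2\}$ so only the $m=0$ and $m=n-1$ terms survive at the centre node, and then simplify using $(n-1)^r\lambda_{n-1}=\vol{V}$ before rearranging the resulting inequality. The only cosmetic difference is that the paper keeps $(\chi_S)_1$ as a parameter and obtains the single formula $(e^{-\tau L}\chi_S)_1=\vol{V}^{-1}\bigl[\mathcal{M}(\chi_S)+e^{-\tau\Lambda_{n-1}}(\vol{V}(\chi_S)_1-\mathcal{M}(\chi_S))\bigr]$ before splitting into cases, whereas you branch on $1\in S$ a step earlier.
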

It is worth remembering that in the setting of Lemma~\ref{lem:node1changeinstar} we have $\vol{V} = (n-1)^r + n-1$ and $\mathcal{M}\left(\chi_S\right) = (n-1)^r \left(\chi_S\right)_1 + \left|S\setminus\{1\}\right|$.
\begin{proof}[Proof of Lemma~\ref{lem:node1changeinstar}]
The proof is a direct computation using an expansion as in \eqref{eq:expansion} along the lines of what was done in \cite{vanGennipGuillenOstingBertozzi14,vanGennip17note}. Using the spectrum in Lemma~\ref{lem:starspectrum}, we compute
\[
\chi_S = \sum_{m=0}^{n-1} \langle \chi_S, \phi^m\rangle_{\mathcal{V}}\, \phi^m
= \mathcal{A}\left(\chi_S\right) + \sum_{m=1}^{n-2} \langle \chi_S, \phi^m\rangle_{\mathcal{V}}\, \phi^m + \frac{(n-1)^{1-r} d_1^r \left(\chi_S\right)_1 - \sum_{i=2}^n d_i^r \left(\chi_S\right)_i}{(n-1)^{2-r} + n - 1} \phi^{n-1}.
\]
Since $\Lambda_0 = 0$, for $m\in \{1, \ldots, n-2\}$, $\phi^m_1=0$, $\phi^{n-1}_1 = (n-1)^{1-r}$, $d_1^r = (n-1)^r$, for $i\in \{2, \ldots, n\}$, $d_i^r=1$, and $\vol{V} = (n-1)^r + n-1$, we compute
\begin{align*}
\left(e^{-\tau L} \chi_S\right)_1 &= \sum_{m=0}^{n-1} e^{-\tau \Lambda_m}  \langle \chi_S, \phi^m\rangle_{\mathcal{V}}\, \phi^m\\
&= \frac{\mathcal{M}\left(\chi_S\right)}{\vol{V}} + e^{-\tau \Lambda_{n-1}}  \frac{(n-1)^{1-r}}{(n-1)^{2-r}+n-1} \left((n-1) \left(\chi_S\right)_1 - \sum_{i=2}^n \left(\chi_S\right)_i\right)\\
&= \frac1{\vol{V}} \left[ \mathcal{M}\left(\chi_S\right) + e^{-\tau \Lambda_{n-1}} \left( \vol{V}\left(\chi_S\right)_1 - \mathcal{M}\left(\chi_S\right)\right)\right].
\end{align*}
The results in the lemma now follow by considering the cases $1\in S$ and $1\not\in S$, hence $(\chi_S)_1=1$ and $\left(\chi_S\right)_1=0$, respectively.
\end{proof}

\begin{remark}
Interpreting the results from Lemma~\ref{lem:node1changeinstar}, we see that, for the unweighted star graph, if $1\in S$, pinning at node $1$ occurs for any value of $\tau\geq 0$ if $\mathcal{M}\left(\chi_S\right) \geq \frac12\vol{V}$. If instead $1\not\in S$, then pinning at node $1$ occurs, independent of the value of $\tau$, if $\mathcal{M}\left(\chi_S\right) \leq \frac12\vol{V}$. In particular, pinning at node $1$ always occurs if $r=1$, independent of the choice of $\tau$ or $S$.
\end{remark}

\subsection{$\Gamma$-convergence of the Lyapunov functional}\label{sec:Gammaconvergence}

In this section we prove that the functionals $\tilde J_\tau: \mathcal{K}\to \overline\R$, defined by
\begin{equation}\label{eq:tildeJ}
\tilde J_\tau (u) := \frac1\tau \langle \chi_V-u, e^{-\tau L} u \rangle_\mathcal{V},
\end{equation}
for $\tau>0$, $\Gamma$-converge to $\tilde F_0: \mathcal{K}\to \overline\R$ as $\tau \to 0$, where $\tilde  F_0$ is defined by
\begin{equation}\label{eq:tildeF0}
\tilde F_0(u) := \begin{cases}
F_0(u), &\text{if } u\in \mathcal{K}\cap \mathcal{V}^b,\\
+\infty, &\text{otherwise},
\end{cases}
\end{equation}
where $F_0$ is as in \eqref{eq:limitOK} with $q=1$\footnote{Note that $\mathcal{K}\cap \mathcal{V}^b = \mathcal{V}^b$. We included $\mathcal{K}$ explicitly in the intersection here to emphasize that the domain of $\tilde F_0$ is $\mathcal{K}$, not $\mathcal{V}$.}. We use the notation $\overline\R := \R \cup \{-\infty,+\infty\}$ for the extended real line.

Remember that the set $\mathcal{K}$ was defined in \eqref{eq:setK} as the subset of all $[0,1]$-valued functions in $\mathcal{V}$. We note that $\tilde J_\tau = \frac1\tau \left.J_\tau\right|_{\mathcal{K}}$, where $J_\tau$ is the Lyapunov functional from Lemma~\ref{lem:Lyapunov}. 

In this section we will encounter different variants of the functional $\frac1\tau J_\tau$, such as $\tilde J_\tau$, $\overline J_\tau$, and $\overline{\overline J}_\tau$, and similar variants of $F_0$. The differences between these functionals are the domains on which they are defined and the parts of their domains on which they take finite values: $\tilde J_\tau$ is defined on all of $\mathcal{K}$, while $\overline J_\tau$ and $\overline{\overline J}_\tau$ (which will be defined later in this section) incorporate a mass constraint and relaxed mass constraint in their domains, respectively. For technical reasons we thought it prudent to distinguish these functionals through their notation, but intuitively they can be thought of as the same functional with different types of constraints (or lack thereof).

For sequences in $\mathcal{V}$ we use convergence in the $\mathcal{V}$-norm, i.e. if $\{u_k\}_{k\in\N}\subset \mathcal{V}$, then we say $u_k\to u$ as $k\to\infty$ if $\|u_k-u\|_{\mathcal{V}} \to 0$ as $k\to \infty$. Note, however, that all norms on the finite space $\mathcal{V}$ induce equivalent topologies, so different norms can be used without this affecting the convergence results in this section.

\begin{lemma}\label{lem:nonbinary}
Let $G=(V,E,\omega)\in \mathcal{G}$. Let $\{u_k\}_{k\in\N}\subset \mathcal{V}$ and $u\in \mathcal{V}\setminus\mathcal{V}^b$ be such that $u_k\to u$ as $k\to\infty$. Then there exists an $i\in V$, an $\eta>0$, and a $K>0$ such that for all $k\geq K$ we have $(u_k)_i \in \R\setminus\big([-\eta,\eta]\cup [1-\eta,1+\eta]\big)$.
\end{lemma}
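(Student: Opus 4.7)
The plan is straightforward: since $u \in \mathcal{V}\setminus\mathcal{V}^b$, by definition of $\mathcal{V}^b$ in \eqref{eq:Vb}, there must exist some node $i\in V$ such that $u_i \notin \{0,1\}$. Fix such an $i$ for the rest of the proof; this will be the node whose existence the lemma claims.

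Next, I would exploit the fact that $u_i$ is bounded away from both $0$ and $1$. Explicitly, since $u_i \neq 0$ and $u_i \neq 1$, the quantity $\min\{|u_i|, |u_i-1|\}$ is strictly positive, so I can choose $\eta > 0$ such that $|u_i| > 2\eta$ and $|u_i - 1| > 2\eta$ (for instance $\eta := \tfrac{1}{3}\min\{|u_i|, |u_i-1|\}$). This $\eta$ will be the $\eta$ in the statement.

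Finally, I would use the hypothesis $u_k \to u$ in the $\mathcal{V}$-norm. Since all norms on the finite-dimensional space $\mathcal{V}$ are equivalent, pointwise convergence follows, so $(u_k)_i \to u_i$ in $\R$. Hence there exists $K \in \N$ such that for all $k\geq K$, $|(u_k)_i - u_i| < \eta$. By the reverse triangle inequality this yields
\[
|(u_k)_i| \geq |u_i| - |(u_k)_i - u_i| > 2\eta - \eta = \eta,
\]
so $(u_k)_i \notin [-\eta, \eta]$, and similarly
\[
|(u_k)_i - 1| \geq |u_i - 1| - |(u_k)_i - u_i| > 2\eta - \eta = \eta,
\]
so $(u_k)_i \notin [1-\eta, 1+\eta]$. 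Combining these two statements gives the desired conclusion.

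There is no real obstacle here; this is purely a consequence of continuity combined with the fact that a non-binary value is bounded away from both $0$ and $1$. The only mild subtlety is choosing $\eta$ small enough so that both the gap from $0$ and the gap from $1$ survive passage to the limit, which is handled by taking $\eta$ smaller than a third (or a half) of both $|u_i|$ and $|u_i-1|$.
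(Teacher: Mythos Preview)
Your proof is correct and follows essentially the same approach as the paper's: pick a node $i$ where $u_i\notin\{0,1\}$, set $\eta$ to be a fixed fraction of $\min\{|u_i|,|u_i-1|\}$ (the paper uses $\tfrac12$, you use $\tfrac13$), and use pointwise convergence plus the reverse triangle inequality. The only cosmetic difference is that the paper justifies pointwise convergence via $d_i^r>0$ in the weighted norm, whereas you invoke equivalence of norms in finite dimensions; both are fine.
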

\begin{proof}
Because $u\in \mathcal{V}\setminus\mathcal{V}^b$, there is an $i\in V$ such that $u_i\not\in \{0,1\}$. Since $G\in \mathcal{G}$, we know that $d_i^r>0$. Thus, since $u_k\to u$ as $k\to \infty$, we know that for every $\hat\eta>0$ there exists a $K(\hat\eta)>0$ such that for all $k\geq K(\hat\eta)$ we have $\left|(u_k)_i - u_i\right| < \hat\eta$. Define
$\displaystyle
\eta := \frac12 \min\{\left|u_i\right|, \left|u_i-1\right|\}>0.
$
Then, for all $k\geq K(\eta)$, we have
$\displaystyle 
\left|(u_k)_i\right| \geq \big| \left|(u_k)_i - u_i\right| - \left|u_i\right|\big | > \frac12 |u_i| \geq \eta
$
and similarly
$\displaystyle
\left|(u_k)_i-1\right| > \eta.
$
\end{proof}

\begin{lemma}\label{lem:masssquared}
Let $G=(V,E,\omega)\in\mathcal{G}$, $u\in \mathcal{V}$, and let $\{\phi^m\}_{m=0}^{n-1}$ be $\mathcal{V}$-orthonormal Laplacian eigenfunctions as in \eqref{eq:eigenfunctions}. Then
\[
\sum_{m=0}^{n-1} \langle u,\phi^m\rangle_{\mathcal{V}}^2 = \mathcal{M}(u^2).
\]
\end{lemma}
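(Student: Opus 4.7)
The plan is to recognize the claimed identity as Parseval's identity for the $\mathcal{V}$-orthonormal basis $\{\phi^m\}_{m=0}^{n-1}$, combined with the observation that $\mathcal{M}(u^2)$ is nothing other than $\|u\|_{\mathcal{V}}^2$.

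First I would unpack the right-hand side: by the definition of $\mathcal{M}$ and of the $\mathcal{V}$ inner product, we have
\[
\mathcal{M}(u^2) = \sum_{i\in V} d_i^r u_i^2 = \langle u, u\rangle_{\mathcal{V}} = \|u\|_{\mathcal{V}}^2.
\]
So it suffices to show $\sum_{m=0}^{n-1} \langle u,\phi^m\rangle_{\mathcal{V}}^2 = \|u\|_{\mathcal{V}}^2$.

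Next I would use the eigenfunction expansion \eqref{eq:expansion}, namely $u = \sum_{m=0}^{n-1} a_m \phi^m$ with $a_m = \langle u,\phi^m\rangle_{\mathcal{V}}$, which is valid because $\{\phi^m\}_{m=0}^{n-1}$ is a $\mathcal{V}$-orthonormal basis of $\mathcal{V}$ (as noted just before \eqref{eq:expansion}). Substituting and using bilinearity together with the orthonormality relation $\langle \phi^m,\phi^l\rangle_{\mathcal{V}} = \delta_{ml}$ from \eqref{eq:eigenfunctions} yields
\[
\|u\|_{\mathcal{V}}^2 = \left\langle \sum_{m=0}^{n-1} a_m \phi^m, \sum_{l=0}^{n-1} a_l \phi^l \right\rangle_{\mathcal{V}} = \sum_{m,l=0}^{n-1} a_m a_l\, \delta_{ml} = \sum_{m=0}^{n-1} a_m^2 = \sum_{m=0}^{n-1} \langle u,\phi^m\rangle_{\mathcal{V}}^2,
\]
which combined with the first identity gives the desired equality.

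There is no real obstacle here: the statement is essentially Parseval, and the only mild subtlety is noticing that the symbol $u^2$ on the right means the pointwise square (so that $\mathcal{M}(u^2)$ coincides with $\|u\|_{\mathcal{V}}^2$), rather than anything involving $\Delta$ or $q,r$ in a nontrivial way.
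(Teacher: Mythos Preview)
Your proof is correct and is the cleanest route: you recognise the statement as Parseval's identity for the $\mathcal{V}$-orthonormal basis, once one observes that $\mathcal{M}(u^2)=\|u\|_{\mathcal{V}}^2$. The paper's proof reaches the same conclusion but by a slightly more explicit (and slightly longer) path: it first derives the completeness relation $\sum_{m=0}^{n-1}\phi^m_i\phi^m_j = d_i^{-r}\delta_{ij}$ by expanding the function $f^j_i:=d_i^{-r}\delta_{ij}$ in the eigenbasis, and then expands $\sum_m\langle u,\phi^m\rangle_{\mathcal{V}}^2$ as a double sum over $i,j$ and collapses it using that relation. Your argument uses exactly the same underlying fact (orthonormal basis $\Rightarrow$ Parseval), just packaged more abstractly; the paper's version has the minor side benefit of making the pointwise completeness identity explicit, which could be reused elsewhere, but for the lemma as stated your approach is equivalent and more direct.
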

\begin{proof}
Let $j\in V$ and define $f\in \mathcal{V}$ by, for all $i\in V$, $f^j_i:=d_i^{-r} \delta_{ij}$, where $\delta$ denotes the Kronecker delta. Using the expansion in \eqref{eq:expansion}, we find, for all $i\in V$,
\[
f^j_i = \sum_{m=0}^{n-1} \langle f^j, \phi^m\rangle_{\mathcal{V}}\, \phi^m_i = \sum_{m=0}^{n-1} \sum_{k\in V} d_i^{-r} \delta_{kj} d_k^r \phi^m_k \phi^m_i = \sum_{m=0}^{n-1} \phi^m_j \phi^m_i.
\]
Hence
\[
\sum_{m=0}^{n-1} \langle u,\phi^m\rangle_{\mathcal{V}}^2 = \sum_{m=0}^{n-1} \sum_{i,j\in V} u_i u_j d_i^r d_j^r \phi^m_i \phi^m_j = \sum_{i,j\in V} u_i u_j d_i^r d_j^r f^j_i = \langle u^2, \chi_V\rangle_{\mathcal{V}} =  \mathcal{M}(u^2).
\]
\end{proof}

\begin{theorem}[$\Gamma$-convergence]\label{thm:gammaconvergence}
Let $G=(V,E,\omega)\in \mathcal{G}$, $q=1$, and $\gamma\geq 0$. Let $\{\tau_k\}_{k \in \N}$ be a sequence of positive real numbers such that $\tau_k\to 0$ as $k\to \infty$. Let $u\in \mathcal{K}$. Then the following lower bound and upper bound hold:
\begin{itemize}
\item[(LB)] for every sequence $\{u_k\}_{k\in \N} \subset \mathcal{K}$ such that $u_k \to u$ as $k\to \infty$, $\tilde F_0(u) \leq \underset{k\to\infty}\liminf\, \tilde J_{\tau_k}(u_k)$, and
\item[(UB)] there exists a sequence $\{u_k\}_{k\in \N} \subset \mathcal{K}$ such that $u_k \to u$ as $k\to \infty$ and $\underset{k\to\infty}\limsup\, \tilde J_{\tau_k}(u_k) \leq \tilde F_0(u)$.
\end{itemize}
\end{theorem}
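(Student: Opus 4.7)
The plan is to rewrite $\tilde J_\tau$ spectrally in the basis $\{\phi^m\}$ from \eqref{eq:eigenfunctions}, after which both (LB) and (UB) reduce to straightforward passage to the limit in a finite sum. Using self-adjointness of $e^{-\tau L}$, the identity $L\chi_V=0$ (so $e^{-\tau L}\chi_V=\chi_V$), and Lemma~\ref{lem:masssquared}, I would first derive
\[
\tilde J_\tau(u) \;=\; \frac{1}{\tau}\,\mathcal{M}\!\bigl(u(\chi_V-u)\bigr) \;+\; \sum_{m=1}^{n-1}\frac{1-e^{-\tau\Lambda_m}}{\tau}\,\langle u,\phi^m\rangle_{\mathcal{V}}^2,
\]
valid for every $u\in \mathcal{V}$. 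On $\mathcal{K}$ both summands are nonnegative, because $u_i(1-u_i)\geq 0$ on $[0,1]$, and the first vanishes precisely when $u\in \mathcal{V}^b$. Combined with the elementary fact $(1-e^{-\tau\Lambda_m})/\tau\to \Lambda_m$ as $\tau\to 0$, Corollary~\ref{cor:OKexpressions} identifies the formal limit of the spectral sum on $\mathcal{V}^b$ with $F_0$.

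For (LB) I would split on whether $u\in\mathcal{V}^b$. If $u\in\mathcal{K}\setminus\mathcal{V}^b$, then $\tilde F_0(u)=+\infty$, and Lemma~\ref{lem:nonbinary} (together with $u_k\in\mathcal{K}$, which forces $(u_k)_i\in[0,1]$) supplies $i\in V$, $\eta>0$, and $K\in\N$ such that $(u_k)_i\in(\eta,1-\eta)$ for all $k\geq K$. Then $\mathcal{M}(u_k(\chi_V-u_k))\geq d_i^r\,\eta(1-\eta)>0$, and after discarding the nonnegative spectral sum one obtains $\tilde J_{\tau_k}(u_k)\geq d_i^r\eta(1-\eta)/\tau_k \to +\infty$. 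If instead $u\in\mathcal{V}^b$, I would drop the (nonnegative) first term and use that the remaining sum is over finitely many $m$: for each fixed $m\geq 1$ the prefactor converges to $\Lambda_m$ and $\langle u_k,\phi^m\rangle_{\mathcal{V}}^2\to\langle u,\phi^m\rangle_{\mathcal{V}}^2$ by continuity of the inner product on the finite dimensional space $\mathcal{V}$, so passing $\liminf$ through the finite sum gives
\[
\liminf_{k\to\infty}\tilde J_{\tau_k}(u_k)\;\geq\;\sum_{m=1}^{n-1}\Lambda_m\,\langle u,\phi^m\rangle_{\mathcal{V}}^2\;=\;F_0(u)\;=\;\tilde F_0(u),
\]
the second equality coming from Corollary~\ref{cor:OKexpressions}.

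For (UB) I would take the constant recovery sequence $u_k\equiv u$. If $u\in\mathcal{K}\setminus\mathcal{V}^b$, the bound against $+\infty$ is automatic. If $u\in\mathcal{V}^b$, the first term in the spectral rewriting is identically zero and the second term converges to $F_0(u)$ by the same termwise argument used above; no mollification or diffuse-interface interpolant is required. The only genuinely nontrivial point is the non-binary case of (LB): a limit $u\notin\mathcal{V}^b$ must leave at least one coordinate bounded away from both wells along an entire tail of the approximating sequence, and Lemma~\ref{lem:nonbinary} encapsulates exactly this, producing the required $1/\tau_k$ blow-up. Everything else is linear algebra on a finite dimensional space combined with the elementary limit $(1-e^{-\tau x})/\tau\to x$, in the spirit of the analogous $\Gamma$-convergence argument for the standard graph MBO scheme in \cite{vanGennipGuillenOstingBertozzi14}.
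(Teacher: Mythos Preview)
Your proposal is correct and follows essentially the same approach as the paper's own proof: both rewrite $\tilde J_\tau(u)$ as $\tau^{-1}\bigl(\mathcal{M}(u)-\mathcal{M}(u^2)\bigr)$ plus the spectral sum $\sum_m \tau^{-1}(1-e^{-\tau\Lambda_m})\langle u,\phi^m\rangle_{\mathcal{V}}^2$ via Lemma~\ref{lem:masssquared}, treat the two cases $u\in\mathcal{V}^b$ and $u\notin\mathcal{V}^b$ separately using Lemma~\ref{lem:nonbinary} for the latter, identify the limiting spectral sum with $F_0$ via Corollary~\ref{cor:OKexpressions}, and take the constant recovery sequence $u_k\equiv u$ for (UB). The only cosmetic differences are that you start the spectral sum at $m=1$ (harmless since $\Lambda_0=0$) and you use the slightly sharper bound $d_i^r\eta(1-\eta)$ rather than the paper's $d_i^r\eta^2$ in the non-binary case.
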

\begin{proof}
With $J_\tau$ the Lyapunov functional from Lemma~\ref{lem:Lyapunov}, we compute, for $\tau>0$ and $u\in \mathcal{V}$,
\[
J_\tau(u) = \langle \chi_V-u, e^{-\tau L} u \rangle_{\mathcal{V}} = \mathcal{M}\left( e^{-\tau L} u\right) -  \langle u, e^{-\tau L} u \rangle_{\mathcal{V}} = \mathcal{M}(u) -  \langle u, e^{-\tau L} u \rangle_{\mathcal{V}},
\]
where we used the mass conservation property from Lemma~\ref{lem:massandsuch}. Using the expansion in \eqref{eq:expansion} and Lemma~\ref{lem:masssquared}, we find
\begin{align}
\frac1\tau J(u) &= \frac1\tau \mathcal{M}(u) - \sum_{m=0}^{n-1} \frac{e^{-\tau \Lambda_m}-1}\tau \langle u, \phi^m\rangle_{\mathcal{V}}^2 -\frac1\tau \sum_{m=0}^{n-1} \langle u, \phi^m\rangle_{\mathcal{V}}^2\notag\\
&=  - \sum_{m=0}^{n-1} \frac{e^{-\tau \Lambda_m}-1}\tau \langle u, \phi^m\rangle_{\mathcal{V}}^2 +\frac1\tau \left(\mathcal{M}(u)-\mathcal{M}(u^2)\right).\label{eq:fractauJ}
\end{align}

Now we prove (LB). Let $\{\tau_k\}_{k \in \N}$ and $\{u_k\}_{n\in \N} \subset \mathcal{K}$ be as stated in the theorem. Then, for all $m\in \{0, \ldots, n-1\}$, we have that $\displaystyle -\underset{k \to \infty}\lim\, \frac{e^{-{\tau_k} \Lambda_m}-1}{\tau_k} = \Lambda_m$ and $\displaystyle \underset{k \to \infty}\lim\, \langle u_k, \phi^m\rangle_{\mathcal{V}}^2 = \langle u, \phi^m\rangle_{\mathcal{V}}^2$, hence 
\begin{equation}\label{eq:positivityoflimit}
\underset{k\to \infty}\lim\, - \sum_{m=0}^{n-1} \frac{e^{-\tau_k \Lambda_m}-1}{\tau_k} \langle u_k, \phi^m\rangle_{\mathcal{V}}^2 = \sum_{m=0}^{n-1} \Lambda_m \langle u, \phi^m\rangle_{\mathcal{V}}^2 \geq 0.
\end{equation}
Moreover, if $u\in \mathcal{K} \cap \mathcal{V}^b$, then, combining the above with \eqref{eq:OKexpression2} (remember that $q=1$) and Lemma~\ref{lem:Lspectrum}, we find
\begin{equation}\label{eq:limitisF0}
\underset{k\to \infty}\lim\, - \sum_{m=0}^{n-1} \frac{e^{-\tau_k \Lambda_m}-1}{\tau_k} \langle u_k, \phi^m\rangle_{\mathcal{V}}^2 = F_0(u).
\end{equation}
Furthermore, since, for every $k\in \N$, $u_k\in \mathcal{K}$, we have that, for all $i\in V$, $u_i^2 \leq u_i$ and thus $\mathcal{M}(u_k)-\mathcal{M}(u_k^2) \geq 0$. Hence
\[
\underset{k\to\infty}\liminf\, \tilde J_{\tau_k}(u_k) \geq -\underset{k\to\infty}\liminf\,  \sum_{m=0}^{n-1} \frac{e^{-\tau_k \Lambda_m}-1}{\tau_k} \langle u_k, \phi^m\rangle_{\mathcal{V}}^2= F_0(u).
\]

Assume now that $u\in \mathcal{K}\setminus\mathcal{V}_b$ instead, then by Lemma~\ref{lem:nonbinary} it follows that there are an $i\in V$ and an $\eta>0$, such that, for all $k$ large enough, $(u_k)_i  \in (\eta, 1-\eta)$. Thus, for all $k$ large enough,
\[
\mathcal{M}(u_k) - \mathcal{M}(u_k^2) \geq d_i^r (u_k)_i (1-(u_k)_i) > d_i^r \eta^2 > 0.
\]
Combining this with \eqref{eq:positivityoflimit} we deduce
\[
\underset{k\to\infty}\liminf\, \tilde J_{\tau_k}(u_k) \geq \underset{k\to\infty}\liminf\, \frac1{\tau_k} \big(\mathcal{M}(u_k) - \mathcal{M}(u_k^2) \big) = +\infty = F_0(u),
\]
which completes the proof of (LB).

To prove (UB), first we note that, if $u\in \mathcal{K}\setminus\mathcal{V}_b$, then $F_0(u)=+\infty$ and the upper bound inequality is trivially satisfied. If instead $u\in \mathcal{K} \cap \mathcal{V}^b$, then we define a so-called recovery sequence as follows: for all $k\in \N$, $u_k:=u$. We trivially have that $u_k\to u$ as $k\to\infty$. Moreover, since $u=u^2$, we find, for all $k\in\N$, $\mathcal{M}(u_k)-\mathcal{M}(u_k^2) = 0$. Finally we find
\[
\underset{k\to\infty}\limsup\, \tilde J_{\tau_k}(u_k) =- \underset{k\to\infty}\lim\,  \sum_{m=0}^{n-1} \frac{e^{-\tau_k \Lambda_m}-1}{\tau_k} \langle u, \phi^m\rangle_{\mathcal{V}}^2 = F_0(u),
\]
where we used a similar calculation as in \eqref{eq:limitisF0}.
\end{proof}

\begin{theorem}[Equi-coercivity]\label{thm:equicoercivity}
Let $G=(V,E,\omega)\in \mathcal{G}$ and $\gamma\geq 0$.   Let $\{\tau_k\}_{k \in \N}$ be a sequence of positive real numbers such that $\tau_k\to 0$ as $k\to \infty$ and let $\{u_k\}_{k\in \N} \subset \mathcal{K}$ be a sequence for which there exists a $C>0$ such that, for all $k\in\N$, $\tilde J_\tau(u_k)\leq C$. Then there is a subsequence $\{u_{k_l}\}_{l \in \N} \subset \{u_k\}_{k\in \N}$ and a $u\in \mathcal{V}^b$ such that $u_{k_l} \to u$ as $l\to \infty$.
\end{theorem}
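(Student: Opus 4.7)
My plan is to reuse the spectral decomposition of $\tilde J_\tau$ that was derived in the proof of Theorem~\ref{thm:gammaconvergence}, namely
\begin{equation*}
\tilde J_\tau(v) = -\sum_{m=0}^{n-1} \frac{e^{-\tau \Lambda_m}-1}{\tau} \langle v, \phi^m\rangle_{\mathcal{V}}^2 + \frac1\tau \bigl(\mathcal{M}(v) - \mathcal{M}(v^2)\bigr),
\end{equation*}
and exploit the non-negativity of each of these two contributions when $v\in\mathcal{K}$. By Lemma~\ref{lem:Lspectrum} we have $\Lambda_m \geq 0$ for all $m$, so for any $\tau>0$ and any $m$, $-\frac{e^{-\tau\Lambda_m}-1}{\tau} \geq 0$; and for $v\in\mathcal{K}$ the inequality $v_i^2 \leq v_i$ gives $\mathcal{M}(v)-\mathcal{M}(v^2) \geq 0$.

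First I would invoke finite-dimensional compactness. Since $\mathcal{K}$ is a closed and bounded subset of the finite-dimensional space $\mathcal{V}$, it is compact in the $\mathcal{V}$-norm topology, so the given sequence $\{u_k\}_{k\in \N}$ admits a subsequence $\{u_{k_l}\}_{l\in\N}$ converging to some $u\in\mathcal{K}$. The entire task then reduces to upgrading $u\in\mathcal{K}$ to $u\in \mathcal{V}^b$.

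Second, from the displayed decomposition and the non-negativity of its first (spectral) summand, I would drop that summand to obtain
\begin{equation*}
\frac1{\tau_k}\bigl(\mathcal{M}(u_k)-\mathcal{M}(u_k^2)\bigr) \leq \tilde J_{\tau_k}(u_k) \leq C,
\end{equation*}
so $\mathcal{M}(u_k)-\mathcal{M}(u_k^2) \leq C\tau_k$. Since $\tau_k\to 0$ and the map $v\mapsto \mathcal{M}(v)-\mathcal{M}(v^2)$ is continuous on $\mathcal{V}$, passing to the limit along the subsequence $k_l$ yields $\mathcal{M}(u)-\mathcal{M}(u^2)=0$, i.e.\ $\sum_{i\in V} d_i^r u_i(1-u_i)=0$.

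Third, I would conclude by pointwise inspection. Each summand $d_i^r u_i(1-u_i)$ is non-negative, because $d_i^r>0$ (as $G\in\mathcal{G}$ has no isolated nodes) and $u_i\in[0,1]$. Hence each summand vanishes, which forces $u_i\in\{0,1\}$ for every $i\in V$, so $u\in \mathcal{V}^b$, completing the argument. There is no real obstacle here: everything hinges on noticing that both pieces of $\tilde J_\tau$ are non-negative on $\mathcal{K}$, which is immediate from positive semidefiniteness of $L$ and from $u_i\in[0,1]$; the rest is compactness and continuity in finite dimensions. (Note: unlike in Theorem~\ref{thm:gammaconvergence} one does not need $q=1$ here, since the argument only uses properties of $L$ that hold for all admissible $q$.)
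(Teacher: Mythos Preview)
Your proof is correct and takes essentially the same route as the paper's: both extract the key bound $\mathcal{M}(u_k)-\mathcal{M}(u_k^2)\leq C\tau_k$ (the paper does this via mass conservation and Lemma~\ref{lem:decreasingfunction} rather than by quoting the spectral identity~\eqref{eq:fractauJ} directly, but these are the same inequality) and then force the subsequential limit into $\mathcal{V}^b$. The only cosmetic difference is that the paper finishes by a contradiction argument through Lemma~\ref{lem:nonbinary}, whereas you pass to the limit by continuity of $v\mapsto\mathcal{M}(v)-\mathcal{M}(v^2)$ and argue pointwise --- a slightly more streamlined but equivalent conclusion.
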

\begin{proof}
Since, for all $k\in \N$, we have $u_k\in \mathcal{K}$, it follows that, for all $k\in \N$, $0\leq \|u_k\|_2 \leq \sqrt n$, where $\|\cdot\|_2$ denotes the usual Euclidean norm on $\R^n$ pulled back to $\mathcal{V}$ via the natural identification of each function in $\mathcal{V}$ with one and only one vector in $\R^n$ (thus, it is the norm $\|\cdot\|_{\mathcal{V}}$ if $r=0$). By the Bolzano-Weierstrass theorem it follows that there is a subsequence $\{u_{k_l}\}_{l \in \N} \subset \{u_k\}_{k\in \N}$ and a $u\in \mathcal{V}$ such that $u_{k_l} \to u$ with respect to the norm $\|\cdot\|_2$ as $l\to \infty$. Because the $\mathcal{V}$-norm is topologically equivalent to the $\|\cdot\|_2$ norm (explicitly, $d_-^{\frac{r}2} \|\cdot\|_2 \leq \|\cdot\|_{\mathcal{V}} \leq d_+^{\frac{r}2} \|\cdot\|_2$), we also have $u_{k_l} \to u$ as $l\to \infty$. Moreover, since convergence with respect to $\|\cdot\|_2$ implies convergence of each component of $(u_{k_l})_i$ ($i\in V$) in $\R$ we have $u\in \mathcal{K}$.

Next we compute
\begin{equation}\label{eq:Jpos}
\tau_{k_l} \tilde J_{\tau_{k_l}}(u_{k_l}) = \langle \chi_V-u_{k_l}, e^{-\tau_{k_l} L} u_{k_l}\rangle_{\mathcal{V}} = \langle \chi_V, u_{k_l}\rangle_{\mathcal{V}}  - \langle u_{k_l}, e^{-\tau_{k_l} L} u_{k_l}\rangle_{\mathcal{V}} \geq \langle \chi_V - u_{k_l}, u_{k_l}\rangle_{\mathcal{V}},
\end{equation}
where we used $\langle \chi_V, e^{-\tau_{k_l} L} u_{k_l}\rangle_{\mathcal{V}}  = \langle \chi_V, u_{k_l}\rangle_{\mathcal{V}}$ from Lemma~\ref{lem:massandsuch} and $\langle u_{k_l}, e^{-\tau_{k_l} L} u_{k_l}\rangle_{\mathcal{V}} \leq \langle u_{k_l}, u_{k_l}\rangle_{\mathcal{V}}$ from Lemma~\ref{lem:decreasingfunction}. Thus, for all $l\in \N$, we have
\begin{equation}\label{eq:contradingred1}
0 \leq \langle \chi_V - u_{k_l}, u_{k_l}\rangle_{\mathcal{V}} \leq C \tau_{k_l}.
\end{equation}

Assume that $u\in \mathcal{K}\setminus\mathcal{V}^b$, then there is an $i\in V$ such that $0<u_i<1$. Hence, by Lemma~\ref{lem:nonbinary}, there is a $\delta>0$  such that for all $l$ large enough, $\delta < (u_{k_l})_i < 1-\delta$ and thus
\begin{equation}\label{eq:contradingred2}
\langle \chi_V - u_{k_l}, u_{k_l}\rangle_{\mathcal{V}} \geq d_i^r \big(1-(u_{k_l})_i\big) (u_{k_l})_i \geq d_i^r \delta^2.
\end{equation}

Let $l$ be large enough such that $C \tau_{k_l} < d_i^r \delta^2$ and large enough such that \eqref{eq:contradingred2} holds. Then we have arrived at a contradiction with \eqref{eq:contradingred1} and thus we conclude that $u\in \mathcal{V}^b$.
\end{proof}

\begin{remark}\label{rem:trivialminimizers}
The computation in \eqref{eq:Jpos} shows that, for all $\tau>0$ and for all $u\in \mathcal{K}$, we have $\tau \tilde J_\tau(u) \geq \langle \chi_V-u, u\rangle_{\mathcal{V}} \geq 0$. Moreover, we have $\tilde J_\tau(0) = \tilde J_\tau(\chi_V) = 0$. Furthermore, since each term of the sum in the inner product is nonnegative, we have  $\langle \chi_V-u, u\rangle_{\mathcal{V}}=0$ if and only if $u=0$ or $u=\chi_V$. Hence we also have $\tilde J_\tau(u) = 0$ if and only if $u=0$ or $u=\chi_V$. The minimization of $\tilde J_\tau$ over $\mathcal{K}$ is thus not a very interesting problem. Therefore we now extend our $\Gamma$-convergence and equi-coercivity results from above to incorporate a mass constraint.

As an aside, note that Lemma~\ref{lem:dynamicsbounds} and Remark~\ref{rem:butwhatabouttheemptyandfullset} guarantee that for $\tau$ large enough and $S^0$ such that $\vol{S^0} \neq \frac12\vol{V}$, the \ref{alg:OKMBO} algorithm converges in at most one step to the minimzer $\emptyset$ or the minimizer $V$.
\end{remark}

Let $M\in \mathfrak{M}$, where $\mathfrak{M}$ is the set of admissible masses as defined in \eqref{eq:admissmass}. Remember from \eqref{eq:setKM} that $\mathcal{K}_M$ is the set of $[0,1]$-valued functions in $\mathcal{V}$ with mass equal to $M$. For $\tau>0$ we define the following functionals with restricted domain. Define $\overline J_\tau: \mathcal{K}_M\to \overline\R$ by $\overline J_\tau := \left. \tilde J_\tau\right|_{\mathcal{K}_M}$, where $\tilde J_\tau$ is as defined above in \eqref{eq:tildeJ}. Also define $\overline F_0: \mathcal{K}_M\to \overline \R$ by
$
\overline F_0(u) := \left.\tilde F_0\right|_{\mathcal{K}_M},
$
where $\tilde F_0$ is as in \eqref{eq:tildeF0}, with $q=1$. 
Note that by definition, $\tilde F_0$, and thus $\overline F_0$, do not assign a finite value to functions $u$ that are not in $\mathcal{V}^b$.

\begin{theorem}\label{thm:gammaconvergencemass}
Let $G=(V,E,\omega)\in \mathcal{G}$, $q=1$, and $\gamma\geq 0$. Let $\{\tau_k\}_{k \in \N}$ be a sequence of positive real numbers such that $\tau_k\to 0$ as $k\to \infty$. Let $u\in \mathcal{K}_M$. Then the following lower bound and upper bound hold:
\begin{itemize}
\item[(LB)] for every sequence $\{u_k\}_{k\in \N} \subset \mathcal{K}_M$ such that $u_k \to u$ as $k\to\infty$, $\overline F_0(u) \leq \underset{k\to\infty}\liminf\, \overline J_{\tau_k}(u_k)$, and
\item[(UB)] there exists a sequence $\{u_k\}_{k\in \N} \subset \mathcal{K}_M$ such that $u_k \to u$ as $k\to \infty$ and $\underset{k\to\infty}\limsup\, \overline J_{\tau_k}(u_k) \leq \overline F_0(u)$.
\end{itemize}

Furthermore, if $\{v_k\}_{k\in \N} \subset \mathcal{K}_M$ is a sequence for which there exists a $C>0$ such that, for all $k\in\N$, $\overline J_\tau(v_k)\leq C$, then there is a subsequence $\{v_{k_l}\}_{l \in \N} \subset \{v_k\}_{k\in \N}$ and a $v\in \mathcal{K}_M \cap \mathcal{V}^b$ such that $v_{k_l} \to v$ as $l\to \infty$.
\end{theorem}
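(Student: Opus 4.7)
The plan is to derive all three statements from the unconstrained versions in Theorem~\ref{thm:gammaconvergence} and Theorem~\ref{thm:equicoercivity}, with the only additional work being to verify that the mass constraint survives the relevant limiting procedures. Since $\mathcal{K}_M \subset \mathcal{K}$ and $\overline J_\tau = \tilde J_\tau|_{\mathcal{K}_M}$ and $\overline F_0 = \tilde F_0|_{\mathcal{K}_M}$, any sequence in $\mathcal{K}_M$ is also a sequence in $\mathcal{K}$ on which $\overline J_\tau$ and $\tilde J_\tau$ (and $\overline F_0$ and $\tilde F_0$) agree. The key observation enabling the reduction is that the mass functional $\mathcal{M}: \mathcal{V} \to \R$ is a continuous linear functional on the finite-dimensional space $\mathcal{V}$, so $\mathcal{M}(u_k)\to \mathcal{M}(u)$ whenever $u_k \to u$, and hence the set $\mathcal{V}_M$ (and therefore $\mathcal{K}_M$) is closed.

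For (LB), let $\{u_k\} \subset \mathcal{K}_M$ with $u_k \to u \in \mathcal{K}_M$. Since $\{u_k\}$ is also a sequence in $\mathcal{K}$ converging to $u \in \mathcal{K}$, the (LB) statement of Theorem~\ref{thm:gammaconvergence} yields $\tilde F_0(u) \leq \liminf_{k\to\infty} \tilde J_{\tau_k}(u_k)$, which is exactly $\overline F_0(u) \leq \liminf_{k\to\infty} \overline J_{\tau_k}(u_k)$. For (UB), note that the recovery sequence constructed in the proof of Theorem~\ref{thm:gammaconvergence} is simply the constant sequence $u_k := u$ when $u \in \mathcal{V}^b$. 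If $u \in \mathcal{K}_M \cap \mathcal{V}^b$, this constant sequence lies in $\mathcal{K}_M$ and gives $\limsup_{k\to\infty} \overline J_{\tau_k}(u_k) = F_0(u) = \overline F_0(u)$. If instead $u \in \mathcal{K}_M \setminus \mathcal{V}^b$, then $\overline F_0(u) = +\infty$, so the constant sequence $u_k := u \in \mathcal{K}_M$ trivially satisfies the upper bound.

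For equi-coercivity, let $\{v_k\} \subset \mathcal{K}_M$ satisfy $\overline J_{\tau_k}(v_k) \leq C$ for all $k$. Applying Theorem~\ref{thm:equicoercivity} to $\{v_k\}$, viewed as a sequence in $\mathcal{K}$ with $\tilde J_{\tau_k}(v_k) \leq C$, we obtain a subsequence $\{v_{k_l}\}$ and $v \in \mathcal{V}^b$ with $v_{k_l} \to v$. By continuity of $\mathcal{M}$, we have $\mathcal{M}(v) = \lim_{l\to\infty} \mathcal{M}(v_{k_l}) = M$, hence $v \in \mathcal{V}_M \cap \mathcal{V}^b$. Since $\mathcal{V}^b \subset \mathcal{K}$, this gives $v \in \mathcal{K}_M \cap \mathcal{V}^b$, as required.

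There is no real obstacle here; the statement is essentially a corollary of the two preceding theorems together with closedness of $\mathcal{V}_M$ under the chosen convergence. The only point that deserves explicit checking is that the constant recovery sequence used in the unconstrained (UB) automatically respects the mass constraint, which it does trivially.
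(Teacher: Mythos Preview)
Your proof is correct and follows essentially the same approach as the paper: reduce (LB) to the unconstrained Theorem~\ref{thm:gammaconvergence} via $\mathcal{K}_M\subset\mathcal{K}$, use the constant recovery sequence $u_k:=u$ for (UB), and obtain equi-coercivity from Theorem~\ref{thm:equicoercivity} together with continuity of $\mathcal{M}$. If anything, you are slightly more careful than the paper in explicitly noting that the limit $v$ lies in $\mathcal{V}^b$ (from Theorem~\ref{thm:equicoercivity}) as well as in $\mathcal{K}_M$ (from continuity of $\mathcal{M}$), yielding $v\in\mathcal{K}_M\cap\mathcal{V}^b$.
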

\begin{proof}
We note that any converging sequence in $\mathcal{K}_M$ with limit $u$ is also a converging sequence in $\mathcal{K}$ with limit $u$. Moreover, on $\mathcal{K}_M$ we have $\overline J_{\tau_k} = \tilde J_{\tau_k}$ and $\overline F_0 = \tilde F_0$. Hence (LB) follows directly from (LB) in Theorem~\ref{thm:gammaconvergence}.

For (UB) we note that if we define, for all $k\in \N$, $u_k:=u$, then trivially the mass constraint on $u_k$ is satisfied for all $k\in \N$ and the result follows by a proof analogous to that of (UB) in Theorem~\ref{thm:gammaconvergence}.

Finally, for the equi-coervicity result, we first note that by Theorem~\ref{thm:equicoercivity} we immediately get the existence of a subsequence $\{v_{k_l}\}_{l \in \N} \subset \{v_k\}_{k\in \N}$ which converges to some $v\in \mathcal{K}$. Since the functional $\mathcal{M}$ is continuous with respect to $\mathcal{V}$-convergence, we conclude that in fact $v\in \mathcal{K}_M$.
\end{proof}

\begin{remark}
Note that for $\tau>0$, $M\in \mathfrak{M}$, and $u\in \mathcal{K}_M$, we have
\[
\tau \overline J_\tau(u) = \mathcal{M}(u) - \langle u, e^{\tau L}u\rangle_{\mathcal{V}} = M - \sum_{m=0}^{n-1} e^{-\tau \Lambda_m} \langle u, \phi^m\rangle_{\mathcal{V}}^2 = M\left(1-\frac{M}{\vol{V}}\right) - \sum_{m=1}^{n-1} e^{-\tau \Lambda_m} \langle u, \phi^m\rangle_{\mathcal{V}}^2.
\]
Hence finding the minimizer of $\overline J_\tau$ in $\mathcal{K}_M$ is equivalent to finding the maximizer of $\displaystyle u\mapsto \sum_{m=1}^{n-1} e^{-\tau \Lambda_m} \langle u, \phi^m\rangle_{\mathcal{V}}^2$ in $\mathcal{K}_M$.
\end{remark}

The following result shows that the $\Gamma$-convergence and equi-coercivity results still hold, even if the mass conditions are not strictly satisfied along the sequence.

\begin{corol}
Let $G=(V,E,\omega)\in \mathcal{G}$, $q=1$, and $\gamma\geq 0$ and let $\mathcal{C} \subset \mathfrak{M}$ be a set of admissible masses. For each $k\in \N$, let $\mathcal{C}_k \subset [0,\infty)$ be such that $\displaystyle\underset{k\in \N}\bigcap \mathcal{C}_k = \mathcal{C}$ and define, for all $k\in\N$,
\[
\mathcal{K}_M^k := \{u \in \mathcal{K}: \mathcal{M}(u) \in \mathcal{C}_k\}.
\]
Let $\{\tau_k\}_{k \in \N}$ be a sequence of positive real numbers such that $\tau_k\to 0$ as $k\to \infty$. Define $\overline{\overline J}_{\tau_k}: \mathcal{K} \to \overline\R$ by
\[
\overline{\overline J}_{\tau_k}(u) := \begin{cases}
\tilde J_{\tau_k}(u), &\text{if  } u\in \mathcal{K}_M^k,\\
+\infty, &\text{otherwise.} 
\end{cases}
\]
Furthermore, define $\overline{\overline F}_0: \mathcal{K} \to \overline\R$ by
\[
\overline{\overline F}_0(u) := \begin{cases}
\tilde F_0(u), &\text{if  } u\in \mathcal{K}_M,\\
+\infty, &\text{otherwise.} 
\end{cases}
\]
Then the results of Theorem~\ref{thm:gammaconvergencemass} hold with $\overline J_{\tau_k}$ and $\overline F_0$ replaced by $\overline{\overline J}_{\tau_k}$ and $\overline{\overline F}_0$, respectively, and with the sequences $\{u_k\}_{k\in \N}$ and $\{v_k\}_{k\in \N}$ in (LB), (UB), and the equi-coercivity result taken in $\mathcal{K}$ instead of $\mathcal{K}_M$, such that, for each $k\in \N$, $u_k, v_k \in \mathcal{K}_M^k$.
\end{corol}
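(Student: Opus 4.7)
The plan is to reduce each of the three statements (LB), (UB), and equi-coercivity to the corresponding results in Theorem~\ref{thm:gammaconvergencemass} (and through it to Theorems~\ref{thm:gammaconvergence} and~\ref{thm:equicoercivity}), exploiting the fact that $\overline{\overline J}_{\tau_k}$ agrees with $\tilde J_{\tau_k}$ on $\mathcal{K}_M^k$ and equals $+\infty$ off it, and similarly $\overline{\overline F}_0$ agrees with $\tilde F_0$ on $\mathcal{K}_M$ and is $+\infty$ otherwise. The essential extra bookkeeping concerns the mass: a sequence $u_k$ with $\mathcal{M}(u_k) \in \mathcal{C}_k$ must have $\mathcal{M}(\lim u_k) \in \mathcal{C} = \bigcap_k \mathcal{C}_k$ in order for the limit to lie in $\mathcal{K}_M$.

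For (UB) I would take the constant recovery sequence $u_k := u$. If $u \in \mathcal{K}_M$, then $\mathcal{M}(u) \in \mathcal{C} \subseteq \mathcal{C}_k$ for every $k$, so $u_k \in \mathcal{K}_M^k$ and $\overline{\overline J}_{\tau_k}(u_k) = \tilde J_{\tau_k}(u)$; the bound $\limsup \tilde J_{\tau_k}(u) \leq \tilde F_0(u) = \overline{\overline F}_0(u)$ is exactly (UB) of Theorem~\ref{thm:gammaconvergence}. If $u \notin \mathcal{K}_M$, then $\overline{\overline F}_0(u) = +\infty$ and the inequality is trivial. For (LB), given a sequence $u_k \in \mathcal{K}_M^k$ with $u_k \to u$, we have $\overline{\overline J}_{\tau_k}(u_k) = \tilde J_{\tau_k}(u_k)$ and (LB) of Theorem~\ref{thm:gammaconvergence} gives $\tilde F_0(u) \leq \liminf \tilde J_{\tau_k}(u_k)$. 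If the liminf is $+\infty$ there is nothing to prove; otherwise $\tilde F_0(u) < \infty$, which forces $u \in \mathcal{V}^b$. Continuity of $\mathcal{M}$ then yields $\mathcal{M}(u_k) \to \mathcal{M}(u)$, and combining $\mathcal{M}(u_k) \in \mathcal{C}_k$ with the intersection property $\bigcap_k \mathcal{C}_k = \mathcal{C}$ (and the implicit closedness/nestedness of the $\mathcal{C}_k$) forces $\mathcal{M}(u) \in \mathcal{C}$, so $u \in \mathcal{K}_M$ and $\tilde F_0(u) = \overline{\overline F}_0(u)$ as required.

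For equi-coercivity, the bound $\overline{\overline J}_{\tau_k}(v_k) \leq C$ automatically places $v_k \in \mathcal{K}_M^k$, so Theorem~\ref{thm:equicoercivity} provides a subsequence $v_{k_l} \to v$ with $v \in \mathcal{V}^b$; the same mass-passage argument as in (LB) then gives $v \in \mathcal{K}_M$. The main obstacle, and in fact the only genuinely new ingredient beyond the earlier theorems, is justifying the implication $\mathcal{M}(u_k) \in \mathcal{C}_k \Rightarrow \lim \mathcal{M}(u_k) \in \mathcal{C}$. In full generality this can fail for arbitrary subsets $\mathcal{C}_k \subset [0,\infty)$, so I expect to need either to add an explicit hypothesis that the $\mathcal{C}_k$ are closed and nested (or are eventually contained in every neighbourhood of $\mathcal{C}$), or to reformulate the statement so that this property is built into how $\mathcal{C}$ arises as the intersection. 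I would make this structural assumption explicit at the start of the proof, after which the three statements reduce essentially mechanically to Theorem~\ref{thm:gammaconvergencemass}.
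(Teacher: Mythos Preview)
Your approach is essentially identical to the paper's: reduce (LB) and (UB) to Theorem~\ref{thm:gammaconvergence} via the identities $\overline{\overline J}_{\tau_k}=\tilde J_{\tau_k}$ on $\mathcal{K}_M^k$ and $\overline{\overline F}_0=\tilde F_0$ on $\mathcal{K}_M$, use the constant recovery sequence $u_k:=u$ for (UB) (admissible because $\mathcal{C}\subset\mathcal{C}_k$ for every $k$), and invoke Theorem~\ref{thm:equicoercivity} plus continuity of $\mathcal{M}$ for compactness. One simplification: in the paper's reading of the statement the limit point $u$ in (LB) is already assumed to lie in $\mathcal{K}_M$ (the corollary inherits the hypothesis ``Let $u\in\mathcal{K}_M$'' from Theorem~\ref{thm:gammaconvergencemass}), so your case split ``$u\notin\mathcal{K}_M$'' and the associated mass-passage argument are not needed there; (LB) really is immediate from Theorem~\ref{thm:gammaconvergence}.

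Your caution about the implication ``$\mathcal{M}(v_{k_l})\in\mathcal{C}_{k_l}$ and $\mathcal{M}(v_{k_l})\to\mathcal{M}(v)$ $\Rightarrow$ $\mathcal{M}(v)\in\mathcal{C}$'' is well placed. The paper only invokes it in the equi-coercivity step and argues as follows: it asserts $\mathcal{M}(v)\in\overline{\bigcap_k\mathcal{C}_k}$, then observes that $\bigcap_k\mathcal{C}_k=\mathcal{C}\subset\mathfrak{M}$ is finite and hence closed, so $\overline{\mathcal{C}}=\mathcal{C}$. The second step is fine, but the first assertion is exactly the point you flag: from $\mathcal{M}(v_{k_l})\in\mathcal{C}_{k_l}$ one does not in general obtain $\mathcal{M}(v)\in\overline{\bigcap_k\mathcal{C}_k}$ without some structural assumption on the $\mathcal{C}_k$ (for instance nestedness together with closedness, so that $\mathcal{M}(v)\in\bigcap_m\overline{\mathcal{C}_m}=\bigcap_m\mathcal{C}_m=\mathcal{C}$). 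So your instinct to make such an hypothesis explicit is the right one; the paper's proof is terse at precisely this point and tacitly relies on it.
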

\begin{proof}
The proof is a slightly tweaked version of the proof of Theorem~\ref{thm:gammaconvergencemass}. 
On $\mathcal{K}_M^k$ we have that $\overline{\overline J}_{\tau_k} = \tilde J_{\tau_k}$ and $\overline{\overline F}_0 = \tilde F_0$. Hence (LB) follows from (LB) in Theorem~\ref{thm:gammaconvergence}.
For (UB) we note that, since $\displaystyle\underset{k\in \N}\bigcap \mathcal{C}_k \supset \mathcal{C}$,  the recovery sequence defined by, for all $k\in \N$, $u_k:=u$, is admissable and the proof follows as in in the proof of Theorem~\ref{thm:gammaconvergence}.

Finally, for the equi-coercivity result, we obtain a converging subsequence $\{v_{k_l}\}_{l \in \N} \subset \{v_k\}_{k\in \N}$ with limit $v\in \mathcal{K}$ by Theorem~\ref{thm:equicoercivity}. By continuity of $\mathcal{M}$ it follows that $\mathcal{M}(v) \in \overline{\underset{k\in \N}\bigcap\mathcal{C}_k}$, where $\overline{\hspace{0.3cm}\cdot\hspace{0.3cm}}$ denotes the topological closure in $[0,\infty) \subset \R$. Because $\mathfrak{M}$ is a set of finite cardinality in $\R$, we know $\displaystyle\underset{k\in \N}\bigcap\mathcal{C}_k \subset \mathcal{C} \subset \mathfrak{M}$ is closed, hence $\displaystyle\mathcal{M}(v)\in \underset{k\in \N}\bigcap\mathcal{C}_k \subset \mathcal{C}$ and thus $v\in \mathcal{K}_M$.
\end{proof}

\begin{remark}\label{rem:whataboutmass?}
By a standard $\Gamma$-convergence result (\cite[Chapter 7]{DalMaso93},\cite[Section 1.5]{Braides02}) we conclude from Theorem~\ref{thm:gammaconvergencemass} that (for fixed $M\in\mathfrak{M}$) minimizers of $\overline J_\tau$ converge (up to a subsequence) to a minimizer of $\overline F_0$ (with $q=1$) when $\tau\to 0$.

By Lemma~\ref{lem:Lyapunov} we know that iterates of \ref{alg:OKMBO} solve \eqref{eq:MBOminimiz} and decrease the value of $J_\tau$, for fixed $\tau>0$ (and thus of $\tilde J_\tau$). By Lemma~\ref{lem:dynamicsbounds}, however, we know that when $\tau$ is sufficiently small, the \ref{alg:OKMBO} dynamics is pinned, in the sense that each iterate is equal to the initial condition. Hence, unless the initial condition is a minimizer of $\overline J_\tau$, for small enough $\tau$ the \ref{alg:OKMBO} algorithm does not generate minimizers of $\overline J_\tau$ and thus we cannot use Theorem~\ref{thm:gammaconvergencemass} to conclude that solutions of \ref{alg:OKMBO} approximate minimizers of $\overline F_0$ when $\tau\to 0$. 

As an interesting aside that can be an interesting angle for future work, we note that it is not uncommon in sequential linear programming for the contraints (such as the constraint that the domain of $\tilde J_\tau$ consists of $[0,1]$-valued functions only) to be an obstacle to convergence; compare for example the Zoutendijk method with the Topkis and Veinott method \cite[Chapter 10]{BazaraaSheraliShetty1993}. An analogous relaxation of the constraints might be a worthwhile direction for alternative MBO type methods for minimization of functionals like $\tilde J_\tau$.

We will not follow that route in this paper. Instead, in the next section we will look at a variant of \ref{alg:OKMBO} which conserves mass in each iteration.
\end{remark}

\subsection{A mass conserving graph Ohta-Kawasaki MBO scheme}\label{sec:OKMBOmass}

In Section~\ref{sec:Gammaconvergence} we saw that, for given $M\in \mathfrak{M}$, any solution to
\begin{equation}\label{eq:minimizeJtauoverKM}
u \in \underset{u\in \mathcal{K}_M}\argmin\, J_\tau(u),
\end{equation}
where $J_\tau$ is as in \eqref{eq:Jtau}\footnote{In Section~\ref{sec:Gammaconvergence} we required various rescaled versions of $J_\tau$ defined on different domains, for technical reasons related to the $\Gamma$-convergence proof. Any of those functionals could be substituted in \eqref{eq:minimizeJtauoverKM} for $J_\tau$, as long as their domain contains $\mathcal{K}_M$.} is an approximate solution to the $F_0$ minimization problem in \eqref{eq:minimprobsF0} (with $q=1$); see for example the discussion in Remark~\ref{rem:whataboutmass?}.

We propose the \ref{alg:massOKMBO} scheme described below to include the mass condition into the \ref{alg:OKMBO} scheme. As part of the algorithm we need a node relabelling function. For $u\in \mathcal{V}$, let $R_u: V \to \{1, \ldots, n\}$ be a bijection such that, for all $i, j \in V$, $R_u(i) < R_u(j)$ if and only if $u_i \geq u_j$. Note that such a function need not be unique, as it is possible that $u_i=u_j$ while $i\neq j$. Given a relabelling function $R_u$, we will define the relabeled version of $u$ denoted by $u^R \in \mathcal{V}$, by, for all $i\in V$,
\begin{equation}\label{eq:relabelling}
u^R_i := u_{R_u^{-1}(i)}.
\end{equation}
In other words, $R_u$ relabels the nodes in $V$ with labels in $\{1, \dots, n\}$, such that in the new labelling we have $u^R_1 \geq u^R_2 \geq \ldots \geq u^R_n$.

Because this will be of importance later in the paper, we introduce the new set of {\it almost binary functions} with prescribed mass $M\geq 0$:
\[
\mathcal{V}^{ab}_M := \left\{ u\in \mathcal{K}_M: \exists i \in V\,\, \forall j\in V\setminus\{i\}\,\, u_j \in \{0,1\}\right\}.
\]

\begin{asm}{(mcOKMBO)}
\KwData{A prescribed mass value $M\in\mathfrak{M}$, an initial function $v^0\in \mathcal{V}^{ab}_M$\footnotemark, a parameter $r\in [0,1]$, a parameter $\gamma\geq 0$, a time step $\tau > 0$, and the number of iterations $N\in \N\cup\{\infty\}$. }
\KwOut{A sequence of functions $\{v^k\}_{k=1}^N \subset \mathcal{V}_M^{ab}$, which is the \ref{alg:massOKMBO} evolution of $v^0$. }
\For{$k = 1 \ \KwTo \  N$,}{
{\bf ODE step.} Compute $u\in \mathcal{V}_\infty$ by solving \eqref{eq:MBOstepa}, where $u_0 = v^{k-1}$.

\medskip

{\bf Mass conserving threshold step.} Let $R_{u}$ be a relabelling function and $u^R$ the relabelled version of $u$ as in \eqref{eq:relabelling}. Let $i^*$ be the unique $i\in V$ such that 
\[
\sum_{i=1}^{i^*} d_i^r u_i \leq M \qquad \text{ and } \sum_{i=1}^{i^*+1} d_i^r u_i > M.
\]
Define $v^k \in \mathcal{V}$ by, for all $i\in V$,
\[
v^k_i := \begin{cases}
1, &\text{if } 1 \leq i \leq i^*,\\
d_i^{-r} \left(M - \sum_{i=1}^{i^*} d_i^r u_i\right), &\text{if } i=i^*+1,\\
0, &\text{if } i^*+2 \leq i \leq n.
\end{cases}
\]
}
\caption{\label{alg:massOKMBO} The mass conserving graph Ohta-Kawasaki Merriman-Bence-Osher algorithm}
\end{asm}
\footnotetext{As the algorithm enforces the mass condition in each iteration, it is not necessary for the initial condition to satisfy the mass condition (or even to be almost binary, it could be any function in $\mathcal{K}$), but for a cleaner presentation we assume it does (and is).}

We see that the ODE step in \ref{alg:massOKMBO} is as the ODE step in \ref{alg:OKMBO}, using the outcome of the previous iteration as initial condition. However, the threshold step is significantly different. In creating the function $v^k$, it assigns the available mass to the nodes $\{1, \ldots, i^*\}$ on which $u$ has the highest value. Note that if $r=0$, there is exactly enough mass to assign the value $1$ to each node in $\{1, \ldots, i^*\}$, since we assumed that $M\in\mathfrak{M}$ and each node contributes the same value to the mass via the factor $d_i^r=1$. In this case we see that $v^k_{i^*+1} = 0$. However, if $r\in (0,1]$, this is not necessarily the case and it is possible to end up with a value in $(0,1)$ being assigned to $v^k_{i^*+1}$ (even if $v^{k-1}\in \mathcal{V}_M^b$). Hence, in general $v^k \in \mathcal{V}_M^{ab}$, but not necessarily $v^k\in \mathcal{V}_M^b$.  

Of course there is no issue in evaluating $F_0(v^k)$ for almost binary functions $v^k$, but strictly speaking an almost binary $v^N$ cannot serve as approximate solution to the $F_0$ minimization problem in \eqref{eq:minimprobsF0} as it is not admissible. 
We can either accept that the qualifier ``approximate'' refers not only to approximate minimization, but also to the fact that $v^N$ is binary when restricted to $V\setminus\{i^*+1\}$, but not necessarily on all of $V$, or we can apply a final thresholding step to $v^N$ and set the value at node $i^*+1$ to either $0$ or $1$ depending on which choice leads to the lowest value of $F_0$ and/or the smallest deviation of the mass from the prescribed mass $M$. In the latter case the function will be binary, but the adherence to the mass constraint will be ``approximate''. We emphasize again that this is not an issue when $r=0$ (or on a regular graph; i.e. a graph in which each node has the same degree). This case is the most interesting case, as the mass condition can be very restrictive when $r\in (0,1]$, especially on (weighted) graphs in which most nodes each have a different degree. Whenever we consider examples for $r\in (0,1]$ in this paper, we will use the first interpretation of ``approximate'', i.e. we will use $v^N$ as is and accept that its value at node $i^*+1$ may be in $(0,1)$.

Note that the sequence $\{v^k\}_{k=1}^N$ generated by the \ref{alg:massOKMBO} scheme is not necessarily unique, as the relabelling function $R_u$ in the mass conserving threshold step is not uniquely determined if there are two different nodes $i,j\in V$ such that $u_i=u_j$. This non-uniqueness of $R_u$ can lead to non-uniqueness in $v^k$ if exchanging the labels $R_u(i)$ and $R_u(j)$ of those nodes leads to a different `threshold node' $i^*$. In the practice of our examples in Section~\ref{sec:numerical} we used the MATLAB function \texttt{sort($\cdot$, `descend')} to order the nodes.

Lemma~\ref{lem:massLyapunov} shows that some of the important properties of \ref{alg:OKMBO} from Lemma~\ref{lem:Lyapunov} and Corollary~\ref{cor:finiteconvergence} also hold for \ref{alg:massOKMBO}. First we prove an intermediate lemma.

\begin{lemma}\label{lem:wminimization}
Let $G=(V,E,\omega)\in \mathcal{G}$, $M\geq 0$ and $z\in V$. Consider the minimization problem 
\begin{equation}\label{eq:minprobforw}
\max_{w\in \mathcal{V}} \sum_{l\in V} w_l z_l, \quad \text{subject to} \quad \sum_{l\in V} w_l = M \quad \text{and} \quad \forall l\in V\,\,\, 0\leq w_l \leq d_l^r.
\end{equation}
Let $w^*\in V$ satisfy the constraints in \eqref{eq:minprobforw}. Then $w^*$ is a minimizer for \eqref{eq:minprobforw} if and only if for all $i,j\in V$, if $z_i<z_j$, then $w^*_i =d_i^r$ or $w^*_j  = 0$.
\end{lemma}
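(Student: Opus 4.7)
The plan is to treat \eqref{eq:minprobforw} as a linear program on the compact convex polytope $P := \{w \in \mathcal{V} : \sum_{l \in V} w_l = M, \ 0 \leq w_l \leq d_l^r\}$, and to prove both directions of the equivalence by elementary exchange arguments. The only-if direction is a single-swap perturbation; the if direction passes through the observation that the stated pointwise condition is equivalent to $w^*$ having a ``water-filling'' threshold structure with respect to $z$.

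For the ``only if'' direction I would argue by contradiction. Suppose $w^*$ is a minimizer, yet there exist $i,j \in V$ with $z_i < z_j$, $w^*_i < d_i^r$, and $w^*_j > 0$. Set $\epsilon := \min\!\big(d_i^r - w^*_i,\, w^*_j\big) > 0$, and define $\tilde w \in \mathcal{V}$ by $\tilde w_i := w^*_i + \epsilon$, $\tilde w_j := w^*_j - \epsilon$, and $\tilde w_l := w^*_l$ for $l \notin \{i,j\}$. A direct check shows $\tilde w \in P$, and
\begin{equation*}
\sum_{l \in V} \tilde w_l z_l - \sum_{l \in V} w^*_l z_l = \epsilon(z_i - z_j) < 0,
\end{equation*}
contradicting the minimality of $w^*$.

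For the ``if'' direction, suppose $w^*$ satisfies the stated condition. Set $A := \{l : w^*_l < d_l^r\}$ and $B := \{l : w^*_l > 0\}$. Applied to any $i \in A$ and $j \in B$, the hypothesis forces $z_i \geq z_j$, so in the nontrivial regime $M \in (0, \vol{V})$ (in which both $A$ and $B$ are nonempty) I may fix a threshold $t$ with $\max_{l \in B} z_l \leq t \leq \min_{l \in A} z_l$. Partition $V$ into $V_- := \{l : z_l < t\}$, $V_0 := \{l : z_l = t\}$, $V_+ := \{l : z_l > t\}$; by the choice of $t$ one has $w^*_l = d_l^r$ on $V_-$ and $w^*_l = 0$ on $V_+$. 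For any $w \in P$, write $\delta := w - w^*$; using $\sum_{l\in V} \delta_l = 0$, rewrite
\begin{equation*}
\sum_{l \in V} w_l z_l - \sum_{l \in V} w^*_l z_l = \sum_{l \in V} \delta_l (z_l - t).
\end{equation*}
On $V_-$ one has $\delta_l \leq 0$ and $z_l - t < 0$; on $V_+$ one has $\delta_l \geq 0$ and $z_l - t > 0$; on $V_0$ the factor $z_l - t$ vanishes. Every summand is therefore nonnegative, so $w^*$ is a minimizer. The degenerate cases $M = 0$ and $M = \vol{V}$ force a unique feasible point and are immediate.

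The main obstacle I anticipate is the ``if'' direction: one has to recognise that the pointwise two-index condition is equivalent to the global sorted-threshold description of $w^*$, and to handle cleanly the ties $z_l = t$ as well as the boundary cases $A = \emptyset$ or $B = \emptyset$. Once that structural claim is secured, the comparison with an arbitrary competitor $w$ reduces to the single signed sum above, whose termwise nonnegativity follows from the mass-conservation identity $\sum_l \delta_l = 0$.
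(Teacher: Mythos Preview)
Your proof is correct. The only-if direction via a single mass swap is exactly the exchange argument the paper uses, so there is nothing to add there.

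For the if direction you take a genuinely different route from the paper. The paper first reduces to nonnegative $z$, then relabels the nodes so that $z$ is nonincreasing, introduces indices $L_1 := \min\{l : w^*_l > 0\}$ and $L_2 := \min\{l : z_l < z_{L_1}\}$, and performs a case analysis (splitting on whether $z$ is constant beyond $L_1$) together with a telescoping identity to compare $\sum w_l z_l$ with $\sum w^*_l z_l$. Your argument avoids the relabeling and the case split entirely: from the two-index condition you extract a single threshold $t$ separating $\min_{l\in A} z_l$ from $\max_{l\in B} z_l$, and then the identity $\sum_l \delta_l(z_l - t) = \sum_l \delta_l z_l$ (using $\sum_l \delta_l = 0$) reduces the comparison to a termwise sign check. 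This is shorter and makes the role of the mass constraint transparent in a single line; the paper's version, by contrast, makes the sorted ``water-filling'' structure more explicit but at the cost of additional bookkeeping. Both handle ties and the degenerate endpoints $M\in\{0,\vol{V}\}$ correctly.
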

\begin{proof}
First consider the case where $z$ is constant, i.e. for all $l\in V$, $z_l = z_1$. Then, for any $w\in \mathcal{V}$ which satisfies the constraints in \eqref{eq:minprobforw}, we have $\sum_{l\in V} w_l z_l = z_1 M$; hence any such $w$ is trivially a minimizer. Moreover, the condition $z_i<z_j$ is never satisfied. Hence the result of the lemma holds. In the rest of the proof we assume $z$ is not constant.

Next we note that, if $M=0$, only $w=0$ is admissible, in which case again the result of the lemma trivially holds. Hence we now assume $M>0$.

Furthermore, if we define $\tilde z := z-z_n$, then, for all $l\in V$, $\tilde z_l \geq 0$. Moreover, for all $w\in \mathcal{V}$ which satisfy the constraints in \eqref{eq:minprobforw}, we have
\[
\sum_{l\in V} w_l z_l = \sum_{l\in V} w_l \tilde z_l + z_n \sum_{l\in V} w_l = \sum_{l\in V} w_l \tilde z_l + z_n M.
\]
Hence we can assume, without loss of generality, that, for all $l\in V$, $z_l\geq 0$.

To prove the ``only if'' statement, let $w^*$ be a minimizer for \eqref{eq:minprobforw} which satisfies the constraints. Assume for a proof by contradiction that there are $i,j\in V$ and $\e \in \left(0, \min(d_i^r,d_j^r)\right)$ such that $z_i < z_j$, $0\leq w^*_i \leq d_i^r-\e$, and $\e \leq w^*_j \leq d_j^r$. Define $w^{**}\in \R^n$ by, for all $l\in V$,
\[
w^{**}_l = \begin{cases}
w^*_l, &\text{if } l\not\in\{i,j\},\\
w^*_i + \e, &\text{if } l = i,\\
w^*_j - \e, &\text{if } l=j.
\end{cases}
\]
Then $\sum_{l\in V} w^{**}_l = \sum_{l\in V} w^*_l = M$, for all $l\in V$, $0\leq w^{**}_k \leq d_l^r$, and
\[
\sum_{l\in V} w^{**}_l z_l = \sum_{l\in V} w^*_l z_l + \e (z_i-z_j) < \sum_{l\in V} w^*_l z_l.
\]
This contradicts the fact that $w^*$ is a minimizer. Hence, for all $i,j\in V$ and for all $\e\in \left(0, \min(d_i^r,d_j^r)\right)$, if $z_i < z_j$, then $w^*_i > d_i^r - \e$ or $w^*_j < \e$. Thus, if $z_i<z_j$, then $w^*_i = d_i^r$ or $w^*_j=0$.

To prove the ``if'' statement in the lemma, assume that 
$w^*\in \mathcal{V}$ satisfies the constraints in \eqref{eq:minprobforw} and that for all $i,j\in V$, if $z_i<z_j$, then $w^*_i =d_i^r$ or $w^*_j  = 0$. Let $R^z$ be a relabelling function and let $z^R$ and ${w^*}^R$ be the corresponding relabelled versions of $z$ and $w^*$, respectively, as in \eqref{eq:relabelling}. For notational simplicity, we will drop the superscript $R$ from $z^R$ and ${w^*}^R$ in the rest of this proof. Define $L_1 := \min\{l\in V: w^*_l > 0 \}$. Since $M>0$, $w^*\neq 0$ and thus $L_1 \leq n$ exists. 

Assume first that, for all $l>L_1$, $z_l = z_{L_1}$. Because, for all $l<L_1$, we have $w^*_l=0$, we compute
\[
\sum_{l\in V} w^*_l z_l = \sum_{l=L_1}^n w^*_l z_l = z_{L_1} \sum_{l=L_1}^n w^*_l = z_{L_1} \sum_{l\in V} w^*_l = z_{L_1} M.
\]
Moreover we note that, by assumption $z_{L_1} = z_n = \min\{z_l\in \R: l\in V\}$, hence for all $w\in \mathcal{V}$ which satisfy the constraints in \eqref{eq:minprobforw}, we have
\[
\sum_{l\in V} w_l z_l \geq z_{L_1} \sum_{l\in V} = z_{L_1} M = \sum_{l\in V} w^*_l z_l.
\]
Hence $w^*$ is a minimizer in \eqref{eq:minprobforw}. 

Next we assume instead that there is an $l>L_1$, such that $z_l < z_{L_1}$. Define $L_2 := \min\{l\in V: z_l < z_{L_1}\}$ and let $w\in \mathcal{V}$ satisfy the constraints in \eqref{eq:minprobforw}. Per definition $L_2>L_1$. By construction we have that, for all $l\geq L_2$, $z_l < z_{L_1}$ and for all $l\in [L_1, L_2)$, $z_l=Z_{L_1}$. By definition $z_{L_1} \neq 0$, thus, by our assumption on $w^*$ it follows that, for all $l\geq L_2$, $w^*_l = d^r_l \geq w_l$. We compute
\begin{align*}
\sum_{l=1}^n z_l w_l  &\geq \sum_{l=L_1}^n z_l w_l = \sum_{l=L_1^n} \left(w_l^*-w_l\right) \left(z_{L_1} - z_l\right) - z_{L_1} \sum_{l=L_1}^n w_l^* + z_{L_1} \sum_{l=L_1}^n w_l + \sum_{l=L_1}^n w^*_l z_l\\
&= \sum_{l=L_2}^n \left(w_l^*-w_l\right) \left(z_{L_1} - z_l\right) - z_{L_1} M + z_{L_1} M + \sum_{l\in V} w^*_l z_l\\
&\geq \sum_{l\in V} w^*_l z_l.
\end{align*}
Hence also in this case $w^*$ is a minimizer.
\end{proof}

\begin{lemma}\label{lem:massLyapunov}
Let $G=(V,E\omega) \in \mathcal{G}$, $\gamma\geq 0$, $\tau>0$, and $M\geq 0$. Let $J_\tau: \mathcal{V}\to \R$ be as in \eqref{eq:Jtau}, $v^0\in \mathcal{V}_M^{ab}$, and let $\{v^k\}_{k=1}^N \subset \mathcal{V}_M^{ab}$ be a sequence generated by \ref{alg:massOKMBO}. Then, for all  $k\in \{1, \ldots, N\}$,
\begin{equation}\label{eq:massMBOminimiz}
v^k \in \underset{v\in \mathcal{K}_M}\argmin\, dJ_\tau^{v^{k-1}}(v),
\end{equation}
where $dJ_\tau$ is given in \eqref{eq:dJtau}.
Moreover, for all $k\in \{1,\ldots, N\}$, $J_\tau(v^k) \leq J_\tau(v^{k-1})$, with equality if and only if $v^k=v^{k-1}$. Finally, there is a $K\geq 0$ such that for all $k\geq K$, $v^k = v^K$.
\end{lemma}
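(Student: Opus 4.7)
The plan is to combine the strict Lyapunov property established in the first two parts of the lemma with the observation that the state space $\mathcal{V}_M^{ab}$ is a finite set.

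First I would verify finiteness of $\mathcal{V}_M^{ab}$. Any $u \in \mathcal{V}_M^{ab}$ is determined by the subset $V_1 := \{j \in V : u_j = 1\}$ together with the (unique, if any) node $i \in V$ for which $u_i \in (0,1)$: given such a pair, the mass constraint $\mathcal{M}(u) = M$ forces $u_i = d_i^{-r}\bigl(M - \sum_{j \in V_1} d_j^r\bigr)$, while if no special node exists then $u = \chi_{V_1}$ with $\mathcal{M}(\chi_{V_1}) = M$. There are only finitely many admissible configurations of this type, so $\mathcal{V}_M^{ab}$ is finite, and in particular $J_\tau\bigl(\mathcal{V}_M^{ab}\bigr)$ is a finite subset of $\R$.

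Next I would invoke the strict Lyapunov property proved earlier in the lemma: the sequence $\{J_\tau(v^k)\}$ is non-increasing, with $J_\tau(v^k) < J_\tau(v^{k-1})$ whenever $v^k \neq v^{k-1}$. Because $J_\tau(\mathcal{V}_M^{ab})$ has only finitely many elements, only finitely many indices $k$ can satisfy $v^k \neq v^{k-1}$. Let $K$ be the largest such index (or $K = 0$ if no such index exists). Then for every $k > K$ one has $J_\tau(v^k) = J_\tau(v^{k-1})$, which by the equality clause of the Lyapunov property forces $v^k = v^{k-1}$; a straightforward induction yields $v^k = v^K$ for all $k \geq K$, as desired.

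The main subtlety, and the reason this proof deviates from its counterpart for the unconstrained scheme (Corollary~\ref{cor:finiteconvergence}), is that the mass-conserving threshold step is not deterministic: the relabelling $R_u$ is unspecified when the evolving $u$ has ties, so one cannot conclude purely from uniqueness of the iteration that $v^{k+1} = v^k$ once a fixed point is attained. The argument above circumvents this non-uniqueness by counting strict decreases of $J_\tau$ rather than tracking consecutive equalities; any putative ``unstopping'' of the sequence after a stabilization would have to produce a further value in the finite set $J_\tau(\mathcal{V}_M^{ab})$ strictly below its current value, which can happen only finitely often.
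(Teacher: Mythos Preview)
Your argument for the final (stabilization) claim is correct and follows the same skeleton as the paper's proof: establish that $\mathcal{V}_M^{ab}$ is finite, then combine this with the strict Lyapunov property. The paper argues finiteness in essentially the same way you do and then writes ``the proof now follows as in Corollary~\ref{cor:finiteconvergence}''. Your version is actually a bit more careful on one point: Corollary~\ref{cor:finiteconvergence} concludes by invoking \emph{determinism} of the (non--mass-conserving) scheme to propagate $S^K=S^{K+1}$ to all later iterates, and, as you observe, the mass-conserving threshold step need not be deterministic when ties occur. Your ``count the strict decreases of $J_\tau$'' argument sidesteps this entirely: a non-increasing sequence in the finite set $J_\tau(\mathcal{V}_M^{ab})$ can strictly decrease only finitely often, so it is eventually constant, and the equality clause then forces the iterates themselves to be constant. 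This is the right patch, and the paper's phrase ``as in Corollary~\ref{cor:finiteconvergence}'' should be read as shorthand for an argument of this type rather than a literal transcription.

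One omission: your proposal takes the first two claims (that $v^k$ minimizes $dJ_\tau^{v^{k-1}}$ over $\mathcal{K}_M$, and the strict Lyapunov inequality) as given. The paper does supply proofs for these: part~1 is reduced, via the substitution $w_i=d_i^r v_i$ and $z_i=(\chi_V-2e^{-\tau L}v^{k-1})_i$, to the linear-programming characterization of Lemma~\ref{lem:wminimization}, and part~2 is obtained by repeating verbatim the concavity argument from the end of Lemma~\ref{lem:Lyapunov}, since that argument is insensitive to whether the admissible set is $\mathcal{K}$ or $\mathcal{K}_M$. If you intended those proofs to carry over unchanged from the unconstrained case, note that part~1 genuinely requires the extra Lemma~\ref{lem:wminimization}; the simple ``set $v_i=1$ where $1-2(e^{-\tau L}u)_i\le 0$'' recipe from Lemma~\ref{lem:Lyapunov} no longer works once the mass constraint is imposed.
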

\begin{proof}
For all $i\in V$, define $w_i:=d_i^r v_i$ and $z_i := \left(\chi_V-2 e^{-\tau L} v^{k-1}\right)_i$. Then the minimization problem \eqref{eq:massMBOminimiz} turns into \eqref{eq:minprobforw}. Hence, by Lemma~\ref{lem:wminimization}, $v^*$ is a solution of \eqref{eq:massMBOminimiz} if and only if $v^*$ satisfies the constraints in \eqref{eq:massMBOminimiz} and for all $i, j\in V$, if $\left(e^{-\tau L} v^{k-1}\right)_i > \left(e^{-\tau L} v^{k-1}\right)_j$, then $v^*_i = d^r_i$ or $v^*_j = 0$. It is easily checked that $v^k$ generated from $v^{k-1}$ by one iteration of the \ref{alg:massOKMBO} algorithm satisfies these properties.

We note that \eqref{eq:massMBOminimiz} differs from \eqref{eq:MBOminimiz} only in the set of admissible functions over which the minimization takes place. This difference does not necessitate any change in the proof of the second part of the lemma compared to the proof of the equivalent statements at the end of Lemma~\ref{lem:Lyapunov}.

The final part of the lemma is trivially true if $N\in\N$. Now assume $N=\infty$. The proof is similar to that of Corollary~\ref{cor:finiteconvergence}. In the current case, however, our functions $v^k$ are not necessarily binary. We note that for each $k$, there is at most one node $i(k)\in V$ at which $v^k$ can take a value in $(0,1)$. For fixed $k$ and $i(k)$, there are only finitely many different possible functions that $\left.v^k\right|_{V\setminus\{i(k)\}}$ can be. Because $\mathcal{M}\left(v^k\right) = \sum_{i\in V\setminus\{i(k)\}} \left(\left.v^k\right|_{V\setminus\{i(k)\}}\right)_i + d_{i(k)}^r v_{i(k)} =  M$, this leads to finitely many possible values $v^k_{i(k)}$ can have. Since $i(k)$ can be only one of finitely many ($n$) nodes, there are only finitely many possible functions that $v^k$ can be. Hence the proof now follows as in Corollary~\ref{cor:finiteconvergence}.
\end{proof}

\begin{remark}
Similar to what we saw in Remark~\ref{rem:sequentiallinearprogramming} about \ref{alg:OKMBO}, we note that \eqref{eq:massMBOminimiz} is a sequential linear programming approach to minimizing $J_\tau$ over $\mathcal{K}_M$; the linear approximation of $J_\tau$ over $\mathcal{K}_M$ is minimized instead.
\end{remark}

\begin{remark}\label{rem:pinningmassOKMBO}
In Lemma~\ref{lem:dynamicsbounds} and Remark~\ref{rem:butwhatabouttheemptyandfullset} we saw that if $\tau$ is chosen too small in \ref{alg:OKMBO} pinning occurs, while if $\tau$ is chosen too large, a constant stationary state will be achieved in one iteration of \ref{alg:OKMBO}. The choice of $\tau$ is also critically important for \ref{alg:massOKMBO}, yet the details of the situation are somewhat different in this case.

Using an expansion as in \eqref{eq:expansion} and the eigenfunctions and eigenvalues as in Lemma~\ref{lem:Lspectrum}, we can write the solution to \eqref{eq:MBOstepa} as
\begin{equation}\label{eq:solutionu}
u(t) = \sum_{m=0}^{n-1} e^{-t \Lambda_m} \langle u_0 \phi^m\rangle_{\mathcal{V}}\, \phi^m,
\end{equation}
hence $u(t) \to \mathcal{A}(u_0)$ as $t\to \infty$. Thus for large $\tau$, the function $u(\tau) \in\mathcal{V}$ will be approximately constant. It will typically not be exactly constant though, and hence the mass conserving threshold step of \ref{alg:massOKMBO} could still be able to produce a non-arbitrary result, in the sense that the result is based on an actual ordering inherent in $u(\tau)$ instead of on an arbitrary ordering of nodes on all of which $u(\tau)$ has the same value. However, for those nodes $i,j\in V$ for which $u_i(\tau) \neq u_j(\tau)$, the differences in value of $u(\tau)$ are likely very small when $\tau$ is large. In a numerical implementation they might even be below machine precision, which renders the resulting output meaningless, determined by the particularities of the sorting algorithm used, instead of the mathematical problem. From Section~\ref{sec:Gammaconvergence} we know that, for $F_0$ minimization purposes, we are mainly interested in small $\tau$, so we will avoid choosing $\tau$ too large in our implementations in Section~\ref{sec:numerical}.

When $\tau$ is small, pinning can occur in the \ref{alg:massOKMBO} algorithm\footnote{We say that pinning occurs in the $k^{\text{th}}$ step if $v^k = v^{k-1}$.}, as it did in \ref{alg:OKMBO}, but the underlying reasons are different in both cases. In \ref{alg:OKMBO} pinning at a node occurs when $\tau$ is so small that the value of $u$ at that node changes by an amount less than (or equal to) $1/2$, whereas pinning in \ref{alg:massOKMBO} occurs in step $k$, if, for all $i,j\in V$ for which $v^{k-1}_i=1\neq v^{k-1}_j$ we have $\left(e^{-\tau L} v^{k-1}\right)_i > \left(e^{-\tau L} v^{k-1}\right)_j$, and for all $i,j \in V$ for which $v^{k-1}_i=0\neq v^{k-1}_j$ we have $\left(e^{-\tau L} v^{k-1}\right)_i < \left(e^{-\tau L} v^{k-1}\right)_j$\footnote{Pinning definitely occurs if these two strict inequalities hold. Depending on which choices the ordering process makes when there are $i,j\in V$ for which $v^{k-1}_i=v^{k-1}_j$, pinning might also occur if non-strict inequalities hold instead. For simplicity of the discussion we assume that the ordering process is such that if in step $k-1$ node $i$ is ranked before node $j$, then these nodes retain their relative ordering in step $k$ unless $v^k_i < v^k_j$ (in particular, we assume their relative ordering does not change if $v^k_i=v^k_j$).}. We need both these conditions to guarantee that $v^k = v^{k-1}$, because of the possibility that $v^{k-1}$ or $v^k$ take values in $(0,1)$ at a single node. In a situation where $v^{k-1}$ and $v^k$ are guaranteed to be in $\mathcal{V}^b_M$, e.g. when $r=0$, and $v^{k-1}$ is not constant we can simplify these pinning conditions: pinning will not occur if
\begin{equation}\label{eq:choiceoftau}
\underset{i\in \{i\in V: v^{k-1}_i = 1\}}\min\, \left(e^{-\tau L} v^{k-1}\right)_i < \underset{j\in \{j\in V: v^{k-1}_j = 0\}}\max\, \left(e^{-\tau L} v^{k-1}\right)_j.
\end{equation}
When $v^{k-1}$ and $v^k$ are not guaranteed to be in $\mathcal{V}^b_M$, the condition above is still sufficient, but might not be necessary, for the absence of pinning.
\end{remark}

\section{Special classes of graphs}\label{sec:specialclasses}

There are certain classes of graphs on which the dynamics of equation \eqref{eq:MBOstepa}, can be directly related to graph diffusion equations, in a way which we will make precise in Section~\ref{sec:graphtransformation}. The tools which we develop in that section will again be used in Section~\ref{sec:comppinning} to prove additional comparison principles.

\subsection{Graph transformation}\label{sec:graphtransformation}

\begin{definition}\label{def:graphclasses}
Let $G=(V,E,\omega)\in \mathcal{G}$. For all $j\in V$, let $\nu^{V\setminus\{j\}}$ be the equilibrium measure which solves \eqref{eq:equilibrium} for $S=V\setminus\{j\}$ and define the functions $f^j\in \mathcal{V}$ as
\begin{equation}\label{eq:fij}
f^j:= \nu^{V\setminus\{j\}} - \mathcal{A}\left(\nu^{V\setminus\{j\}}\right).
\end{equation}

Now we introduce the following classes of graphs:
\begin{enumerate}
\item $\mathcal{C} := \left\{G\in \mathcal{G}: \forall j\in V\,\, \forall i\in V\setminus\{j\}\,\,  f^j_i \geq 0\right\}$,
\item $\mathcal{C}^0 := \left\{G\in \mathcal{G}: \forall j\in V\,\, \forall i\in V\setminus\{j\}\,\, \omega_{ij}>0 \text{ or }  f^j_i \geq 0 \right\}$,
\item $\mathcal{C}_\gamma := \left\{G\in \mathcal{C}^0: \forall j\in V\,\, \forall i\in V\setminus\{j\}\,\,  \omega_{ij}=0 \text{ or } d_i^{-r} \omega_{ij} +  \gamma \frac{d_j^r}{\vol V} f^j_i > 0\right\}$, \text{ for } $\gamma>0$.
\end{enumerate}
For $\gamma=0$, we define $\mathcal{C}_0 := \mathcal{G}$\footnote{The definition of $\mathcal{C}_0$  is purely for notational convenience, so that we do not always need to treat the case $\gamma=0$ separately.}.
\end{definition}

\begin{remark}
Let us have a closer look at the properties of graphs in $\mathcal{C}_\gamma$. Let $\gamma\geq 0$. If $G\in \mathcal{C}_\gamma$, then per definition $G\in \mathcal{C}^0$. Let $i,j\in V$. If $\omega_{ij}=0$, then $f^j_i\geq 0$ and thus, per definition of $\mathcal{C}^0$, $d_i^{-r} \omega_{ij} +  \gamma \frac{d_j^r}{\vol V} f^j_i \geq 0$. On the other hand, if $\omega_{ij} > 0 $, then per definition of $\mathcal{C}_\gamma$, $d_i^{-r} \omega_{ij} +  \gamma \frac{d_j^r}{\vol V} f^j_i > 0$.
\end{remark}

\begin{lemma}\label{lem:Csubsets}
Let the setting and notation be as in Definition~\ref{def:graphclasses}. Then, $\mathcal{C} \subset \mathcal{C}^0$ and, for all $\gamma\geq 0$, $\mathcal{C} \subset \mathcal{C}_\gamma$. Moreover, if $G\in \mathcal{C}^0\setminus \mathcal{C}$, there is a $\gamma_*(G)>0$ such that, for all $\gamma\in [0,\gamma_*(G))$, $G\in \mathcal{C}_\gamma$.
\end{lemma}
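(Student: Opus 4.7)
My plan is to dispatch the first two inclusions quickly, then construct the threshold $\gamma_*(G)$ explicitly for the third. For $\mathcal{C}\subset\mathcal{C}^0$, I would just observe that the defining condition $f^j_i\geq 0$ for $\mathcal{C}$ makes the second disjunct in the definition of $\mathcal{C}^0$ hold for every admissible pair $(i,j)$, so the inclusion is immediate. For $\mathcal{C}\subset\mathcal{C}_\gamma$, the case $\gamma=0$ follows from $\mathcal{C}_0:=\mathcal{G}$, while for $\gamma>0$ it suffices to check that whenever $\omega_{ij}>0$ and $f^j_i\geq 0$, the quantity $d_i^{-r}\omega_{ij}+\gamma\frac{d_j^r}{\vol{V}}f^j_i$ is bounded below by $d_i^{-r}\omega_{ij}>0$.

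For the final statement, the key observation is that if $G\in\mathcal{C}^0\setminus\mathcal{C}$, then every pair $(i,j)$ witnessing the failure of $\mathcal{C}$ (i.e., $f^j_i<0$) must satisfy $\omega_{ij}>0$, since otherwise the disjunction defining $\mathcal{C}^0$ would force $f^j_i\geq 0$. For each such ``bad'' pair I would solve the inequality $d_i^{-r}\omega_{ij}+\gamma\frac{d_j^r}{\vol{V}}f^j_i>0$ for $\gamma$, obtaining the explicit critical value $\frac{\omega_{ij}\vol{V}}{d_i^r d_j^r(-f^j_i)}>0$. I would then define
\[
\gamma_*(G) := \min\left\{\frac{\omega_{ij}\vol{V}}{d_i^r d_j^r(-f^j_i)} : i,j\in V,\ i\neq j,\ \omega_{ij}>0,\ f^j_i<0\right\},
\]
a minimum over a finite set (by finiteness of $V$) that is nonempty (because $G\notin\mathcal{C}$) and consists of strictly positive numbers, so $\gamma_*(G)>0$.

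To verify that $G\in\mathcal{C}_\gamma$ for every $\gamma\in[0,\gamma_*(G))$, I would perform a case analysis on pairs $(i,j)$ with $i\neq j$: if $\omega_{ij}=0$ the required condition for $\mathcal{C}_\gamma$ holds trivially; if $\omega_{ij}>0$ and $f^j_i\geq 0$ the sum in question is bounded below by $d_i^{-r}\omega_{ij}>0$; and if $\omega_{ij}>0$ and $f^j_i<0$ then by choice of $\gamma_*(G)$ we have $\gamma<\frac{\omega_{ij}\vol{V}}{d_i^r d_j^r(-f^j_i)}$, which is precisely the rearrangement of the required strict positivity. The prerequisite $G\in\mathcal{C}^0$ used in the definition of $\mathcal{C}_\gamma$ is given by hypothesis.

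I do not anticipate any serious obstacle in this argument; the only subtlety is checking that the minimum defining $\gamma_*(G)$ is taken over a nonempty set, which is exactly what $G\notin\mathcal{C}$ provides, and that the three cases in the verification genuinely cover all pairs (using $G\in\mathcal{C}^0$ to rule out the fourth case $\omega_{ij}=0$ and $f^j_i<0$).
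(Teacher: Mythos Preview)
Your proposal is correct and follows essentially the same approach as the paper: the first two inclusions are immediate from the definitions, and for the third you define the same threshold $\gamma_*(G)=\vol{V}\,\min\{d_i^{-r}d_j^{-r}\omega_{ij}\,|f^j_i|^{-1}: f^j_i<0\}$ (your expression is just this rewritten) and verify membership in $\mathcal{C}_\gamma$ by the same case split on the sign of $f^j_i$ and whether $\omega_{ij}$ vanishes. Your handling of the nonemptiness of the minimizing set via $G\notin\mathcal{C}$ and the observation that $f^j_i<0$ forces $\omega_{ij}>0$ when $G\in\mathcal{C}^0$ is exactly what the paper does.
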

\begin{proof}
The first two inclusions stated in the lemma follow immediately from the definitions of the sets involved. To prove the final statement, let $G\in \mathcal{C}^0$ and let $j\in V$, $i\in V\setminus\{j\}$. If $f^j_i \geq 0$, then, $\omega_{ij}=0$ or, for all $\gamma\geq 0$, $ d_i^{-r} \omega_{ij} +   \gamma \frac{d_j^r}{\vol V} f^j_i \geq d_i^{-r} \omega_{ij} \geq 0$. If $f^j_i < 0$ (and, by the assumption that $G\not\in \mathcal{C}$, there are $j\in V$, $i\in V\setminus\{j\}$ for which this is the case), then by definition of $\mathcal{C}^0$ we have $\omega_{ij} > 0$. Define
\[
\gamma_*(G) := \vol V \min\left\{d_i^{-r} d_j^{-r} \omega_{ij} \left|f^j_i\right|^{-1}: j \in V, i\in V\setminus\{j\} \text{ such that } f^j_i < 0 \right\}
\]
and let $\gamma \in [0, \gamma_*(G))$, then $ d_i^{-r} \omega_{ij} +  \gamma\frac{d_j^r}{\vol V} f^j_i > d_i^{-r} \omega_{ij} - \gamma_*(G)\frac{d_j^r}{\vol V} |f_j^i| \geq 0$.
\end{proof}

The following lemma shows that $\mathcal{C}$ is not empty (and thus, by Lemma~\ref{lem:Csubsets}, so are $\mathcal{C}^0$ and, for all $\gamma\geq 0$, $\mathcal{C}_\gamma$).

\begin{lemma}\label{lem:stargraphinC}
Let $G=(V,E,\omega) \in \mathcal{G}$ be an unweighted star graph as in Definition~\ref{def:bipartstar} with $n\geq 3$ nodes. Then $G\in\mathcal{C}$.
\end{lemma}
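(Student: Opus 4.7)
The plan is to verify the defining inequality of $\mathcal{C}$ directly, using the explicit formulas for the equilibrium measures on the star graph which are already available in Lemma~\ref{lem:starequil}. Since $\mathcal{C}$ requires $f^j_i = \nu^{V\setminus\{j\}}_i - \mathcal{A}\left(\nu^{V\setminus\{j\}}\right) \geq 0$ for every $j\in V$ and every $i\in V\setminus\{j\}$, and Lemma~\ref{lem:starequil} gives a case split based on whether the removed node $j$ is the centre or a leaf, I would split the proof into exactly these two cases.

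In the first case $j=1$, the equilibrium measure is $\nu^{V\setminus\{1\}}=\chi_{V\setminus\{1\}}$, so its mass is $\mathcal{M}(\nu^{V\setminus\{1\}})=\sum_{i=2}^n d_i^r = n-1$ and the constant value of the average function is $\frac{n-1}{\vol{V}} = \frac{n-1}{(n-1)^r+n-1}$, which lies in $(0,1)$. Therefore at every leaf $i\in V\setminus\{1\}$ one gets $f^1_i = 1 - \tfrac{n-1}{\vol V} = \tfrac{(n-1)^r}{\vol V} > 0$, giving the required nonnegativity on all relevant nodes.

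In the second case $j\neq 1$, I would first compute $\mathcal{M}(\nu^{V\setminus\{j\}})$ using the three values from Lemma~\ref{lem:starequil}: the centre $1$ contributes $(n-1)^r(\vol V -1)$, the removed leaf contributes $0$, and the $n-2$ remaining leaves each contribute $\vol V$, for a total of $\mathcal{M}(\nu^{V\setminus\{j\}}) = \vol V^2 - \vol V - (n-1)^r$, so $\mathcal{A}(\nu^{V\setminus\{j\}})$ is the constant $\vol V - 1 - \tfrac{(n-1)^r}{\vol V}$. Subtracting this from $\nu^{V\setminus\{j\}}$ gives $f^j_1 = \tfrac{(n-1)^r}{\vol V}>0$ at the centre and $f^j_i = 1 + \tfrac{(n-1)^r}{\vol V}>0$ at each remaining leaf $i\in V\setminus\{1,j\}$, which exhausts the set $V\setminus\{j\}$.

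This is purely a direct verification, so I do not anticipate a genuine obstacle; the only thing to be careful about is the bookkeeping of the two different degree values $d_1^r=(n-1)^r$ and $d_i^r=1$ for $i\neq 1$, which enter both in $\vol V$ and in the computation of $\mathcal{M}(\nu^{V\setminus\{j\}})$. Since the averages come out strictly below the relevant values of $\nu^{V\setminus\{j\}}$ in every case, $G$ in fact satisfies the strict version of the $\mathcal{C}$ inequality, and the conclusion $G\in\mathcal{C}$ follows immediately.
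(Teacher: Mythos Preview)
Your proof is correct and follows essentially the same approach as the paper: both use the explicit equilibrium measures from Lemma~\ref{lem:starequil}, compute their masses, and verify directly that $f^j_i>0$ on $V\setminus\{j\}$ in the two cases $j=1$ and $j\neq 1$. The only cosmetic difference is that for $j\neq1$ the paper bounds $f^j_i$ below by its value at the centre node (where $\nu^{V\setminus\{j\}}$ is smallest on $V\setminus\{j\}$), whereas you compute both values separately; the resulting expressions $\tfrac{(n-1)^r}{\vol V}$ and $1+\tfrac{(n-1)^r}{\vol V}$ agree with the paper's.
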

\begin{proof}
Using Lemma~\ref{lem:starequil}, we compute
\begin{align*}
\mathcal{M}\left(\nu^{V\setminus\{1\}}\right) &= \sum_{k\in V\setminus\{1\}} d_k^r = \vol{V} - d_1^r,\\
\mathcal{M}\left(\nu^{V\setminus\{j\}}\right) &= (\vol{V}-1) d_1^r + \vol{V} \sum_{k\in V\setminus\{1,j\}} d_k^r = \vol{V} (\vol{V} - d_j^r) - d^r_1\\ &= \vol{V} (\vol{V} - 1) - d^r_1,
\end{align*}
where $j\in V\setminus\{1\}$ in the second line. Hence, if $i\in V\setminus\{1\}$,
\[
f^1_i = 1 - \frac{\vol{V}-d_1^r}{\vol{V}} = \frac{d_1^r}{\vol V} > 0.
\]
Furthermore, if $j\in V\setminus\{1\}$ and $i\in V\setminus\{j\}$,
\[
f^j_i \geq \vol{V}-1 - \frac{\vol{V} (\vol{V} - 1) - d^r_1}{\vol{V}} = \frac{d^r_1}{\vol{V}} > 0.
\]
We conclude that $G\in \mathcal{C}$.
\end{proof}

\begin{remark}
The following is an example of a graph $G\in \mathcal{C}^0\setminus\mathcal{C}$. Let $G$ be the unweighted graph with $V=\{1, 2, 3, 4\}$ and $E=\{(1,2), (2,3), (3,4)\}$. A quick computation verifies that the equilibrium measures $\nu^{V\setminus\{1\}}$ and $\nu^{V\setminus\{2\}}$ are given by, for $i\in V$,
\[
\nu^{V\setminus\{1\}}_i := \begin{cases}
0, &\text{if } i=1,\\
2^{1+r}+1, &\text{if } i =2,\\
2^{1+r}+2^r+2, &\text{if } i=3,\\
2^{1+r}+2^r+3, &\text{if } i=4,
\end{cases}
\quad \text{ and } \quad
\nu^{V\setminus\{2\}}_i := \begin{cases}
1, &\text{if } i=1,\\
0, &\text{if } i =2,\\
2^r + 1, &\text{if } i=3,\\
2^r + 2, &\text{if } i=4.
\end{cases}
\]
We also compute that
\[
\vol V = 2^{r+1} + 2, \quad \mathcal{M}\left(\nu^{V\setminus\{1\}}\right) = 2^{2r+2} + 2^{r+2}  + 2^{r+1} +  2^{2r} + 3, \quad \mathcal{M}\left(\nu^{V\setminus\{2\}}\right) = 2^{r+1} + 2^{2r} + 3.
\]
Hence
\begin{align*}
\mathcal{A}\left(\nu^{V\setminus\{1\}}\right) _2 &= 2^{r+1} + 1 + \frac{2^{2r}+1}{2^{r+1}+2} > \nu^{V\setminus\{1\}}_2,\\
\vol V \nu^{V\setminus\{2\}}_1 &= 2^r+2 < 3+2^{r+1} < \mathcal{M}\left(\nu^{V\setminus\{2\}}\right),
\end{align*}
so $f^1_2<0$ and $f^2_1<0$ and thus $G\not\in \mathcal{C}$. However, $(2^r+1)(2^{r+1}+2) = 2(2^{2r}+2^{r+1}+1) > 2^{2r}+1$, and thus
\begin{align*}
\mathcal{A}\left(\nu^{V\setminus\{1\}}\right)_3- 2^{r+1} - 1 &= \frac{2^{2r}+1}{2^{r+1}+2}  < \nu^{V\setminus\{1\}}_3 - 2^{r+1} - 1,\\
\vol V \nu^{V\setminus\{2\}}_3 &= 2^{r+1} + 2^{2r} + 2^r + 2 > 2^{r+1} + 2^{2r} + 1 + 2 =  \mathcal{M}\left(\nu^{V\setminus\{2\}}\right).
\end{align*}
Therefore $f^1_3>0$ and $f^2_3>0$. Since $\nu^{V\setminus\{1\}}_4 > \nu^{V\setminus\{1\}}_3$ and $\nu^{V\setminus\{2\}}_3>\nu^{V\setminus\{2\}}_3$, it follows that also $f^1_4>0$ and $f^2_4>0$. The corresponding inequalities for $f^3$ and $f^4$ follow by symmetry. We conclude that, if $\omega_{ij}=0$, then $f^j_i>0$. Hence $G\in \mathcal{C}^0$.
\end{remark}

\begin{lemma}\label{lem:someconditionsforCC0}
Let $G=(V,E,\omega)\in \mathcal{G}$. For all $j\in V$, let $\nu^{V\setminus\{j\}}$ be the equilibrium measure which solves \eqref{eq:equilibrium} for $S=V\setminus\{j\}$ and define,
for $i,j\in V$, 
\begin{equation}\label{eq:sharedneighbours}
\mathcal{N}_s(i,j) := \sum_{k\in \mathcal{N}(j)} \omega_{ik},
\end{equation}
where $\mathcal{N}(j)$ is the set of neighbours of node $j$, as in \eqref{eq:neighbours}. Then following statements are true.
\begin{enumerate}
\item\label{item:GinC} If, for all $j\in V$ and for all $i\in \mathcal{N}(j)$, $\omega_{ij} \mathcal{M}\left(\nu^{V\setminus\{j\}}\right) \leq \vol{V} d_i^r$, then $G\in \mathcal{C}$. 
\item\label{item:GinC0} If, for all $j\in V$ and for all $i\in V\setminus \left(\{j\} \cup \mathcal{N}(j)\right)$, $\mathcal{N}_s(i,j) \mathcal{M}\left(\nu^{V\setminus\{j\}}\right) \leq \vol{V} d_i^r$, then $G\in \mathcal{C}^0$.
\end{enumerate}
\end{lemma}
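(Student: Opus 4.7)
The plan is to work directly with $f^j = \nu^{V\setminus\{j\}} - \mathcal{A}(\nu^{V\setminus\{j\}})$, exploiting that $\mathcal{A}(\nu^{V\setminus\{j\}})$ is a constant function with zero Laplacian. Thus $(\Delta f^j)_i = 1$ for all $i \neq j$, and $f^j_j = -\mathcal{M}(\nu^{V\setminus\{j\}})/\vol{V}$. First I would observe that $f^j$ attains its minimum over $V$ at $j$: at any other minimizer $i^*$, every term $\omega_{i^*k}(f^j_{i^*} - f^j_k)$ appearing in $d_{i^*}^r (\Delta f^j)_{i^*}$ would be $\leq 0$, contradicting $(\Delta f^j)_{i^*} = 1 > 0$. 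This global lower bound $f^j_k \geq f^j_j$ is the workhorse of both parts.

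For part~\ref{item:GinC}, let $i^*$ minimize $f^j$ on $V \setminus \{j\}$. Isolating the $k = j$ term in $(\Delta f^j)_{i^*} = 1$ and noting that every other summand is nonpositive by the choice of $i^*$ yields
\[
d_{i^*}^r \leq \omega_{i^*j}\bigl(f^j_{i^*} - f^j_j\bigr).
\]
In particular $\omega_{i^*j} > 0$, so $i^* \in \mathcal{N}(j)$, and rearranging together with the standing hypothesis $\omega_{i^*j}\mathcal{M}(\nu^{V\setminus\{j\}}) \leq \vol{V} d_{i^*}^r$ gives $f^j_{i^*} \geq 0$. Since $i^*$ was a minimizer on $V\setminus\{j\}$, this shows $f^j \geq 0$ there, and hence $G \in \mathcal{C}$.

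For part~\ref{item:GinC0} the target is weaker: I only need $f^j_i \geq 0$ whenever $i \notin \{j\} \cup \mathcal{N}(j)$. I would argue by contradiction, setting
\[
U := \{i \in V \setminus (\{j\} \cup \mathcal{N}(j)) : f^j_i < 0\},
\]
assuming $U \neq \emptyset$, and letting $i^*$ minimize $f^j$ on $U$. Using $\omega_{i^*j} = 0$, the identity $(\Delta f^j)_{i^*} = 1$ splits according to whether $k \in \mathcal{N}(j)$ or $k \in V \setminus (\{j\} \cup \mathcal{N}(j))$. Every summand of the second type is $\leq 0$: if $f^j_k \geq 0$ then $f^j_{i^*} < 0 \leq f^j_k$, while if $f^j_k < 0$ then $k \in U$ and minimality of $i^*$ forces $f^j_{i^*} \leq f^j_k$. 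For the first type, the global bound $f^j_k \geq f^j_j$ gives $f^j_{i^*} - f^j_k \leq f^j_{i^*} - f^j_j$. Combining,
\[
d_{i^*}^r \leq \mathcal{N}_s(i^*, j)\bigl(f^j_{i^*} - f^j_j\bigr),
\]
so $\mathcal{N}_s(i^*, j) > 0$ and the hypothesis $\mathcal{N}_s(i^*, j) \mathcal{M}(\nu^{V\setminus\{j\}}) \leq \vol{V} d_{i^*}^r$ delivers $f^j_{i^*} \geq 0$, contradicting $i^* \in U$.

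The main subtlety is the bookkeeping in the two sign-split Laplacian sums; in particular, choosing the auxiliary set $U$ to exclude $\mathcal{N}(j)$ is what makes the "far" summands nonpositive by default, so that the "near" summands (edges to $j$ in part~\ref{item:GinC}, edges to $\mathcal{N}(j)$ in part~\ref{item:GinC0}) carry the full weight of the estimate against $d_{i^*}^r$ and meet the stated hypotheses exactly.
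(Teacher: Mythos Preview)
Your argument is correct. Both parts go through cleanly: the observation that $f^j$ is globally minimized at $j$ (via the sign of $(\Delta f^j)_{i^*}$), the isolation of the $k=j$ term in part~\ref{item:GinC}, and the split into ``near'' ($k\in\mathcal{N}(j)$) versus ``far'' summands in part~\ref{item:GinC0} are all sound. In part~\ref{item:GinC0} the choice of $U$ to exclude $\mathcal{N}(j)$ is exactly what makes the far summands nonpositive, as you note.

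The paper's route is different. Rather than working with $f^j$ directly, it sets $x:=\mathcal{M}(\nu^{V\setminus\{j\}})/\vol{V}$ and shows $\nu^{V\setminus\{j\}}_i\ge x$ by comparing $\nu^S$ with the subsolution $x\chi_S$ via the elliptic comparison principle (Lemma~\ref{lem:compell}), after checking $x(\kappa_S)_i\le 1$ on $S$ --- this is the template of Lemma~\ref{lem:nuandkappa}. For part~\ref{item:GinC} one takes $S=V\setminus\{j\}$; for part~\ref{item:GinC0} one takes the smaller set $S=V\setminus(\{j\}\cup\mathcal{N}(j))$, obtains $\nu^S_i\ge x$ there, and then invokes the monotonicity $\nu^{V\setminus\{j\}}\ge\nu^S$ from Lemma~\ref{lem:equimeasprops}\ref{item:subsetsnu}. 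Your approach is more self-contained: it reproves the needed minimum-principle step by hand on $f^j$ and in particular avoids the auxiliary equilibrium measure $\nu^S$ and the monotonicity lemma in part~\ref{item:GinC0}. The paper's approach, on the other hand, reuses established machinery and makes the connection to the curvature bound $\kappa_S^+$ explicit.
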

\begin{proof}
To prove these statements, we fix $j\in V$ and use a similar approach as in the proof of Lemma~\ref{lem:nuandkappa} with $x:= \frac{\mathcal{M}\left(\nu^{V\setminus\{j\}}\right)}{\vol{V}}$. Note, for $i\in V\setminus\{j\}$, that $\nu^{V\setminus\{j\}}_i \geq x$ implies that $f^j_i \geq 0$, where $f^j_i$ is as in \eqref{eq:fij}.

To prove \ref{item:GinC}, set $S:=V\setminus\{j\}$. We compute, for $i\in S$, $\left(\kappa_S\right)_i = d_i^r \omega_{ij}$. Note that the inequality in the assumption in \ref{item:GinC} trivially holds for all $i\in V\setminus\mathcal{N}(j)$. Hence, for all $i\in S$, $x \left(\kappa_S\right)_i \leq 1$. Repeating the argument in the proof of Lemma~\ref{lem:nuandkappa}, we find that, for all $i\in S$, $\nu^{V\setminus\{j\}}_i \geq x$, hence $G\in \mathcal{C}$.

To prove \ref{item:GinC0}, set $S:= V\setminus \left(\{j\} \cup \mathcal{N}(j)\right)$. Then we have, for $i\in S$, $\left(\kappa_S\right)_i = d_i^{-r} \sum_{k\in \{j\} \cup \mathcal{N}(j)} \omega_{ij} = d_i^{-r} \left(\mathcal{N}_s(i,j) + \omega_{ij}\right) = d_i^{-r} \mathcal{N}_s(i,j)$. Hence, by assumption, for all $i\in S$, $x \left(\kappa_S\right)_i \leq 1$. Repeating again the argument in the proof of Lemma~\ref{lem:nuandkappa}, we find that, for all $i\in S$, $\nu^S_i \geq x$. By statement \ref{item:subsetsnu} in Lemma~\ref{lem:equimeasprops}, we also know that $\nu^{V\setminus\{j\}} \geq \nu^S$. Hence $G\in \mathcal{C}$.
\end{proof}

\begin{remark}
Note that, for the quantity in \eqref{eq:sharedneighbours} we have
$\displaystyle
\mathcal{N}_s(i,j) = \sum_{k\in \mathcal{N}(j) \cap \mathcal{N}(i)}\omega_{ik}.
$
Hence, if $G\in \mathcal{G}$ is an unweighted graph, then $\mathcal{N}_s(i,j) = \mathcal{N}_s(j,i)$ is the number of \textit{shared neighbours} of nodes $i$ and $j$, i.e. the number of nodes $k$ for which both edges $(i,k)$ and $(j,k)$ exist in $E$. We also see that, for all $G\in\mathcal{G}$ and $i\in V$, $\mathcal{N}_s(i,i) = d_i$.
\end{remark}

\begin{corol}\label{cor:completegraphinC0}
Let $G=(V,E,\omega)\in \mathcal{G}$ be complete (see Definition~\ref{def:bipartstar}). Then $G\in\mathcal{C^0}$.
\end{corol}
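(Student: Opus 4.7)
The plan is very short because the claim is essentially a direct consequence of unwinding the definitions. The defining condition for $G\in \mathcal{C}^0$ is that, for every $j\in V$ and every $i\in V\setminus\{j\}$, at least one of the two alternatives $\omega_{ij}>0$ or $f^j_i\geq 0$ holds. So the proof will just observe that in a complete graph the first alternative is automatic.

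More precisely, I would proceed as follows. First, recall from Definition~\ref{def:bipartstar} that $G$ being complete means $\omega_{ij}>0$ whenever $i\neq j$. Next, fix an arbitrary $j\in V$ and an arbitrary $i\in V\setminus\{j\}$; by completeness we immediately have $\omega_{ij}>0$, so the disjunction $\omega_{ij}>0$ or $f^j_i\geq 0$ in the definition of $\mathcal{C}^0$ is satisfied (by its first clause, independently of the sign of $f^j_i$). Since $i,j$ were arbitrary, this shows $G\in\mathcal{C}^0$.

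There is no real obstacle here: the result is a one-line verification and does not require any properties of the equilibrium measures $\nu^{V\setminus\{j\}}$ or the auxiliary functions $f^j$ beyond the fact that they exist. The only thing to take care of in the write-up is to cite Definition~\ref{def:graphclasses} for the definition of $\mathcal{C}^0$ and Definition~\ref{def:bipartstar} for the notion of a complete graph, so that the reader sees precisely which clause of the disjunction is being invoked. It is worth noting (if desired as a remark after the proof) that this result is strictly weaker than the analogous statement for the class $\mathcal{C}$, which would require the sign information on $f^j_i$ that Lemma~\ref{lem:someconditionsforCC0}\ref{item:GinC} provides; here completeness makes that auxiliary inequality unnecessary.
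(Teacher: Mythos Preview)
Your proof is correct and is essentially the same as the paper's own proof: both observe that completeness forces $\omega_{ij}>0$ for every $i\neq j$, so the first clause of the disjunction in the definition of $\mathcal{C}^0$ is always satisfied. The paper phrases this via $\mathcal{N}(j)=V\setminus\{j\}$ (so $V\setminus(\{j\}\cup\mathcal{N}(j))=\emptyset$) and also notes that one could alternatively invoke condition~\ref{item:GinC0} of Lemma~\ref{lem:someconditionsforCC0}, but the content is identical.
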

\begin{proof}
Let $j\in V$. Because $G$ is complete, $\mathcal{N}(j) = V\setminus\{j\}$ and thus $V\setminus \left(\{j\} \cup \mathcal{N}(j)\right)=\emptyset$. It follows, either directly from the definition of $\mathcal{C}^0$, or from condition~\ref{item:GinC0} in Lemma~\ref{lem:someconditionsforCC0}, that $G\in\mathcal{C}^0$.
\end{proof}

\begin{remark}
Let us consider a simple example to illustrate the conditions from Lemma~\ref{lem:someconditionsforCC0}. Let $G\in \mathcal{G}$ be the graph with node set $V=\{1,2,3\}$, edge set $E=\{(1,2), (2,3), (3,1)\}$ and edge weights $\omega_{12}=\omega_{23} = 1$ and $\omega_{13}=\omega>0$. We note right away that, since $G$ is a complete graph, condition~\ref{item:GinC0} in Lemma~\ref{lem:someconditionsforCC0} is trivially satisfied (see Corollary~\ref{cor:completegraphinC0}).

We compute $d_1 = d_3=1+\omega$, $d_2=2$, $\vol V = 4 + 2\omega$. We can confirm via direct computation that the equilibrium measure $\nu^{V\setminus\{1\}}$ is given by, for $i\in V$,
\[
\nu^{V\setminus\{1\}}_i = \frac1{3+2\omega}\begin{cases}
0, &\text{if } i=1,\\
2^r (1+\omega) + (1+\omega)^2, &\text{if } i=2,\\
2^r + 2(1+\omega)^r, &\text{if } i=3.
\end{cases}
\]
Therefore
\begin{align*}
\mathcal{M}\left(\nu^{V\setminus\{1\}}\right) &= \frac{2^r\left(2^r(1+\omega)+(1+\omega)^r\right) + (1+\omega)^r \left(2^r + 2(1+\omega)^r\right)}{3+2\omega}\\ &= \frac{2^{2r} (1+\omega) + 2^{r+1} (1+\omega)^r + 2(1+\omega)^{2r}}{3+2\omega}.
\end{align*}
If we choose $\omega=1$, it is a matter of straightforward computation to check that condition~\ref{item:GinC} from Lemma~\ref{lem:someconditionsforCC0} is satisfied for $j=1$. Since the graph is fully symmetric when $\omega=1$, it then follows that the condition is also satisfied for $j\in \{2,3\}$.

On the other hand, a computation with $\omega=7$ and $r=0$, shows that $\omega \mathcal{M}\left(\nu^{V\setminus\{1\}}\right) > \vol{V} d_3^r$, thus this provides an example of a graph for which condition~\ref{item:GinC0} is satisfied, but condition~\ref{item:GinC} is not. Note that by continuity of $(r, \omega) \mapsto \omega \mathcal{M}\left(\nu^{V\setminus\{1\}}\right) - \vol{V} d_3^r$, the same is true for values of $(r,\omega)$ close to $(0,7)$.
\end{remark}

The following lemma hints at the reason for our interest in the functions $f^j$ from \eqref{eq:fij}.

\begin{lemma}\label{lem:varphijintroduced}
Let $G=(V,E,\omega)\in \mathcal{G}$. Let $j\in V$ and let $f^j\in \mathcal{V}$ be as in \eqref{eq:fij}. Then the function $\varphi^j \in \mathcal{V}$, defined by
\begin{equation}\label{eq:defvarphij}
\varphi^j := - \frac{d_j^r}{\vol{V}} f^j,
\end{equation}
solves \eqref{eq:zeromassvarphiequation} for $\chi_{\{j\}}$.
\end{lemma}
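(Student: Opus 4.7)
The plan is to verify the two conditions in \eqref{eq:zeromassvarphiequation} directly with $u = \chi_{\{j\}}$. Note that $\mathcal{A}(\chi_{\{j\}}) = \frac{d_j^r}{\vol V}\chi_V$, so we need to show that $\Delta \varphi^j = \chi_{\{j\}} - \frac{d_j^r}{\vol V}\chi_V$ and $\mathcal{M}(\varphi^j) = 0$.

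The zero-mass condition is the easy half. Since $\mathcal{A}(u) = \frac{\mathcal{M}(u)}{\vol V}\chi_V$, a one-line computation gives $\mathcal{M}(\mathcal{A}(u)) = \mathcal{M}(u)$ for any $u\in\mathcal{V}$, and hence $\mathcal{M}(f^j) = \mathcal{M}(\nu^{V\setminus\{j\}}) - \mathcal{M}(\nu^{V\setminus\{j\}}) = 0$, so $\mathcal{M}(\varphi^j) = 0$ by linearity.

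For the Laplacian condition, I would first observe that $\mathcal{A}(\nu^{V\setminus\{j\}})$ is a constant multiple of $\chi_V$ and is therefore in the kernel of $\Delta$, so
\[
\Delta \varphi^j = -\frac{d_j^r}{\vol V}\Delta \nu^{V\setminus\{j\}}.
\]
It then remains to compute $\Delta \nu^{V\setminus\{j\}}$ pointwise. By the defining equation \eqref{eq:equilibrium} for the equilibrium measure, $(\Delta \nu^{V\setminus\{j\}})_i = 1$ for every $i\neq j$. The value at $i=j$ is determined by applying the mass-of-Laplacian identity \eqref{eq:Laplacemass}, which forces
\[
0 = \mathcal{M}(\Delta \nu^{V\setminus\{j\}}) = \sum_{i\neq j} d_i^r + d_j^r(\Delta \nu^{V\setminus\{j\}})_j,
\]
so $(\Delta \nu^{V\setminus\{j\}})_j = 1 - d_j^{-r}\vol V$. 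Combining the two cases yields $\Delta \nu^{V\setminus\{j\}} = \chi_V - d_j^{-r}\vol V\,\chi_{\{j\}}$.

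Substituting this back gives
\[
\Delta \varphi^j = -\frac{d_j^r}{\vol V}\chi_V + \chi_{\{j\}} = \chi_{\{j\}} - \mathcal{A}(\chi_{\{j\}}),
\]
which is exactly the first equation in \eqref{eq:zeromassvarphiequation} for $u=\chi_{\{j\}}$. There is no real obstacle here; the only mildly subtle point is remembering that the equilibrium measure only prescribes $\Delta \nu^{V\setminus\{j\}}$ on $V\setminus\{j\}$, so the value at $j$ must be recovered from the global zero-mass constraint on the range of $\Delta$.
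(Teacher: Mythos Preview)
Your proof is correct and follows essentially the same approach as the paper: both verify the two conditions of \eqref{eq:zeromassvarphiequation} by using that constants lie in the kernel of $\Delta$, reading off $(\Delta\nu^{V\setminus\{j\}})_i=1$ for $i\neq j$ from the equilibrium equation, and recovering the value at $j$ from $\mathcal{M}(\Delta\,\cdot\,)=0$. The only cosmetic difference is that you first assemble $\Delta\nu^{V\setminus\{j\}}=\chi_V-d_j^{-r}\vol{V}\,\chi_{\{j\}}$ and then scale, whereas the paper carries out the same computation directly at the level of $\Delta\varphi^j$.
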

\begin{proof}
From \eqref{eq:fij} and \eqref{eq:equilibrium} it follows immediately that, for all $i\in V\setminus\{j\}$, $\left(\Delta f^j\right)_i = \left(\Delta \nu^{V\setminus\{j\}}\right)_i = 1$. Thus, for all $i\in V\setminus\{j\}$, $\left(\Delta \varphi^j\right)_i = -\frac{d_j^r}{\vol V} = \left(\chi_{\{j\}}\right)_i - \mathcal{A}\left(\chi_{\{j\}}\right)$. Moreover, by \eqref{eq:massu-Au} we have $\displaystyle 0=\mathcal{M}\left(\Delta \varphi^j\right)= d_j^r \left(\Delta \varphi^j\right)_j + \sum_{i\in V\setminus\{j\}} d_i^r \left(\Delta \varphi^j\right)_i$, and thus
\begin{align*}
\left(\Delta \varphi^j\right)_j &= -d_j^{-r} \sum_{i\in V\setminus\{j\}} d_i^r \left(\Delta \varphi^j \right)_i = d_j^{-r} \sum_{i\in V\setminus\{j\}} d_i^r \frac{d_j^r}{\vol{V}} =  \frac{\vol{V}-d_j^r}{\vol{V}}\\
&= 1-\frac{d_j^r}{\vol{V}} = \left(\chi_{\{j\}}\right)_j-\left(\mathcal{A}\left(\chi_{\{j\}}\right)\right)_j.
\end{align*}
Finally, by \eqref{eq:fij}, $\mathcal{M}\left(f^j\right)=0$, thus $\mathcal{M}\left(\varphi^j\right)=0$.
\end{proof}

\begin{corol}\label{cor:spectralvarphij}
Let $G=(V,E,\omega)\in \mathcal{G}$.  Let $\lambda_m$ and $\phi^m$ be the eigenvalues and corresponding eigenfunctions of the graph Laplacian $\Delta$ (with parameter $r$), as in \eqref{eq:eigenvalues}, \eqref{eq:eigenfunctions}. Let $j\in V$. If $\varphi^j \in \mathcal{V}$ is as defined in \eqref{eq:defvarphij}, then, for all $i\in V$,
\begin{equation}\label{eq:spectralvarphij}
\varphi^j_i = \sum_{m=1}^{n-1} \lambda_m^{-1} d_j^r \phi_i^m \phi_j^m.
\end{equation}
In particular, if $f^j$ is as in \eqref{eq:fij} and $i\in V$, then $f^j_i \geq 0$ if and only if $\sum_{m=1}^{n-1} \lambda_m^{-1} d_j^r \phi_i^m \phi_j^m \leq 0$.
\end{corol}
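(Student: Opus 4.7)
The corollary follows quickly from material already established, so my plan is essentially to assemble the two ingredients we need: the spectral expansion of solutions to the Poisson problem \eqref{eq:zeromassvarphiequation}, and the relation between $\varphi^j$ and $f^j$ from \eqref{eq:defvarphij}.

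First, I will invoke Lemma~\ref{lem:varphijintroduced} to recall that $\varphi^j$ is the (unique) solution of \eqref{eq:zeromassvarphiequation} with right-hand side $u = \chi_{\{j\}}$. Then, applying the spectral representation in \eqref{eq:varphispectral} (equivalently Lemma~\ref{lem:varphi} modulo the zero-mass normalization, which kills the $\mathcal{A}(\varphi)$ term), I can write
\[
\varphi^j = \sum_{m=1}^{n-1} \lambda_m^{-1} \langle \chi_{\{j\}}, \phi^m\rangle_{\mathcal{V}}\, \phi^m.
\]
A one-line computation using the definition of the $\mathcal{V}$-inner product gives $\langle \chi_{\{j\}}, \phi^m\rangle_{\mathcal{V}} = \sum_{i\in V} d_i^r \left(\chi_{\{j\}}\right)_i \phi^m_i = d_j^r \phi^m_j$, and substituting this in and reading off the $i$th component yields exactly \eqref{eq:spectralvarphij}.

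For the sign equivalence, I use \eqref{eq:defvarphij} in the form $f^j = -\frac{\vol{V}}{d_j^r}\varphi^j$. Since $\vol{V}/d_j^r > 0$, the inequality $f^j_i \geq 0$ is equivalent to $\varphi^j_i \leq 0$, which by the expansion just derived is equivalent to $\sum_{m=1}^{n-1} \lambda_m^{-1} d_j^r \phi^m_i \phi^m_j \leq 0$. There is no real obstacle here; the only thing worth being careful about is making sure the zero-mass condition in \eqref{eq:zeromassvarphiequation} is what lets us drop the $m=0$ term (equivalently, that $\mathcal{A}(\varphi^j) = 0$ so that the expansion starts at $m=1$), which is immediate since $\lambda_0 = 0$ annihilates $\phi^0$ in any solution formula and $\mathcal{M}(\varphi^j) = 0$ by Lemma~\ref{lem:varphijintroduced}.
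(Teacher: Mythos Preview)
Your proof is correct and follows essentially the same route as the paper's own argument: invoke Lemma~\ref{lem:varphijintroduced} to identify $\varphi^j$ as the zero-mass solution of \eqref{eq:zeromassvarphiequation} for $\chi_{\{j\}}$, apply the spectral expansion \eqref{eq:varphispectral}, compute $\langle \chi_{\{j\}},\phi^m\rangle_{\mathcal{V}}=d_j^r\phi^m_j$, and then read off the sign equivalence from the positive proportionality between $f^j$ and $-\varphi^j$ in \eqref{eq:defvarphij}.
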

\begin{proof}
Let $j\in V$. By Lemma~\ref{lem:varphijintroduced} we know that $\varphi^j$ solves \eqref{eq:zeromassvarphiequation} for $\chi_{\{j\}}$. Then by \eqref{eq:varphispectral} we can write, for all $i\in V$,
\[
\varphi^j_i =  \sum_{m=1}^{n-1}\lambda_m^{-1} \langle \chi_{\{j\}}, \phi^m\rangle_{\mathcal{V}} \ \phi^m = \sum_{m=1}^{n-1} \lambda_m^{-1} d_j^r \phi_i^m \phi_j^m,
\]
where we used that,
\begin{equation}\label{eq:innerwithchi}
\langle \chi_{\{j\}}, \phi^m\rangle_{\mathcal{V}} = \sum_{k\in V} d_k^r \delta_{jk} \phi^m_k = d_j^r \phi_j^m,
\end{equation}
where $\delta_{jk}$ is the Kronecker delta.

The final statement follows from the definition of $\varphi^j$ in Lemma~\ref{lem:varphijintroduced}, which shows that, for all $i\in V$, $f_i^j \geq 0$ if and only $\varphi^j_i \leq 0$.
\end{proof}

\begin{corol}\label{cor:symmetries}
Let $G=(V,E,\omega)\in \mathcal{G}$. For all $j\in V$, let $\varphi^j$ be as in \eqref{eq:defvarphij}, let $f^j$ be as in \eqref{eq:fij}, and let $\nu^{V\setminus\{j\}}$ be the equilibrium measure for $V\setminus\{j\}$ as in \eqref{eq:equilibrium}. If $r=0$, then, for all $i,j\in V$, $\varphi^j_i = \varphi^i_j$, $f^j_i = f^i_j$, and
\[
\nu^{V\setminus\{j\}}_i - \mathcal{A}\left(\nu^{V\setminus\{j\}}\right)_i = \nu^{V\setminus\{i\}}_j- \mathcal{A}\left(\nu^{V\setminus\{i\}}\right)_j.
\]
\end{corol}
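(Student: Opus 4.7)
The plan is to derive all three symmetries in one shot from the spectral representation of $\varphi^j$ in Corollary~\ref{cor:spectralvarphij}, exploiting the fact that when $r=0$ the weighting factor $d_j^r$ equals $1$ and drops out of the formula. The observation that does all the work is that the expression $\sum_{m=1}^{n-1}\lambda_m^{-1}\phi_i^m\phi_j^m$ is manifestly symmetric under interchange of $i$ and $j$; everything else is bookkeeping.

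First I would apply Corollary~\ref{cor:spectralvarphij} with $r=0$, giving
\[
\varphi^j_i \;=\; \sum_{m=1}^{n-1}\lambda_m^{-1}\,\phi_i^m\phi_j^m \;=\; \varphi^i_j,
\]
which establishes the first identity. For the second identity, I would invert the defining relation \eqref{eq:defvarphij} to obtain $f^j = -\frac{\vol V}{d_j^r}\varphi^j$; with $r=0$ this reduces to $f^j_i = -\vol V\, \varphi^j_i$, and the symmetry just established for $\varphi$ transfers directly to give $f^j_i = f^i_j$. The third identity is then simply a restatement of $f^j_i = f^i_j$ via the definition $f^j_i = \nu^{V\setminus\{j\}}_i - \mathcal{A}(\nu^{V\setminus\{j\}})_i$ from \eqref{eq:fij}, since $\mathcal{A}(\nu^{V\setminus\{j\}})_i$ is the constant $\mathcal{M}(\nu^{V\setminus\{j\}})/\vol V$ independent of $i$.

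There is no real obstacle here: the only subtlety worth flagging is that the asymmetric-looking prefactor $d_j^r$ in the spectral formula \eqref{eq:spectralvarphij} is precisely what breaks the $i \leftrightarrow j$ symmetry for general $r\in[0,1]$, so the conclusion is genuinely special to the case $r=0$ (equivalently, the combinatorial Laplacian). One could alternatively argue more directly by writing $\varphi^j_i = d_j^r G_{ij}$ for a suitable Green's function and appealing to the symmetry of Green's functions from Lemma~\ref{lem:symmetry}, but the spectral route is cleaner because $d_j^r$ appears as an explicit multiplicative factor that collapses when $r=0$.
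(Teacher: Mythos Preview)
Your proof is correct and follows essentially the same route as the paper's own argument, which simply cites \eqref{eq:spectralvarphij}, \eqref{eq:defvarphij}, and \eqref{eq:fij} without spelling out the steps. Your exposition makes explicit what the paper leaves implicit, and your remark that the factor $d_j^r$ in \eqref{eq:spectralvarphij} is exactly what obstructs the symmetry for general $r$ is a useful observation.
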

\begin{proof}
This follows immediately from \eqref{eq:spectralvarphij}, \eqref{eq:defvarphij}, and \eqref{eq:fij}.
\end{proof}

\begin{remark}
The result of Corollary~\ref{cor:spectralvarphij} is not only an ingredient in the proof of Theorem~\ref{thm:newgraph} below, but can also be useful when testing numerically whether or not a graph is in $\mathcal{C}$ or in $\mathcal{C}^0$. 
\end{remark}

\begin{lemma}\label{lem:newedgeweights}
Let $\gamma \geq 0$ and let $G=(V,E,\omega) \in \mathcal{C}_\gamma$. Let $L$ be as defined in \eqref{eq:defofL}. Let $\lambda_m$ and $\phi^m$ be the eigenvalues and corresponding eigenfunctions of the graph Laplacian $\Delta$ (with parameter $r$), as in \eqref{eq:eigenvalues}, \eqref{eq:eigenfunctions} and define 
\begin{equation}\label{eq:tildeomega}
\tilde \omega_{ij} := \begin{cases}
-d_j^r\sum_{m=1}^{n-1} \Lambda_m \phi_i^m \phi_j^m, &\text{if } i\neq j,\\
0, & \text{if } i=j,
\end{cases}
\end{equation}
where $\Lambda_m$ is defined in \eqref{eq:eigvalsofL}. Then, for all $i,j\in V$, $\tilde \omega_{ij} \geq 0$. Moreover, if $\omega_{ij}>0$, then $\tilde \omega_{ij} > 0$. If, additionally, $G\in \mathcal{C}$, then $\tilde \omega_{ij} \geq d_i^{-r}\omega_{ij}$.
\end{lemma}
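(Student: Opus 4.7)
The plan is to derive a closed-form expression for $\tilde \omega_{ij}$ in terms of the edge weight $\omega_{ij}$ and the function $f^j$ from \eqref{eq:fij}, and then read off each of the three claims from the definitions of $\mathcal{C}$, $\mathcal{C}^0$, and $\mathcal{C}_\gamma$ in Definition~\ref{def:graphclasses}. Since the case $i=j$ is trivial ($\tilde\omega_{ii}=0$), all work is for $i\neq j$.

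First I would split the sum in \eqref{eq:tildeomega} according to $\Lambda_m = \lambda_m + \gamma/\lambda_m$:
\[
-d_j^r\sum_{m=1}^{n-1}\Lambda_m \phi_i^m\phi_j^m \;=\; -d_j^r\sum_{m=1}^{n-1}\lambda_m \phi_i^m\phi_j^m \;-\; \gamma\, d_j^r\sum_{m=1}^{n-1}\lambda_m^{-1}\phi_i^m\phi_j^m.
\]
For the second sum I invoke Corollary~\ref{cor:spectralvarphij}, which gives $\sum_{m=1}^{n-1}\lambda_m^{-1} d_j^r\phi_i^m\phi_j^m = \varphi^j_i = -\frac{d_j^r}{\vol{V}}f^j_i$, by the definition \eqref{eq:defvarphij} of $\varphi^j$. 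For the first sum I would use the eigenexpansion of $\chi_{\{j\}}$: by \eqref{eq:innerwithchi}, $\langle \chi_{\{j\}},\phi^m\rangle_{\mathcal{V}} = d_j^r\phi_j^m$, so $\Delta\chi_{\{j\}} = \sum_{m=1}^{n-1}\lambda_m d_j^r\phi_j^m\phi^m$ (the $m=0$ term drops because $\lambda_0=0$). Evaluating at $i\neq j$ and using the explicit formula for the Laplacian yields
\[
d_j^r\sum_{m=1}^{n-1}\lambda_m \phi_i^m\phi_j^m \;=\; (\Delta\chi_{\{j\}})_i \;=\; -d_i^{-r}\omega_{ij}.
\]
Combining these two identities gives the key closed form, for $i\neq j$,
\begin{equation}\label{eq:tildeomegaclosed}
\tilde\omega_{ij} \;=\; d_i^{-r}\omega_{ij} + \gamma\,\frac{d_j^r}{\vol{V}}\,f^j_i.
\end{equation}

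From \eqref{eq:tildeomegaclosed} the three assertions follow by direct case analysis. For nonnegativity, if $\gamma=0$ then $\mathcal{C}_0=\mathcal{G}$ and $\tilde\omega_{ij}=d_i^{-r}\omega_{ij}\geq 0$; if $\gamma>0$ and $\omega_{ij}>0$, the defining condition of $\mathcal{C}_\gamma$ gives $\tilde\omega_{ij}>0$ (which simultaneously settles the second claim in the case $\gamma>0$); if $\gamma>0$ and $\omega_{ij}=0$, the defining condition of $\mathcal{C}^0$ forces $f^j_i\geq 0$, so $\tilde\omega_{ij}=\gamma\frac{d_j^r}{\vol{V}}f^j_i\geq 0$. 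The second claim for $\gamma=0$ is immediate from \eqref{eq:tildeomegaclosed}. Finally, if $G\in\mathcal{C}$ then $f^j_i\geq 0$ for every $i\neq j$, so \eqref{eq:tildeomegaclosed} gives $\tilde\omega_{ij}\geq d_i^{-r}\omega_{ij}$.

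The only nontrivial step is the derivation of \eqref{eq:tildeomegaclosed}; I do not anticipate a real obstacle, since both pieces are already supplied by the spectral identities built up earlier (Corollary~\ref{cor:spectralvarphij} together with the eigenexpansion of $\chi_{\{j\}}$), and everything else is bookkeeping against the definitions of $\mathcal{C}$, $\mathcal{C}^0$, and $\mathcal{C}_\gamma$.
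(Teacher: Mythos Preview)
Your proposal is correct and follows essentially the same approach as the paper: both derive the key identity $\tilde\omega_{ij} = d_i^{-r}\omega_{ij} + \gamma\frac{d_j^r}{\vol{V}}f^j_i$ for $i\neq j$ (the paper obtains it by writing $\tilde\omega_{ij} = -(L\chi_{\{j\}})_i = -(\Delta\chi_{\{j\}})_i - \gamma\varphi^j_i$, while you split $\Lambda_m = \lambda_m + \gamma\lambda_m^{-1}$ first, which amounts to the same computation), and then read off the three claims from the definitions of $\mathcal{C}_\gamma$, $\mathcal{C}^0$, and $\mathcal{C}$. Your case analysis is in fact slightly more explicit than the paper's.
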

\begin{proof}
Expanding $\chi_{\{j\}}$ as in \eqref{eq:expansion} and using \eqref{eq:innerwithchi}, we find, for $i,j\in V$,
\begin{equation}\label{eq:Lchi}
(L\chi_{\{j\}})_i = \sum_{m=1}^{n-1} \langle \chi_{\{j\}}, \phi^m\rangle_{\mathcal{V}} \left(L \phi^m\right)_i = d_j^r \sum_{m=1}^{n-1}\Lambda_m \phi^m_j \phi^m_i.
\end{equation}
Note in particular that, if 
$i\neq j$, then $\tilde \omega_{ij} = -(L\chi_{\{j\}})_i$.

For $i,j \in V$ we also compute
\begin{equation}\label{eq:Deltadelta}
(\Delta \chi_{\{j\}})_i = d_i^{-r} \sum_{k\in V} \omega_{ik} (\delta_{ji} - \delta_{jk}) = d_i^{-r} (d_i \delta_{ji} - \omega_{ij}),
\end{equation}
hence, if 
$i\neq j$, then $\omega_{ij} = - d_i^r \left(\Delta \chi_{\{j\}}\right)_i$. Combining the above with \eqref{eq:defvarphij}, we find for  
$i\neq j$,
\begin{equation}\label{eq:tildeomegaisomegapluspos}
\tilde \omega_{ij} = -(L\chi_{\{j\}})_i = -\left(\Delta \chi_{\{j\}}\right)_i - \gamma \varphi^j_i = d_i^{-r} \omega_{ij}  + \gamma\frac{d_j^r}{\vol{V}} f^j_i \geq 0,
\end{equation}
where the inequality follows since $G\in \mathcal{C}_\gamma$ (note that for $\gamma=0$ the inequality follows from the nonnegativity of $\omega$). Moreover, if $\omega_{ij}>0$, then, by definition of $\mathcal{C}_\gamma$, the inequality is strict, and thus $\tilde \omega_{ij} > 0$\footnote{This property is used to prove connectedness of $\tilde G$ in Theorem~\ref{thm:newgraph}, which is the reason why we did not define $\mathcal{C}_\gamma$ to simply be $\left\{G\in \mathcal{G}: \forall j\in V\,\, \forall i\in V\setminus\{j\}\,\,  d_i^{-r} \omega_{ij} +  \gamma \frac{d_j^r}{\vol V} f^j_i \geq 0\right\}$.}.

If additionally $G\in \mathcal{C}$, then, for $i\neq j$, $f^j_i \geq 0$ and thus by \eqref{eq:tildeomegaisomegapluspos}, $\tilde \omega_{ij} \geq \omega_{ij}$.
\end{proof}

Lemma~\ref{lem:newedgeweights} suggests that, given a graph $G\in \mathcal{C}_\gamma$ with edge weights $\omega$, we can construct a new graph $\tilde G$ with edge weights $\tilde \omega$ as in \eqref{eq:tildeomega}, that are also nonnegative. The next theorem shows that, in fact, if $r=0$, then this new graph is in $\mathcal{G}$ and the graph Laplacian $\tilde \Delta$ on $\tilde G$ is related to $L$.

\begin{theorem}\label{thm:newgraph}
Let $\gamma \geq 0$ and let $G=(V,E,\omega) \in \mathcal{C}_\gamma$. Let $L$ be as defined in \eqref{eq:defofL}. Let $\lambda_m$ and $\phi^m$ be the eigenvalues and corresponding eigenfunctions of the graph Laplacian $\Delta$ (with parameter $r$), as in \eqref{eq:eigenvalues}, \eqref{eq:eigenfunctions}. Assume $r=0$ and let $\tilde \omega$ be as in \eqref{eq:tildeomega}. Let $\tilde E \subset V^2$ contain an undirected edge $(i,j)$ between $i\in V$ and $j\in V$ if and only if $\tilde \omega_{ij} >0$. Then $\tilde G=(V, \tilde E, \tilde \omega) \in \mathcal{G}$. Let $\tilde \Delta$ be the graph Laplacian (with parameter $\tilde r$) on $\tilde G$. If $\tilde r=0$, then $\tilde \Delta = L$.
\end{theorem}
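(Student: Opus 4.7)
The plan has two main parts: first verify the five defining properties of $\mathcal{G}$ for $\tilde G$, then show the operator identity $\tilde\Delta=L$ by acting on a convenient basis.

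\textbf{Verifying $\tilde G \in \mathcal{G}$.} The node set $V$ is unchanged, so $\tilde G$ is finite with $n\geq 2$ nodes. The graph has no self-loops by the $i=j$ case in \eqref{eq:tildeomega}. Symmetry $\tilde\omega_{ij}=\tilde\omega_{ji}$ is where the hypothesis $r=0$ first enters: then the prefactor $d_j^r$ in \eqref{eq:tildeomega} equals $1$, so $\tilde\omega_{ij}=-\sum_{m=1}^{n-1}\Lambda_m\phi_i^m\phi_j^m$ is manifestly symmetric. Nonnegativity of the weights and the definition of $\tilde E$ are taken care of by Lemma~\ref{lem:newedgeweights}, which also gives the strict inequality $\tilde\omega_{ij}>0$ whenever $\omega_{ij}>0$, hence $E\subset\tilde E$. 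Since $G$ is connected, so is $\tilde G$. Thus $\tilde G\in\mathcal{G}$.

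\textbf{Proving $\tilde\Delta=L$.} Both $\tilde\Delta$ and $L$ are linear operators on $\mathcal{V}$, so it suffices to check $\tilde\Delta\chi_{\{j\}}=L\chi_{\{j\}}$ for every $j\in V$. The formula \eqref{eq:Lchi} gives, with $r=0$,
\[
(L\chi_{\{j\}})_i=\sum_{m=1}^{n-1}\Lambda_m\phi_j^m\phi_i^m,
\]
which for $i\neq j$ is precisely $-\tilde\omega_{ij}$ by \eqref{eq:tildeomega}. On the other hand, the analogue of \eqref{eq:Deltadelta} for the new graph, with $\tilde r=0$, yields $(\tilde\Delta\chi_{\{j\}})_i=\tilde d_i\delta_{ji}-\tilde\omega_{ij}$. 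So the off-diagonal entries ($i\neq j$) already match.

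\textbf{The diagonal entries.} The remaining equality $(\tilde\Delta\chi_{\{j\}})_j=(L\chi_{\{j\}})_j$ amounts to showing
\[
\tilde d_j=\sum_{m=1}^{n-1}\Lambda_m(\phi_j^m)^2.
\]
This is the one spot where a small identity is needed: since $r=0$, the distinguished eigenfunction in \eqref{eq:zerotheigvec} is $\phi^0=n^{-1/2}\chi_V$, and the $\mathcal{V}$-orthogonality $\langle\phi^m,\phi^0\rangle_{\mathcal{V}}=0$ for $m\geq 1$ becomes $\sum_{i\in V}\phi_i^m=0$, hence $\sum_{i\neq j}\phi_i^m=-\phi_j^m$. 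Summing $\tilde\omega_{ij}$ over $i\neq j$ and swapping the order of summation gives
\[
\tilde d_j=-\sum_{m=1}^{n-1}\Lambda_m\phi_j^m\sum_{i\neq j}\phi_i^m=\sum_{m=1}^{n-1}\Lambda_m(\phi_j^m)^2,
\]
as required. This completes the identification $\tilde\Delta=L$.

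\textbf{Where the work really lies.} None of the individual steps is deep: symmetry of $\tilde\omega$ and the basis-expansion argument are essentially bookkeeping. The substantive content was already invested in Lemma~\ref{lem:newedgeweights} (positivity of $\tilde\omega_{ij}$, which required the graph class $\mathcal{C}_\gamma$) and in the spectral identity \eqref{eq:Lchi}. The remaining point of friction is the hypothesis $r=0$: it is used both to obtain symmetric weights in \eqref{eq:tildeomega} and to make $\phi^0$ proportional to $\chi_V$ so that $\sum_i\phi_i^m=0$ for $m\geq 1$; without it, neither step of the proof would go through cleanly, which is why the theorem imposes $r=\tilde r=0$.
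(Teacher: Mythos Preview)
Your proof is correct and follows essentially the same route as the paper's: verify membership in $\mathcal{G}$ via symmetry (from $r=0$), nonnegativity and strict positivity on $E$ (from Lemma~\ref{lem:newedgeweights}), and connectedness (from $E\subset\tilde E$); then check $\tilde\Delta=L$ on the basis $\{\chi_{\{j\}}\}$, handling the off-diagonal entries directly from \eqref{eq:tildeomega} and the diagonal entries via the identity $\sum_{i\neq j}\phi_i^m=-\phi_j^m$. The only cosmetic difference is that the paper carries general $r,\tilde r$ through the computation and sets them to zero at the end, while you specialise immediately; one minor imprecision in your closing remark is that $\phi^0$ is always proportional to $\chi_V$ regardless of $r$---what $r=0$ actually buys is that the orthogonality relation $\langle\phi^m,\chi_V\rangle_{\mathcal{V}}=0$ becomes the unweighted sum $\sum_i\phi_i^m=0$.
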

\begin{proof}
In the following it is instructive to keep $r, \tilde r\in [0,1]$ as unspecified parameters in the proof and point out explicitly where the assumptions $r=0$ and $\tilde r = 0$ are used.

From the definition of $\tilde \omega_{ij}$ in \eqref{eq:tildeomega} it follows directly that $\tilde G$ has no self-loops ($\tilde \omega_{ii}=0$). Moreover, using $r=0$ in \eqref{eq:tildeomega}, we see that $\tilde \omega_{ij} = \tilde \omega_{ji}$ and thus $\tilde G$ is undirected. Furthermore, by 
Lemma~\ref{lem:newedgeweights} we know that, for all $i,j\in V$, if $\omega_{ij} > 0$, then $\tilde \omega_{ij}>0$. Thus $\tilde G$ is connected, because $G$ is connected. Hence $\tilde G\in \mathcal{G}$.

Repeating the computation from \eqref{eq:Deltadelta} for $\tilde \Delta$ instead of $\Delta$, we find, for $i,j\in V$,
\begin{equation}\label{eq:tildeDeltachi}
\left(\tilde \Delta \chi_{\{j\}}\right)_i = \tilde d_i^{-\tilde r} \left(\tilde d_i \delta_{ji} - \tilde \omega_{ij}\right),
\end{equation}
where $\tilde d_i:=\sum_{j\in V} \tilde \omega_{ij}$.
Combining this with \eqref{eq:Lchi}, we find that, if $j\in V$ and $i\in V\setminus\{j\}$, then
\begin{equation}\label{eq:DeltaLineqj}
\left(\tilde \Delta \chi_{\{j\}}\right)_i = - \tilde d_i^{-\tilde r} \tilde \omega_{ij} =   \tilde d_i^{-\tilde r} d^r_j \sum_{m=1}^{n-1} \Lambda_m \phi^m_j \phi^m_i =  \tilde d_i^{-\tilde r}\left(L\chi_{\{j\}}\right)_i.
\end{equation}
Since we have
$\displaystyle
0=\langle \phi^m, \chi_V\rangle_{\mathcal{V}} = \sum_{j\in V} d_j^r \phi^m_j,
$
it follows that, for all $i\in V$, $d_i^r \phi^m_i = -\sum_{j\in V\setminus\{i\}} d_j^r \phi^m_j$. It follows that, for $i\in V$,
\[
\tilde d_i = \sum_{j\in V} \tilde \omega_{ij} = \sum_{j\in V\setminus\{i\}} \tilde \omega_{ij} =  -\sum_{m=1}^{n-1} \Lambda_m \sum_{j\in V\setminus\{i\}}d_j^r \phi^m_j \phi^m_i = d_i^r \sum_{m=1}^{n-1} \Lambda_m \left(\phi^m_i\right)^2.
\]
By \eqref{eq:Lchi} and \eqref{eq:tildeDeltachi} with $i=j$, we then have
\begin{equation}\label{eq:DeltaLi=j}
\left(\tilde \Delta \chi_{\{i\}}\right)_i = \tilde d_i^{1-\tilde r} = \left(d_i^r \sum_{m=1}^{n-1} \Lambda_m \left(\phi^m_i\right)^2\right)^{1-\tilde r} = \left((L\chi_{\{i\}})_i\right)^{1-\tilde r}.
\end{equation}
Now we use $\tilde r=0$ in \eqref{eq:DeltaLineqj} and \eqref{eq:DeltaLi=j} to deduce that, for all $j\in V$, $\tilde \Delta \chi_{\{j\}}= L \chi_{\{j\}}$. Since $\{\chi_{\{i\}} \in \mathcal{V}: i\in V\}$ is a basis for the vector space $\mathcal{V}$, we conclude $\tilde\Delta  = L$.
\end{proof}

\begin{remark}\label{rem:therolesofrandtilder}
In the proof of Theorem~\ref{thm:newgraph} we can trace the roles that $r$ and $\tilde r$ play. We only used the assumption $r=0$ in order te deduce that $\tilde G$ is undirected. The assumption $\tilde r = 0$ is necessary to obtain equality between $\tilde \Delta$ and $L$ in equations \eqref{eq:DeltaLineqj} and \eqref{eq:DeltaLi=j}. 

These assumptions on $r$ and $\tilde r$ have a futher interesting consequence. Since the graphs $G$ and $\tilde G$ have the same node set, both graphs also have the same associated set of node functions $\mathcal{V}$. Moreover, since $r=\tilde r = 0$, the $\mathcal{V}$-inner product is the same for both graphs. Hence we can view $\mathcal{V}$ corresponding to $G$ as the same inner product space as $\mathcal{V}$ corresponding to $\tilde G$. In this setting the operator equality $\tilde \Delta = L$ from Theorem~\ref{thm:newgraph} holds not only between operators on the vector space $\mathcal{V}$, but also between operators on the inner product space $\mathcal{V}$.
\end{remark}

\begin{lemma}\label{lem:F0onnewgraph}
Let $\gamma \geq 0$, $q=1$, and let $G=(V,E,\omega) \in \mathcal{C}_\gamma$. Assume $r=0$. Let $\tilde \omega$ be as in \eqref{eq:tildeomega} and $\tilde E$ as in Theorem~\ref{eq:tildeDeltachi}. Let $\tilde r$ be the $r$-parameter corresponding to the graph $\tilde G=(V,\tilde E, \tilde \omega)$. Suppose $S\subset V$, $F_0$ is as in \eqref{eq:limitOK}, for all $i\in V$ $\tilde d_i := \sum_{j\in V} \tilde\omega_{ij}$, and $\tilde \kappa_S$ is the graph curvature of $S$ as in Definition~\ref{def:graphcurvature} corresponding to $\tilde \omega$. Then $F_0(\chi_S) = \sum_{i,j\in S} \tilde\omega_{ij}$.  Moreover, if $\tilde r = 0$, $F_0(\chi_S) = \sum_{i\in S} \left(\tilde d_i - \left(\tilde \kappa_S\right)_i\right)$.
\end{lemma}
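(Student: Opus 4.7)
The plan is to use the spectral representation of $F_0$ together with the explicit formula \eqref{eq:tildeomega} for $\tilde\omega_{ij}$ and identify the resulting expression in terms of the data on $\tilde G$.

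First, I would invoke Corollary~\ref{cor:OKexpressions} (valid since $q=1$) to write
\[
F_0(\chi_S) \;=\; \sum_{m=1}^{n-1}\Lambda_m\,\langle\chi_S,\phi^m\rangle_{\mathcal{V}}^2.
\]
Since $r=0$, one has $\langle\chi_S,\phi^m\rangle_{\mathcal{V}} = \sum_{i\in S}\phi^m_i$, so expanding the square and swapping the order of summation gives
\[
F_0(\chi_S) \;=\; \sum_{i,j\in S}\,\sum_{m=1}^{n-1}\Lambda_m\phi^m_i\phi^m_j.
\]

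Next, I would split this double sum into its diagonal ($i=j$) and off-diagonal ($i\neq j$) contributions. For off-diagonal pairs the inner spectral sum is, by definition \eqref{eq:tildeomega} and $r=0$, simply $-\tilde\omega_{ij}$. For the diagonal terms I would use the identity
\[
\sum_{m=1}^{n-1}\Lambda_m(\phi^m_i)^2 \;=\; \tilde d_i,
\]
which is exactly the computation already carried out in the proof of Theorem~\ref{thm:newgraph}: from $\phi^0=(\vol V)^{-1/2}\chi_V$ and $\langle\phi^0,\phi^m\rangle_{\mathcal{V}}=0$ for $m\geq 1$ (with $r=0$), one gets $\sum_{j\neq i}\phi^m_j = -\phi^m_i$, and plugging this into the definition of $\tilde d_i=\sum_{j\neq i}\tilde\omega_{ij}$ yields the identity. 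Combining the two contributions (and remembering $\tilde\omega_{ii}=0$) would produce the first claimed identity.

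For the second equality I would use the assumption $\tilde r=0$ to simplify the graph curvature on $\tilde G$: for $i\in S$, $(\tilde\kappa_S)_i = \sum_{j\in V\setminus S}\tilde\omega_{ij}$, whence $\tilde d_i - (\tilde\kappa_S)_i = \sum_{j\in S}\tilde\omega_{ij}$, and summing over $i\in S$ converts the expression from the first part into the second.

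The main potential obstacle is the diagonal identity $\sum_{m\geq 1}\Lambda_m(\phi^m_i)^2 = \tilde d_i$, which hinges on the specific form of $\phi^0$ and the orthonormality relations; everything else is bookkeeping. A cleaner but more restrictive alternative route, available when $\tilde r=0$, is to appeal to Theorem~\ref{thm:newgraph} directly and write
\[
F_0(\chi_S) \;=\; \langle\chi_S,L\chi_S\rangle_{\mathcal{V}} \;=\; \langle\chi_S,\tilde\Delta\chi_S\rangle_{\mathcal{V}} \;=\; \tfrac12\sum_{i,j\in V}\tilde\omega_{ij}\bigl((\chi_S)_i-(\chi_S)_j\bigr)^2,
\]
from which both identities fall out. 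Since the first statement is asserted under only $r=0$ (not $\tilde r=0$), I would carry out the spectral decomposition explicitly rather than relying on the operator identity $\tilde\Delta=L$.
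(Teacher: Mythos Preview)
Your approach mirrors the paper's: both start from the spectral representation $F_0(\chi_S)=\sum_{m\geq 1}\Lambda_m\langle\chi_S,\phi^m\rangle_{\mathcal{V}}^2$ from Corollary~\ref{cor:OKexpressions}, expand the square, and try to identify the result with $\tilde\omega$. The paper does this in a single line, writing $\sum_{m\geq 1}\Lambda_m d_i^r d_j^r\phi^m_i\phi^m_j=\tilde\omega_{ij}$ directly; you are more careful and split into diagonal and off-diagonal parts.

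However, your own sign bookkeeping exposes a genuine problem. You correctly observe that for $i\neq j$ (and $r=0$) the definition \eqref{eq:tildeomega} gives $\sum_{m\geq 1}\Lambda_m\phi^m_i\phi^m_j=-\tilde\omega_{ij}$, and that the diagonal contribution is $\tilde d_i$. Combining these yields
\[
F_0(\chi_S)=\sum_{i\in S}\tilde d_i-\sum_{i,j\in S}\tilde\omega_{ij}=\sum_{i\in S}\sum_{j\in V\setminus S}\tilde\omega_{ij},
\]
which is \emph{not} the claimed $\sum_{i,j\in S}\tilde\omega_{ij}$. (Sanity check: take $\gamma=0$, so $\tilde\omega=\omega$ and $F_0=\TV$; then $\TV(\chi_S)=\sum_{i\in S,\,j\notin S}\omega_{ij}$, not $\sum_{i,j\in S}\omega_{ij}$.) So the ``combining'' step you describe does not in fact produce the first identity as stated. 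The paper's one-line argument makes the matching error: it silently drops the minus sign from \eqref{eq:tildeomega} and ignores the diagonal term. In other words, the lemma as printed appears to be misstated; the formula your computation actually produces is $F_0(\chi_S)=\sum_{i\in S,\,j\in V\setminus S}\tilde\omega_{ij}$, equivalently $\sum_{i\in S}(\tilde\kappa_S)_i$ when $\tilde r=0$, which is exactly $\TV$ on $\tilde G$ and agrees with your alternative route via $\langle\chi_S,\tilde\Delta\chi_S\rangle_{\mathcal{V}}$.
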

\begin{proof}
From Corollary~\ref{cor:OKexpressions} and \eqref{eq:eigvalsofL} we find
\[
F_0(\chi_S) = \sum_{m=1}^{n-1} \Lambda_m \langle \chi_S, \phi^m\rangle_{\mathcal{V}}^2 = \sum_{m=1}^{n-1} \Lambda_m \sum_{i,j\in V} \left(\chi_S\right)i \left(\chi_S\right)_j d_i^r d_j^r \phi_i^m \phi_j^m = \sum_{i,j\in V}\left(\chi_S\right)_i \left(\chi_S\right)_j \tilde\omega_{ij},
\]
where we used that $r=0$. Moreover, if $\tilde r=0$,
\[
\sum_{i,j\in S}\tilde \omega_{ij} = \sum_{i\in S} \left(\sum_{j\in V} \tilde \omega_{ij} - \sum_{j\in V\setminus S} \tilde \omega_{ij}\right) = \sum_{i\in S} \left(\tilde d_i - \left(\tilde \kappa_S\right)_i\right).
\]
\end{proof}

\begin{lemma}\label{lem:increaseinweights}
Let $\gamma\geq 0$ and $G=(V,E,\omega) \in \mathcal{C}_\gamma$. Let $L$ be as defined in \eqref{eq:defofL} for $G$. Let $\lambda_m$ and $\phi^m$ be the eigenvalues and corresponding eigenfunctions of the graph Laplacian $\Delta$, as in \eqref{eq:eigenvalues}, \eqref{eq:eigenfunctions}, with parameter $r=0$. Let $\tilde \omega$ be as in \eqref{eq:tildeomega}. Then, for all $i, j\in V$ for which $i\neq j$, we have
\[
\gamma\left(\frac12 \sum_{m=1}^{n-1} \frac{\left(\phi_i^m - \phi_j^m\right)^2}{\lambda_m} - \frac{1-n^{-1}}{\lambda_1}\right) \leq \tilde \omega_{ij} - \omega_{ij}  \leq \gamma\left(\frac12 \sum_{m=1}^{n-1} \frac{\left(\phi_i^m - \phi_j^m\right)^2}{\lambda_m} - \frac{1-n^{-1}}{\lambda_{n-1}}\right).
\]
\end{lemma}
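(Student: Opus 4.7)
The plan is to first rewrite $\tilde\omega_{ij}-\omega_{ij}$ in two equivalent ways and then split the resulting spectral sum via an elementary identity. Since $G\in\mathcal{C}_\gamma$ and $r=0$, equation \eqref{eq:tildeomegaisomegapluspos} reduces to $\tilde\omega_{ij}=\omega_{ij}+\gamma\frac{1}{n}f^j_i$, so
\[
\tilde\omega_{ij}-\omega_{ij}=\gamma\tfrac{1}{n}f^j_i.
\]
On the other hand, the definition \eqref{eq:defvarphij} of $\varphi^j$ together with the spectral expansion in Corollary~\ref{cor:spectralvarphij} gives (for $r=0$) $f^j_i=-n\sum_{m=1}^{n-1}\lambda_m^{-1}\phi_i^m\phi_j^m$. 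Combining these yields the clean identity
\[
\tilde\omega_{ij}-\omega_{ij}=-\gamma\sum_{m=1}^{n-1}\lambda_m^{-1}\phi_i^m\phi_j^m.
\]

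Next, I would invoke the elementary polarization identity $-\phi_i^m\phi_j^m=\tfrac12(\phi_i^m-\phi_j^m)^2-\tfrac12(\phi_i^m)^2-\tfrac12(\phi_j^m)^2$ to rewrite
\[
\tilde\omega_{ij}-\omega_{ij}=\gamma\Biggl(\tfrac12\sum_{m=1}^{n-1}\frac{(\phi_i^m-\phi_j^m)^2}{\lambda_m}-\sum_{m=1}^{n-1}\frac{(\phi_i^m)^2+(\phi_j^m)^2}{2\lambda_m}\Biggr).
\]
The claimed inequalities are therefore equivalent to the two-sided bound
\[
\frac{1-n^{-1}}{\lambda_{n-1}}\;\leq\;\sum_{m=1}^{n-1}\frac{(\phi_i^m)^2+(\phi_j^m)^2}{2\lambda_m}\;\leq\;\frac{1-n^{-1}}{\lambda_1}.
\]

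To establish this, for each $k\in V$ I would apply Lemma~\ref{lem:masssquared} to $u=\chi_{\{k\}}$. Since $r=0$ we have $\langle\chi_{\{k\}},\phi^m\rangle_{\mathcal{V}}=\phi_k^m$ and $\mathcal{M}(\chi_{\{k\}}^2)=1$, so $\sum_{m=0}^{n-1}(\phi_k^m)^2=1$. Because $\phi^0=n^{-1/2}\chi_V$ contributes $(\phi_k^0)^2=1/n$, this yields $\sum_{m=1}^{n-1}(\phi_k^m)^2=1-n^{-1}$ for every $k\in V$. The ordering $0<\lambda_1\leq\lambda_m\leq\lambda_{n-1}$ gives $\lambda_{n-1}^{-1}\leq\lambda_m^{-1}\leq\lambda_1^{-1}$ for all $m\geq 1$, so
\[
\frac{1-n^{-1}}{\lambda_{n-1}}\;\leq\;\sum_{m=1}^{n-1}\frac{(\phi_k^m)^2}{\lambda_m}\;\leq\;\frac{1-n^{-1}}{\lambda_1}.
\]
Averaging this bound over $k\in\{i,j\}$ produces exactly the desired inequality for the self-interaction sum, which upon substitution into the identity above completes the proof.

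There is no real obstacle here: the argument is a sequence of bookkeeping steps once one spots (i) that the spectral expression for $f^j_i$ from Corollary~\ref{cor:spectralvarphij} combines with \eqref{eq:tildeomegaisomegapluspos} to eliminate the $\omega_{ij}$ term, and (ii) that the polarization identity splits off precisely the $(\phi_i^m-\phi_j^m)^2$ piece appearing in the statement, leaving only ``diagonal'' terms that are controlled by the orthonormality sum $\sum_{m=1}^{n-1}(\phi_k^m)^2=1-n^{-1}$.
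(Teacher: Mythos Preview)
Your proof is correct and follows essentially the same route as the paper. The only cosmetic difference is that you obtain $\sum_{m=0}^{n-1}(\phi_k^m)^2=1$ by applying Lemma~\ref{lem:masssquared} to $\chi_{\{k\}}$, whereas the paper phrases the same fact as ``the matrix with orthonormal columns $\phi^m$ also has orthonormal rows''; the subsequent polarization identity and eigenvalue bounds are identical.
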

\begin{proof}
Let $i,j\in V$ be such that $i\neq j$. Combining \eqref{eq:tildeomegaisomegapluspos} with \eqref{eq:defvarphij} and \eqref{eq:spectralvarphij}, we obtain
\begin{equation}\label{eq:omegadiff}
\tilde \omega_{ij} - \omega_{ij} = \frac\gamma{n} f^j_i = -\varphi^j_i = -\gamma \sum_{m=1}^{n-1} \lambda_m^{-1} \phi_i^m \phi_j^m.
\end{equation}
Note that the matrix which has (the vector representations of)  $\phi^m$ ($m=0, \ldots, n-1$) as orthonormal columns also has orthonormal rows, hence (since $r=0$) we have that $\displaystyle \sum_{m=0}^{n-1} \left(\phi_i^m\right)^2 = 1$. 
Thus
\[
\sum_{m=1}^{n-1} \lambda_m^{-1} \left(\phi_i^m\right)^2 \leq \lambda_1^{-1} \sum_{m=1}^{n-1} \left(\phi_i^m\right)^2 = \lambda_1^{-1} \left(\sum_{m=0}^{n-1} \left(\phi_i^m\right)^2 - n^{-1}\right) = \lambda_1^{-1} \left(1-n^{-1}\right)
\]
and similarly
\[
\sum_{m=1}^{n-1} \lambda_m^{-1} \left(\phi_i^m\right)^2 \geq \lambda_{n-1}^{-1} \left(1-n^{-1}\right).
\]
Since
\[
-\sum_{m=1}^{n-1} \lambda_m^{-1} \phi_i^m \phi_j^m = \frac12 \sum_{m=1}^{n-1} \lambda_m^{-1} \left(\phi_i^m - \phi_j^m\right)^2 - \frac12 \sum_{m=1}^{n-1} \lambda_m^{-1} \left[\left(\phi_i^m\right)^2 + \left(\phi_j^m\right)^2\right],
\]
the result follows.
\end{proof}

\begin{remark}\label{rem:diffineigenfunc}
If $r=0$ and $\gamma \geq 0$, Theorem~\ref{thm:newgraph} tells us that the dynamics of \eqref{eq:MBOstepa} on a graph $G=(V,E,\omega)\in \mathcal{C}_\gamma$ correspond to diffusion dynamics on a different graph $\tilde G=(V,\tilde E, \tilde \omega)\in \mathcal{G}$ with the same node set $V$, but a different edge set and different edge weights. Furthermore, from Lemma~\ref{lem:newedgeweights} it follows that $E \subset \tilde E$, so $\tilde G$ can gain edges compared to $G$, but not lose any. By the same lemma we know that, if $G\in \mathcal{C}$, any edges that already existed in $G$ cannot have a lower weight in $\tilde G$ than they had in $G$. Equation \eqref{eq:tildeomegaisomegapluspos} quantifies the change in edge weight. Lemma~\ref{lem:increaseinweights} suggests (but does not prove) that the largest increase (in the case when $G\in\mathcal{C}$) in edge weight (including potentially creation of a new edge where there was none in $G$) occurs between nodes $i\in V$ and $j\in V$ for which $\sum_{m=1}^{n-1} \frac1{\lambda_m}\left(\phi_i^m - \phi_j^m\right)^2$ is large. If this suggestion is accurate and $G$ is such that the eigenvalues $\lambda_m$ rapidly increase with increasing $m$, then the main addition of edge weight going from $G$ to $\tilde G$ happens between those nodes $i$ and $j$ for which $\left(\phi_i^1-\phi_j^1\right)^2$ is large (or for which $\sum_{m=1}^k \left(\phi_i^m-\phi_j^m\right)^2$ is large, if the eigenvalue $\lambda_1$ has multiplicity $k$). 

In this context it is interesting to note that the second smallest eigenvalue (when $r=0$), i.e. the smallest nonzero eigenvalue for a connected graph, is called the \textit{algebraic connectivity} of the graph and the corresponding eigenfunction (or eigenvector) is called the\footnote{Assuming the eigenvalue is simple.} \textit{Fiedler vector} \cite{fiedler1973}. In
\cite{4177113,pmlr-v28-osting13,osting2014optimal} it is argued that a good strategy when attempting to add an edge to a graph such as to maximize the algebraic connectivity of the resulting graph, is to add the edge between those nodes whose corresponding values in the Fiedler vector have a large (absolute) difference. In other words, adding an edge between those nodes $i$ and $j$ for which $(\phi^1_i - \phi^1_j)^2$ is largest, is a good heuristic for maximizing the algebraic connectivity of a graph (if the addition of one edge is allowed). Our discussion above thus suggests that in going from graph $G$ to graph $\tilde G$, most weight is added to those edges which make the largest contribution to the algebraic connectivity of the graph.
\end{remark}

\begin{remark}
The discussion in Remark~\ref{rem:diffineigenfunc} can give a some high level intuition about the dynamics of the \ref{alg:massOKMBO} algorithm on graphs in $\mathcal{C}_\gamma$. These dynamics can be seen as a diffusion process on a new graph which differs from the original graph by having additional (or more highly weighted) edges (approximately) between those nodes whose values in the eigenfunctions corresponding to the smallest nonzero eigenvalues of $\Delta$ differ by a large amount. 
The mass conserving thresholding step in \ref{alg:massOKMBO} distributes all the available mass over those nodes which, in the ODE step, acquired the most mass through this diffusion process on the new graph. Thus, the available mass from the initial function $v^0$  is most likely\footnote{This should currently be interpreted as an imprecise, nonrigorous, statement, but might be turned into a precise conjecture for future research.} to end up (after one iteration) at those nodes that are more strongly connected (in the new graph) to the nodes in the support of $v^0$, than to nodes in the support's complement. These connections could have been present already in the original graph, or they could have been created (or strengthened) via the newly created edges determined in large part by the eigenfunctions (corresponding to the smallest nonzero eigenvalues) of $\Delta$. The relative influence of both these effects is controlled by the parameter $\gamma$.

Lemma~\ref{lem:F0onnewgraph} shows that sets $S\subset V$ which minimize $F_0(\chi_S)$ have to balance their `volume', $\sum_{i\in S} \tilde d_i$, and curvature, $\sum_{i\in S} \left(\tilde \kappa_S\right)_i$, on the new graph $\tilde G$. We have put `volume' in scare quotes here, because $\tilde r=0$ in Lemma~\ref{lem:F0onnewgraph}, thus $\sum_{j\in S} \tilde d_j$ is not equal to $\vol{S}$ on $\tilde G$.
\end{remark}

In the following, we use the unweighted star graph from Definition~\ref{def:bipartstar} to illustrate some of the concepts discussed so far in this section. Remember from Lemma~\ref{lem:stargraphinC} that this star graph is in $\mathcal{C}$, so it is a suitable example.

\begin{lemma}\label{lem:starvarphi}
Let $G\in\mathcal{G}$ be an unweighted star graph as in Definition~\ref{def:bipartstar} with $n\geq 3$ nodes. Then the functions $\varphi^1, \varphi^j: \mathcal{V}\to \R$, for $j\in V\setminus\{1\}$, as defined in \eqref{eq:defvarphij}, are given by, for $i\in V$,
\begin{align*}
\varphi^1_i &= \left(\vol{V}\right)^{-2}\begin{cases}
(n-1)^{r+1}, &\text{if } i=1,\\
-(n-1)^{2r}, &\text{if } i \neq 1,
\end{cases}\\
\varphi^j_i &= \left(\vol{V}\right)^{-2}\begin{cases}
-(n-1)^r, &\text{if } i=1,\\
\left( (n-1)^r + n-1\right)^2 - 2(n-1)^r - (n-1), &\text{if } i=j,\\
-2(n-1)^r - (n-1), &\text{if } 1\neq i\neq j.
\end{cases}
\end{align*}
Assume $r=0$ and let $\gamma\geq 0$. Let $\tilde \omega$ be as in \eqref{eq:tildeomega}, then, for $i, j \in V$,
\[
\tilde \omega_{ij} = \begin{cases}
0, &\text{if } i=j,\\
1+ \frac\gamma{n^2}, &\text{if } i=1\neq j \text{ or } j=1\neq i,\\
\frac{\gamma (n+1)}{n^2}, &\text{if } i\neq 1 \neq j \neq i.
\end{cases}
\]
\end{lemma}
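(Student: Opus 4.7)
The proof will be a direct computation built on top of Lemma~\ref{lem:starequil}, which already gives the equilibrium measures $\nu^{V\setminus\{j\}}$ for the star graph. The plan is to plug those into the defining formulae $f^j = \nu^{V\setminus\{j\}} - \mathcal{A}\left(\nu^{V\setminus\{j\}}\right)$ and $\varphi^j = -\frac{d_j^r}{\vol V}f^j$ and then use \eqref{eq:tildeomegaisomegapluspos} to obtain $\tilde\omega$.

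First I would record the degrees, $d_1 = n-1$ and $d_i = 1$ for $i\in V\setminus\{1\}$, so that $\vol V = (n-1)^r + n - 1$. Using Lemma~\ref{lem:starequil} I would then compute the two relevant masses,
\[
\mathcal{M}\left(\nu^{V\setminus\{1\}}\right) = \sum_{i\in V\setminus\{1\}} d_i^r = n-1,
\qquad
\mathcal{M}\left(\nu^{V\setminus\{j\}}\right) = \vol V \,(\vol V - 1) - (n-1)^r \quad (j\neq 1),
\]
the second identity following because $(n-1)^r(\vol V - 1) + (n-2)\vol V = \vol V\bigl((n-1)^r + n - 2\bigr) - (n-1)^r$. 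Dividing by $\vol V$ gives the constant value of $\mathcal{A}(\nu^{V\setminus\{j\}})$ in each case.

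Next I would subtract this constant from $\nu^{V\setminus\{j\}}$ (read off node-by-node from Lemma~\ref{lem:starequil}) to obtain $f^j$ in each of the five cases that must be distinguished: $(j,i) = (1,1)$, $(1,\,\neq 1)$, $(\neq 1,1)$, $(\neq 1, j)$, and $(\neq 1,\,\neq 1,\neq j)$. Multiplying by $-d_j^r/\vol V$ (where $d_1^r = (n-1)^r$ and $d_j^r = 1$ for $j\neq 1$) then yields the claimed expressions for $\varphi^j_i$. The main bookkeeping step is the $j\neq 1$, $i=j$ entry, where $\varphi^j_j = \frac{\vol V - 1}{\vol V} - \frac{(n-1)^r}{(\vol V)^2}$ must be rewritten as $\frac{1}{(\vol V)^2}\bigl[(\vol V)^2 - \vol V - (n-1)^r\bigr]$ and then expanded using $\vol V = (n-1)^r + n - 1$ to match the stated form $\bigl((n-1)^r + n - 1\bigr)^2 - 2(n-1)^r - (n-1)$; this is the one place where a little care is required to keep the terms straight, but it is routine algebra.

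For the edge weights of the transformed graph, I would set $r = 0$ so that $\vol V = n$ and $d_j^r = 1$ for every $j$, and then invoke \eqref{eq:tildeomegaisomegapluspos}, which for $i\neq j$ simplifies to $\tilde\omega_{ij} = \omega_{ij} - \gamma\varphi^j_i$ (noting that $\gamma \frac{d_j^r}{\vol V}f^j_i = -\gamma\varphi^j_i$). Substituting the three nonzero cases of the already computed $\varphi^j_i$ at $r = 0$ — namely $-1/n^2$ when one of $i,j$ equals $1$ and $-(n+1)/n^2$ when neither does — together with the edge weights $\omega_{1j} = \omega_{j1} = 1$ and $\omega_{ij} = 0$ for $i,j\in V\setminus\{1\}$, produces precisely the three values listed in the statement. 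The case $i = j$ gives $\tilde\omega_{ii} = 0$ directly from the definition \eqref{eq:tildeomega}. No obstacle beyond careful case-splitting is expected.
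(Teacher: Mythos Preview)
Your proposal is correct and follows essentially the same route as the paper: compute the masses $\mathcal{M}(\nu^{V\setminus\{j\}})$ from the equilibrium measures of Lemma~\ref{lem:starequil}, form $f^j$ and then $\varphi^j$ via \eqref{eq:defvarphij}, and finally read off $\tilde\omega$ from \eqref{eq:tildeomegaisomegapluspos} at $r=0$. The only cosmetic difference is that the paper writes the mass for $j\neq 1$ as $(\vol V)^2 - 2\vol V + n-1$ rather than your equivalent $\vol V(\vol V-1)-(n-1)^r$, and you also correctly include the factor $\gamma$ in $\tilde\omega_{ij}=\omega_{ij}-\gamma\varphi^j_i$ (which the paper's printed text omits, though it is clearly intended).
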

\begin{proof}
A direct computation can be performed to validate that, for all $j\in V$, $\varphi^j$ indeed solves \eqref{eq:zeromassvarphiequation} for $\chi_{\{j\}}$, but we will give a different derivation here based directly on \eqref{eq:defvarphij} and \eqref{eq:fij}. Noting that $d_1=n-1$ and, for $i\in V\setminus\{1\}$, $d_i = 1$, and using Lemma~\ref{lem:starequil}, we compute
$
\mathcal{M}\left(\nu^{V\setminus\{1\}}\right) = \sum_{i=2}^n d_i^r = n-1
$ 
and, for $j\in V\setminus\{1\}$,
\[
\mathcal{M}\left(\nu^{V\setminus\{j\}}\right) = d_1^r (\vol{V}-1) + \sum_{i\in V\setminus\{1, j\}} d_i^r  \vol{V}  = \left(\vol{V}\right)^2 - 2\vol{V} + n-1.
\]
Furthermore,
$
\frac{d_1^r}{\vol{V}} = 1-\frac{n-1}{\vol{V}}$ and, for $j\in V\setminus\{1\}$,
$\frac{d_j^r}{\vol{V}} =\frac1{\vol{V}}$. 
Combining these results with the expressions for $\nu^{V\setminus\{1\}}$ and $\nu^{V\setminus\{j\}}$ in  Lemma~\ref{lem:starequil}, we find that $\varphi^1$ and $\varphi^j$ are as defined in this lemma.

Now assume that $r=0$ and let $i,j\in V$. Per definition, if $i=j$, then $\tilde \omega_{ij} = 0$. If $i\neq j$, we know, by \eqref{eq:tildeomegaisomegapluspos} and \eqref{eq:defvarphij}, that $\tilde \omega_{ij} = \omega_{ij} + \frac\gamma{n} f^j_i = \omega_{ij} - \varphi^j_i$. A direct computation for $r=0$ shows that, for $j\in V\setminus\{1\}$, $i\in V$, 
\begin{equation}\label{eq:starPhi}
\varphi^1_i := \begin{cases}
\frac{n-1}{n^2}, &\text{if } i=1,\\
-\frac1{n^2}, &\text{if } i \neq 1,
\end{cases} \qquad \qquad
\varphi^j_i := \begin{cases}
-\frac1{n^2}, &\text{if } i=1,\\
\frac{n^2-n-1}{n^2}, &\text{if } i=j,\\
-\frac{n+1}{n^2}, &\text{if } 1\neq i\neq j.
\end{cases}
\end{equation}
Note that for all $i,j\in V$, $\varphi^j_i= \varphi^i_j$, as should be the case per Corollary~\ref{cor:symmetries}. The result now follows from the fact that for all $j\neq 1$, $\omega_{1j} = 1$ and all other $\omega_{ij}$ are $0$.
\end{proof}

\begin{remark}
In the proof of Lemma~\ref{lem:starvarphi} we computed the functions $\varphi^j$, for $j\in V$, using the equilibrium measures from Lemma~\ref{lem:starequil}. It is instructive to compute $\varphi^j$ directly from the eigenvalues and eigenfunctions as well, for the case $r=0$. Using Lemma~\ref{lem:starspectrum}, we see that, for $i,j\in V$,
\begin{equation}\label{eq:starPhidirect}
\varphi^j_i = \sum_{m=1}^{n-2} \phi^m_i \phi^m_j + \frac1n \phi^{n-1}_i \phi^{n-1}_j.
\end{equation}
In Appendix~\ref{sec:starPhidirect} we give the details showing that this computation leads to the same expression for $\varphi^j$ as given above in \eqref{eq:starPhi}.
\end{remark}

\begin{remark}
If we want to apply the observation from Remark~\ref{rem:diffineigenfunc} to the star graph discussed above, we see from Lemma~\ref{lem:starspectrum} that the smallest nonzero eigenvalue is $1$ with multiplicity $n-2$. Hence, from Remark~\ref{rem:diffineigenfunc}, we expect that $\tilde\omega_{ij}-\omega_{ij}$ is largest for those nodes $i,j$ for which $\sum_{m=1}^{n-2} \left(\phi_i^m-\phi_j^m\right)^2$ is large\footnote{Unfortunately, while the star graph has served us very well in previous examples, it is not the cleanest case to illustrate our heuristic from Remark~\ref{rem:diffineigenfunc}. The symmetry of the star graph, which has simplified some of the calculations in earlier examples, now means that our heuristic requires some more calculation, since we cannot suffice with checking $\left(\phi_i^1-\phi_j^1\right)^2$ only.}. 
From Lemma~\ref{lem:starspectrum} we have, for $m\in \{1, \ldots, n-2\}$ and $i\in V$,
\begin{align*}
\left(\phi_i^m\right)^2 &= \frac{(n-i)^2}{(n-i)^2+n-i} (1-\delta_{i1})(1-\delta_{in}) + (1-\delta_{i1}) (1-\delta_{i2})\sum_{m=1}^{i-2} \frac1{(n-m-1)^2+n-m-1},\\
&= \frac{n-i}{n-i-1} (1-\delta_{i1})(1-\delta_{in}) + \frac{i-2}{(n-i+1)(n-1)} (1-\delta_{i1}) (1-\delta_{i2}),
\end{align*}
where we used the Kronecker delta and \eqref{eq:subtractsums}. Furthermore, if $m\in \{1, \ldots, n-2\}$ and $i,j\in V$ with $j<i$, then
\begin{align*}
\phi_i^m \phi_j^m &= -\frac{n-j}{(n-j)^2+ n-j} + (1-\delta_{j1}) (1-\delta_{j2})\sum_{m=1}^{j-2} \frac1{(n-m-1)^2+n-m-1}\\
&= \frac1{n-j+1} + (1-\delta_{j1}) (1-\delta_{j2}) \frac{j-2}{(n-j+1)(n-1)}.
\end{align*}
Hence, for $j<i$,
\begin{align*}
&\hspace{0.6cm}\sum_{m=1}^{n-2} \left(\phi_i^m-\phi_j^m\right)^2 = \sum_{m=1}^{n-2} \left(\left(\phi_i\right)^2+\left(\phi_j\right)^2-2\phi_i\phi_j\right)\\ &=\begin{cases}
\frac{n-i}{n-i+1} + \frac{i-2}{(n-i+1)(n-1)} + \frac2n, &\text{if } j=1 \text{ and } i\neq n,\\
\frac{n-2}{n-1} + \frac2n, &\text{if } j=1 \text{ and } i=n,\\
\frac{n-i}{n-i+1} + \frac{i-2}{(n-i+1)(n-1)} + \frac{n-j}{n-j+1} + \frac{j-2}{(n-j+1)(n-1)} + \frac2{n-j+1} - \frac{2(j-2)}{(n-j+1)(n-1)}, &\text{if } j\neq 1 \text{ and } i\neq n,\\
\frac{n-2}{n-1} + \frac{n-j}{n-j+1} + \frac{j-2}{(n-j+1)(n-1)} + \frac2{n-j+1} - \frac{2(j-2)}{(n-j+1)(n-1)}, &\text{if } j\neq 1 \text{ and } i=n,
\end{cases}\\
&= \begin{cases}
\frac{n^2-2}{n(n-1)}, &\text{if } j=1 \text{ and } i\neq n,\\
\frac{n^2-2}{n(n-1)}, &\text{if } j=1 \text{ and } i=n,\\
2, &\text{if } j\neq 1 \text{ and } i\neq n,\\
2, &\text{if } j\neq 1 \text{ and } i=n.
\end{cases}
\end{align*}
The second equality follows by straightforward simplification of the fractions. The case where $j>i$ follows by symmetry. 

There are two things we can immediately note. First, despite there being an apparent difference in computation of the cases $1\neq i\neq n$ and $1\neq i=n$, there is of course no difference in eventual outcome. As we expect by symmetry of the star graph, each of the nodes $\{2, \ldots, n\}$ is interchangable without affecting the outcome. Most importantly for our present purposes, we have that $\frac{n^2-2}{n(n-1)} < 2$. In fact, a direct calculation shows that $\frac{n^2-2}{n(n-1)}$ has a maximum value of $\frac76$ for $n\in \{n\in \N: n\geq 2\}$, which is attained at $n=3$ and $n=4$. Hence, according to our heuristic, the increase $\tilde \omega - \omega$ between the leaves (i.e. nodes $\{2, \ldots, n\}$) of the star graph should be larger than the incease between the leaves and the centre node $1$. This is indeed what we saw in Lemma~\ref{lem:starvarphi}.
\end{remark}

\subsection{More comparison principles}\label{sec:comppinning}

Theorem~\ref{thm:newgraph} tells us that, if $\gamma\geq 0$ is such that $G\in\mathcal{C}_\gamma$ and if $r=0$, then the dynamics in \eqref{eq:MBOstepa} can be viewed as graph diffusion on a new graph with the same node set, but a different edge set and weights, as the original graph $G$. We can use this to prove that properties of $\Delta$ also hold for $L$ on such graphs. Note that, when $\gamma=0$, $L=\Delta$, so this can be viewed as a generalization of results for $\Delta$ to $L$.

In this section we prove a generalization of Lemma~\ref{lem:compell} and a generalization of the comparison principle in \cite[Lemma 2.6(d)]{vanGennipGuillenOstingBertozzi14}. In fact, despite the new graph construction in Theorem~\ref{thm:newgraph} requiring $r=0$ for symmetry reasons (see Remark~\ref{rem:therolesofrandtilder}), the crucial ingredient that will allow these generalizations is that $G\in \mathcal{C}_\gamma$; the assumption on $r$ is not required. We will also see a counterexample illustrating that this generalization does not extent (at least not without further assumptions) to graphs that are not in $\mathcal{C}_\gamma$.

Lemma~\ref{lem:varphiistar} gives a result which we need to prove the comparison principles in Lemmas ~\ref{lem:compellgeneralization} and~\ref{lem:varphiistar}.

\begin{lemma}\label{lem:varphiistar}
Let $\gamma\geq 0$, $G=(V,E,\omega)\in \mathcal{C}_\gamma$, $w\in \mathcal{V}$, and let $i^*\in V$ be such that $w_{i^*}=\min_{i\in V} w_i$. Let $\varphi\in V$ solve \eqref{eq:zeromassvarphiequation} for $w$. Then $\varphi_{i^*} \leq 0$.
\end{lemma}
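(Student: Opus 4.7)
The plan is to represent $\varphi$ explicitly through the node-potentials $\varphi^j$ of Lemma~\ref{lem:varphijintroduced}. Since each $\varphi^j$ satisfies $\Delta\varphi^j = \chi_{\{j\}} - \mathcal{A}(\chi_{\{j\}})$ with $\mathcal{M}(\varphi^j) = 0$, writing $w = \sum_{j\in V} w_j\chi_{\{j\}}$ and invoking linearity of $\Delta$ and $\mathcal{A}$ together with the uniqueness of the zero-mass Poisson solution (Lemma~\ref{lem:uniquesolution}) forces $\varphi = \sum_{j\in V} w_j \varphi^j$. Evaluating at $i^*$ and using $\varphi^j_{i^*} = -\tfrac{d_j^r}{\vol{V}} f^j_{i^*}$ from \eqref{eq:defvarphij}, this yields
\[
\varphi_{i^*} = -\frac{1}{\vol{V}}\sum_{j\in V} w_j\, d_j^r\, f^j_{i^*}.
\]

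Next I would symmetrise the resulting expression. Corollary~\ref{cor:spectralvarphij} gives $\varphi^j_i = d_j^r \sum_{m\geq 1}\lambda_m^{-1}\phi^m_i\phi^m_j$, so
\[
f^j_i = -\vol{V}\sum_{m\geq 1}\lambda_m^{-1}\phi^m_i\phi^m_j
\]
is manifestly symmetric in $i$ and $j$; in particular $f^j_{i^*} = f^{i^*}_j$. Combining with $\mathcal{M}(f^{i^*}) = 0$ (so $\sum_j d_j^r f^{i^*}_j = 0$) and subtracting $w_{i^*}$ times this identity yields
\[
\varphi_{i^*} = -\frac{1}{\vol{V}}\sum_{j\in V\setminus\{i^*\}} d_j^r\, (w_j - w_{i^*})\, f^{i^*}_j,
\]
where the $j = i^*$ term drops out automatically. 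Because $w_{i^*} = \min_i w_i$, every factor $w_j - w_{i^*}$ is nonnegative, so the proof reduces to showing that the whole sum on the right is nonnegative.

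The final step uses $G \in \mathcal{C}_\gamma$ to control the signs of $f^{i^*}_j$ for $j \neq i^*$, and I expect this to be the main obstacle. When $\omega_{i^* j} = 0$, membership in $\mathcal{C}^0$ (which contains $\mathcal{C}_\gamma$) already gives $f^{i^*}_j \geq 0$, so those terms contribute the correct sign; if in fact $G \in \mathcal{C}$ (and in particular if $\gamma = 0$) this accounts for every index and the proof is complete. The hard case is $\omega_{i^* j} > 0$, where $f^{i^*}_j$ may be negative. Here I would appeal to the inequality $\tilde\omega_{i^* j} = d_{i^*}^{-r}\omega_{i^* j} + \gamma\tfrac{d_j^r}{\vol{V}} f^j_{i^*} \geq 0$ from Lemma~\ref{lem:newedgeweights} to obtain the uniform lower bound $d_j^r f^{i^*}_j \geq -\vol{V}\, d_{i^*}^{-r}\omega_{i^* j}/\gamma$, and then combine these neighbourhood contributions with the elementary inequality $(\Delta w)_{i^*} = w_{i^*} - \mathcal{A}(w) \leq 0$, equivalently $\sum_j \omega_{i^*j}(w_j - w_{i^*}) \geq 0$, to absorb the negative contributions and close the argument.
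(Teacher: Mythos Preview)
Your decomposition $\varphi=\sum_j w_j\varphi^j$ and the reduction to the signs of $f^{i^*}_j$ (equivalently $\varphi^j_{i^*}$) for $j\neq i^*$ is exactly the paper's argument: the paper asserts $\varphi^j_{i^*}\leq 0$ for all $j\neq i^*$, replaces each $w_j$ by $w_{i^*}$ in $\sum_{j} w_j\varphi^j_{i^*}$, and then uses $\sum_j\varphi^j=0$ to conclude. That is precisely your ``$G\in\mathcal{C}$'' sub-case, and the paper does not separately treat the situation you isolate where some $f^{i^*}_j<0$.

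The closing step you propose for that situation does not work. Your lower bound $d_j^r f^{i^*}_j\geq -\vol{V}\,d_{i^*}^{-r}\omega_{i^*j}/\gamma$ yields, after multiplying by $w_j-w_{i^*}\geq 0$ and summing over neighbours of $i^*$,
\[
\sum_{j:\,\omega_{i^*j}>0} d_j^r(w_j-w_{i^*})f^{i^*}_j \;\geq\; -\frac{\vol{V}}{\gamma\,d_{i^*}^r}\sum_j \omega_{i^*j}(w_j-w_{i^*}),
\]
and since $\sum_j\omega_{i^*j}(w_j-w_{i^*})\geq 0$ the right-hand side is \emph{non-positive}. You have produced a lower bound on the wrong side of zero; the inequality $(\Delta w)_{i^*}\leq 0$ does not absorb the negative contributions, it quantifies them. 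Concretely, on the unweighted $4$-path with $r=0$, $w=\chi_{\{1\}}$ and $i^*=2$ one computes $\varphi_{i^*}=1/8>0$, and this graph lies in $\mathcal{C}_\gamma$ for every $\gamma\in[0,8)$, so the obstacle you flagged is not removable by the estimate you propose. (Aside: the identity ``$(\Delta w)_{i^*}=w_{i^*}-\mathcal{A}(w)$'' is mistyped; that equation holds with $\varphi$, not $w$, on the left.)
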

\begin{proof}
Let $j\in V$ and let $\varphi^j \in \mathcal{V}$ be as in \eqref{eq:defvarphij}.  Then, by Lemma~\ref{lem:varphijintroduced}, we have that $\displaystyle \Delta \varphi^j = \chi_{\{j\}} - \mathcal{A}\left(\chi_{\{j\}}\right)$ and $\displaystyle \mathcal{M}\left(\varphi^j\right)=0$. Furthermore, by Definition~\ref{def:graphclasses} and \eqref{eq:defvarphij}, it follows that, for all $i\in V\setminus\{j\}$, $\varphi^j_i \leq 0$. Because $\displaystyle w = \sum_{j\in V} w_j \chi_{\{j\}}$, we have $\displaystyle \mathcal{A}(w) = \sum_{j\in V} w_j \mathcal{A}\left(\chi_{\{j\}}\right)$ and thus $\displaystyle \Delta \varphi = \sum_{j\in V} w_j \left(\chi_{\{j\}}-\mathcal{A}\left(\chi_{\{j\}}\right)\right)$. Since also $\displaystyle \mathcal{M}\left(\sum_{j\in V} w_j \varphi^j\right) = \sum_{j\in V} \mathcal{M}\left(w_j \varphi^j\right) = 0$, we find that $\displaystyle \varphi = \sum_{j\in V} \varphi^j$. Hence
$\displaystyle 
\varphi_{i^*} = \sum_{j\in V} w_j \varphi^j_{i^*} = w_{i^*} \varphi^{i^*}_{i^*}  + \sum_{j\in V\setminus\{i^*\}} w_j \varphi^j_{i^*}.
$
For $j\neq i^*$, we know that $w_{i^*} \leq w_j$ and $\varphi^j_{i^*} \leq 0$, hence $w_j \varphi^j_{i^*}  \leq w_{i^*} \varphi^j_{i^*}$. Therefore
$\displaystyle
\varphi_{i^*} \leq w_{i^*} \varphi^{i^*}_{i^*}  + \sum_{j\in V\setminus\{i^*\}} w_{i^*} \varphi^j_{i^*} = w_{i^*} \sum_{j\in V} \varphi^j_{i^*}.
$
If we define $\displaystyle \tilde \varphi := \sum_{j\in V} \varphi^j = \sum_{j\in V} \left(\chi_V\right)_j \varphi^j$, then by a similar argument as above, $\displaystyle \Delta \tilde \varphi = \chi_V - \mathcal{A}\left(\chi_V\right) = 0$ and $\displaystyle \mathcal{M}\left(\tilde \varphi\right)=0$. Thus $\tilde \varphi = 0$ and we conclude that $\varphi_{i^*} \leq 0$.
\end{proof}

\begin{lemma}[Generalization of comparison principle I]\label{lem:compellgeneralization}
Let $\gamma\geq 0$, $G=(V,E,\omega) \in \mathcal{C}_\gamma$, and let $V'$ be a proper subset of $V$. Assume that $u, v \in \mathcal{V}$ are such that, for all $i\in V'$,
$(L u)_i \geq (L v)_i$ and, for all $i\in V\setminus V'$, $u_i\geq v_i$. Then, for all $i\in V$, $u_i \geq v_i$.
\end{lemma}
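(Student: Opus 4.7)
My plan is to adapt the strategy of the proof of Lemma~\ref{lem:compell} by reducing to a statement about a single function $w := u - v$ and then using Lemma~\ref{lem:varphiistar} to control the extra $\gamma\varphi$ term coming from $L$. Concretely, it suffices to show that if $w \in \mathcal{V}$ satisfies $(Lw)_i \geq 0$ for all $i \in V'$ and $w_i \geq 0$ for all $i \in V\setminus V'$, then $w_i \geq 0$ for all $i \in V$. Note that the case $V' = \emptyset$ is immediate, and the case $\gamma = 0$ is exactly Lemma~\ref{lem:compell} (and is also consistent with the convention $\mathcal{C}_0 = \mathcal{G}$), so I may assume $V' \neq \emptyset$ and $\gamma > 0$.

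The argument will be by contradiction: assume $m := \min_{i\in V} w_i < 0$ and set $U := \{i \in V : w_i = m\}$. Since $w \geq 0$ on $V\setminus V'$, we have $U \subset V'$, and since $V' \neq V$, we have $U \neq V$. Now I would copy the path argument from the proof of Lemma~\ref{lem:compell}: by connectedness of $G$, there is a node $j^* \in U$ with a neighbour $k^* \in V\setminus U$, and then
\[
d_{j^*}^r (\Delta w)_{j^*} = \sum_{k\in V} \omega_{j^*k}^q (\nabla w)_{kj^*} \leq \omega_{j^*k^*}^q (\nabla w)_{k^*j^*} = \omega_{j^*k^*} (w_{j^*} - w_{k^*}) < 0,
\]
so $(\Delta w)_{j^*} < 0$.

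To close the argument, let $\varphi \in \mathcal{V}$ solve \eqref{eq:zeromassvarphiequation} for $w$, so that $Lw = \Delta w + \gamma \varphi$. Since $j^* \in U$, we have $w_{j^*} = \min_{i\in V} w_i$, and therefore Lemma~\ref{lem:varphiistar} (which is where the assumption $G \in \mathcal{C}_\gamma$ enters) gives $\varphi_{j^*} \leq 0$. Combining the two inequalities,
\[
(Lw)_{j^*} = (\Delta w)_{j^*} + \gamma \varphi_{j^*} < 0,
\]
which contradicts $(Lw)_{j^*} \geq 0$, since $j^* \in U \subset V'$. Hence $w \geq 0$ on $V$, as required.

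The only subtle step is the sign control on $\varphi_{j^*}$, but that is already handled by Lemma~\ref{lem:varphiistar}; everything else is a direct transplant of the connectivity/path argument from Lemma~\ref{lem:compell}. I do not anticipate a genuine obstacle here, only the need to be careful that the minimum is attained inside $V'$ (which uses the boundary condition) and that the node $j^*$ lies in $U$ (so that the application of Lemma~\ref{lem:varphiistar} is legitimate).
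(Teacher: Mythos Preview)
Your proposal is correct and matches the paper's own argument essentially line for line: reduce to $w=u-v$, assume $\min w<0$, run the path/connectivity argument from Lemma~\ref{lem:compell} to get $(\Delta w)_{j^*}<0$ at a minimizing node $j^*\in V'$, and then invoke Lemma~\ref{lem:varphiistar} to obtain $\varphi_{j^*}\le 0$, yielding $(Lw)_{j^*}<0$ and a contradiction. The paper additionally notes that when $r=0$ one can shortcut by applying Lemma~\ref{lem:compell} directly to $\tilde\Delta$ via Theorem~\ref{thm:newgraph}, but then proves the general $r\in[0,1]$ case exactly as you do.
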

\begin{proof}
When $r=0$, we know that $L=\tilde \Delta$, where $\tilde\Delta$ is the graph Laplacian on the graph $\tilde G$, in the notation from Theorem~\ref{thm:newgraph}. Because $G$ and $\tilde G$ have the same node set $V$, the result follows immediately by applying Lemma~\ref{lem:compell} to $\tilde \Delta$. We will, however, prove the generalization for any $r\in [0,1]$.

Let the situation and notation be as in the proof of Lemma~\ref{lem:compell}, with the exception that now, for all $i\in V'$, $(Lw)_i\geq 0$ (instead of $(\Delta w)_i\geq 0$). Let $\varphi\in \mathcal{V}$ by such that $\Delta \varphi = w - \mathcal{A}(w)$ and $\mathcal{M}(w)=0$. Proceed with the proof in the same way as the proof of Lemma~\ref{lem:compell}, up to and incluing the assumption that $\min_{j\in V} w_j <0$ and the subsequent construction of the path from $U$ to $i^*$ and the special nodes $j^*$ and $k^*$ on this path. Then, as in that proof, we know that $(\Delta w)_j^* < 0$. Moreover, since $w_{j^*} = \min_{i\in V}w_i$, we know by Lemma~\ref{lem:varphiistar} that $\varphi_{j^*} \leq 0$. Hence, for all $\gamma \geq 0$, $(Lw)_j^* < 0$. This contradicts the assumption that, for all $i\in V'$, $(Lw)_i\geq 0$, hence $\min_{i\in V} w_i \geq 0$ and the result is proven.
\end{proof}

The following corrollary of Lemma~\ref{lem:varphiistar} will be useful in proving Lemma~\ref{lem:comprincII}

\begin{corol}\label{cor:LtildeuLu}
Let $\gamma\geq 0$, $G=(V,E,\omega)\in \mathcal{C}_\gamma$. Assume that $u, \tilde u \in \mathcal{V}$ satisfy, for all $i\in V$, $u_i \leq \tilde u_i$, and let there be an $i^* \in V$ such that $u_{i^*}=\tilde u_{i^*}$. Then $(Lu)_{i^*} \geq (L\tilde u)_{i^*}$.
\end{corol}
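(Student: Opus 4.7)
The plan is to set $w := \tilde u - u$ and reformulate the claim as $(Lw)_{i^*} \leq 0$, exploiting linearity of $L$. By hypothesis, $w \in \mathcal{V}$ satisfies $w_i \geq 0$ for all $i \in V$ and $w_{i^*} = 0$, so $i^*$ is a minimizer of $w$ on $V$. It therefore suffices to show that each of the two contributions to $Lw$ at node $i^*$, namely $(\Delta w)_{i^*}$ and $\gamma \psi_{i^*}$ where $\psi$ solves \eqref{eq:zeromassvarphiequation} for $w$, is nonpositive.

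First I would handle the Laplacian term by a direct calculation. Because $w_{i^*} = 0 \leq w_j$ for every $j \in V$, we have
\[
(\Delta w)_{i^*} = d_{i^*}^{-r} \sum_{j \in V} \omega_{i^* j}(w_{i^*} - w_j) = -d_{i^*}^{-r} \sum_{j \in V} \omega_{i^* j} w_j \leq 0,
\]
which is the familiar fact that $\Delta$ is nonpositive at an interior minimum.

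Next I would deal with the nonlocal term by invoking Lemma~\ref{lem:varphiistar}, which is precisely tailored for this situation: since $w_{i^*} = \min_{i\in V} w_i$ and $G \in \mathcal{C}_\gamma$, the unique $\psi \in \mathcal{V}$ solving \eqref{eq:zeromassvarphiequation} for $w$ satisfies $\psi_{i^*} \leq 0$. Combining the two nonpositive contributions with $\gamma \geq 0$ yields
\[
(L\tilde u)_{i^*} - (Lu)_{i^*} = (Lw)_{i^*} = (\Delta w)_{i^*} + \gamma \psi_{i^*} \leq 0,
\]
which is the desired inequality.

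The only nontrivial step is the bound on $\psi_{i^*}$, but this is already supplied by Lemma~\ref{lem:varphiistar}, whose proof is exactly where the assumption $G \in \mathcal{C}_\gamma$ (and in fact the stronger consequences from Definition~\ref{def:graphclasses} used there) is essential. So the proof here amounts to assembling these two observations; no additional obstacle is anticipated.
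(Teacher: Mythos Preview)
Your proof is correct and follows essentially the same approach as the paper: define $w=\tilde u-u$, check $(\Delta w)_{i^*}\le 0$ directly since $w\ge 0$ and $w_{i^*}=0$, and invoke Lemma~\ref{lem:varphiistar} at the minimizer $i^*$ to get the nonlocal term $\psi_{i^*}\le 0$.
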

\begin{proof}
Define $w:= \tilde u - u$, then, for all $i\in V$, $w\geq 0$ and $w_{i^*} = 0$. We compute
\[
d_{i^*}^r (\Delta w)_{i^*} = d_{i^*} w_{i^*} - \sum_{j\in V} \omega_{i^*j} w_j = - \sum_{j\in V} \omega_{i^*j} w_j  \leq 0.
\]
Let $\varphi\in \mathcal{V}$ solve \eqref{eq:zeromassvarphiequation} for $w$. Since $w_{i^*} = \min_{i\in V} w_i$, we have by Lemma~\ref{lem:varphiistar} that $\varphi_{i^*} \leq 0$. Hence
\[
(L\tilde u)_{i^*} - (Lu)_{i^*} = (Lw)_{i^*} = (\Delta w)_{i^*} + \gamma \varphi_{i^*} \leq 0.
\]
\end{proof}

\begin{lemma}[Comparison principle II]\label{lem:comprincII}
Let $\gamma\geq 0$, $G=(V,E,\omega) \in \mathcal{C}_\gamma$, $u_0, v_0\in \mathcal{V}$, and let $u, v \in \mathcal{V}_\infty$ be solutions to \eqref{eq:MBOstepa}, with initial conditions $u_0$ and $v_0$, respectively. If, for all $i\in V$, $(u_0)_i \leq (v_0)_i$, then, for all $t\geq 0$ and for all $i\in V$, $u_i(t)\leq v_i(t)$.
\end{lemma}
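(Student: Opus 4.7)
\medskip

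\noindent\textbf{Proof proposal.} The natural starting point is to set $w := v - u \in \mathcal{V}_\infty$, which by linearity of \eqref{eq:MBOstepa} satisfies $\partial_t w = -Lw$ with $w(0) \geq 0$ pointwise on $V$. The goal reduces to showing that $w(t) \geq 0$ on $V$ for every $t \geq 0$. The assumption $G \in \mathcal{C}_\gamma$ will enter exclusively through Corollary~\ref{cor:LtildeuLu}, which plays the role of the usual elliptic inequality at an interior minimum: if $w \geq 0$ on $V$ and $w_{i^*} = 0$, then applying the corollary with $u = 0$ and $\tilde u = w$ gives $(Lw)_{i^*} \leq 0$.

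A direct attempt to run the standard ``first touching time'' argument on $w$ itself yields only a non-strict inequality for $\partial_t w_{i^*}(t^*)$ and fails to produce a contradiction. To force strictness, I would introduce the perturbation
\[
w_\varepsilon(t) := w(t) + \varepsilon(1+t)\chi_V, \qquad \varepsilon > 0.
\]
Since $L\chi_V = 0$ (see Remark~\ref{rem:varphiandL}), $w_\varepsilon$ satisfies $\partial_t w_\varepsilon = -Lw_\varepsilon + \varepsilon \chi_V$, and moreover $w_\varepsilon(0) \geq \varepsilon \chi_V > 0$ strictly on $V$.

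The key step is then a contradiction argument. Suppose that the set $T_\varepsilon := \{t > 0 : \min_{i \in V} (w_\varepsilon)_i(t) \leq 0\}$ is non-empty, and let $t^* := \inf T_\varepsilon$. By continuity of $w_\varepsilon$ in $t$ and the strict positivity of $w_\varepsilon(0)$, we have $t^* > 0$ and $w_\varepsilon(t^*) \geq 0$ on $V$, with $(w_\varepsilon)_{i^*}(t^*) = 0$ for some $i^* \in V$. Since $(w_\varepsilon)_{i^*}$ is positive on $[0,t^*)$ and vanishes at $t^*$, we have $\partial_t (w_\varepsilon)_{i^*}(t^*) \leq 0$. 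On the other hand, Corollary~\ref{cor:LtildeuLu} applied to $0 \leq w_\varepsilon(t^*)$ with equality at $i^*$ yields $(Lw_\varepsilon(t^*))_{i^*} \leq 0$, and therefore
\[
\partial_t (w_\varepsilon)_{i^*}(t^*) = -(Lw_\varepsilon)_{i^*}(t^*) + \varepsilon \geq \varepsilon > 0,
\]
which contradicts the previous inequality. Hence $T_\varepsilon = \emptyset$, i.e.\ $w_\varepsilon(t) > 0$ on $V$ for all $t \geq 0$. Letting $\varepsilon \downarrow 0$ then gives $w(t) \geq 0$ on $V$ for all $t \geq 0$, which is the claim.

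The main obstacle, and the reason the perturbation $\varepsilon(1+t)\chi_V$ (rather than just $\varepsilon \chi_V$) is needed, is precisely this gap between weak and strict inequalities: Corollary~\ref{cor:LtildeuLu} only provides $(Lw)_{i^*} \leq 0$ at a touching point, which on its own is too weak to contradict $\partial_t w_{i^*}(t^*) \leq 0$. The $+\varepsilon \chi_V$ contribution in $\partial_t w_\varepsilon$, arising because $L$ annihilates $\chi_V$ but the time-derivative of the perturbation does not, is exactly what is needed to turn the weak sign information into a strict one. Everything else is standard; no assumption beyond $G \in \mathcal{C}_\gamma$ is required, so in particular the argument is valid for all $r \in [0,1]$, not only for the symmetric case $r=0$ in which Theorem~\ref{thm:newgraph} would allow one to deduce the result from the classical heat-kernel positivity on the transformed graph $\tilde G$.
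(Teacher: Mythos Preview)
Your proof is correct and uses the same key ingredient as the paper's proof, namely Corollary~\ref{cor:LtildeuLu} (the quasimonotonicity of $L$ on graphs in $\mathcal{C}_\gamma$). The difference is in packaging: the paper, after noting (as you do) that for $r=0$ one can simply appeal to the graph transformation of Theorem~\ref{thm:newgraph} and the known Laplacian case, handles general $r\in[0,1]$ by observing that Corollary~\ref{cor:LtildeuLu} is precisely Szarski's condition $W_+$ and then citing \cite[Theorems~9.3--9.4]{Szarski1965}. You instead write out a self-contained $\varepsilon$-perturbation first-touching-time argument, which is essentially the proof of the relevant special case of Szarski's theorem. Your route buys independence from an external reference and makes transparent exactly how the quasimonotonicity enters; the paper's route is shorter but relies on the reader knowing (or looking up) Szarski's result.
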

\begin{proof}
If $r=0$ we note that, by Theorem~\ref{thm:newgraph}, $L$ can be rewritten as a graph Laplacian on a new graph $\tilde G$ with the same node set $V$. The result in \cite[Lemma 2.6(d)]{vanGennipGuillenOstingBertozzi14} shows the desired conclusion holds for graph Laplacians (i.e. when $\gamma=0$) and thus we can apply it to the graph Laplacian on $\tilde G$ to obtain to result for $L$ on $G$.

In the general case when $r\in [0,1]$, Corollary~\ref{cor:LtildeuLu} tells us that $L$ satisfies the condition which is called $W_+$ in \cite[Section 4]{Szarski1965}\footnote{The property of $L$ in Corollary~\ref{cor:LtildeuLu} is sometimes also called quasimonotonicity, or, more properly it can be seen as a consequence of quasimonotonicity in the sense of \cite{Volkmann1972,Chaljub-Simon1992,herzog2004characterization}.}. Since, for a given initial condition, the solution to \eqref{eq:MBOstepa} is unique, the result now follows by applying \cite[Theorem 9.3]{Szarski1965} (or \cite[Theorem 9.4]{Szarski1965}.
\end{proof}

\begin{corol}\label{cor:betweenconstants}
Let $\gamma\geq 0$, $G=(V,E,\omega) \in \mathcal{C}_\gamma$, and let $w\in \mathcal{V}_\infty$ be a solution to \eqref{eq:MBOstepa} with initial condition $w_0 \in \mathcal{V}$. Let $c_1, c_2\in \R$ be such that, for all $i\in V$, $c_1 \leq (w_0)_i \leq c_2$. Then, for all $t\geq 0$ and for all $i\in V$, $c_1 \leq w_i(t) \leq c_2$.

In particular, for all $t\geq 0$, $\|w(t)\|_{\mathcal{V}, \infty} \leq \|w_0\|_{\mathcal{V},\infty}$.
\end{corol}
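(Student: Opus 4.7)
The plan is to deduce this corollary directly from the comparison principle in Lemma~\ref{lem:comprincII} by comparing the solution $w$ against the constant functions $c_1\chi_V$ and $c_2\chi_V$, which serve as trivial stationary solutions of \eqref{eq:MBOstepa}.

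First I would observe that for any constant $c\in\R$, the function $c\chi_V$ is a stationary solution of \eqref{eq:MBOstepa}. Indeed, $\Delta(c\chi_V)=0$ since $\chi_V$ spans the kernel of $\Delta$, and by \eqref{eq:zeromassvarphiequation} (or equivalently by Remark~\ref{rem:varphiandL}, which gives $L(c\chi_V)=0$), the corresponding $\varphi$ solving \eqref{eq:zeromassvarphiequation} for $c\chi_V-\mathcal{A}(c\chi_V)=0$ is $\varphi=0$. So letting $u^{(c)}(t):=c\chi_V$ for all $t\geq 0$ gives a solution to \eqref{eq:MBOstepa} with initial datum $c\chi_V$.

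Next, apply Lemma~\ref{lem:comprincII} twice. Let $u^{(c_1)}$ and $u^{(c_2)}$ be the constant solutions defined above. By hypothesis, for every $i\in V$ we have $(c_1\chi_V)_i=c_1\leq(w_0)_i\leq c_2=(c_2\chi_V)_i$. Since the hypothesis of Lemma~\ref{lem:comprincII} is exactly the pointwise ordering of initial data, it follows that for all $t\geq 0$ and all $i\in V$,
\[
c_1 = u^{(c_1)}_i(t) \leq w_i(t) \leq u^{(c_2)}_i(t) = c_2.
\]

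For the final statement, take $c_2:=\|w_0\|_{\mathcal{V},\infty}=\max_{i\in V}|(w_0)_i|$ and $c_1:=-\|w_0\|_{\mathcal{V},\infty}$. Then by construction $c_1\leq(w_0)_i\leq c_2$ for every $i\in V$, so the first part of the corollary yields $|w_i(t)|\leq\|w_0\|_{\mathcal{V},\infty}$ for all $t\geq 0$ and all $i\in V$. Taking the max over $i\in V$ gives $\|w(t)\|_{\mathcal{V},\infty}\leq\|w_0\|_{\mathcal{V},\infty}$, as desired.

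There is no real obstacle here: the only substantive step is recognising that constants are stationary solutions of the ODE step, which requires $L(c\chi_V)=0$. This holds precisely because the average-subtraction in \eqref{eq:zeromassvarphiequation} (or equivalently the spectral representation \eqref{eq:Leigexp}) annihilates the constant mode. All the heavy lifting—establishing that $\mathcal{C}_\gamma$ is the right class of graphs for a comparison principle to hold for $L$—has already been done in Lemma~\ref{lem:comprincII}.
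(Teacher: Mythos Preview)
Your proof is correct and follows essentially the same approach as the paper: recognise that constant functions $c\chi_V$ are stationary solutions of \eqref{eq:MBOstepa} because $L(c\chi_V)=0$, then apply Lemma~\ref{lem:comprincII} twice to sandwich $w$ between $c_1$ and $c_2$, and finally specialise to $c_1=-\|w_0\|_{\mathcal{V},\infty}$, $c_2=\|w_0\|_{\mathcal{V},\infty}$ for the norm bound.
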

\begin{proof}
First note that $c_1$ and $c_2$ always exist, since $V$ is finite.

If $u\in \mathcal{V}_\infty$ solves \eqref{eq:MBOstepa} with initial condition $u_0 = c_1\chi_V \in \mathcal{V}$, then, for all $t\geq 0$, $u(t)=c_1 \chi_V$. Applying Lemma~\ref{lem:comprincII} with $v_0 = w_0$ and $v=w$, we obtain that, for all $t\geq 0$ and for all $i\in V$, $w_i(t)\geq c_1$. Similarly, if $v\in \mathcal{V}_\infty$ solves \eqref{eq:MBOstepa} with initial condition $v_0 = c_2 \chi_V \in \mathcal{V}$, then, for all $t\geq 0$, $u(t)=c_2 \chi_V$. Hence, Lemma~\ref{lem:comprincII} with $u_0=w_0$ and $u=w$ tells us that, for all $t\geq 0$ and for all $i\in V$, $w_i(t) \leq c_2$.

The final statement follows by noting that, for all $i\in V$, $-\|w_0\|_{\mathcal{V},\infty} \leq (w_0)_i \leq  \|w_0\|_{\mathcal{V},\infty}$.
\end{proof}

\begin{remark}
Numerical simulations show that when $G\not\in \mathcal{C}_\gamma$, the results from Corollary~\ref{cor:betweenconstants} do not necessarily hold for all $t>0$. For example, consider an unweighted 4-regular graph (in the notation of Section~\ref{sec:examplegraphs} we take the graph $G_{\text{torus}}(900)$) with $r=0$ and $\gamma=0.7$. We compute $\min_{i,j\in V} (d_i^{-r} \omega_{ij} +  \gamma \frac{d_j^r}{\vol V} f^j_i) \approx -0.1906$ in MATLAB using \eqref{eq:defvarphij}, \eqref{eq:spectralvarphij}, so the graph is not in $\mathcal{C}_{0.7}$. Computing $v(0.01) = e^{-0.01 L} v^0$, where $v^0$ is a $\{0,1\}$-valued initial condition\footnote{To be precise, here we choose $v^0$ based on the eigenvector corresponding to option (c) explained in Section~\ref{sec:initialcondition}, with $M=450$.}, we find $\min_{i\in V} v_i(0.01) \approx -0.0033<0$ and $\max_{i\in V} v_i(0.01) \approx 1.0033>1$. Hence the conclusions of  Corollary~\ref{cor:betweenconstants} do not hold in this case.
\end{remark}

We can use the result from Corollary~\ref{cor:betweenconstants} to prove a second pinning bound, in the vein of Lemma~\ref{lem:dynamicsbounds}, for graphs in $\mathcal{C}_\gamma$.

\begin{lemma}
Let $\gamma\geq 0$ and let $G=(V,E,\omega) \in \mathcal{C}_\gamma$. Let $S\subset V$ be nonempty and define
\[
\tau_\kappa(S) := \frac12 \|L\chi_S\|_{\mathcal{V},\infty}^{-1}.
\]
Let $S^1$ be the first set in the corresponding \ref{alg:OKMBO} evolution of the initial set $S^0=S$. If $0\leq \tau<\tau_\kappa(S)$, then $S^1=S$.
\end{lemma}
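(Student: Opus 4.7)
The plan is to bound the pointwise deviation $\|u(\tau)-\chi_S\|_{\mathcal{V},\infty}$ strictly below $\tfrac12$, which will force the threshold step to recover $S$ exactly. First I would observe that, by Remark~\ref{rem:varphiandL}, the solution to \eqref{eq:MBOstepa} with initial condition $u_0=\chi_S$ is $u(t)=e^{-tL}\chi_S$. Using the fundamental theorem of calculus (together with the fact that $L$ commutes with $e^{-sL}$), this gives the representation
\[
u(\tau)-\chi_S \;=\; \int_0^\tau \frac{d}{ds}\bigl(e^{-sL}\chi_S\bigr)\,ds \;=\; -\int_0^\tau e^{-sL}(L\chi_S)\,ds.
\]

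Next I would control the integrand in the $\|\cdot\|_{\mathcal{V},\infty}$ norm. This is the step where the assumption $G\in \mathcal{C}_\gamma$ is essential: by Corollary~\ref{cor:betweenconstants}, the evolution $w\mapsto e^{-sL}w$ is a contraction in the $\|\cdot\|_{\mathcal{V},\infty}$ norm, so that for every $s\ge 0$,
\[
\|e^{-sL}(L\chi_S)\|_{\mathcal{V},\infty} \;\le\; \|L\chi_S\|_{\mathcal{V},\infty}.
\]
(If $\|L\chi_S\|_{\mathcal{V},\infty}=0$, then $\chi_S$ is a constant function and the statement is trivial, since one can interpret $\tau_\kappa(S)=+\infty$ and the \ref{alg:OKMBO} evolution is pinned; this case must be addressed only to confirm well-definedness of $\tau_\kappa(S)$.)

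Combining the two, the triangle inequality for integrals of $\mathcal{V}$-valued functions (applied componentwise in $\|\cdot\|_{\mathcal{V},\infty}$) yields
\[
\|u(\tau)-\chi_S\|_{\mathcal{V},\infty} \;\le\; \int_0^\tau \|e^{-sL}(L\chi_S)\|_{\mathcal{V},\infty}\,ds \;\le\; \tau\,\|L\chi_S\|_{\mathcal{V},\infty}.
\]
The hypothesis $\tau<\tau_\kappa(S)=\tfrac12\|L\chi_S\|_{\mathcal{V},\infty}^{-1}$ then gives the strict bound $\|u(\tau)-\chi_S\|_{\mathcal{V},\infty}<\tfrac12$.

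Finally I would read off the conclusion from the threshold step in \ref{alg:OKMBO}: for $i\in S$ one has $(\chi_S)_i=1$, so $u(\tau)_i>\tfrac12$ and thus $i\in S^1$; for $i\in V\setminus S$ one has $(\chi_S)_i=0$, so $u(\tau)_i<\tfrac12$ and thus $i\not\in S^1$. Hence $S^1=S$. The only subtle point is the contractivity bound in the second step, which is where $\mathcal{C}_\gamma$ enters; everywhere else the argument is the standard MBO pinning estimate.
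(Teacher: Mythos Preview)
Your proof is correct and follows essentially the same approach as the paper: write $u(\tau)-\chi_S$ as the integral $-\int_0^\tau e^{-sL}(L\chi_S)\,ds$, apply the $\|\cdot\|_{\mathcal{V},\infty}$-contractivity of $e^{-sL}$ from Corollary~\ref{cor:betweenconstants} (which is precisely where $G\in\mathcal{C}_\gamma$ is used), and conclude $\|u(\tau)-\chi_S\|_{\mathcal{V},\infty}\le\tau\|L\chi_S\|_{\mathcal{V},\infty}<\tfrac12$. The paper's version is slightly terser and omits your remark on the degenerate case $L\chi_S=0$, but the argument is the same.
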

\begin{proof}
The proof is based on (parts of the) proof of \cite[Theorem 4.2]{vanGennipGuillenOstingBertozzi14}.

Writing the solution $u(t)=e^{-t L} \chi_S$ to \eqref{eq:MBOstepa} at $t=\tau$ as
$\displaystyle
u(\tau) = \chi_S - \int_0^\tau L(u(t)) \,dt,
$
we find
\begin{align*}
\|u(\tau)-\chi_S\|_{\mathcal{V},\infty} &\leq \int_0^\tau \|L(u(t))\|_{\mathcal{V},\infty} \,dt \leq \int_0^\tau \|e^{-tL} L\chi_S\|_{\mathcal{V},\infty}\, dt \leq \int_0^\tau \|L \chi_S\|_{\mathcal{V},\infty} \, dt\\ 
&= \tau \|L\chi_S\|_{\mathcal{V},\infty} < \frac12,
\end{align*}
where we used that $L$ and $e^{-tL}$ commute for the second inequality, and  
Corollary~\ref{cor:betweenconstants} for the third inequality. 
We conclude that $S^1=S$.
\end{proof}

\section{Numerical implementations}\label{sec:numerical}

In this section we discuss how we implemented \ref{alg:massOKMBO} (in MATLAB version 9.2.0.538062 (R2017a)) and show some results.

\subsection{Spectral expansion method}

We use the spectral expansion \eqref{eq:solutionu} to solve \eqref{eq:MBOstepa}. This is similar in spirit to the spectral expansion methods used in, for example, \cite{BertozziFlenner12,calatroni2017graph}. However, in those papers an iterative method is used to deal with additional terms in the equation. Here, we can deal with the operator $L$ in \eqref{eq:MBOstepa} in one go.

Note that in other applications of spectral expansion methods, such as those in \cite{bertozzi2016diffuse}, sometimes only a subset of the eigenvalues and corresponding eigenfunctions is used. When $n$ is very large, computation time can be saved, often without a great loss of accuracy, by using a truncated version of \eqref{eq:expansion} wich only uses the $K \ll n$ smallest eigenvalues $\Lambda_m$ with corresponding eigenfunctions. The examples we show in this paper are small enough that such an approximation was not necessary, but it might be considered if the method is to be run on large graphs.

\subsection{Example graphs}\label{sec:examplegraphs}

For the purpose of having visually appealing results, in the experiments we present here we have mostly used graphs whose structure resembles a discretization of the plane ---such as the graphs $G_{\text{torus}}$, $G_{\text{stitched}}$, and even, to a degree, $G_{\text{moons}}$, which are introduced below--- as they allow us to see pattern formation similar to what we expect based on the continuum case \cite[Chapter 2]{vanGennip08}. For example, spherical droplets (Figure~\ref{fig:02bmini}) or lamellar patterns (Figure~\ref{fig:01bmini}). However, the algorithm is not restricted to such examples; in the visually less interesting examples we will still display the evolution of the value of $F_0$ along the sequence of \ref{alg:massOKMBO} iterates, to illustrate that the algorithm does (mostly) decrease the value of $F_0$ also in these cases. In this paper we present results obtained on the following graphs:
\begin{itemize}
\item An unweighted $4$-regular graph (i.e. each node has degree $4$) which can be graphically represented as the grid obtained by tessellating a square with periodic boundary conditions (i.e. the square two-dimensional flat torus) with square tiles, see for example Figure~\ref{fig:initialcond}\footnote{In order to increase the visibility of the patterns in the function values on the nodes, the size of the nodes as depicted was chosen to be large. As a consequence, in the figure the nodes cover the edges and the edges are no longer visible; for each node edges are present between it and each of the four nodes placed immediately adjacent to it, taking into account periodic boundary conditions.}. We denote this graph by $G_{\text{torus}}(n)$, where $n$ is the number of nodes (and thus $\sqrt{n}$ is the number of nodes along each direction of the square in the tesselation representation.

\item An unweighted graph obtained by adjoining a square lattice graph (this time without periodic boundary conditions) and a triangular lattice graph, as in Figure~\ref{fig:13b} (see also \cite{vanGennipGuillenOstingBertozzi14}. We will denote these `stitched together' graphs by $G_{\text{stitched}}(n)$ where $n$ is the total number of nodes in the graph.

\item A two moon graph constructed as in \cite{Buhler2009}. This graph is constructed by sampling points from two half-circles in $\R^2$, embedding these into a high-dimensional space, adding Gaussian noise, and constructing a weighted $K$-nearest neighbour graph with the sample points represented by the nodes. We will denote this graph by $G_{\text{moons}}$. It has $600$ nodes. See Figure~\ref{fig:18b19b20b}.

\item To illustrate that the method can also be applied to more complex networks, we use a symmetrized version of the weighted ``neural network'' graph obtained from \cite{Newmanwebsite} and based on \cite{white1986structure,WattsStrogatz98}. It represents the neural network of C. Elegans and has 297 nodes. Since the original network with weight matrix $A$ is directed, we use the symmetrized weight matrix $\frac12(A+A^T)$.  We will denote the resulting undirected, weighted, graph by $G_{\text{neural}}$. See Figure~\ref{fig:21b}.
\end{itemize}

\begin{figure}
\begin{center}
\begin{subfigure}[b]{0.45\textwidth}
\includegraphics[width=1.1\textwidth]{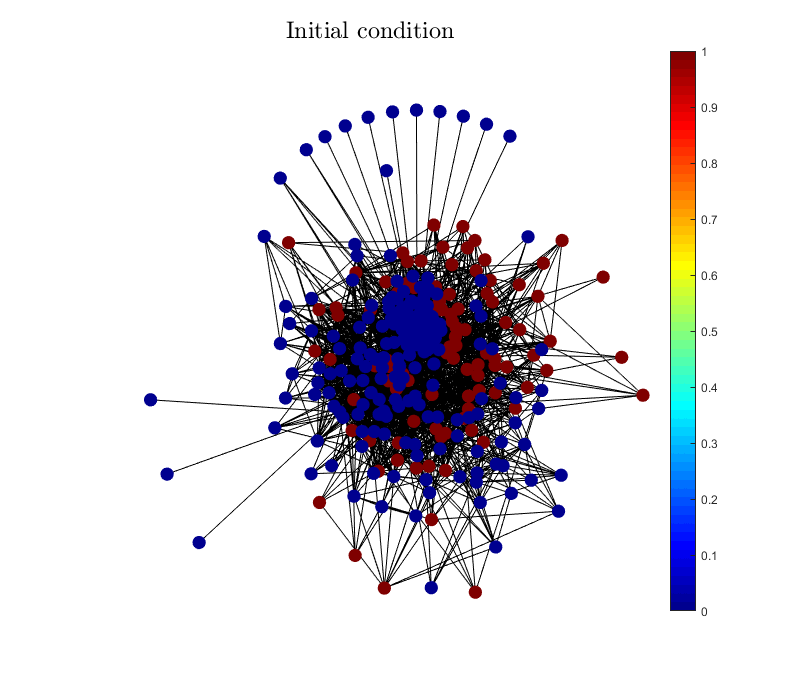}\vspace{-0.7cm}
\caption{Initial condition} \label{fig:21binit}
\end{subfigure}
\begin{subfigure}[b]{0.45\textwidth}
\includegraphics[width=1.1\textwidth]{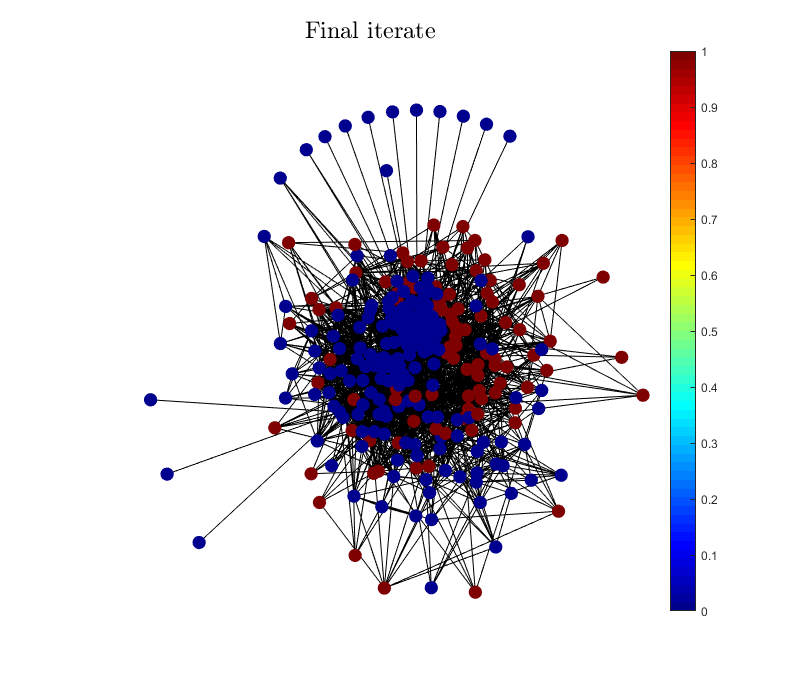}\vspace{-0.7cm}
\caption{Final iterate ($k=2$)} \label{fig:21bmini}
\end{subfigure}\\ \vspace{0.3cm}
\begin{subfigure}[b]{0.45\textwidth}
\includegraphics[width=1.1\textwidth]{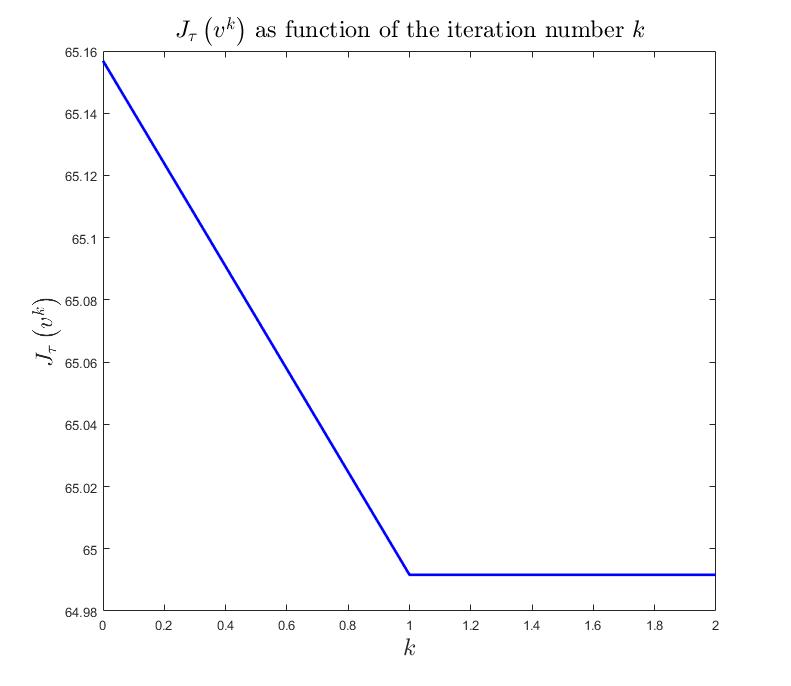}\vspace{-0.4cm}
\caption{Plot of $J_{0.75}\left(v^k\right)$} \label{fig:21bJtau}
\end{subfigure}
\begin{subfigure}[b]{0.45\textwidth}
\includegraphics[width=1.1\textwidth]{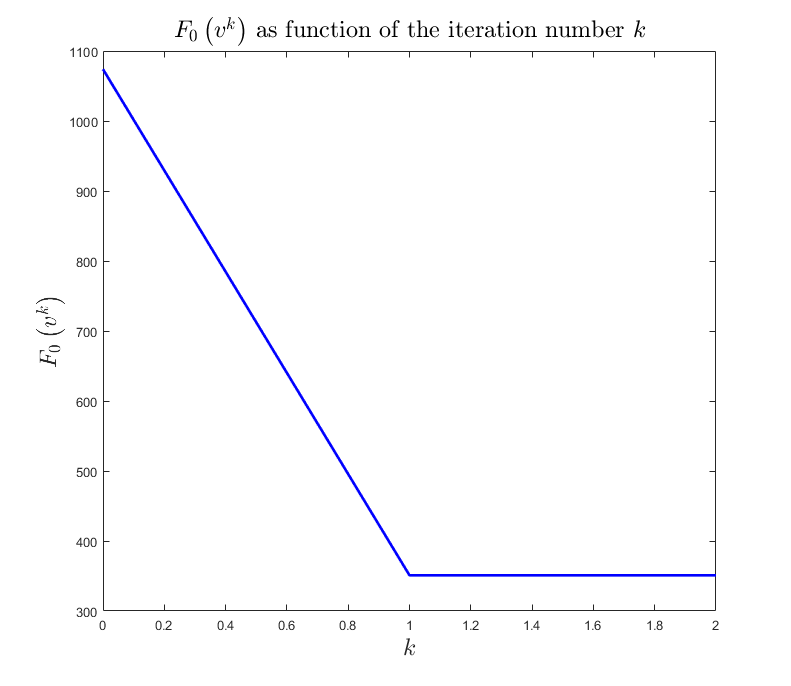}\vspace{-0.4cm}
\caption{Plot of $F_0\left(v^k\right)$} \label{fig:21bOKTV}
\end{subfigure}\\ \vspace{0.3cm}
\begin{subfigure}[b]{0.45\textwidth}
\includegraphics[width=1.1\textwidth]{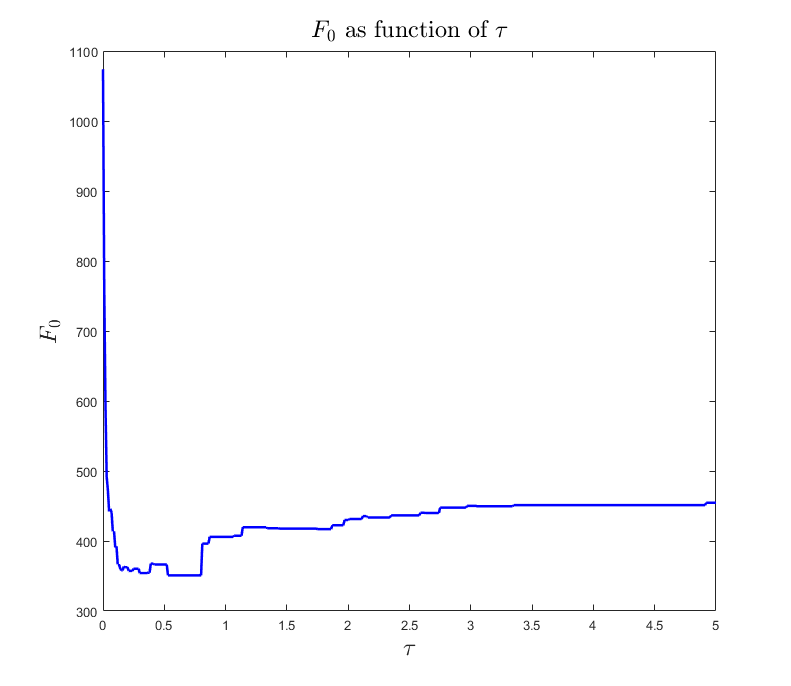}\vspace{-0.5cm}
\caption{$F_0$ at the final iterate as a function of $\tau$} \label{fig:multtau07}
\end{subfigure}
\vspace{-0.2cm} \caption{Results from Algorithm~\ref{alg:massOKMBO} on $G_{\text{neural}}$ with $r=0$, $\gamma=1$, and $M=100$. The initial condition in Figure~\ref{fig:21binit} was constructed using option (b) in Section~\ref{sec:initialcondition} and was used to obtain the other results displayed here. The results in Figures~\ref{fig:21bmini}, \ref{fig:21bJtau}, and \ref{fig:21bOKTV} were obtained for $\tau=0.75$, in which case the value of $F_0$ at the final iterate is approximately $350.95$. The graph in Figure~\ref{fig:multtau07} shows the values of $F_0$ at the final iterates for a ranges of values of $\tau$, with a resolution on the $\tau$ axis (step size) of $0.01$.}
\label{fig:21b}
\end{center}
\end{figure}

\subsection{Choice of $\tau$}

The choice of $\tau$ is an important one.  We know from the discussion in Remark~\ref{rem:pinningmassOKMBO} that $\tau$ should not be chosen too small or too large, but it is not easy to decide a priori what a good choice would be.  The discussion in Remark~\ref{rem:whataboutmass?} suggests that, if minimizing $F_0$ in \eqref{eq:minimprobsF0} (with $q=1$) is our goal, then we should choose $\tau$ small, but the potential for pinning prevents us from choosing $\tau$ too small. It is also worthwhile to note that, while the $\Gamma$-convergence results in Section~\ref{sec:Gammaconvergence} guarantee convergence of minimizers of $J_\tau$ over $\mathcal{K}_M$ to a minimizer of $F_0$ over $\mathcal{V}_M^b$, there is no monotonicity result in the sense that we do not know if minimizers of $J_\tau$ for smaller $\tau$ are better approximations.

One might think that the condition in \eqref{eq:choiceoftau} (for $v^0$) gives us some guidance in choosing $\tau$. After all, we do not want the algorithm to pin straight away in the first iteration. There are, however, some difficulties with this approach. The condition does not give us a way to determine $\tau$ a priori, before actually computing $e^{-\tau L} v^0$, and so while it might serve as a condition to reject or accept a given $\tau$ a posteriori (which boils down to being a glorified trial and error approach), it does not directly help in deciding on $\tau$ beforehand. We experimented with replacing the exponentials in \eqref{eq:choiceoftau} with their linear or quadratic Taylor approximations at $\tau=0$. While such approximations allow us to find a value of $\tau$ which satisfies the approximated version of the inequality \eqref{eq:choiceoftau}, in our experiments these $\tau$ did not satisfy the exact inequality.

Even if we do manage to find a $\tau$ which satisfies the condition in \eqref{eq:choiceoftau} for $v^0$, this same $\tau$ might not satisfy the condition for $v^1$ or some other $v^{k-1}$ down the line. In fact, we know that \ref{alg:massOKMBO} does terminate, hence there is a $k$ for which $\tau$ violates the condition for $v^{k-1}$. It is not at all clear which $k$ is the preferred final iteration number, even if we could somehow design a way to choose $\tau$ at the start in such a way to have the algorithm terminate after this preferred $k^{\text{th}}$ iteration. Lemma~\ref{lem:massLyapunov} tells us that $J_\tau$ decreases along iterates of the \ref{alg:massOKMBO} algorithm, but it does not tell us how close each iterate is to minimizing $J_\tau$.

We did consider (and experimented with) updating $\tau$ in each iteration of \ref{alg:massOKMBO} such that it satisfies \eqref{eq:choiceoftau} with $v^{k-1}$ in the $k^{\text{th}}$ iteration. This might seem a good approach, but it does not actually address the problems described above and introduces some new ones. First of all, we are still posed with the same difficulties we had in choosing a good $\tau$ based on $v^0$, only now at each iteration. Second, this introduces the question of when to stop updating $\tau$. If we update $\tau$ after each iteration such that it satisfies \eqref{eq:choiceoftau} in each new iteration, the algorithm will only terminate once it reaches a state in which \eqref{eq:choiceoftau} has no solutions, which is not necessarily guaranteed to be a preferred state. One possible choice could be to terminate when the only possible choices lead to a new value of $\tau$ that is higher than the previous value of $\tau$ (with some possible leeway in the first few iterations, to allow the scheme to move away from the initial condition). Third, such iterative updating of $\tau$ introduces a new layer of difficulty in the theoretical interpretation of the algorithm. If we run \ref{alg:massOKMBO} at a fixed $\tau$, we know that we do so in order to minimize $J_\tau$ (even though we do not know how well the final iterate approximates a minimzer), which in turn we do because such minimizers approximate minimizers of $J_\tau$ (by Theorem~\ref{thm:gammaconvergencemass}). Updating $\tau$ in each step complicates the first part of that interpretation. 

In our experiments the results obtained by updating $\tau$ did not outperform results obtained with fixed $\tau$ (measured by the value of $F_0$ at the final iterate). It might be that significant improvements can be obtained with the right update rule (we tried various ad hoc update techniques that would allow the algorithm to progress through a number of iterations before terminating), but since we did not discover such rule if it even exists, in this paper all the results we present are obtained with fixed $\tau$, chosen by trial and error. The graphs and results in Figures~\ref{fig:12b11b}, \ref{fig:16b17b}, and~\ref{fig:differentgamma} show how the final value of $F_0$ obtained by the algorithm can vary greatly depending on the choice of $\tau$. Note that large values of $\tau$ can lead to spurious patterns as explained in more detail in Section~\ref{sec:spurious}.

In Figure~\ref{fig:12b11b} we see detailed results obtained at two different values of $\tau$ with all the other parameter values kept the same. The resulting final iterates in Figures~\ref{fig:12bmini} and~\ref{fig:11bmini} are very different. The latter has a significantly lower value of $F_0$ than the former ($228.42$ versus $454.96$) and is thus a better approximation of a minimizer of $F_0$. It is however not an exact minimizer, as in this case we can obtain even lower values of $F_0$ by choosing a different initial condition (namely the one in Figure~\ref{fig:initialcondb}, as is explained in more detail in Section~\ref{sec:initialcondition} and can be seen in Figures~\ref{fig:02bmini} and~\ref{fig:02bOKTV}.

\begin{figure}
\begin{center}
\begin{subfigure}[b]{0.45\textwidth}
\includegraphics[width=1.1\textwidth]{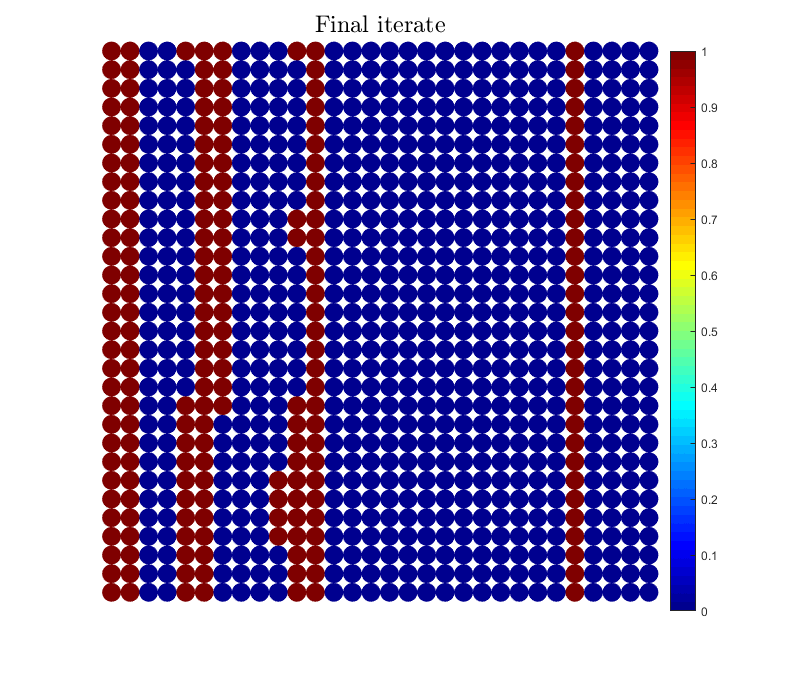}\vspace{-0.7cm}
\caption{ Final iterate ($k=3$) for $\tau=1$} \label{fig:12bmini}
\end{subfigure}
\begin{subfigure}[b]{0.45\textwidth}
\includegraphics[width=1.1\textwidth]{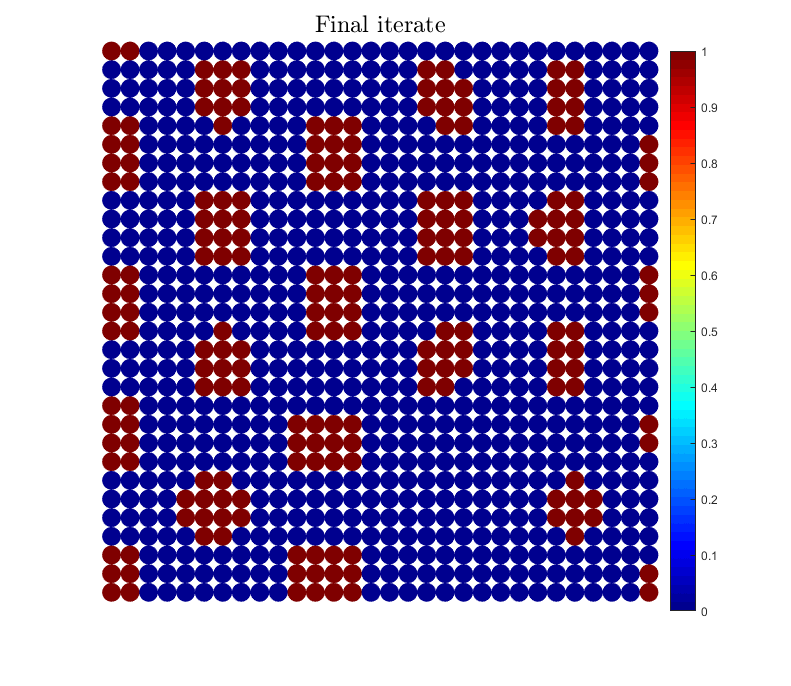}\vspace{-0.7cm}
\caption{Final iterate ($k=13$) for $\tau=5$} \label{fig:11bmini}
\end{subfigure}\\ \vspace{0.3cm}
\begin{subfigure}[b]{0.45\textwidth}
\includegraphics[width=1.1\textwidth]{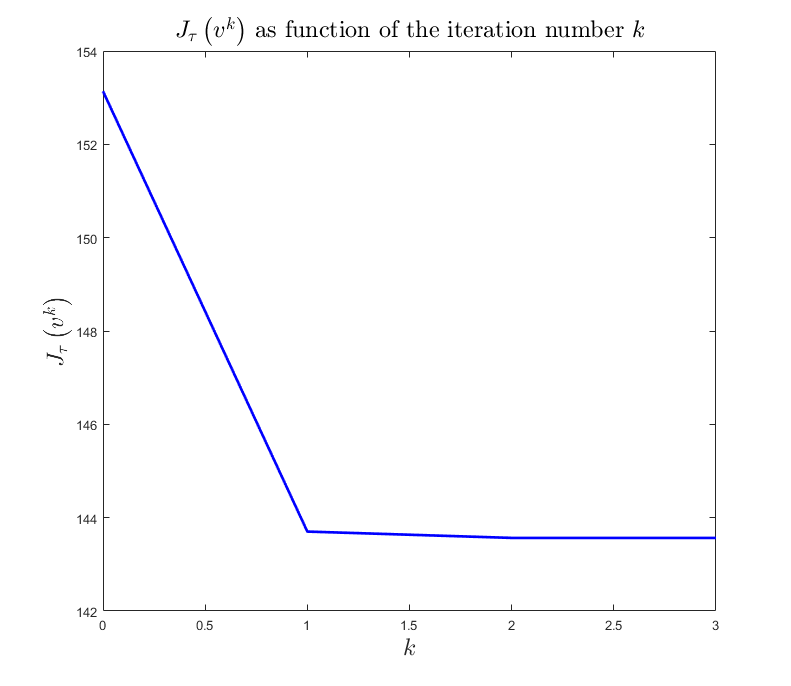}\vspace{-0.4cm}
\caption{Plot of $J_1\left(v^k\right)$} \label{fig:12bJtau}
\end{subfigure}
\begin{subfigure}[b]{0.45\textwidth}
\includegraphics[width=1.1\textwidth]{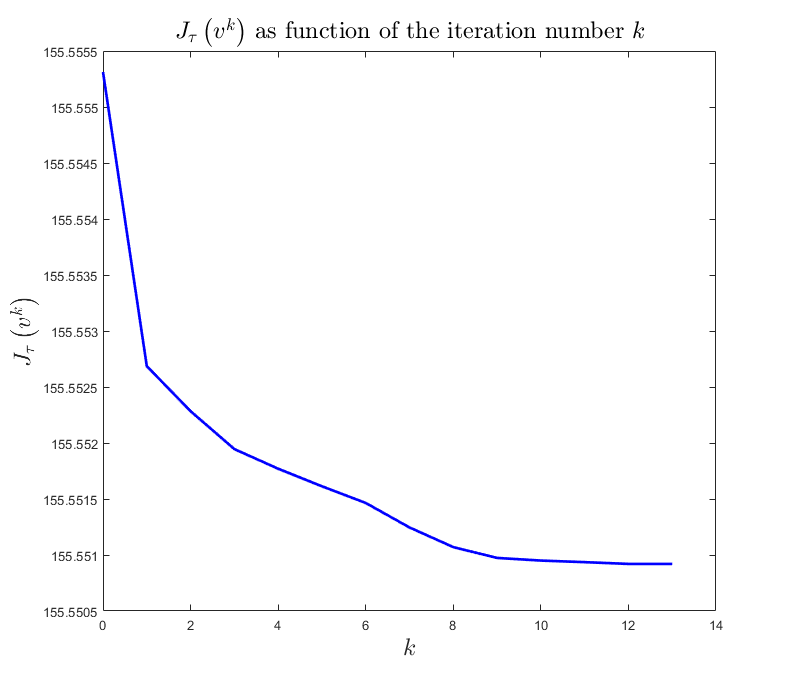}\vspace{-0.4cm}
\caption{Plot of $J_5\left(v^k\right)$} \label{fig:11bJtau}
\end{subfigure}\\ \vspace{0.3cm}
\begin{subfigure}[b]{0.45\textwidth}
\includegraphics[width=1.1\textwidth]{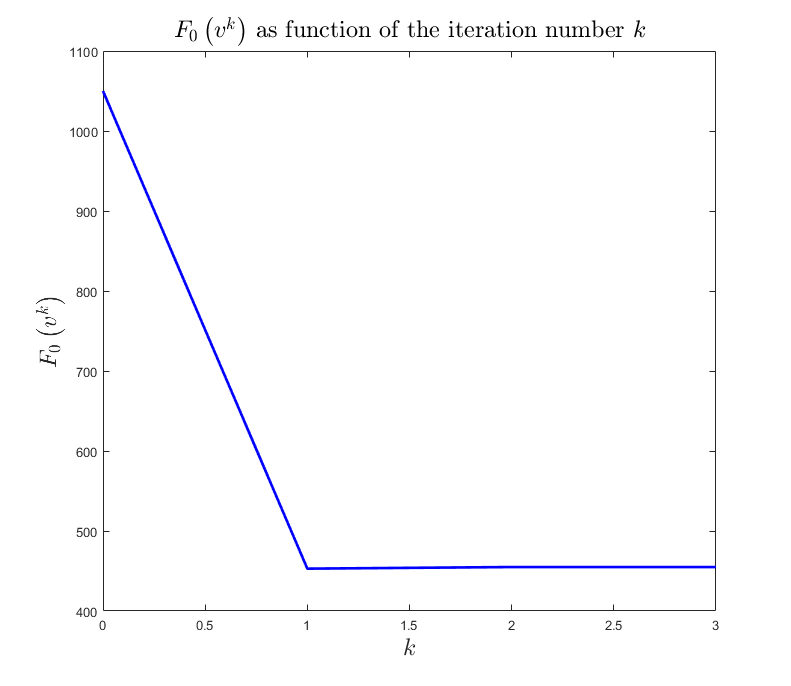}\vspace{-0.4cm}
\caption{Plot of $F_0\left(v^k\right)$ for $\tau=1$} \label{fig:12bOKTV}
\end{subfigure}
\begin{subfigure}[b]{0.45\textwidth}
\includegraphics[width=1.1\textwidth]{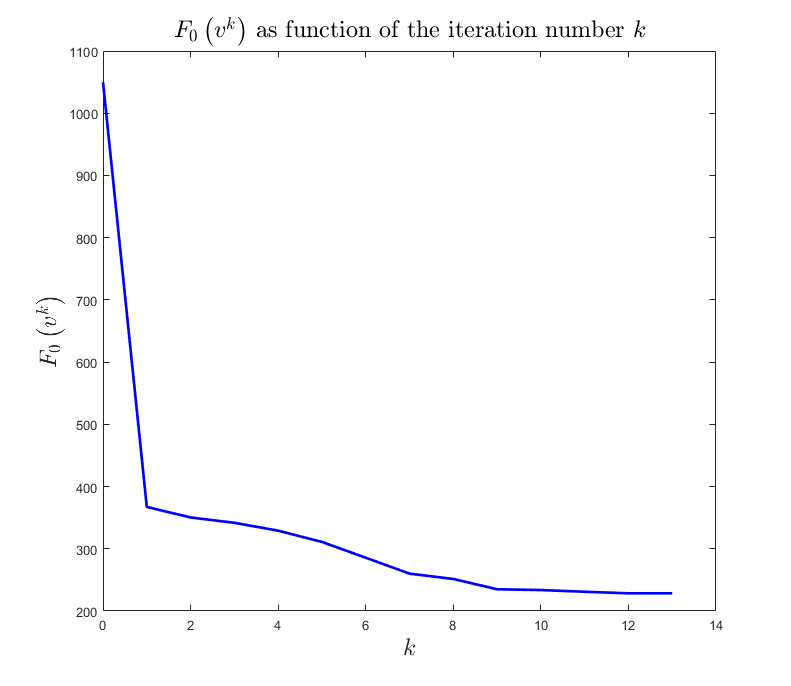}\vspace{-0.4cm}
\caption{Plot of $F_0\left(v^k\right)$ for $\tau=5$} \label{fig:11OKTV}
\end{subfigure}
\caption{Results from Algorithm~\ref{alg:massOKMBO} on $G_{\text{torus}}(900)$ with $r=0$, $\gamma=1$, and $M=200$. The initial condition from Figure~\ref{fig:initialconda} was used. The figures in the left column were obtained with $\tau=1$, the ones on the right with $\tau=5$. The value of $F_0$ at the final iterate is approximately $454.96$ for $\tau=1$ and $228.42$ for $\tau=5$.}
\label{fig:12b11b}
\end{center}
\end{figure}

In most of the numerical results we present here, $F_0$ decreases monotonically along the sequence of \ref{alg:massOKMBO} iterates (until the penultimate iterate after which it stays constant; see the discussion of the stopping criterion in Section~\ref{sec:otherchoices}). Figure~\ref{fig:17bOKTV} shows that this is not always the case. In Figures~\ref{fig:13bOKTV} and~\ref{fig:19bOKTV} we even see cases in which the value of $F_0$ at the final iterate is higher than at some of the earlier iterates. If the required additional memory and computation time are available, at every iteration of \ref{alg:massOKMBO} one can store the state which has obtained the lowest value of $F_0$ so far and use that state as approximate minimizer of $F_0$ upon termination of the algorithm. Note, however, from Figures~\ref{fig:17bJtau}, \ref{fig:13bJtau}, and~\ref{fig:19bJtau} that also in those cases the value of $J_\tau$ does decrease along the sequence of iterates, as it is guaranteed to do by Lemma~\ref{lem:massLyapunov}.

\begin{figure}
\begin{center}
\begin{subfigure}[b]{0.45\textwidth}
\includegraphics[width=1.1\textwidth]{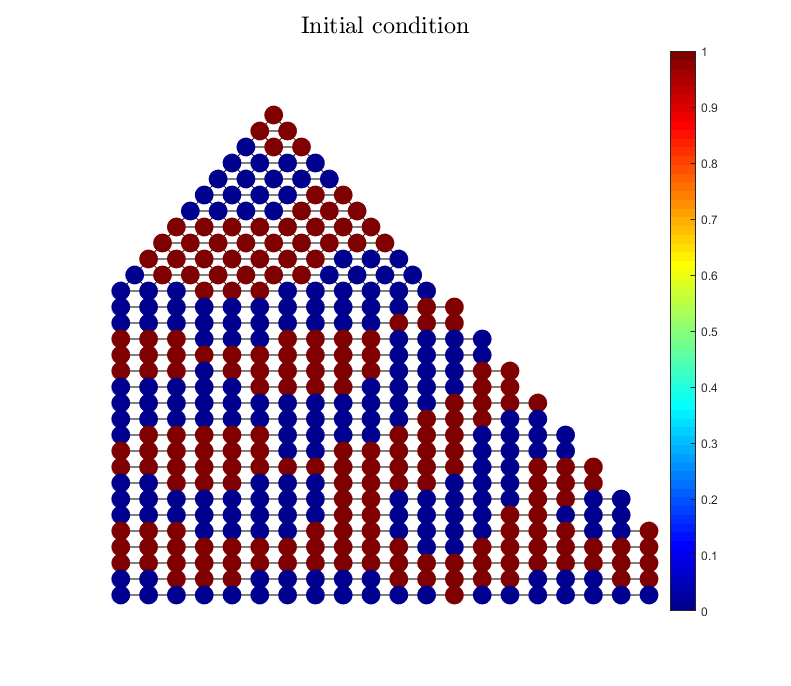}\vspace{-0.7cm}
\caption{Initial condition} \label{fig:13binit}
\end{subfigure}
\begin{subfigure}[b]{0.45\textwidth}
\includegraphics[width=1.1\textwidth]{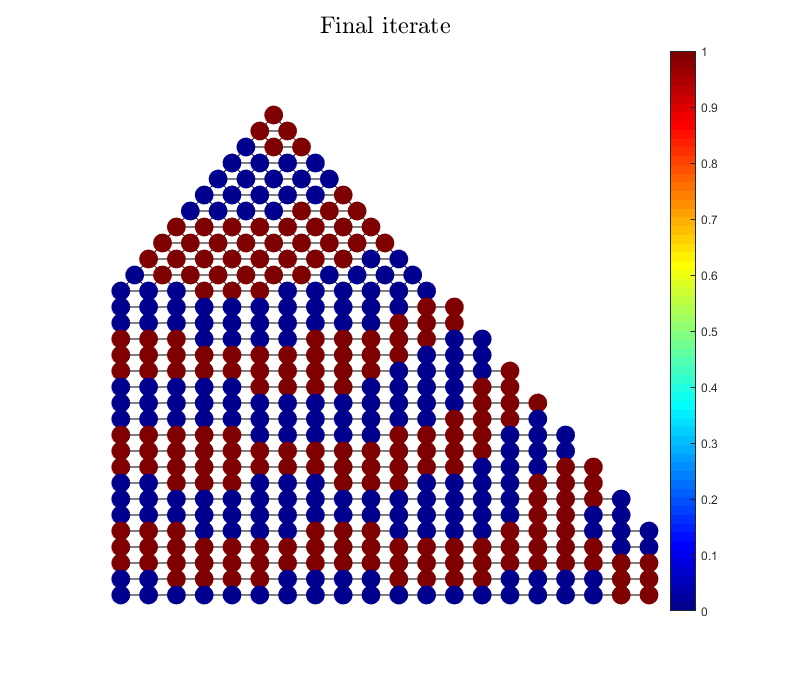}\vspace{-0.7cm}
\caption{Final iterate ($k=8$)} \label{fig:13bmini}
\end{subfigure}\\ \vspace{0.3cm}
\begin{subfigure}[b]{0.45\textwidth}
\includegraphics[width=1.1\textwidth]{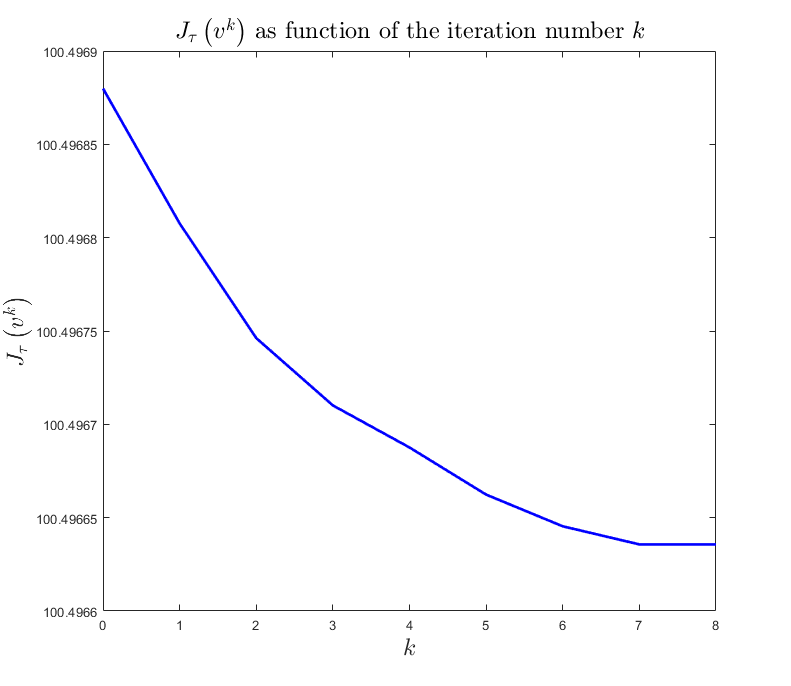}\vspace{-0.4cm}
\caption{Plot of $J_5\left(v^k\right)$} \label{fig:13bJtau}
\end{subfigure}
\begin{subfigure}[b]{0.45\textwidth}
\includegraphics[width=1.1\textwidth]{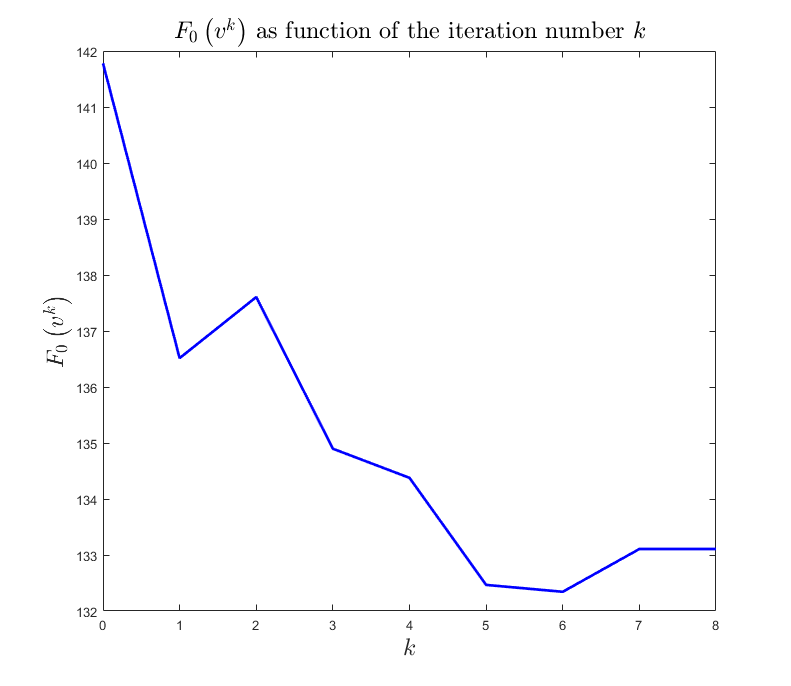}\vspace{-0.4cm}
\caption{Plot of $F_0\left(v^k\right)$} \label{fig:13bOKTV}
\end{subfigure}
\caption{Results from Algorithm~\ref{alg:massOKMBO} on $G_{\text{stitched}}(402)$ with $r=0$, $\gamma=1$, $M=201$, and $\tau=5$. The initial condition in Figure~\ref{fig:13binit} was constructed according to option (c) in Section~\ref{sec:initialcondition}. The value of $F_0$ at the final iterate is approximately $133.11$.}
\label{fig:13b}
\end{center}
\end{figure}

\subsection{Choice of initial condition}\label{sec:initialcondition}

Up until now, we have not paid much attention to the choice of initial condition, but in practice this choice has a big influence on the final state of \ref{alg:massOKMBO}; different initial conditions can lead to final states with significantly different values of $F_0$. In the experiments which we report on here\footnote{We also tried some other initial conditions in $\mathcal{K}_M\setminus\mathcal{V}_M^{ab}$, $\mathcal{K}\setminus\mathcal{V}^b$, and $\mathcal{V}^b\setminus\mathcal{V}^b_M$, such as states with the mass spread out evenly over all nodes or other constant functions, states with randomly spread mass (which differs from option (a) in the main text in that this state is usually not binary), and states constructed by changing a function $\mathcal{V}^{ab}_M$ at one node to make it binary. In our test these approaches were never optimal, so we will not spend more time on them here.} we used three different options for constructing initial conditions:
\begin{itemize}
\item[(a)] Assign the available mass to random nodes (by applying the mass conserving thresholding step of \ref{alg:massOKMBO} to a random vector generated by MATLAB's \texttt{rand} function). A realization of such a randomly constructed initial condition is given in Figure~\ref{fig:14binit}.
\item[(b)] Cluster the initial mass together in one part of the graph. This description is necessarily somewhat vague, as it is not a well-defined method in itself which is applicable across all choices of graphs. Instead, in this option we let the structure of the graph suggest the structure of the initial condition. It is best illustrated by specific examples, e.g. assigning all mass to the nodes in one strip of the square grid/discretized torus or one part of the stitched mess; see Figures~\ref{fig:initialconda} and~\ref{fig:15binit}. Figure~\ref{fig:21binit} shows another example where this option was used\footnote{In practice this is achieved by applying the mass conserving threshold step to the vector $(n, n-1, \ldots, 1)^T$, where the numbering of the nodes in $G_{\text{torus}}$ and $G_{\text{stitched}}$ is clear from the resulting initial conditions in Figures~\ref{fig:initialconda} and~\ref{fig:15binit}, respectively, and the node numbering in $G_{\text{neural}}$ is the one inherited from the dataset from \cite{Newmanwebsite}.}.
\item[(c)] Construct $v^0$ based on the eigenfunctions $\phi^m$ by applying the mass conserving thresholding step to an eigenvector $\phi^m$ corresponding to the smallest nonzero eigenvalue of $L$\footnote{Other variations we tried include using other eigenvalues, e.g. the smallest non-zero eigenvalue of $\Delta$, (since, for small $\gamma$, we can view $L$ as a perturbation of $\Delta$ in the sense of Theorem~\ref{thm:newgraph}), and applying the mass conserving threshold step to the vector with entries $\left|\pm \phi^m_i\right|$ (or $\pm \left|\sum_m \phi^m_i\right|$ in the case of a non-simple eigenvalue) to reflect (in crude approximation) the fact that the relevant quantity to minimize in \eqref{eq:OKexpression2} is $\left|\langle v^0, \phi^m\rangle_{\mathcal{V}}\right|$. None of those choices stood out from option (c)  mentioned in the main text.}. When this eigenvalue $\Lambda_m$ has multiplicity greater than $1$, the choice of $\phi_m$ is not unique (besides the `standard' non-uniqueness in sign when $m\geq 1$). In our experiments we choose $\phi_m$ to be the sum of all eigenfunctions (after normalization) that MATLAB's \texttt{eig} function returns corresponding to $\Lambda_m$\footnote{Note that this still could be machine dependent, as \texttt{eig} does not necessarily use a consistent order.}. Examples of initial conditions constructed in this way are given in Figures~\ref{fig:initialcondb}, \ref{fig:initialcondc}, \ref{fig:16binit}, \ref{fig:18binit}, \ref{fig:19binit}, and~\ref{fig:20binit}. It should be noted that, while some of these initial conditions are very close to the final iterate they lead to, they are not (in our experiments that are presented here) equal to the final iterates. Even in those cases in which the initial condition is closest to the final iterate (out of the cases we present here), i.e. those in Figure~\ref{fig:02b01b}, the algorithm goes through at least one iteration before arriving at the final state. That is not to say the algorithm cannot pin (it will of course, if $\tau$ is chosen small enough), but it shows that the eigenfunctions are not necessarily stationary states of the algorithm and \ref{alg:massOKMBO} can improve on those states.
\end{itemize}

Comparing the right column of Figure~\ref{fig:12b11b} with the left column of Figure~\ref{fig:02b01b} we see a case in which the eigenfunction based initial condition (option (c) above)  gives better results than the `structured' approach of option (b) with all other parameters kept the same. The latter (Figure~\ref{fig:12b11b}) gives a final value of $F_0$ of approximately $228.42$, whereas the former gives a value of approximately $206.59$. Option (c) is not always preferred though. In Figures~\ref{fig:15b14b} and~\ref{fig:16b17b} we see that both the value obtained with the initial condition in Figure~\ref{fig:14binit} (which is a particular realization of the random process of option (a)) and the value obtained with the initial condition from Figure~\ref{fig:15binit} (option (b)) are both lower than the value obtained with option (c) (Figure~\ref{fig:16binit}), with all other parameters kept the same: $102.01$ and $122.83$, respectively, versus $126.05$. We can improve the result obtained with option (c) by choosing a different $\tau$ ($\tau=7$ instead of $\tau=5$ in Figures~\ref{fig:17bmini} and~\ref{fig:17bOKTV}), but the resulting value $104.01$ is still higher than the lowest value in Figure~\ref{fig:15b14b} (at $\tau=7$ options (a) and (b) did perform worse than option (c) in our experiments; not pictured). We did not find any $\tau$ values that achieved lower $F_0$ values in this case. It should also be noted that of course not every realization of the random process that generates the initial condition in option (a) achieves the same low value for $F_0$. In a separate run of 10 experiments (which did not include the run pictured in the right column of Figure~\ref{fig:15b14b}, but used the exact same parameters) we obtained an average final value for $F_0$ of $118.77$ with a (corrected sample) standard deviation (obtained via \texttt{std} in MATLAB) of $26.84$, a maximum of $191.24$ and a minimum of $101.84$ (all numbers rounded to two decimals).

For some non-optimal initial conditions, we see patterns emerge that look like the intermediate-time phase ordering pictures in \cite[Figure 2]{ito1998domain}, as we see in Figure~\ref{fig:22b}. Figure~\ref{fig:23b} has been constructed using the same parameter choices as Figure~\ref{fig:22b}, but uses an initial condition constructed using option (b), instead of an eigenfunction based initial condition (option (c)). In this case a lower final value of $F_0$ is achieved.

\begin{figure}
\begin{center}
\begin{subfigure}[b]{0.45\textwidth}
\includegraphics[width=1.1\textwidth]{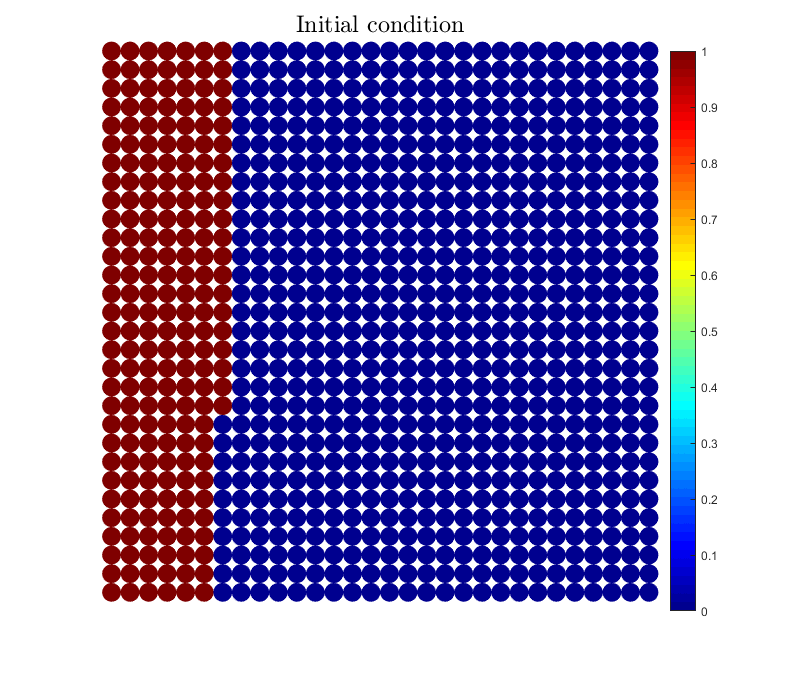}\vspace{-0.7cm}
\caption{`Structured' initial condition} \label{fig:initialconda}
\end{subfigure}
\hspace{.5cm}
\begin{subfigure}[b]{0.45\textwidth}
\includegraphics[width=1.1\textwidth]{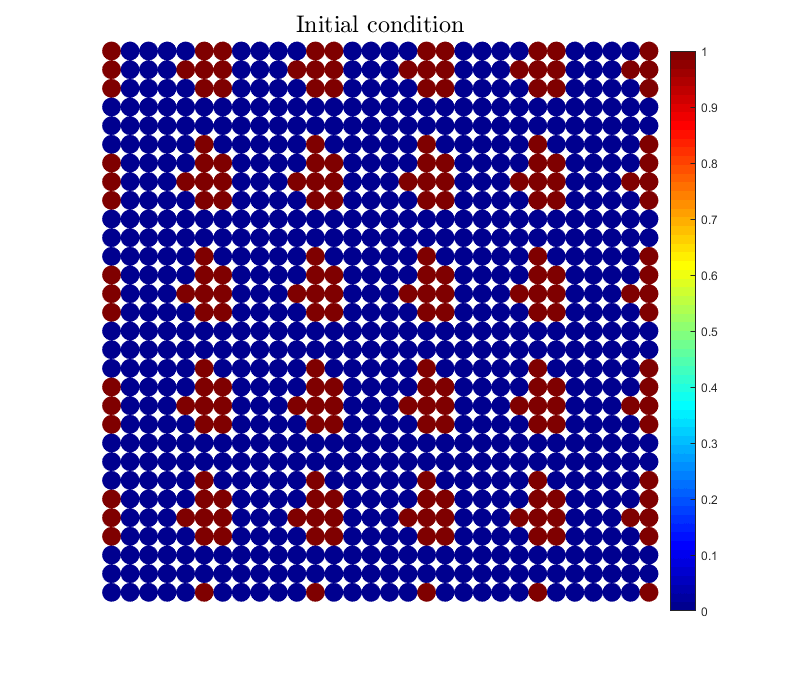}\vspace{-0.7cm}
\caption{Eigenfunction based initial condition} \label{fig:initialcondb}
\end{subfigure}\\ \vspace{0.3cm}
\begin{subfigure}[b]{0.45\textwidth}
\includegraphics[width=1.1\textwidth]{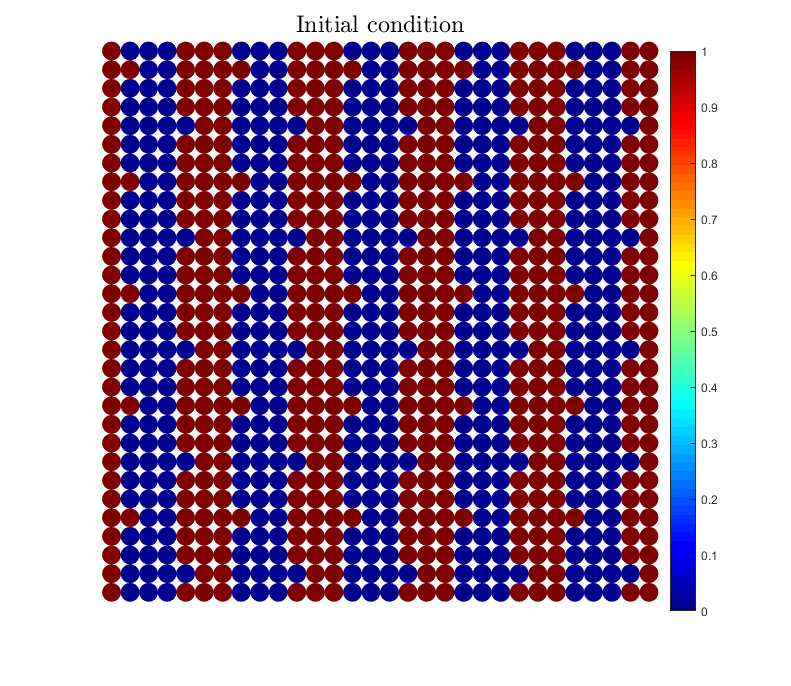}\vspace{-0.7cm}
\caption{Eigenfunction based initial condition} \label{fig:initialcondc}
\end{subfigure}
\hspace{.5cm}
\begin{subfigure}[b]{0.45\textwidth}
\includegraphics[width=1.1\textwidth]{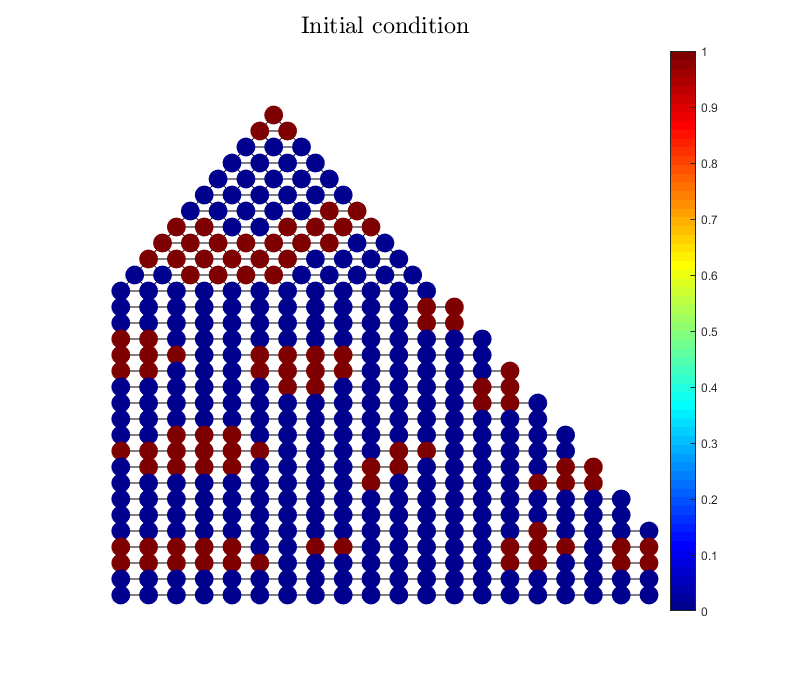}\vspace{-0.7cm}
\caption{Eigenfunction based initial condition} \label{fig:16binit}
\end{subfigure}
\caption{Three different initial conditions for \ref{alg:massOKMBO} on $G_{\text{torus}}(900)$ and one on $G_{\text{stitched}}(402)$, all with $r=0$. The top two figures have $M=200$, the bottom left figure has $M=450$, the bottom right one $M=201$. The initial condition in the top left figure is `structured' in the sense of option (b) in Section~\ref{sec:initialcondition}; the others are based on eigenfunctions as in option (c) in Section~\ref{sec:initialcondition}.}
\label{fig:initialcond}
\end{center}
\end{figure}

\begin{figure}
\begin{center}
\begin{subfigure}[b]{0.45\textwidth}
\includegraphics[width=1.1\textwidth]{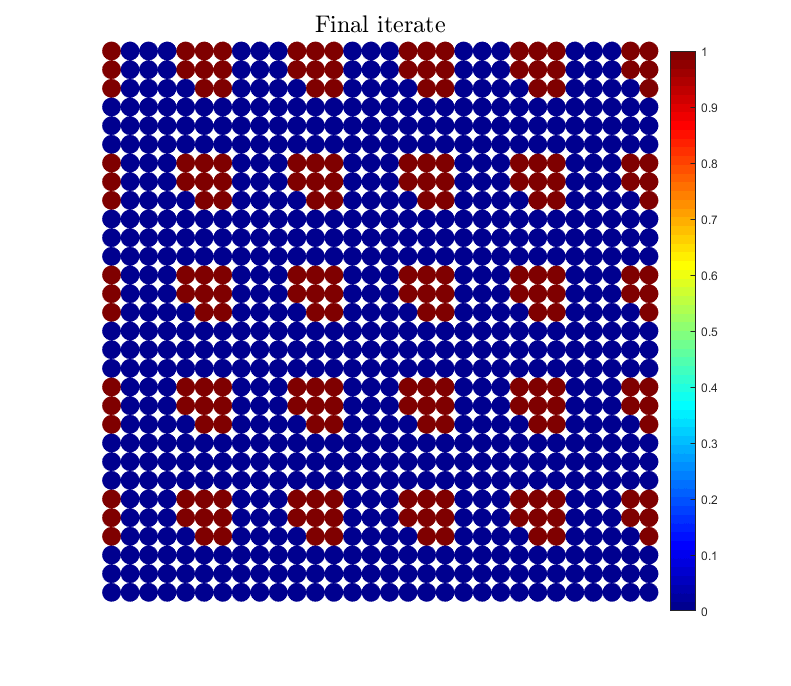}\vspace{-0.7cm}
\caption{ Final iterate ($k=3$) for $M=200$} \label{fig:02bmini}
\end{subfigure}
\begin{subfigure}[b]{0.45\textwidth}
\includegraphics[width=1.1\textwidth]{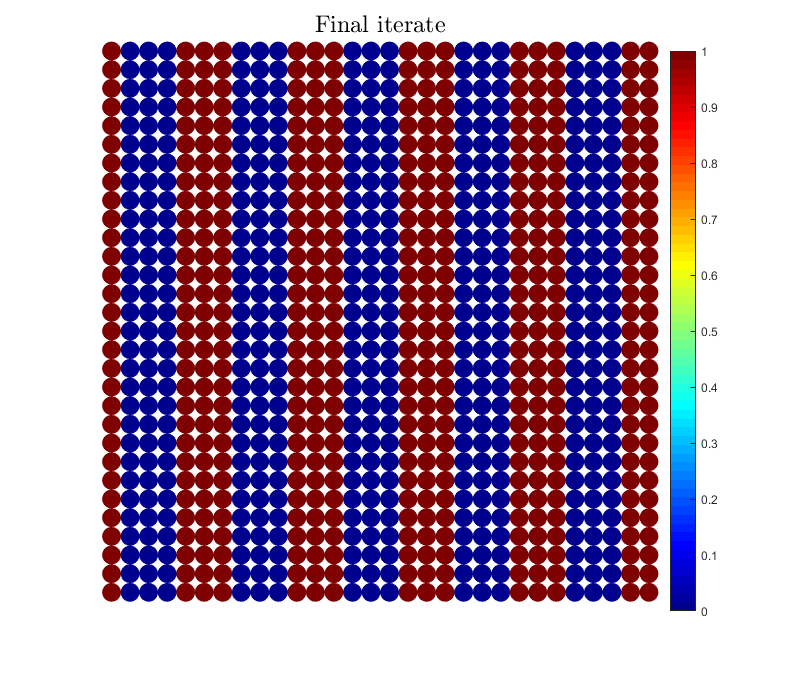}\vspace{-0.7cm}
\caption{Final iterate ($k=2$) for $M=450$} \label{fig:01bmini}
\end{subfigure}\\ \vspace{0.3cm}
\begin{subfigure}[b]{0.45\textwidth}
\includegraphics[width=1.1\textwidth]{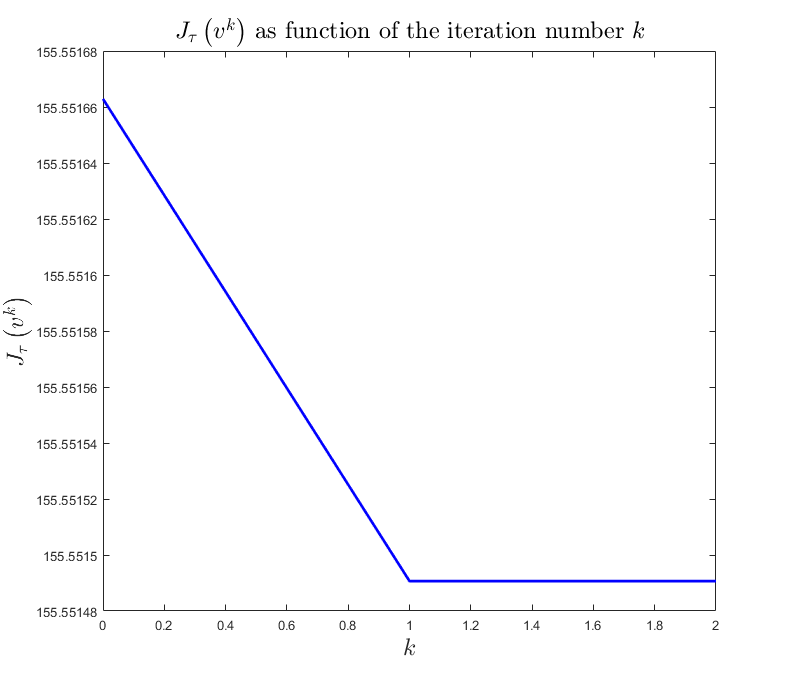}\vspace{-0.4cm}
\caption{Plot of $J_5\left(v^k\right)$} \label{fig:02bJtau}
\end{subfigure}
\begin{subfigure}[b]{0.45\textwidth}
\includegraphics[width=1.1\textwidth]{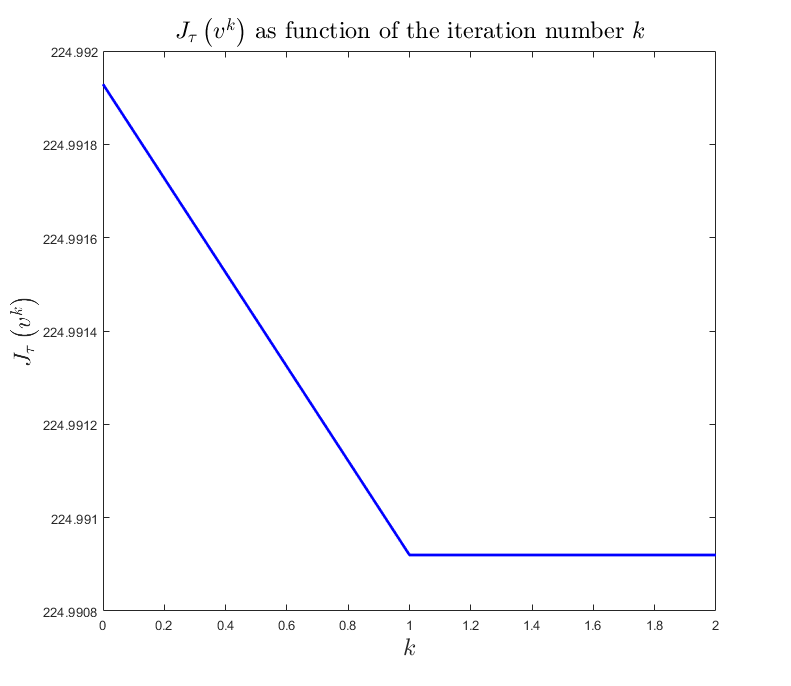}\vspace{-0.4cm}
\caption{Plot of $J_5\left(v^k\right)$} \label{fig:01bJtau}
\end{subfigure}\\ \vspace{0.3cm}
\begin{subfigure}[b]{0.45\textwidth}
\includegraphics[width=1.1\textwidth]{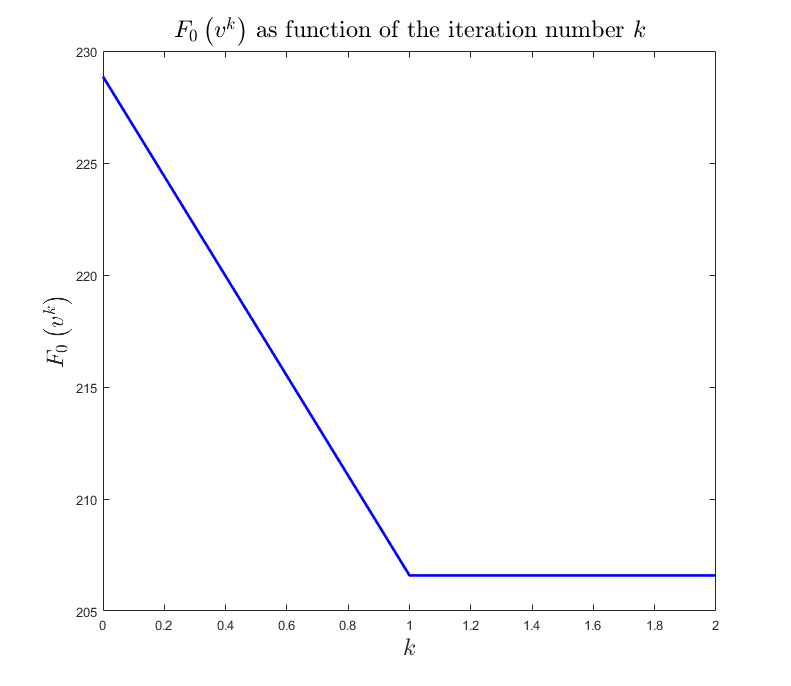}\vspace{-0.4cm}
\caption{Plot of $F_0\left(v^k\right)$ for $M=200$} \label{fig:02bOKTV}
\end{subfigure}
\begin{subfigure}[b]{0.45\textwidth}
\includegraphics[width=1.1\textwidth]{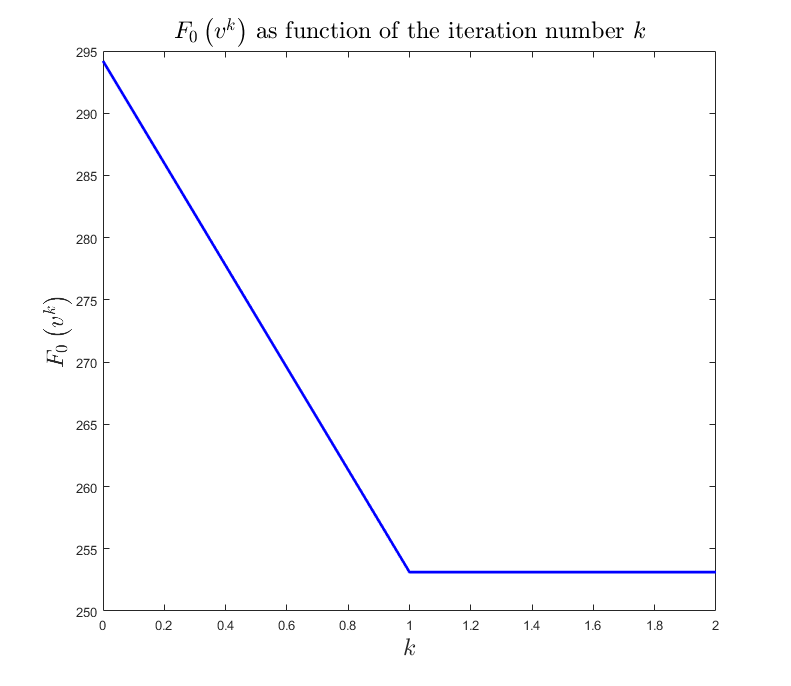}\vspace{-0.4cm}
\caption{Plot of $F_0\left(v^k\right)$ for $M=450$} \label{fig:01bOKTV}
\end{subfigure}
\caption{Results from Algorithm~\ref{alg:massOKMBO} on $G_{\text{torus}}(900)$ with $r=0$, $\gamma=1$, and $\tau=5$. The figures in the left column were obtained with $M=200$ and the initial condition from Figure~\ref{fig:initialcondb}, the ones on the right with $M=450$ and the initial condition from Figure~\ref{fig:initialcondc}. The value of $F_0$ at the final iterate is approximately $206.59$ for $M=200$ and $253.12$ for $M=450$.}
\label{fig:02b01b}
\end{center}
\end{figure}

\begin{figure}
\begin{center}
\begin{subfigure}[b]{0.45\textwidth}
\includegraphics[width=1.1\textwidth]{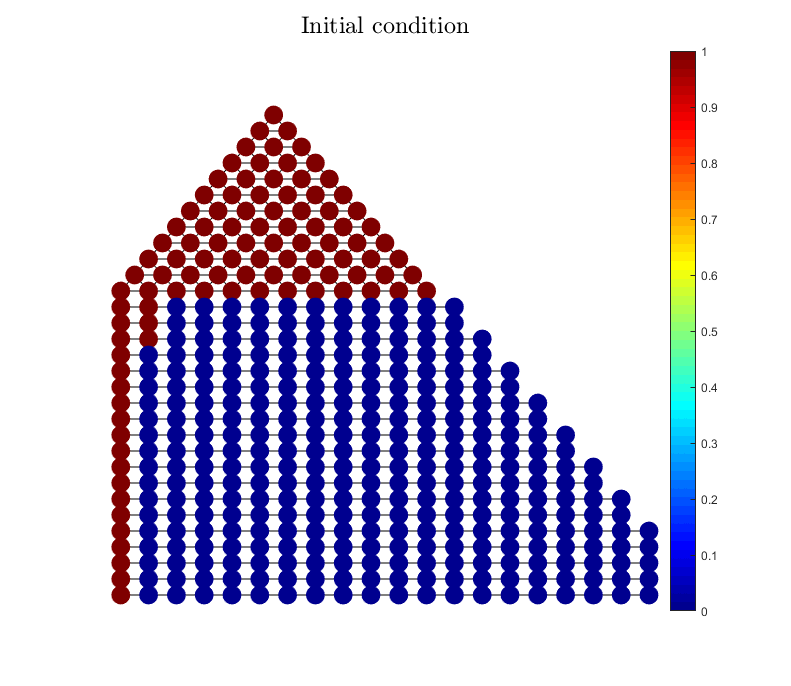}\vspace{-0.4cm}
\caption{Initial condition} \label{fig:15binit}
\end{subfigure}
\begin{subfigure}[b]{0.45\textwidth}
\includegraphics[width=1.1\textwidth]{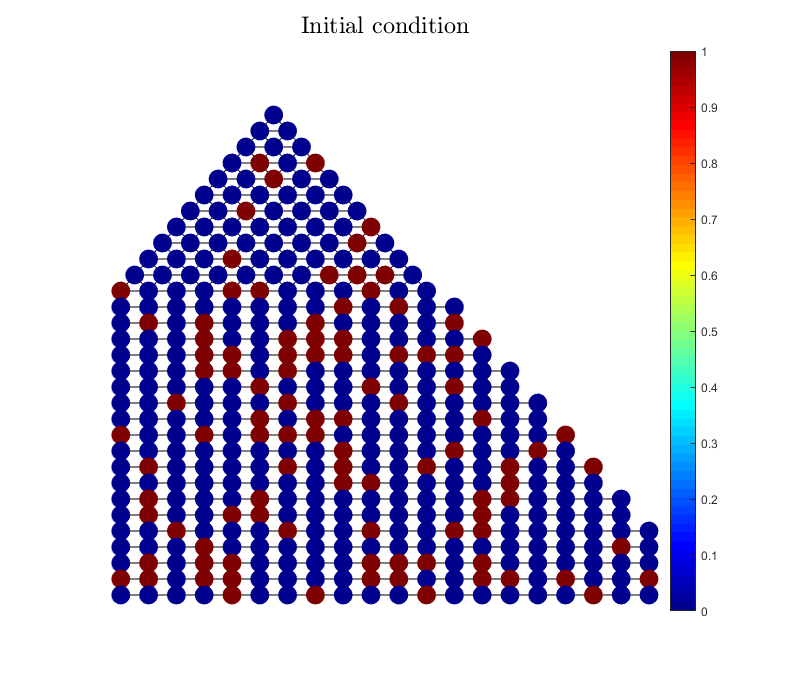}\vspace{-0.4cm}
\caption{Initial condition} \label{fig:14binit}
\end{subfigure}\\ \vspace{0.3cm}
\begin{subfigure}[b]{0.45\textwidth}
\includegraphics[width=1.1\textwidth]{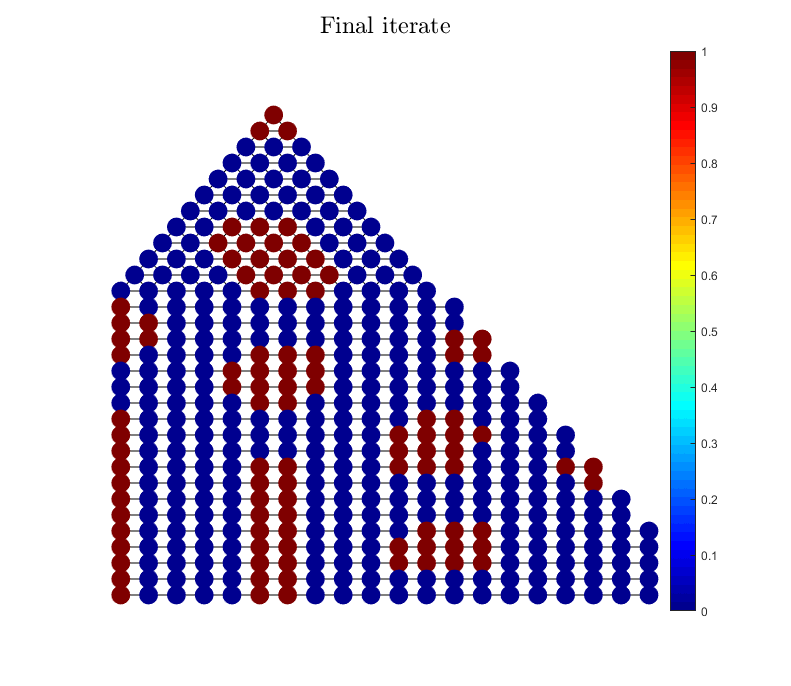}\vspace{-0.7cm}
\caption{ Final iterate ($k=13$) starting from Fig.~\ref{fig:15binit}} \label{fig:15bmini}
\end{subfigure}
\begin{subfigure}[b]{0.45\textwidth}
\includegraphics[width=1.1\textwidth]{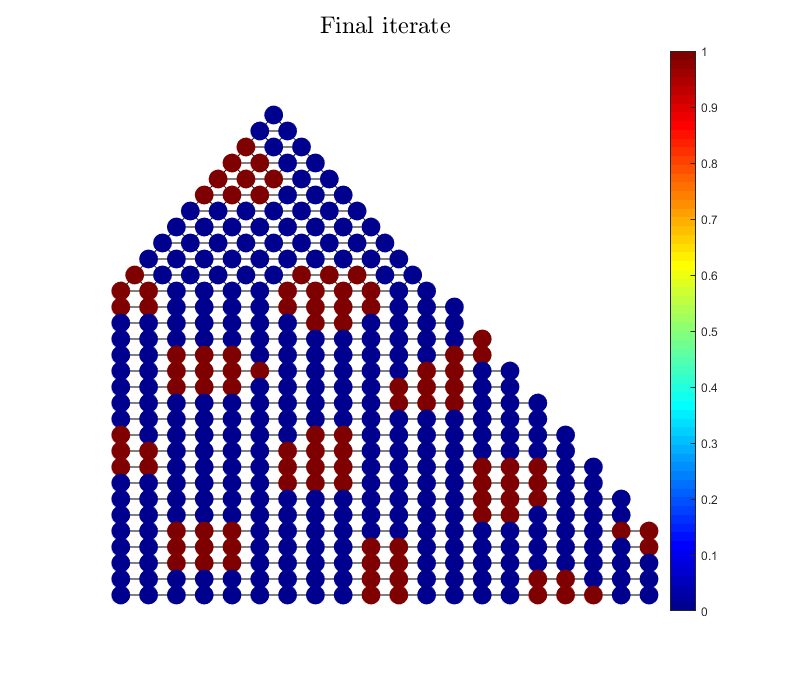}\vspace{-0.7cm}
\caption{Final iterate ($k=9$) starting from Fig.~\ref{fig:14binit}} \label{fig:14bmini}
\end{subfigure}\\ \vspace{0.3cm}
\begin{subfigure}[b]{0.45\textwidth}
\includegraphics[width=1.1\textwidth]{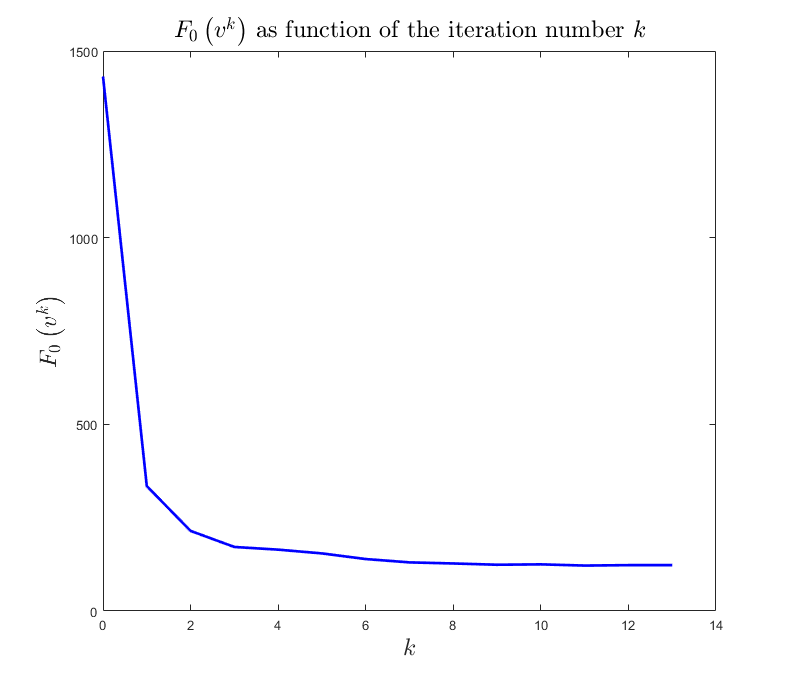}\vspace{-0.4cm}
\caption{Plot of $F_0\left(v^k\right)$ starting from Fig.~\ref{fig:15binit}} \label{fig:15bOKTV}
\end{subfigure}
\begin{subfigure}[b]{0.45\textwidth}
\includegraphics[width=1.1\textwidth]{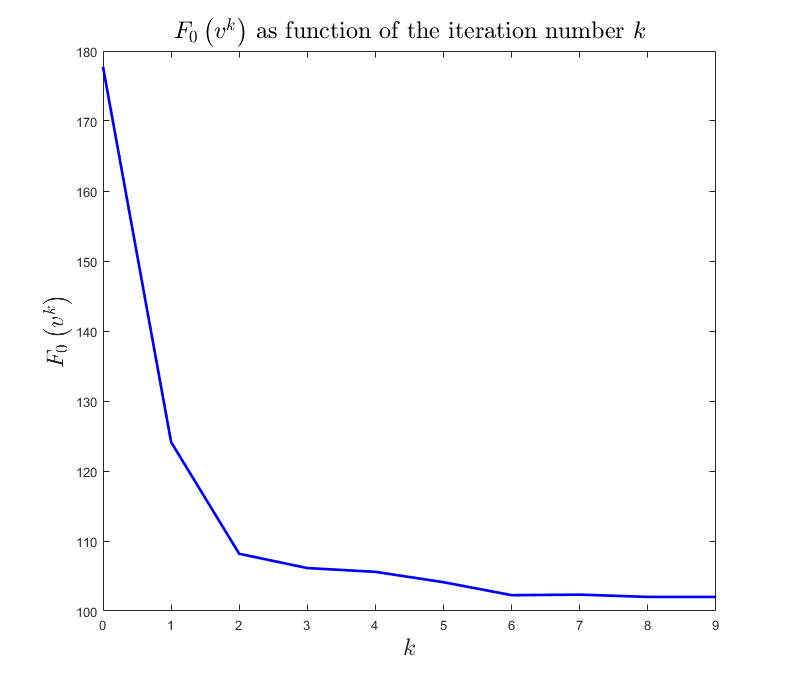}\vspace{-0.4cm}
\caption{Plot of $F_0\left(v^k\right)$ starting from Fig.~\ref{fig:14binit}} \label{fig:14bOKTV}
\end{subfigure}
\caption{Results from Algorithm~\ref{alg:massOKMBO} on $G_{\text{stitched}}(402)$ with $r=0$, $\gamma=1$, $M=100$, and $\tau=5$. The left and right columns correspond to the cases in which the initial conditions from Figures~\ref{fig:15binit} (option (b) from Section~\ref{sec:initialcondition}) and~\ref{fig:14binit} (option (a)), respectively, were used. The value of $F_0$ at the final iterate is approximately $122.83$ on the left and $102.01$ on the right.}
\label{fig:15b14b}
\end{center}
\end{figure}

\begin{figure}
\begin{center}
\begin{subfigure}[b]{0.45\textwidth}
\includegraphics[width=1.1\textwidth]{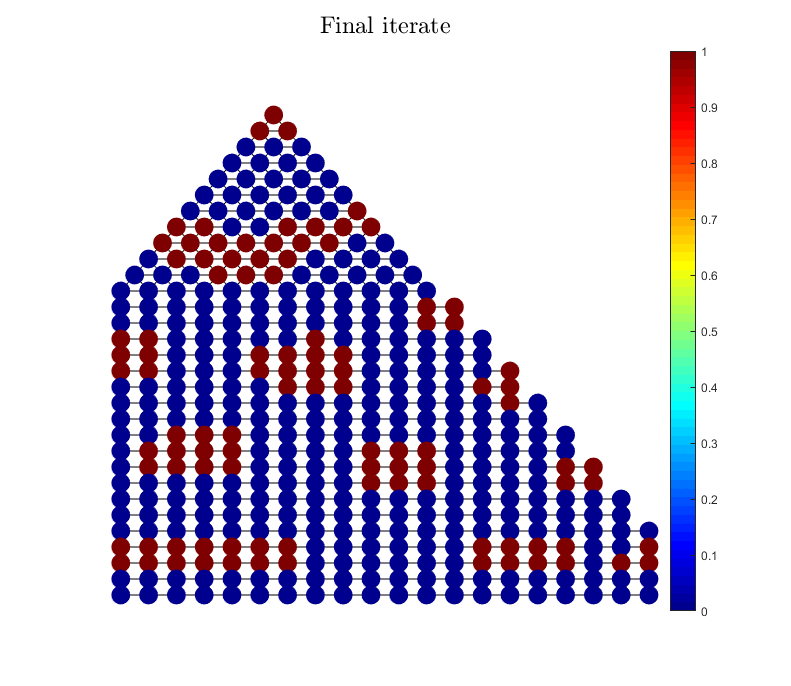}\vspace{-0.7cm}
\caption{ Final iterate ($k=4$) with $\tau=5$} \label{fig:16bmini}
\end{subfigure}
\begin{subfigure}[b]{0.45\textwidth}
\includegraphics[width=1.1\textwidth]{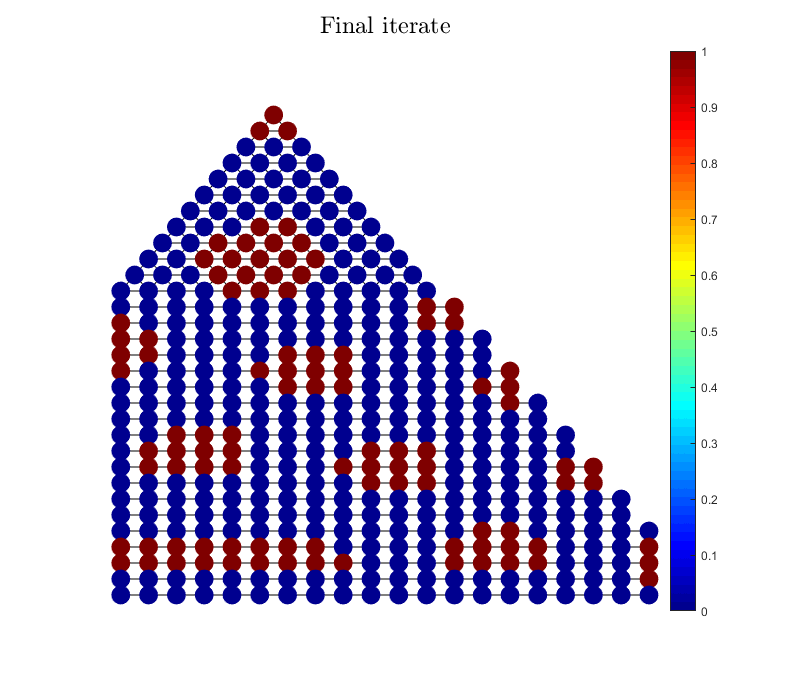}\vspace{-0.7cm}
\caption{Final iterate ($k=9$) with $\tau=7$} \label{fig:17bmini}
\end{subfigure}\\ \vspace{0.3cm}
\begin{subfigure}[b]{0.45\textwidth}
\includegraphics[width=1.1\textwidth]{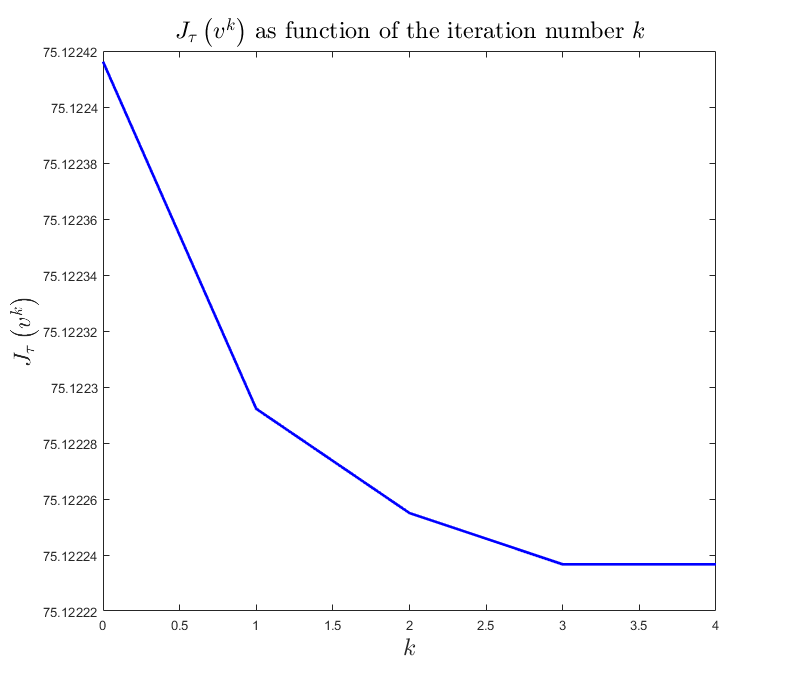}\vspace{-0.4cm}
\caption{Plot of $J_5\left(v^k\right)$ with $\tau=5$} \label{fig:16bJtau}
\end{subfigure}
\begin{subfigure}[b]{0.45\textwidth}
\includegraphics[width=1.1\textwidth]{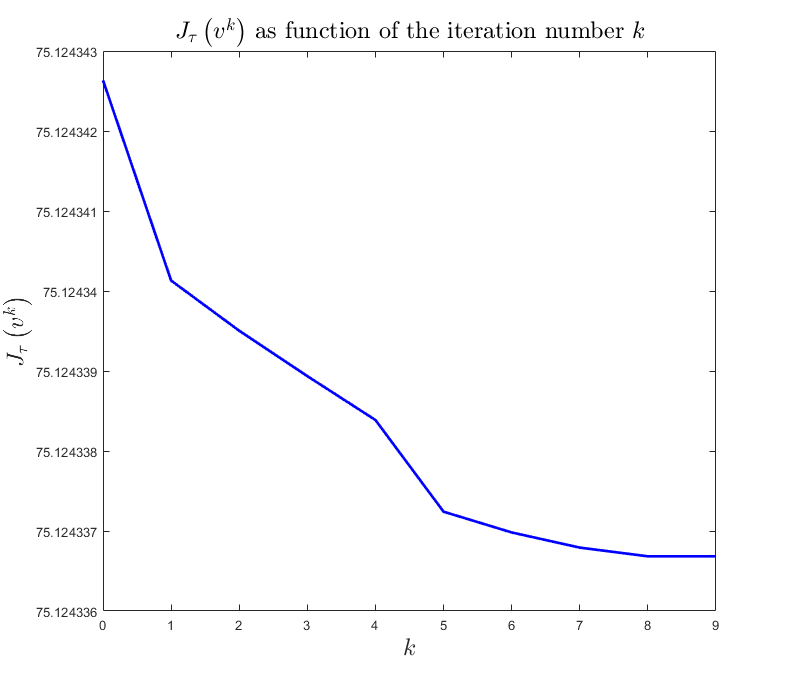}\vspace{-0.4cm}
\caption{Plot of $J_7\left(v^k\right)$ with $\tau=7$} \label{fig:17bJtau}
\end{subfigure}\\ \vspace{0.3cm}
\begin{subfigure}[b]{0.45\textwidth}
\includegraphics[width=1.1\textwidth]{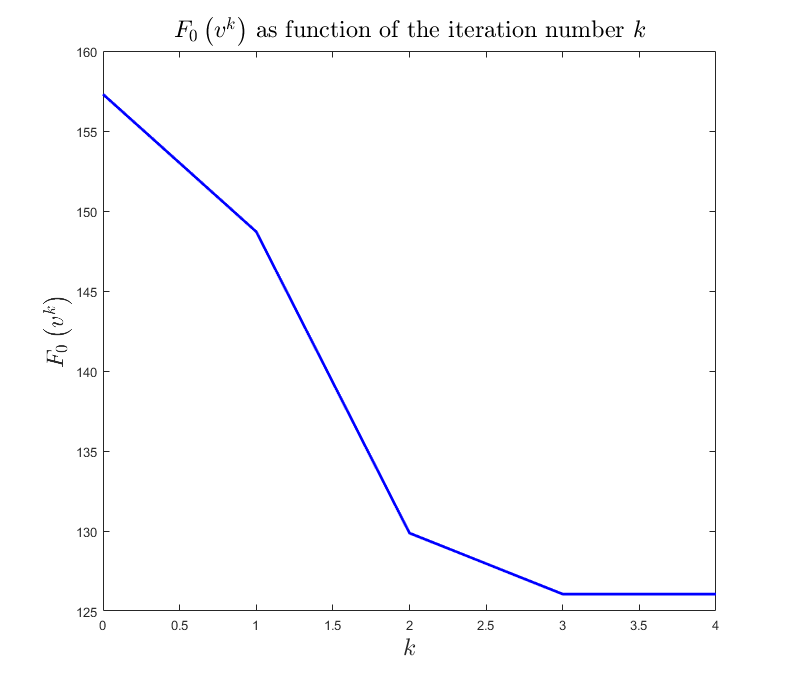}\vspace{-0.4cm}
\caption{Plot of $F_0\left(v^k\right)$ with $\tau=5$} \label{fig:16bOKTV}
\end{subfigure}
\begin{subfigure}[b]{0.45\textwidth}
\includegraphics[width=1.1\textwidth]{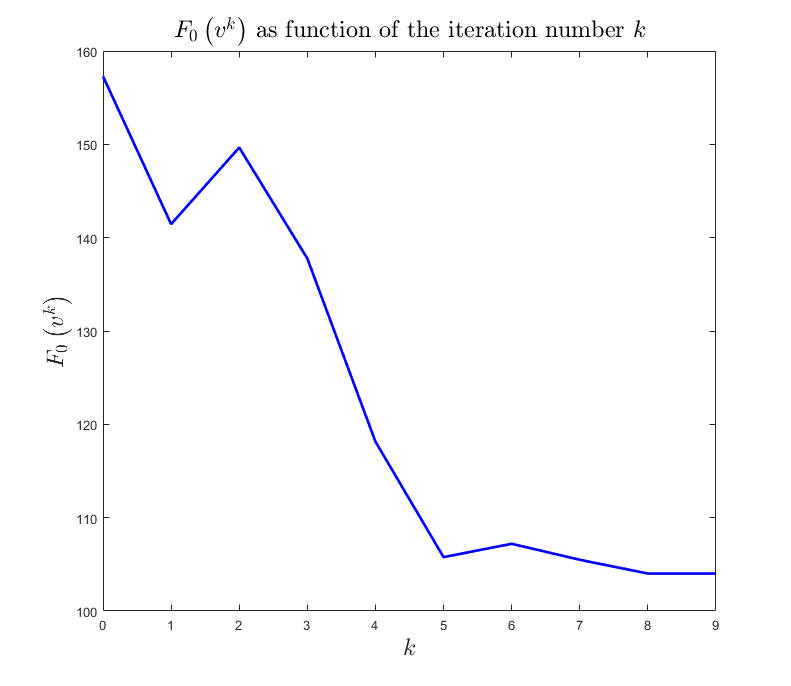}\vspace{-0.4cm}
\caption{Plot of $F_0\left(v^k\right)$ with $\tau=7$} \label{fig:17bOKTV}
\end{subfigure}
\caption{Results from Algorithm~\ref{alg:massOKMBO} on $G_{\text{stitched}}(402)$ with $r=0$, $\gamma=1$ and $M=100$ and starting from the initial condition in Figure~\ref{fig:16binit}. The left and right columns in the two lower rows correspond to the cases in which $\tau=5$ and $\tau=7$, respectively. The value of $F_0$ at the final iterate is approximately $126.05$ on the left and $104.01$ on the right.}
\label{fig:16b17b}
\end{center}
\end{figure}

\begin{figure}
\begin{center}
\begin{subfigure}[b]{0.45\textwidth}
\includegraphics[width=1.1\textwidth]{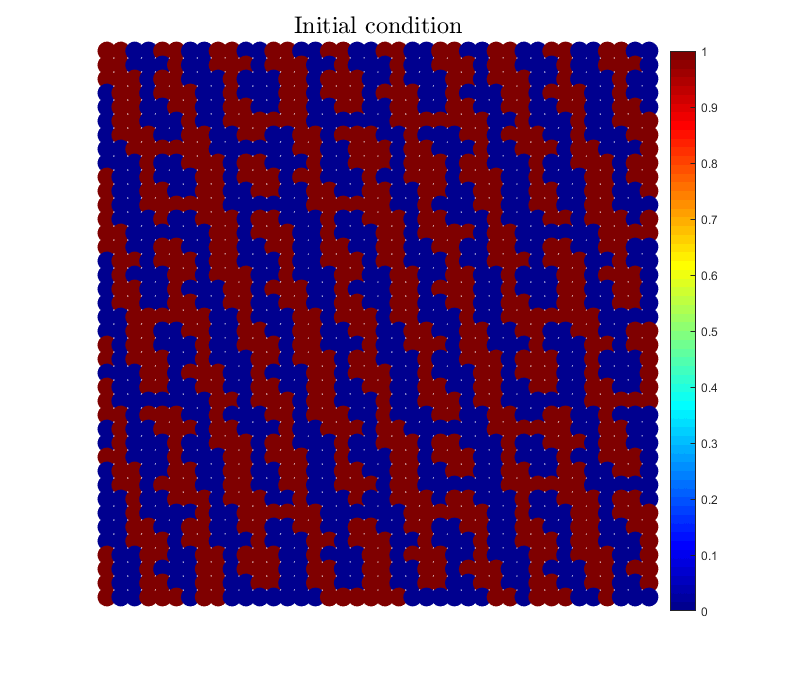}\vspace{-0.7cm}
\caption{Initial condition} \label{fig:22binit}
\end{subfigure}
\begin{subfigure}[b]{0.45\textwidth}
\includegraphics[width=1.1\textwidth]{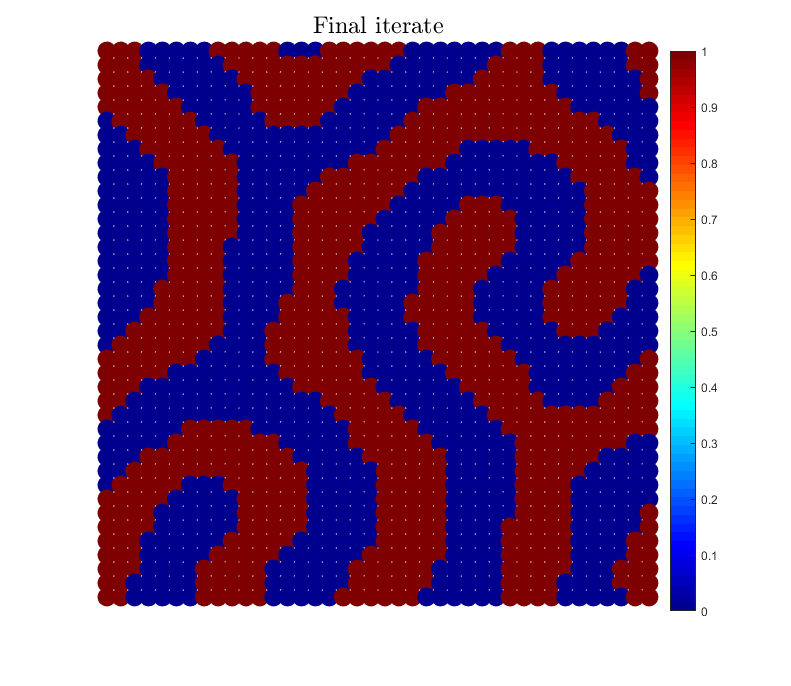}\vspace{-0.7cm}
\caption{Final iterate ($k=25$)} \label{fig:22bmini}
\end{subfigure}\\ \vspace{0.3cm}
\begin{subfigure}[b]{0.45\textwidth}
\includegraphics[width=1.1\textwidth]{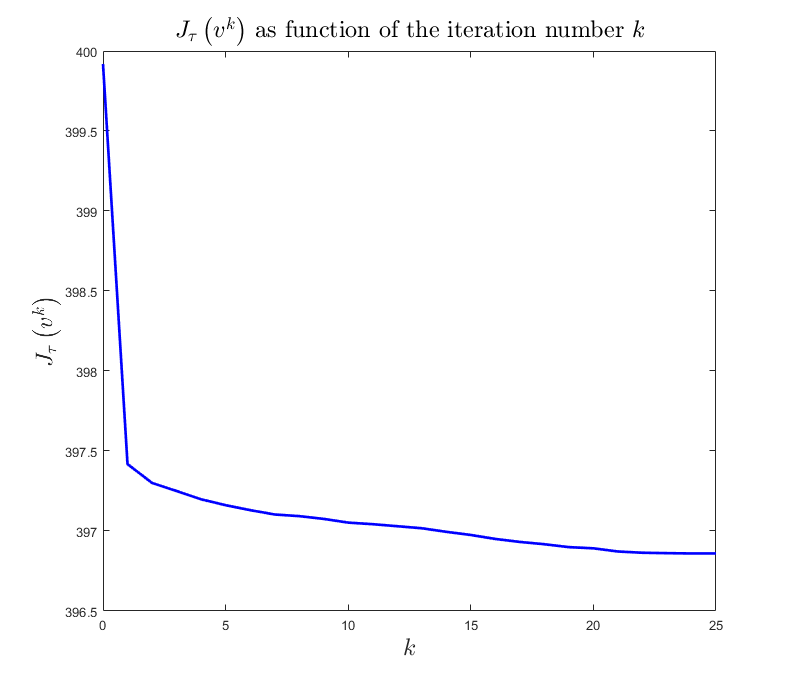}\vspace{-0.4cm}
\caption{Plot of $J_5\left(v^k\right)$} \label{fig:22bJtau}
\end{subfigure}
\begin{subfigure}[b]{0.45\textwidth}
\includegraphics[width=1.1\textwidth]{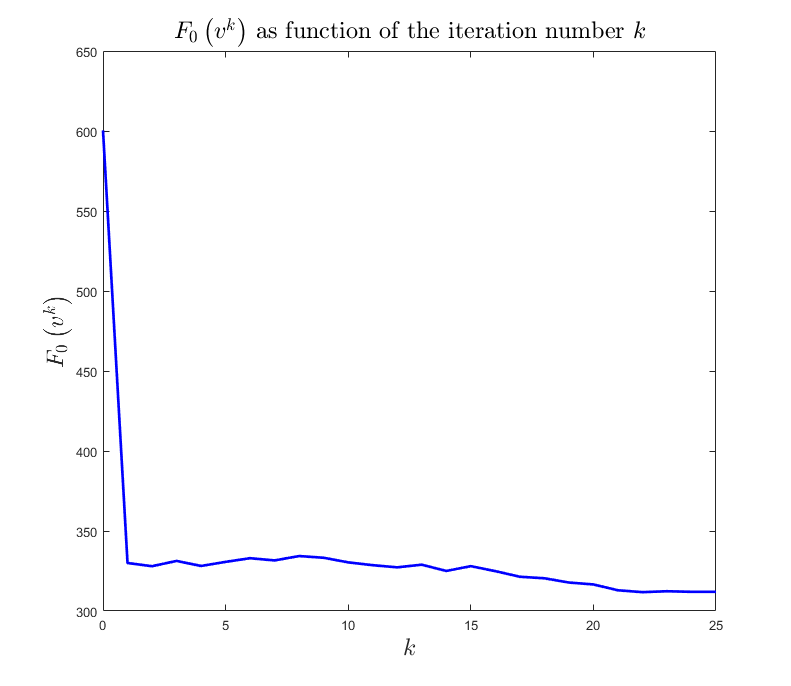}\vspace{-0.4cm}
\caption{Plot of $F_0\left(v^k\right)$} \label{fig:22bOKTV}
\end{subfigure}
\caption{Results from Algorithm~\ref{alg:massOKMBO} on $G_{\text{torus}}(1600)$ with $r=0$, $\gamma=0.2$, $M=800$, and $\tau=5$. The initial condition in Figure~\ref{fig:22binit} was constructed using option (c) in Section~\ref{sec:initialcondition} and was used to obtain the other results displayed here. The value of $F_0$ at the final iterate is approximately $311.99$.}
\label{fig:22b}
\end{center}
\end{figure}

\begin{figure}
\begin{center}
\begin{subfigure}[b]{0.45\textwidth}
\includegraphics[width=1.1\textwidth]{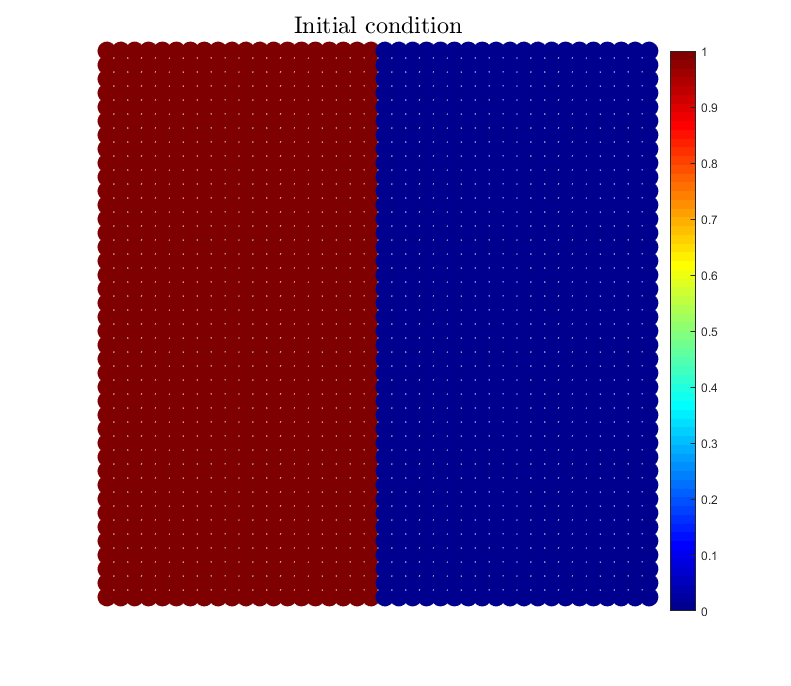}\vspace{-0.7cm}
\caption{Initial condition} \label{fig:23binit}
\end{subfigure}
\begin{subfigure}[b]{0.45\textwidth}
\includegraphics[width=1.1\textwidth]{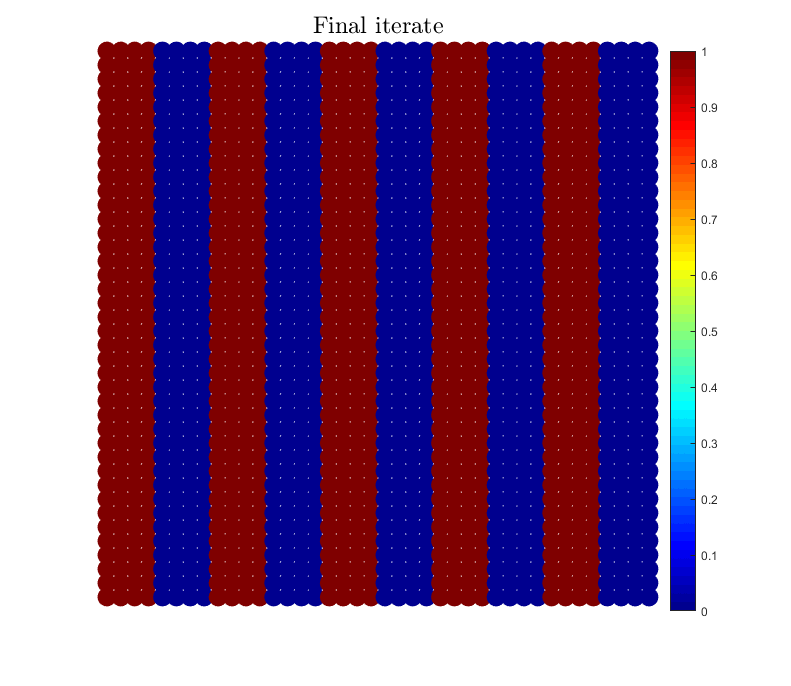}\vspace{-0.7cm}
\caption{Final iterate ($k=3$)} \label{fig:23bmini}
\end{subfigure}\\ \vspace{0.3cm}
\begin{subfigure}[b]{0.45\textwidth}
\includegraphics[width=1.1\textwidth]{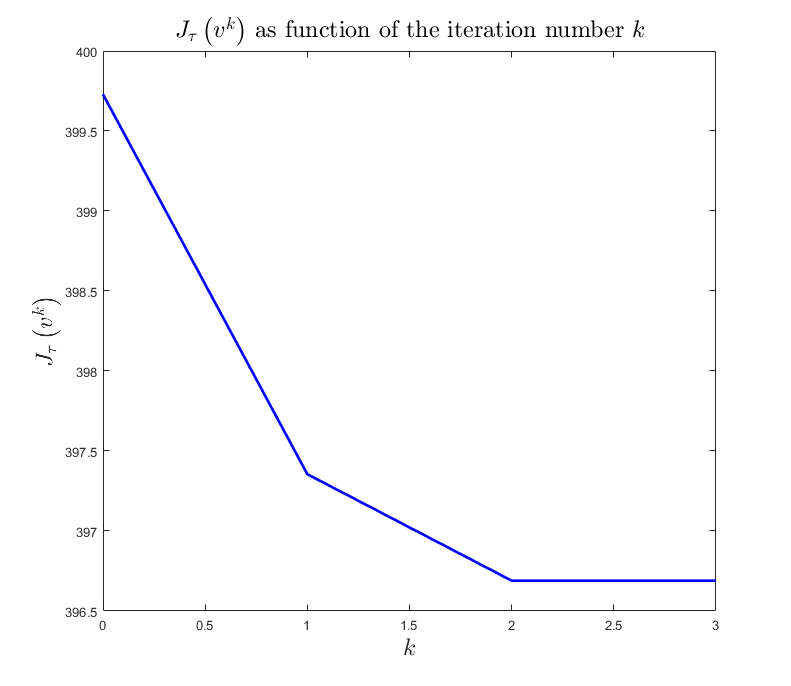}\vspace{-0.4cm}
\caption{Plot of $J_5\left(v^k\right)$} \label{fig:23bJtau}
\end{subfigure}
\begin{subfigure}[b]{0.45\textwidth}
\includegraphics[width=1.1\textwidth]{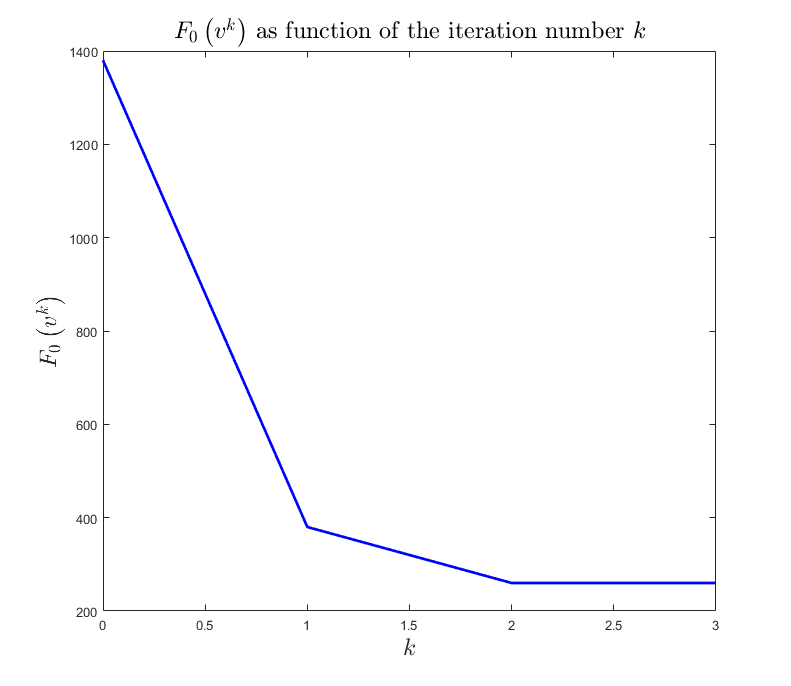}\vspace{-0.4cm}
\caption{Plot of $F_0\left(v^k\right)$} \label{fig:23bOKTV}
\end{subfigure}
\caption{Results from Algorithm~\ref{alg:massOKMBO} on $G_{\text{torus}}(1600)$ with $r=0$, $\gamma=0.2$, $M=800$, and $\tau=5$. The initial condition in Figure~\ref{fig:23binit} was constructed using option (b) in Section~\ref{sec:initialcondition} and was used to obtain the other results displayed here. The value of $F_0$ at the final iterate is  $260$.}
\label{fig:23b}
\end{center}
\end{figure}

\subsection{Other choices in the problem setting and the algorithm}\label{sec:otherchoices}

There are some other choices to make, besides the graph, $\tau$, and the initial condition, before running the \ref{alg:massOKMBO} algorithm, both in the set-up of the original problem \eqref{eq:minimprobsF0} as well as for the algorithm.

The parameter $\gamma$ is a parameter that is part of the original problem setting \eqref{eq:minimprobsF0}. Its value does not only influence the structure of the (approximate) solutions, but also influences what the appropriate choices of $\tau$  and $v^0$ are. As, for given $m\neq 0$, $\gamma\mapsto \Lambda_m$ is an increasing function and $\tau$ always appears in the combination $\tau \Lambda_m$ in the algorithm via \eqref{eq:solutionu}, increasing $\gamma$ decreases the values of $\tau$ at which good results are obtained (all other things being equal). We see an example of this in Figure~\ref{fig:differentgamma}. The choice of $\gamma$ also has an influence on the order of the eigenvalues $\Lambda_m$, as per Remark~\ref{rem:eigorder}, hence the eigenfunction based method for choosing $v^0$ described in Section~\ref{sec:initialcondition} (option (c)) is also influenced by the choice of $\gamma$.

\begin{figure}
\begin{center}
\begin{subfigure}[b]{0.45\textwidth}
\includegraphics[width=1.1\textwidth]{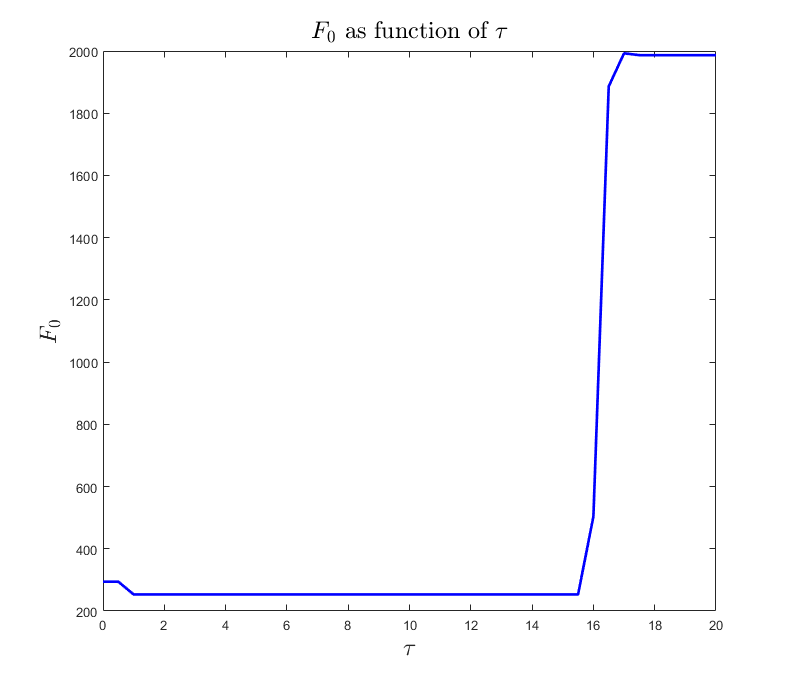}\vspace{-0.4cm}
\caption{$\gamma=1$} \label{fig:differentgammaa}
\end{subfigure}
\hspace{.5cm}
\begin{subfigure}[b]{0.45\textwidth}
\includegraphics[width=1.1\textwidth]{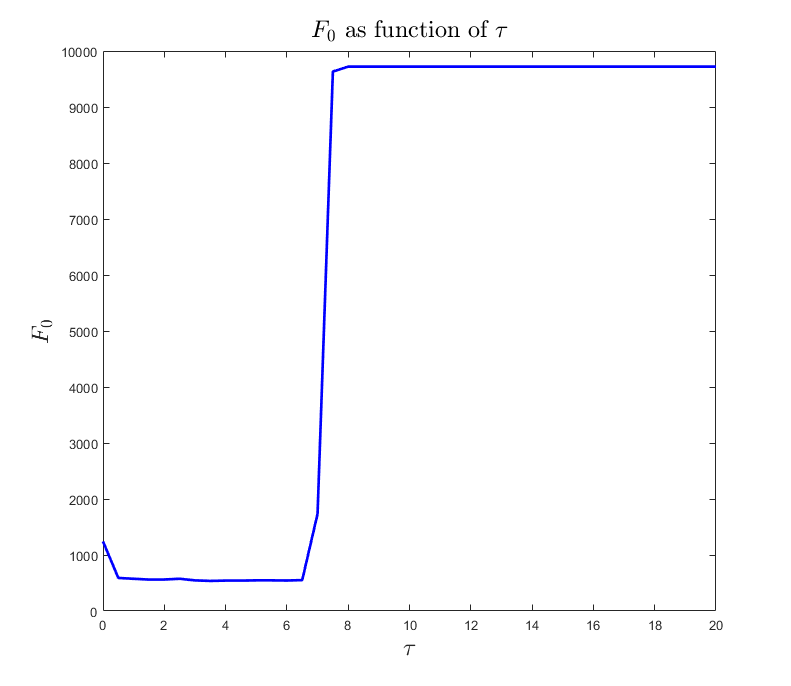}\vspace{-0.4cm}
\caption{$\gamma=5$} \label{fig:differentgammab}
\end{subfigure}
\caption{The value of $F_0\left(v^k\right)$, where $v^k$ is the final iterate of \ref{alg:massOKMBO}, as a function of $\tau$, for two different values of $\gamma$.  In both cases $G_{\text{torus}}(900)$ was used, with $r=0$, $M=450$, and the initial condition from Figure~\ref{fig:initialcondc}. The resolution on the $\tau$ axis (step size) is $0.5$ for both graphs.}
\label{fig:differentgamma}
\end{center}
\end{figure}

\begin{figure}
\begin{center}
\begin{subfigure}[b]{0.45\textwidth}
\includegraphics[width=1.1\textwidth]{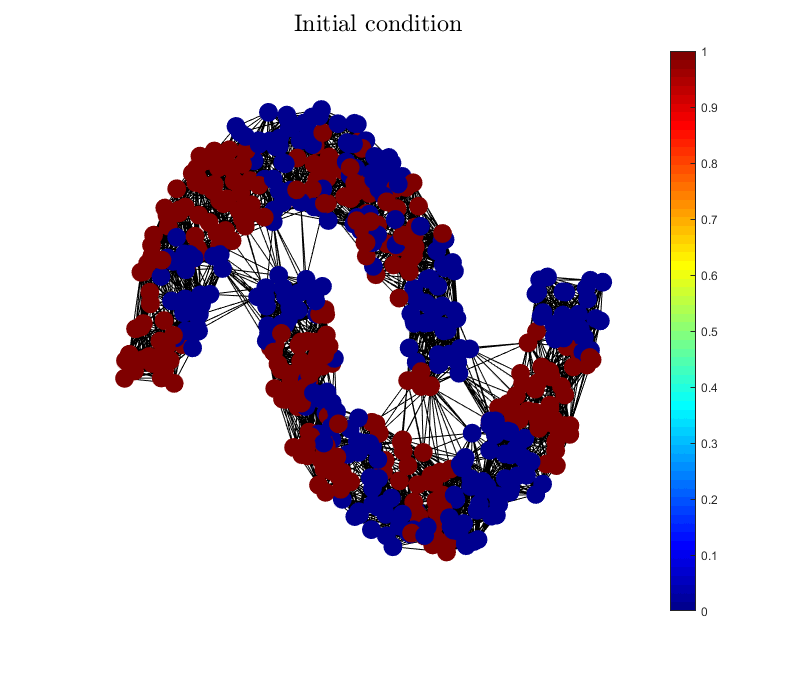}\vspace{-0.7cm}
\caption{Initial condition for $\gamma=0.1$} \label{fig:18binit}
\end{subfigure}
\hspace{.5cm}
\begin{subfigure}[b]{0.45\textwidth}
\includegraphics[width=1.1\textwidth]{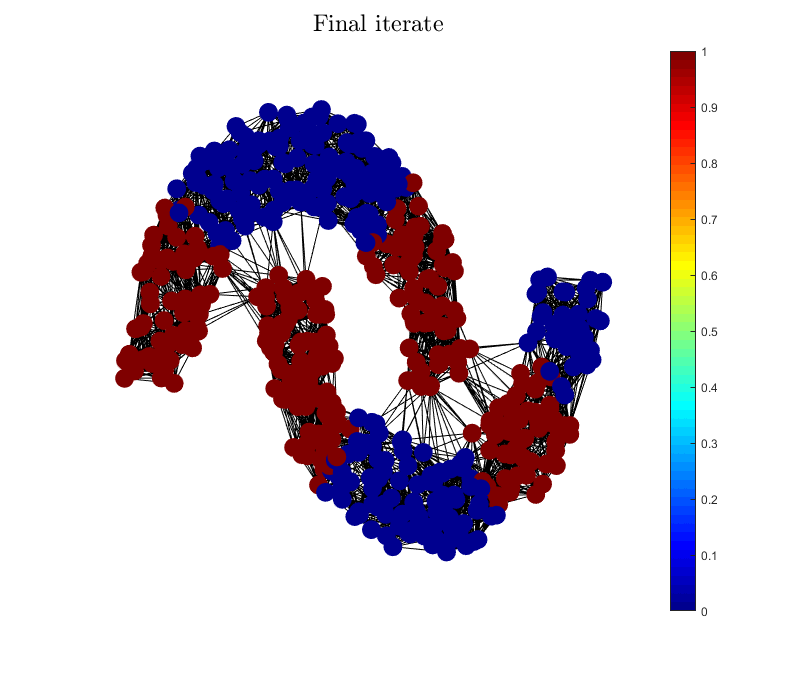}\vspace{-0.7cm}
\caption{Final iterate ($k=21$) for $\gamma=0.1$} \label{fig:18bmini}
\end{subfigure}\\ \vspace{0.3cm}
\begin{subfigure}[b]{0.45\textwidth}
\includegraphics[width=1.1\textwidth]{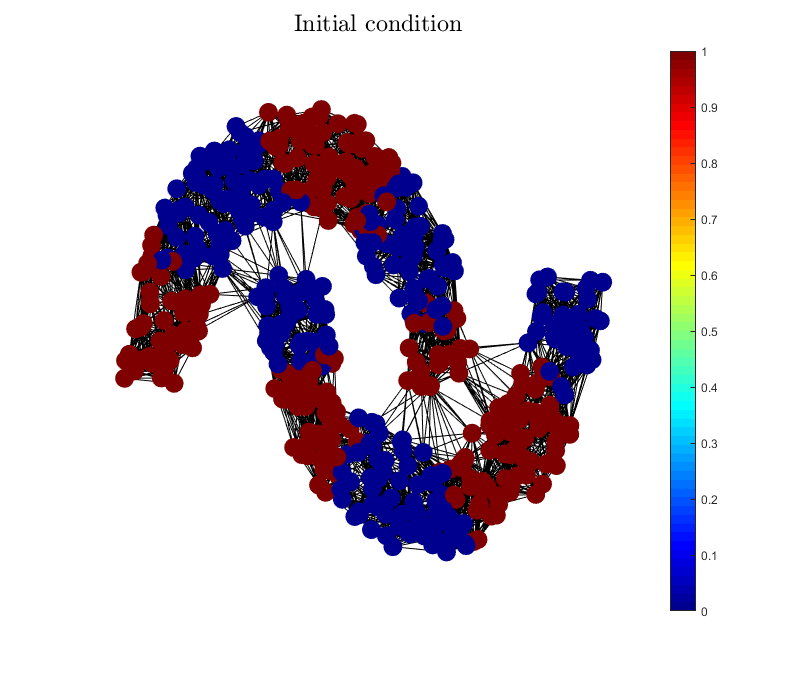}\vspace{-0.7cm}
\caption{Initial condition for $\gamma=1$} \label{fig:19binit}
\end{subfigure}
\hspace{.5cm}
\begin{subfigure}[b]{0.45\textwidth}
\includegraphics[width=1.1\textwidth]{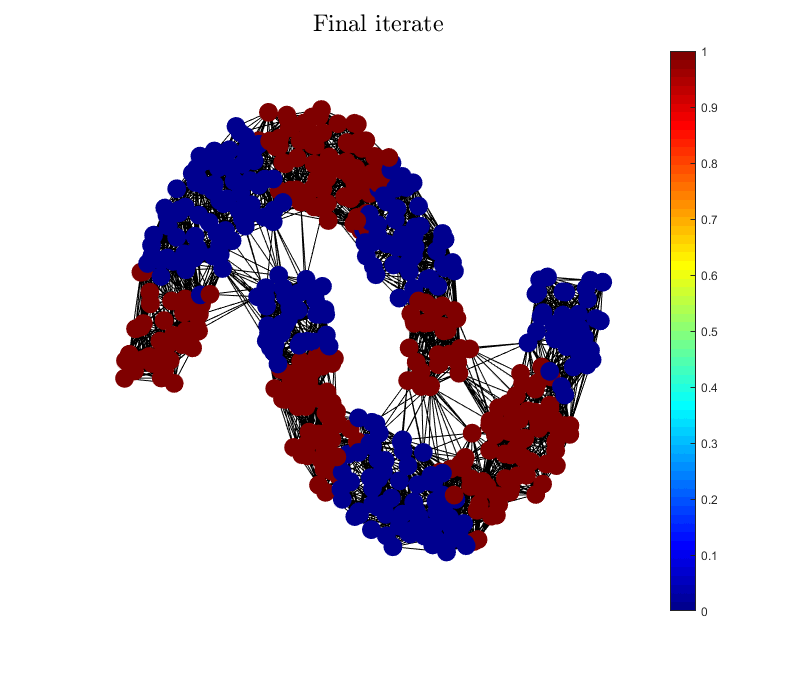}\vspace{-0.7cm}
\caption{Final iterate ($k=9$) for $\gamma=1$} \label{fig:19bmini}
\end{subfigure}\\ \vspace{0.3cm}
\begin{subfigure}[b]{0.45\textwidth}
\includegraphics[width=1.1\textwidth]{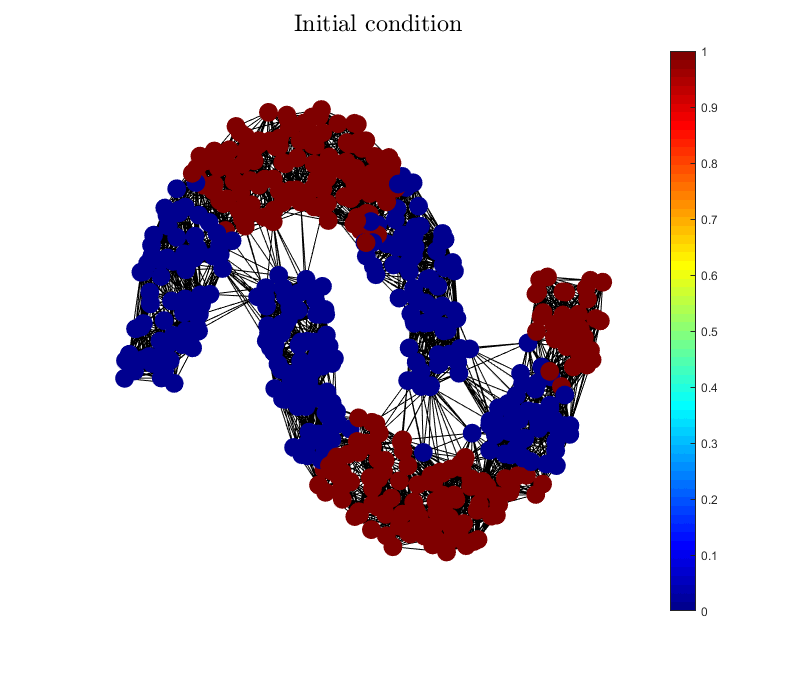}\vspace{-0.7cm}
\caption{Initial condition for $\gamma=10$} \label{fig:20binit}
\end{subfigure}
\hspace{.5cm}
\begin{subfigure}[b]{0.45\textwidth}
\includegraphics[width=1.1\textwidth]{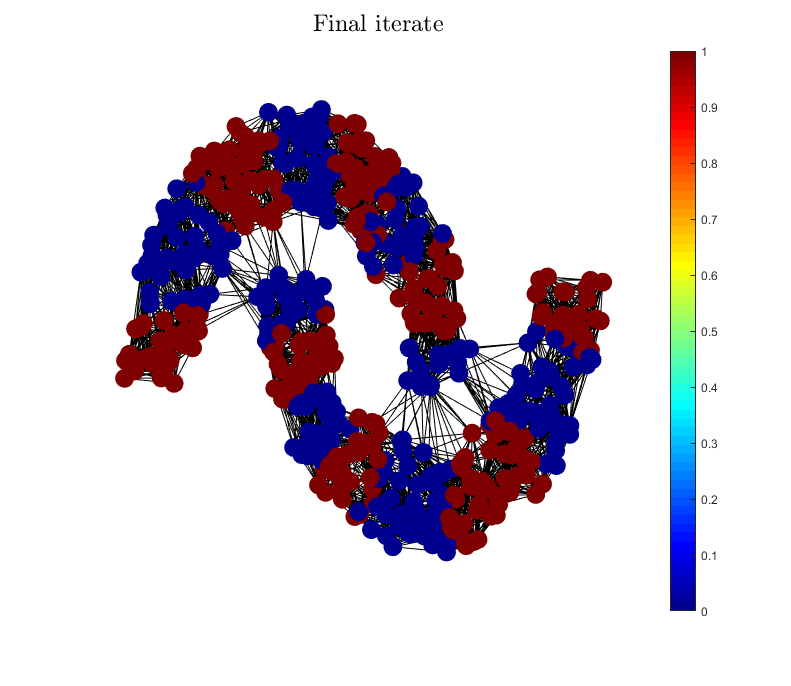}\vspace{-0.7cm}
\caption{Final iterate ($k=7$) for $\gamma=10$} \label{fig:20bmini}
\end{subfigure}
\caption{Initial (left column) and final (right column) states of Algorirthm \ref{alg:massOKMBO} applied to $G_{\text{moons}}$ with $r=0$, $M=300$, $\tau=1$ for a different value of $\gamma$ in each row. The initial conditions are eigenfunction based in the sense of option (c) in Section~\ref{sec:initialcondition}. The values of $F_0$ at the final iterates are approximately $109.48$ (top row), $230.48$ (middle row), and $626.89$ (bottom row).
}\label{fig:18b19b20b}
\end{center}
\end{figure}

\begin{figure}
\begin{center}
\begin{subfigure}[b]{0.45\textwidth}
\includegraphics[width=1.1\textwidth]{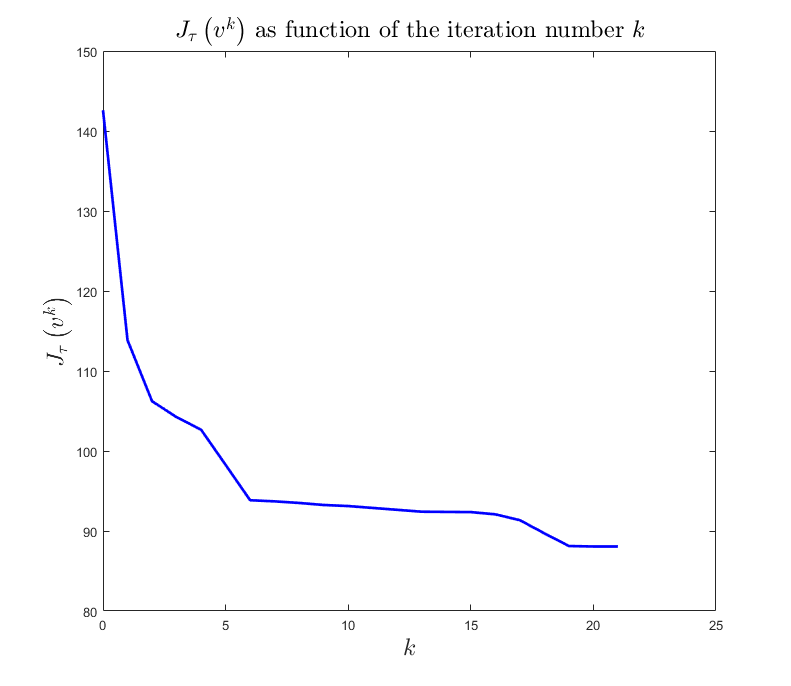}\vspace{-0.4cm}
\caption{Plot of $J_1\left(v^k\right)$ for $\gamma=0.1$} \label{fig:18bJtau}
\end{subfigure}
\hspace{.5cm}
\begin{subfigure}[b]{0.45\textwidth}
\includegraphics[width=1.1\textwidth]{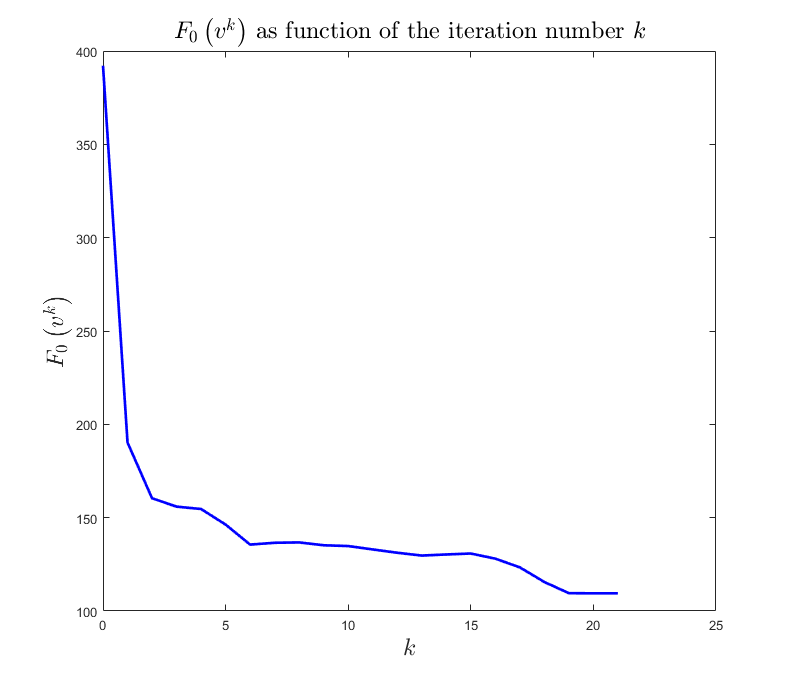}\vspace{-0.4cm}
\caption{Plot of $F_0\left(v^k\right)$ for $\gamma=0.1$} \label{fig:18bOKTV}
\end{subfigure}\\ \vspace{0.3cm}
\begin{subfigure}[b]{0.45\textwidth}
\includegraphics[width=1.1\textwidth]{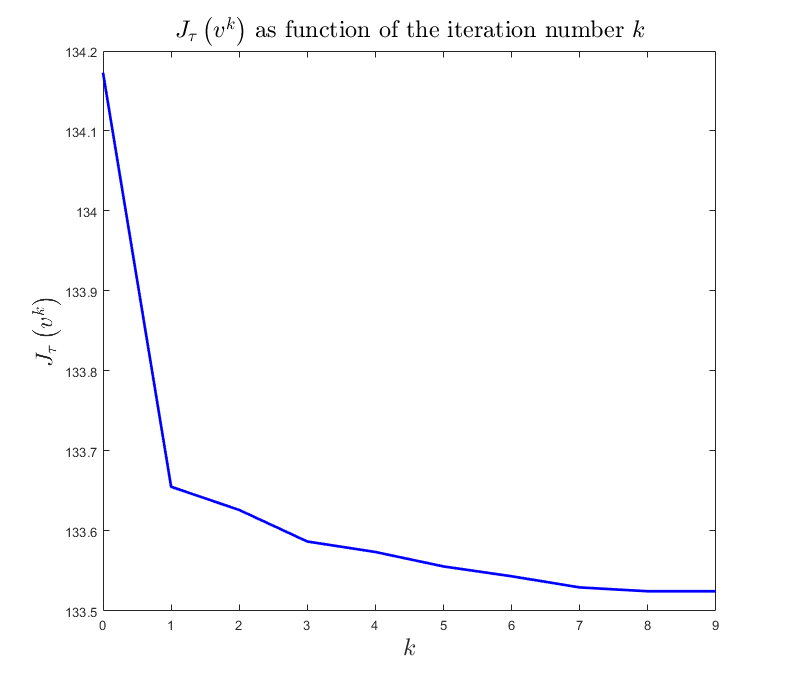}\vspace{-0.4cm}
\caption{Plot of $J_1\left(v^k\right)$ for $\gamma=1$} \label{fig:19bJtau}
\end{subfigure}
\hspace{0.5cm}
\begin{subfigure}[b]{0.45\textwidth}
\includegraphics[width=1.1\textwidth]{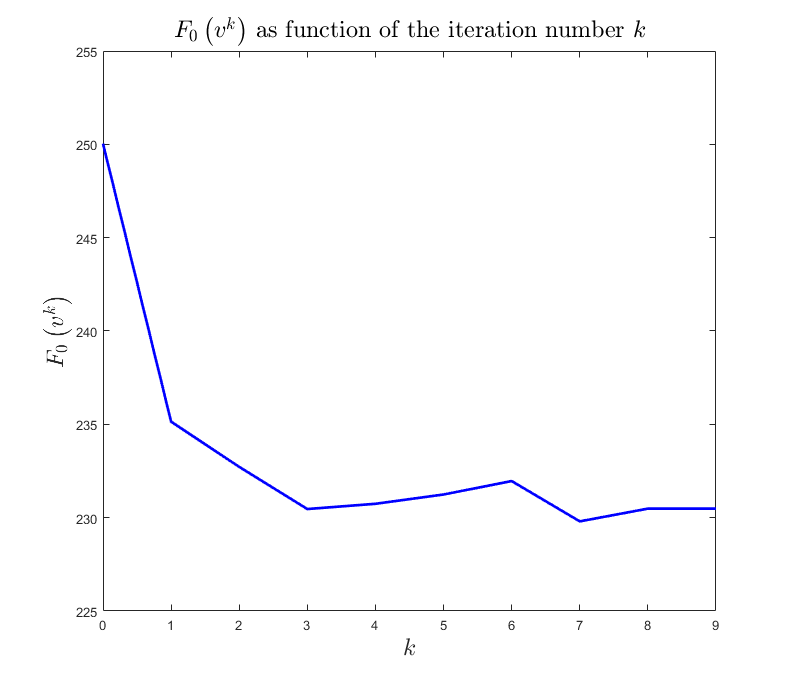}\vspace{-0.4cm}
\caption{Plot of $F_0\left(v^k\right)$ for $\gamma=1$} \label{fig:19bOKTV}
\end{subfigure}\\ \vspace{0.3cm}
\begin{subfigure}[b]{0.45\textwidth}
\includegraphics[width=1.1\textwidth]{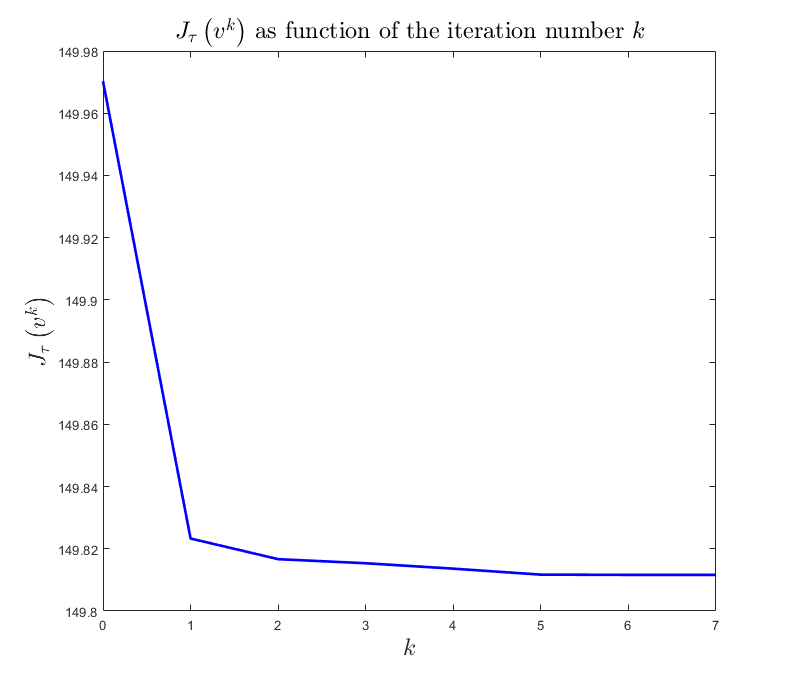}\vspace{-0.4cm}
\caption{Plot of $J_1\left(v^k\right)$ for $\gamma=10$} \label{fig:20bJtau}
\end{subfigure}
\hspace{0.5cm}
\begin{subfigure}[b]{0.45\textwidth}
\includegraphics[width=1.1\textwidth]{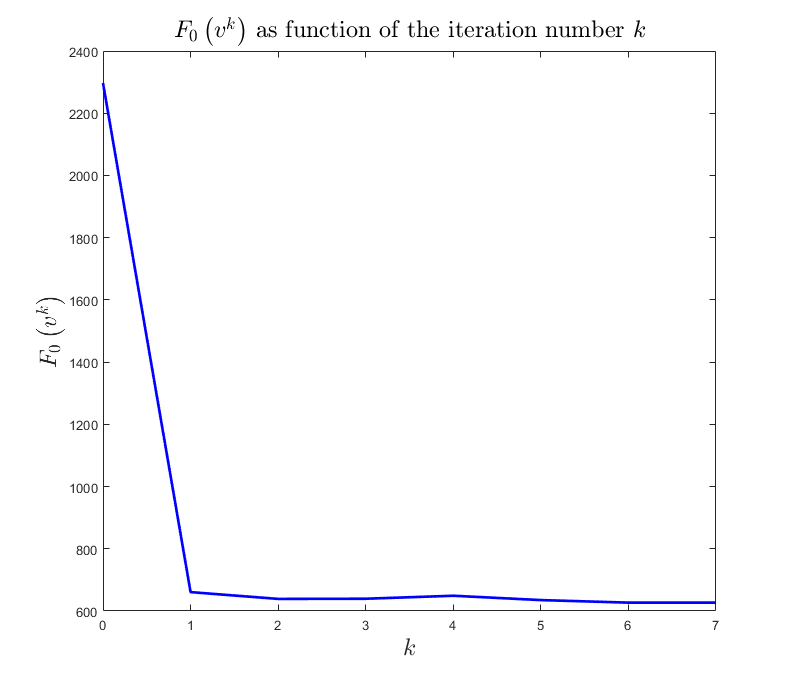}\vspace{-0.4cm}
\caption{Plot of $F_0\left(v^k\right)$ for $\gamma=10$} \label{fig:20bOKTV}
\end{subfigure}
\caption{Plots of $J_1\left(v^k\right)$ (left column) and $F_0\left(v^k\right)$ (right column) for the applications of \ref{alg:massOKMBO} corresponding to Figure~\ref{fig:18b19b20b}.}\label{fig:18b19b20bJtauOKTV}
\end{center}
\end{figure}

The parameters $r$ and $q$, that are part of the original setup of our function spaces $\mathcal{V}$ and $\mathcal{E}$ also play a role. The value of $q$ changes the value of $F_0$. Important results in this paper, such as Corollary~\ref{cor:OKexpressions} and Theorem~\ref{thm:gammaconvergencemass} have all been obtained under the assumption that $q=1$, hence that is also the choice we make when we compute the value of $F_0$ for our experiments. Note however that the choice of $q$ does not influence the actual algorithm \ref{alg:massOKMBO}.

The choice of $r$ does influence the problem setup in \eqref{eq:minimprobsF0} and the algorithm \ref{alg:massOKMBO}. The functional $F_0$ is independent of $r$, but the mass functional $\mathcal{M}$ is not. As noted a few times in this paper already (e.g. in Sections~\ref{sec:setup} and~\ref{sec:OKMBOmass}), when $r\neq 0$ the mass condition can be very restrictive in that the set $\mathcal{V}^b_M$ (or even $\mathcal{V}^{ab}_M$) can be very small. This is especially the case if the graph has a highly irregular degree distribution. Hence all the examples we show are for the case $r=0$. 
The parameter $r$ also influences \ref{alg:massOKMBO} through its effect on $\Delta$.

Finally we mention $N$, the number of iterations in \ref{alg:massOKMBO} (or \ref{alg:OKMBO}). Up until now we have assumed that the algorithm is run for a preset number of iterations, mostly for notational convenience; we know, however, that the algorithm converges in a finite number of steps, in the sense which was made precise in Lemma~\ref{lem:massLyapunov} (or Corollary~\ref{cor:finiteconvergence}). It thus makes sense to add a stopping criterion to the algorithm. In our experiments we set $N=500$ and add a stopping criterion which ends the algorithm's run if the Euclidean norm of the difference between (the vector representations of) $v^{k-1}$ and $v^k$ is less than $10^{-24}$. This tolerance in practice means that the algorithm stops before it has run through 500 iterations if and only if $v^k=v^{k-1}$. In fact, in our examples the algorithm runs for at most a few dozen iterations before the stopping criterion kicks in and never gets to the (arbitrarily chosen) maximum of 500 iterations. Note that as a consequence, in all our examples the states obtained in the final two iterations are the same. For example, in the left hand column of Figure~\ref{fig:12b11b} the final value of $k$ is $3$. Hence $v^3=v^2$ and in that case the algorithm only took two iterates to arrive at its final state. In the right hand side of that same figure the algorithm took twelve iterates to arrive at the final state $v^{12}=v^{13}$.

\subsection{Spurious patterns}\label{sec:spurious}

Because mass is conserved in \ref{alg:massOKMBO} and the iterates of the algorithm are forced to be in $\mathcal{V}_M^{ab}$, patterns are guaranteed to appear, in the sense that mass will be allocated to some nodes and not to others, giving the appearance of a pattern. We used MATLAB's \texttt{sort} function to produce the relabelling $R_u$ in the mass conserving threshold step of \ref{alg:massOKMBO}. This function will produce an output, even if $u$ has the same value on two or more nodes. This means that our choice of sorting method, effectively hides the non-uniqueness that is inherent in the choice of $R_u$ when $u$ takes the same value at different nodes. This is desirable when the non-uniqueness involves the relabelling of a small number of nodes only, since some choice of $R_u$ has to be made to continue the algorithm and the influence of that choice on the final state (and value of $F_0$) is presumably small in that case. However, when $u$ has the same value on many different nodes (within machine precision), for example when $\tau$ in the ODE step has been chosen too large, the resulting non-uniqueness in the choice of $R_u$ is very large (e.g. for constant $u$ all relabelling functions $R_u$ are equally admissible). Hence the resulting output of the mass conserving threshold step is (close to) arbitrary, yet it will still produce a pattern when visualized. Thus it is important to have a way to identify if this has occurred or if the resulting pattern is indeed meaningful in the context of the $F_0$ minimization problem of \eqref{eq:minimprobsF0}. 

One could inspect the function $u$ before the mass conserving threshold step and discard the result if $u$ is (too close to being) constant. The problem with this approach is that it is not a priori clear what ``too close to'' means. In our experiments sometimes the variation in node values of $u$ (as measured by the standard deviation, computed with MATLAB's \texttt{std} function) is on the order of $10^{-12}$ (or less) and yet still meaningful in the sense explained below.

Luckily we have an arbiter of meaning in this case. After all, our goal is to minimize $F_0$, hence as long as $F_0$ decreases along the iterates of \ref{alg:massOKMBO} the algorithm (and thus also the mass conserving threshold step) is performing a meaningful operation. A decrease in the values of the functional $J_\tau$ can also be used to justify confidence in the output of the algorithm. We include plots of the values of $F_0$ and $F_\tau$ as function of the iteration number $k$ with our results in this paper to validate the algorithm's ouput.

\section{Discussion and future work}\label{sec:discussthefuture}

In this paper we present three main results: the Lyapunov functionals associated with the (mass conserving) Ohta-Kawasaki MBO schemes $\Gamma$-converge to the sharp interface Ohta-Kawasaki functional; there exists a class of graphs on which this MBO scheme can be interpreted as a standard graph MBO scheme on a transformed graph (and for which additional comparison principles hold); the mass conserving Ohta-Kawasaki MBO scheme works well in practice when attempting to minimize the sharp interface graph Ohta-Kawasaki functional under a mass constraint. Along the way we have also further developed the theory of PDE inspired graph problems  and added to the theoretical underpinnings of this field.

Future research on the graph Ohta-Kawasaki functional can mirror the research on the continuum Ohta-Kawasaki functional and attempt to prove the existence of certain structures in minimizers on certain graphs, analogous to structures such as lamellae and droplets in the continuum case. The numerical methods presented in this paper might also prove useful for simulations of minimizers of the continuum functional.

The $\Gamma$-convergence results presented in this paper also fit in well with the ongoing programme, started in \cite{vanGennipGuillenOstingBertozzi14}, aimed at improving our understanding how various PDE inspired graph based processes, such as the graph MBO scheme, graph Allen-Cahn equation, and graph mean curvature flow, are connected.

\appendix

\section{The continuum Ohta-Kawasaki model}\label{sec:continuumOK}

In this section we give a brief introduction to the continuum Ohta-Kawasaki model which was introduced into the physics/chemistry literature to describe diblock copolymer melts. This model has been studied intensively in recent decades and this section is not aiming to be exhaustive or even extensive.

The continuum Ohta-Kawasaki functional \cite{OhtaKawasaki86,KawasakiOhtaKohrogui88} has a diffuse interface form, $F_\e: H^{-1}(\Omega; \R) \to \R$,
\[
\mathcal{F}_\e(u) := \frac12 \int_{\Omega} |\nabla u|^2 + \frac1\e \int_{\Omega} W(u) + \frac\gamma2 \left\|u-\frac1{|\Omega|} \int_\Omega u\right\|_{H^{-1}(\Omega)}^2,
\]
and a sharp interface form, $F_0: BV(\Omega; \{-1, 1\}) \to \R$, 
\[
\mathcal{F}_\e(u) := \sigma  \int_{\Omega} |\nabla u| + \frac\gamma2 \left\|u-\frac1{|\Omega|} \int_\Omega u\right\|_{H^{-1}(\Omega)}^2.
\]
Here $\Omega\subset \R^n$ is an open, bounded set, $\e$, $\gamma$, and $\sigma$ are positive parameters, and $W$ denotes a nonnegative double well potential with equal depth wells, for example the double well potential in \eqref{eq:doublewell} with wells at $x\in\{0,1\}$. The total variation \cite{Giusti84} is defined as
\[
\int_\Omega |\nabla u| := \sup\left\{\int_\Omega u, \dvg v: \, v\in C^{\infty}_0(\Omega; \R^n), \, \forall x \in \Omega\, |v(x)|\leq q\right\},
\]
and the negative Sobolev $H^{-1}$ norm as
\[
\left\|u-\frac1{|\Omega|} \int_\Omega u\right\|_{H^{-1}(\Omega)}^2 := \int_\Omega |\nabla \varphi|^2,
\]
where $\varphi \in H^1(\Omega)$ solves
\[
\Delta \varphi = u-\frac1{|\Omega|} \int_\Omega u
\]
with appropriate boundary conditions (which can vary depending on the context). The diffuse interface functional $\mathcal{F}_\e$ is an approximation of the sharp interface functional $\mathcal{F}_0$ in the sense of $\Gamma$-convergence: any sequence of functionals $\mathcal{F}_\e$ $\Gamma$-converges to $\mathcal{F}_0$ when $\e\to 0$ in the $L^1(\Omega)$ topology \cite{ModicaMortola77,Modica87,Modica87b}. Note in particular that $\mathcal{F}_0$ is defined on binary functions that take values $\pm 1$ only. For such functions $\frac12 \int_\Omega |\nabla u|$ computes the length of the (reduced) boundary \cite[Definition 3.54]{AmbrosioFuscoPallara00} between the set where $u=-1$ and the set where $u=1$. In this limit, the surface tension parameter $\sigma>0$ is determined by the specific choice of $W$, but its precise value is not of importance here.

When $\mathcal{F}_0$ (or $\mathcal{F}_\e$) is minimized under a mass constraint $\frac1{|\Omega|}\int_\Omega u = M$, the boundary minimizing effect of the total variation term competes with the mixing preference of the $H^{-1}$ norm, which leads to pattern formation on a scale determined by the parameter $\gamma$, which controls the relative influence of both terms.  The mass parameter $M$ has large impact on the type of patterns that appear. When $M$ is close to $-1$ or close to $1$, such that one phase is much more prevalent than the other, small droplets of the minority phase will form in a background formed by the majority phase; when $M\approx 0$ a lamellar phase forms; see for example \cite[Figure 3]{BatesFredrickson99} for a simplified theoretical sketch of some of the expected patterns in a physical diblock copolymer system. A goal in the mathematical literature has been to proof the existence of various patterns that appear as minimizers and study their stability, see for example \cite{RenWei00,RenWei02,RenWei03a,RenWei03b,ren2014double,ren2017spectrum}. Extensions of the model, for example including a third phase either through triblock copolymers or through adding a homopolymer, have also been considered, for example in \cite{UneyamaDoi05,vanGennip08,vanGennipPeletier08,vanGennipPeletier09}.

\section{Random walk interpretation of the Green's function}\label{sec:randomwalk} 

For more information about the general concepts discussed in this section, see e.g. \cite{Chung97,2000math......1057D,Sigman09}.

Consider a discrete time random walk on the graph $G=(V,E,\omega)\in \mathcal{G}$, with transition probabilities, for all $i,j\in V$,
$
p_{ij} := d_i^{-1} \omega_{ij},
$ 
i.e. the probability of moving from vertex $i$ to vertex $j$ in one time step is $d_i^{-1} \omega_{ij}$. Note that $\sum_{j\in V} p_{ij} = 1$.

For all $i\in V$, let $T_i$ be the earliest time step at which the random walk is at node $i$. By convention, let the random walk start at time 0. Now (remembering that $|V| \geq 2$) fix two different vertices $a, b \in V$ and define $h\in \mathcal{V}$, by, for all $i\in V$,
$
h_i := P[T_a < T_b | T_i = 0],
$ 
i.e. $h_i$ is the probability the random walk starting from node $i$ reaches $a$ before it reaches $b$. Clearly
$
h_a = 1 \quad \text{and} \quad h_b = 0.
$ 
Moreover, since the walk at each time step is independent, we have, for $i\in V\setminus \{a, b\}$ (and for any $r\in [0,1]$),
$
h_i = \sum_{j\in V} p_{ij} h_j$ or, equivalently, $(\Delta h)_i = 0.
$ 
If, for all $i\in V$, $\tilde h_i := P[T_b < T_a | T_i = 0]$ (note that the roles of $a$ and $b$ are exchanged, compared to $h$) and $h^+ := h + \tilde h$, then $\Delta h^+ = 0$ on $V\setminus\{a,b\}$ and $h^+_a=h^+_b=1$, hence $h^+_i=1$ for all $i\in V$, as expected (the probability that the random walk either reaches $a$ before $b$, or $b$ before $a$, is $1$).

Conversely to the computation for $h$ above, if $v\in \mathcal{V}$ solves
\[
\begin{cases}
(\Delta v)_i = 0, & \text{if } i\in V\setminus\{a, b\},\\
v_a = c, & \text{for some } c\in \R,\\
v_b = 0,
\end{cases}
\]
then, for each $i\in V$, $v_i$ is the expected payoff value in a game consisting of a random walk starting at $i\in V$, with payoff equal to $c$ if the walk reaches $a$ before $b$ and zero otherwise. (Since the same equation is satisfied by the voltage function on an electric network with voltage $c$ applied to node $a$ and voltage $0$ to node $b$, such a $v$ can also be interpreted as voltage on an electric network \cite{2000math......1057D}.)

Consider now the Green's function for the Poisson equation satisfying \eqref{eq:GreensproblemPoisson} with \eqref{eq:qchoice} and \eqref{eq:Cchoice}, for $j=a \in V$ and $k=b \in V$, then, 
\[
\begin{cases}
(\Delta G^a)_i = 0, & \text{if } i\in V\setminus\{a, b\},\\
G^a_a = \frac1{\vol{V}} \left(\nu_a^{V\setminus{\{b\}}} + \nu_b^{V\setminus{\{a\}}}\right),\\
G^a_b = 0,
\end{cases}
\]
where we used \eqref{eq:GreenPoisson} for the second line. Hence $G^a_i$ is the expected payoff of the game described above with the walk starting at $i\in V$ and $c = c_{ab} := \frac1{\vol{V}} \left(\nu_a^{V\setminus{\{b\}}} + \nu_b^{V\setminus{\{a\}}}\right) > 0$. Positivity of $c_{ab}$ follows from positivity of the equilibrium measures $\nu_a^{V\setminus{\{b\}}}$ and $\nu_b^{V\setminus{\{a\}}}$ (Definition~\ref{def:equilibrium}). Note that $c_{ab}$ implicitly depends on $r$.

\section{$\Gamma$-convergence of $F_\e$}\label{sec:gammaconvergence}

In this section we prove that the diffuse interface graph Ohta-Kawasaki functionals from \eqref{eq:epsOK} converge to the limit functional $F_0$ \eqref{eq:limitOK} in the sense of $\Gamma$-convergence. The upper bound and lower bound properties of Theorem~\ref{thm:gammaconvergencemass} (or Theorem~\ref{thm:gammaconvergencemass}) are the two defining conditions of $\Gamma$-convergence. We refer the reader to \cite{DalMaso93,Braides02} for a detailed definition and important properties of $\Gamma$-convergence.

Note that in the results below, we do not specify the topology under which the convergence of sequences in $\mathcal{V}$ are considered. We can use $\|\cdot\|_{\mathcal{V}}$, but any other norm based topology will be equivalent in this finite dimensional setting.

We remind the reader that $\overline\R$ denotes the extended real line $\R\cup \{-\infty+\infty\}$.

\begin{lemma}\label{lem:gammaconv}
Let $G=(V,E,\omega)\in\mathcal{G}$ and let $\{\e_k\}_{k\in\N}$ be a sequence such that, for all $k\in \N$, $\e_k>0$ and $\e_k\to 0$ as $k\to\infty$. For each $k\in \N$, define $F_{\e_k}: \mathcal{V} \to \overline\R$ by the expression in \eqref{eq:epsOK} and let $\hat F_0: \mathcal{V}\to \overline\R$ be defined as
\[
\hat F_0(u) := \begin{cases}
F_0(u), &\text{if } u\in \mathcal{V}^b,\\
+\infty, &\text{otherwise},
\end{cases}
\]
where $F_0$ is as in \eqref{eq:limitOK}.
Then $\{F_{\e_k}\}_{k\in \N}$ $\Gamma$-converges to $\hat F_0$ as $k\to \infty$.

Moreover, if $\{u_k\}_{k\in \N} \subset \mathcal{V}$ and there exists a $C>0$ such that, for all $k\in \N$, $F_{\e_k}(u_k) < C$, then there is a subsequence $\{u_{k_l}\}_{l\in \N} \subset \{u_k\}_{k\in \N}$ and a $u \in \mathcal{V}^b$ such that $u_{n_k} \to  u$ as $k\to \infty$.
\end{lemma}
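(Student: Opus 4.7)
The plan is to split $F_{\e_k} = G_{\e_k} + \gamma H$, where $G_{\e_k}(u) := \tfrac12 \|\nabla u\|_{\mathcal{E}}^2 + \tfrac1{\e_k} \sum_{i\in V} W(u_i)$ is a Modica--Mortola-type functional on the graph and $H(u) := \tfrac12 \|u-\mathcal{A}(u)\|_{H^{-1}}^2$. The key observation is that $H$ is a continuous functional on the finite-dimensional space $\mathcal{V}$: the spectral representation in Lemma~\ref{lem:varphi} (together with Lemma~\ref{lem:H-1rewrite}) exhibits $H$ as a quadratic polynomial in the components of $u$. Since $\Gamma$-convergence is preserved under continuous perturbations, it suffices to prove that $G_{\e_k}$ $\Gamma$-converges to the functional which equals $\TV$ on $\mathcal{V}^b$ and $+\infty$ elsewhere, and then add back $\gamma H$ to recover $\hat F_0$.

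For the lower bound, I take an arbitrary sequence $u_k \to u$ and split into cases. If $u \notin \mathcal{V}^b$, Lemma~\ref{lem:nonbinary} (which is available because it is stated earlier in the excerpt) produces a node $i\in V$, an $\eta > 0$, and a $K$ such that $(u_k)_i$ is eventually bounded away from both wells of $W$. Consequently $W((u_k)_i) \geq \delta > 0$ for some $\delta$, and $\tfrac{1}{\e_k} W((u_k)_i) \to \infty$, so $\liminf G_{\e_k}(u_k) = +\infty$ matches $\hat F_0(u)$. If $u \in \mathcal{V}^b$, I simply discard the non-negative potential and appeal to the continuity of the Dirichlet energy to obtain $\liminf G_{\e_k}(u_k) \geq \tfrac12 \|\nabla u\|_{\mathcal{E}}^2$; the identity $(u_i-u_j)^2 = |u_i-u_j|$ on binary functions (with $q=1$) then identifies this limit with the relevant multiple of $\TV(u)$ via the expressions \eqref{eq:Dirichletenergy} and the definition of $\TV$. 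For the upper bound I take the constant recovery sequence $u_k := u$: if $u\notin\mathcal{V}^b$ the claim is vacuous, while if $u\in\mathcal{V}^b$ every $W(u_i)$ vanishes, so $G_{\e_k}(u) = \tfrac12 \|\nabla u\|_{\mathcal{E}}^2$ for every $k$, matching the $\Gamma$-limit.

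For the equi-coercivity assertion, the uniform bound $F_{\e_k}(u_k) \leq C$ together with non-negativity of the Dirichlet and $H^{-1}$ terms yields $\sum_{i\in V} W((u_k)_i) \leq C\e_k \to 0$. Because $W(x) = x^2(x-1)^2$ is coercive on $\R$ and $V$ is finite, this immediately gives boundedness of $\{u_k\}_{k\in\N}$ in $\mathcal{V}$, so the Bolzano--Weierstrass theorem produces a subsequence $u_{k_l} \to u$ in $\mathcal{V}$. Passing to the limit in $W((u_{k_l})_i) \to 0$ for each $i\in V$ and using continuity of $W$ then forces $u_i\in\{0,1\}$, so $u\in\mathcal{V}^b$ as required. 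The only point requiring care, rather than a genuine obstacle, is the bookkeeping of normalizations so that the Dirichlet energy of a binary function and $\TV$ are reconciled consistently in both the lower and upper bounds; the remaining structural steps are the graph Modica--Mortola arguments of \cite[Section~3]{vanGennipBertozzi12}, used mutatis mutandis, with the $\gamma H$ term absorbed by the continuous perturbation argument that is immediate in the finite-dimensional setting.
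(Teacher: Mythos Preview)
Your approach is essentially identical to the paper's: both split $F_{\e_k}$ into the graph Modica--Mortola part and the $H^{-1}$ term, observe that the latter is a continuous perturbation on the finite-dimensional space $\mathcal{V}$, invoke stability of $\Gamma$-convergence under continuous perturbations, and defer the Modica--Mortola part (and the compactness) to \cite[Section~3]{vanGennipBertozzi12}. The paper's proof is in fact terser than yours, consisting of exactly that citation plus the continuous-perturbation remark; your additional sketch of the liminf/limsup arguments and the Bolzano--Weierstrass compactness is correct and compatible with what is in the cited reference, and your flagging of the normalization bookkeeping between $\tfrac12\|\nabla u\|_{\mathcal{E}}^2$ and $\TV(u)$ on binary functions is the one place where the details live (and is handled in \cite{vanGennipBertozzi12}).
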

\begin{proof}
A proof of the $\Gamma$-convergence of the terms $\frac12 \|\nabla u\|_{\mathcal{E}}^2 + \frac1{\e_n} \sum_{i\in V} W(u_i)$ in $F_{\e_n}$ is given  in \cite[Section 3.1]{vanGennipBertozzi12}. Since $\Gamma$-convergence is stable under continuous perturbations \cite[Proposition 6.21]{DalMaso93} and both the map $u\mapsto u-\mathcal{A}(u)$ and the $H^{-1}$ norm are continuous, the $\Gamma$-convergence statement follows. The compactness result in the second part of the lemma's statement follows directly from \cite[Section 3.1]{vanGennipBertozzi12}.
\end{proof}

\begin{lemma}[$\Gamma$-convergence with a mass constraint]
Let $G=(V,E,\omega)\in\mathcal{G}$ and let $\{\e_k\}_{k\in\N}$ be a sequence such that, for all $k\in \N$, $\e_k>0$ and $\e_k\to 0$ as $k\to\infty$. Let $M\in \mathfrak{M}$, where $\mathfrak{M}$ is the set of admissible masses as in \eqref{eq:admissmass}. For each $k\in \N$, define $\breve F_{\e_k}: \mathcal{V}_M \to \overline\R$ by $\breve F_{\e_k} := \left.F_{\e_k}\right|_{\mathcal{V}_M}$, where $F_{\e_k}$ is as in \eqref{eq:epsOK}, and let $\breve F_0: \mathcal{V}_M \to \overline\R$ be defined as $\breve F_0:= \left.\hat F_0\right|_{\mathcal{V}_M}$, where $\hat F_0$ is as in Lemma~\ref{lem:gammaconv}. Then $\{\breve F_{\e_k}\}_{k\in \N}$ $\Gamma$-converges to $\breve F_0$ as $k\to \infty$.

Moreover, if $\{u_k\}_{k\in \N} \subset \mathcal{V}$ and there exists a $C>0$ such that, for all $k\in \N$, $\breve F_{\e_k}(u_k) < C$, then there is a subsequence $\{u_{k_l}\}_{l\in \N} \subset \{u_k\}_{k\in \N}$ and a $u \in \mathcal{V}_0$ such that $u_{k_l} \to  u$ as $l\to \infty$.
\end{lemma}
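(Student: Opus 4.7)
The plan is to derive this constrained result by tracing through how the three ingredients of Lemma~\ref{lem:gammaconv} --- the lower bound, the upper bound, and the compactness statement --- interact with the mass restriction. The lower bound and the compactness come almost for free from the unconstrained version, so the substance is in building a recovery sequence that lives in $\mathcal{V}_M$ (rather than merely in $\mathcal{V}$) for the upper bound.

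For (LB), given $u \in \mathcal{V}_M$ and a sequence $\{u_k\} \subset \mathcal{V}_M$ with $u_k \to u$, the linearity and continuity of $\mathcal{M}$ on the finite-dimensional space $\mathcal{V}$ immediately give $\mathcal{M}(u)=M$, so $u \in \mathcal{V}_M$ automatically; since $\breve F_{\e_k}(u_k)=F_{\e_k}(u_k)$ and $\breve F_0(u)=\hat F_0(u)$ on $\mathcal{V}_M$, Lemma~\ref{lem:gammaconv} applied to the same sequence viewed inside $\mathcal{V}$ delivers the inequality. For the compactness, I would note that $\breve F_{\e_k}(u_k)<C$ forces $u_k \in \mathcal{V}_M$ (else the value is $+\infty$), then invoke the compactness in Lemma~\ref{lem:gammaconv} to extract a subsequence $u_{k_l} \to u$ with $u \in \mathcal{V}^b$, and finally use continuity of $\mathcal{M}$ to promote $u$ to $\mathcal{V}_M^b$. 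I read the stated ``$u\in \mathcal{V}_0$'' as a typographic slip for $\mathcal{V}_M^b$.

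For (UB), the case $u \in \mathcal{V}_M \setminus \mathcal{V}^b$ is trivial since $\breve F_0(u)=+\infty$ and the constant sequence $u_k=u$ stays in $\mathcal{V}_M$. For $u \in \mathcal{V}_M^b$ I would take the recovery sequence $\{u_k\}$ produced by Lemma~\ref{lem:gammaconv} --- which gives $u_k \to u$ in $\mathcal{V}$ and hence $\mathcal{M}(u_k) \to M$ --- and correct it by a small $\mathcal{V}_M$-preserving perturbation $\tilde u_k$. The cleanest candidate, $\tilde u_k := u_k + \frac{M-\mathcal{M}(u_k)}{\vol{V}} \chi_V$, has the attractive feature that both $\nabla \tilde u_k = \nabla u_k$ and $\tilde u_k - \mathcal{A}(\tilde u_k) = u_k - \mathcal{A}(u_k)$, so the Dirichlet and $H^{-1}$ terms are identical to those of $u_k$; only the potential term changes. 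An alternative, when $u=\chi_S$ with $\emptyset \neq S \neq V$, is to correct at a single node $i^*$ chosen in $S^c$ when $\mathcal{M}(u_k)>M$ (or in $S$ when $\mathcal{M}(u_k)<M$) by setting $\tilde u_{k,i^*} = u_{k,i^*} - d_{i^*}^{-r}(\mathcal{M}(u_k)-M)$; this has the pleasant side-effect that it nudges $u_{k,i^*}$ closer to the nearer well and hence does not \emph{increase} the potential.

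The main obstacle will be controlling $\frac1{\e_k}\sum_i W(\tilde u_{k,i})$. Since typical Modica--Mortola-style recovery sequences for $F_{\e_k}$ converge only at rate $O(\sqrt{\e_k})$, the correction $c_k$ is itself $O(\sqrt{\e_k})$, and a naive Taylor expansion $W(u_{k,i}+c_k)-W(u_{k,i}) = W'(u_{k,i}) c_k + O(c_k^2)$ produces an $O(\e_k)$ change per node, yielding an $a priori$ $O(1)$ shift in $\frac1{\e_k}\sum_i W$. To close this gap I see two routes: (i)~use the single-node, well-directed correction sketched above so that the sign of the potential change is favourable and the estimate $\limsup_k \frac1{\e_k}\sum_i W(\tilde u_{k,i}) \leq \limsup_k \frac1{\e_k}\sum_i W(u_{k,i})$ becomes automatic; or (ii)~inspect the construction of the recovery sequence from \cite{vanGennipBertozzi12} (which on a finite graph has some built-in flexibility, since $\mathcal{V}$ is finite-dimensional) and show it can be chosen with $\mathcal{M}(u_k)=M$ for every $k$ from the outset. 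Either route yields $\limsup_k \breve F_{\e_k}(\tilde u_k) \leq \breve F_0(u)$ and completes the proof.
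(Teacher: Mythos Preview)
Your handling of the lower bound and compactness matches the paper's proof exactly: both follow from the unconstrained Lemma~\ref{lem:gammaconv} together with the continuity of $\mathcal{M}$, and your reading of ``$\mathcal{V}_0$'' as a slip for the binary mass-constrained class is the natural one.

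For the upper bound, the paper takes precisely your route~(ii): it observes (citing \cite[Section~3.2]{vanGennipBertozzi12}) that the recovery sequence constructed in the unconstrained proof already satisfies the same mass constraint as its limit, so no correction is needed at all. Your route~(i) with the single-node or uniform mass correction is a legitimate alternative, and your concern about the $\frac{1}{\e_k}$-blowup of the potential term is the right thing to worry about in general; but in this graph setting the recovery sequence from \cite{vanGennipBertozzi12} can be taken to be the constant sequence $u_k=u$ for binary $u$ (so $W(u_{k,i})=0$ and $\mathcal{M}(u_k)=M$ identically), which makes the whole correction machinery unnecessary. In short: your proposal is correct, but you have done more work than the paper needed to.
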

\begin{proof}
The only difference between this result and that of Lemma~\ref{lem:gammaconv}, is that now the definitions of $\breve F_{\e_n}$ and $\breve F_0$ incorporate a mass constraint in their domains. Analogously to the argument in \cite[Section 3.2]{vanGennipBertozzi12}, we see that by continuity of $u\mapsto \mathcal{M}(u)$, the proof of the lower bound in the $\Gamma$-convergence proof and the proof of the compactness result remain unchanged from the case of Lemma~\ref{lem:gammaconv}. For the proof of the upper bound, we note, as in \cite[Section 3.2]{vanGennipBertozzi12}, that the recovery sequence used in this proof will satisfy the same mass constraint as its limit.
\end{proof}

\section{Direct computation of \eqref{eq:starPhidirect}}\label{sec:starPhidirect}

In this section we compute \eqref{eq:starPhidirect} using the eigenfunctions and eigenvalues as given in Lemma~\ref{lem:starspectrum}. For $j\in V$,
\[
\varphi^j_i = \varphi^1_j = 0 + \frac1n \left[(n-1)n\right]^{-1} (n-1) \left((n-1) \delta_{1j} - (1-\delta{j1})\right) = \begin{cases}
\frac{n-1}n, &\text{if } j=1,\\
-\frac1{n^2}, &\text{if } j\neq 1.
\end{cases}
\]
Next we assume $i\neq 1\neq j$. Let $I(k) := \{i\in \N: k\leq i\leq n\}$, for $k\in \N$. If $i=j$, we find
\begin{align*}
\varphi^i_i &= \sum_{m=1}^{n-2} \left[(n-m-1)^2 + (n-m-1)\right]^{-1} \left((n-m-1) \delta_{i,m+1} - \left(\chi_{I(m+2)}\right)_i\right)^2\\ &\hspace{1cm}+ \frac1n \left[(n-1)n\right]^{-1}\\
&= \begin{cases}
\frac{n-2}{n-1} + \frac1{n^2(n-1)}, &\text{if } i=2,\\
\frac{n-i}{n-i+1} + \sum_{m=1}^{i-2} \frac1{(n-m-1)(n-m)} + \frac1{n^2(n-1)}, &\text{if } i\in \{3, \ldots, n-1\},\\
\sum_{m=1}^{n-2} \frac1{(n-m-1)(n-m)} + \frac1{n^2(n-1)}, &\text{if } i=n,\\
\end{cases}\\
&= \frac{n^2-n-1}{n^2}.
\end{align*}
The final equality above is not immediately obvious and follows from the fact that we have
\begin{align}
\sum_{m=i-1}^{n-2} \frac1{(n-m-1)(n-m)} &= \frac{n-i}{n-i+1}, \quad i\in \{3, \ldots, n-1\},\label{eq:sum1}\\
\quad \sum_{m=1}^{n-2} \frac1{(n-m-1)(n-m)} &= \frac{n-2}{n-1}.\label{eq:sum2}
\end{align}
In Lemma~\ref{lem:proofsums} below we give a proof of the identities in \eqref{eq:sum1}, \eqref{eq:sum2}.

Finally, we consider the case $i\neq 1 \neq j \neq i$. Without loss of generality (because of symmetry under exchange of $i$ and $j$) we assume that $i\leq j-1$. Then
\begin{align*}
\varphi^j_i &= \sum_{m=1}^{n-2} \frac{\left( (n-m-1) \delta_{i,m+1} - \left(\chi_{I(m+2)}\right)_i\right) \left((n-m-1) \delta_{j,m+1} - \left(\chi_{I(m+2)}\right)_j\right)}{(n-m-1)^2 + (n-m-1)} + \frac1{n^2(n-1)}\\
&= \begin{cases}
-\frac1{n-1} + \frac1{n^2(n-1)}, &\text{if } i=2,\\
\sum_{m-1}^{i-2} \frac1{(n-m-1)(n-m)} - \frac1{n-(i-1)} + \frac1{n^2(n-1)}, &\text{if } i\geq 3,
\end{cases}\\
&= -\frac{n+1}{n^2},
\end{align*}
where for the last equality we have used that, for $i\geq 3$,
\begin{equation}\label{eq:subtractsums}
\sum_{m=1}^{i-2} \frac1{(n-m-1)(n-m)} = \frac{i-2}{(n-i+1)(n-1)},
\end{equation}
which is proven by subtracting \eqref{eq:sum1} from \eqref{eq:sum2}.

\begin{lemma}\label{lem:proofsums}
For $N\in \N$, $N\geq 1$, we have
\begin{equation}\label{eq:sum3}
\sum_{l=0}^{N-1} \frac1{(N-l)(N-l+1)} = \frac{N}{N+1}.
\end{equation}
In particular, the identities in \eqref{eq:sum1} and \eqref{eq:sum2} hold.
\end{lemma}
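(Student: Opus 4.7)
The plan is to reduce \eqref{eq:sum3} to a standard telescoping sum via partial fractions, and then derive \eqref{eq:sum1} and \eqref{eq:sum2} as corollaries by simple index shifts.

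First I would perform the change of summation index $k = N-l$ in \eqref{eq:sum3}, so that as $l$ runs from $0$ to $N-1$, $k$ runs from $N$ down to $1$. The sum becomes
\[
\sum_{l=0}^{N-1} \frac1{(N-l)(N-l+1)} = \sum_{k=1}^{N} \frac1{k(k+1)}.
\]
Then I would apply the partial fraction decomposition
\[
\frac1{k(k+1)} = \frac1{k} - \frac1{k+1},
\]
which turns the sum into a telescoping sum $\sum_{k=1}^{N} \left(\frac1{k} - \frac1{k+1}\right) = 1 - \frac1{N+1} = \frac{N}{N+1}$. This establishes \eqref{eq:sum3}.

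For \eqref{eq:sum2}, I would substitute $l = m-1$ so that $l$ runs from $0$ to $n-3$, and note that $n-m-1 = (n-2)-l$ and $n-m = (n-2)-l+1$. Hence \eqref{eq:sum2} is exactly \eqref{eq:sum3} with $N=n-2$, yielding the value $\frac{n-2}{n-1}$. For \eqref{eq:sum1}, I would substitute $l = m-(i-1)$ so that $l$ runs from $0$ to $n-i-1$; then $n-m-1 = (n-i)-l$ and $n-m = (n-i)-l+1$, giving \eqref{eq:sum3} with $N=n-i$ and value $\frac{n-i}{n-i+1}$.

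There is no real obstacle here; the identity is a textbook telescoping computation and the two subsidiary identities are obtained by book-keeping of indices. The only care needed is to verify the substitutions map the summation ranges correctly, and to note that the range hypothesis $i\in\{3,\ldots,n-1\}$ in \eqref{eq:sum1} ensures $N = n-i \geq 1$, so the base hypothesis $N\geq 1$ of \eqref{eq:sum3} is satisfied.
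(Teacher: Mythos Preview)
Your proof is correct. The paper proves \eqref{eq:sum3} by induction on $N$ rather than by telescoping: it checks the base case $N=1$ directly and then passes from $N=k$ to $N=k+1$ by shifting the index and adding the new term $\frac{1}{(k+1)(k+2)}$. Your partial-fraction/telescoping argument is more direct and avoids the inductive machinery entirely; it also makes the identity self-evident once the substitution $k=N-l$ is made. The derivations of \eqref{eq:sum1} and \eqref{eq:sum2} from \eqref{eq:sum3} are essentially identical in both approaches, the paper writing the substitutions as $m=l+i-1$, $N=n-i$ and $m=l+1$, $N=n-2$, which are just the inverses of yours.
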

\begin{proof}
First we prove \eqref{eq:sum3} by induction. If $N=1$ we immediately find that both the left and right hand side of \eqref{eq:sum3} are equal to $\frac12$. Now assume \eqref{eq:sum3} is true for $N=k\in \N$, $k\geq 1$, then, for $N=k+1$ we compute
\begin{align*}
\sum_{l=0}^k \frac1{(k-l+1)(k-l+2)} &= \sum_{\tilde l = -1}^{k-1} \frac1{(k-\tilde l)(k-\tilde l+1)} = \sum_{\tilde l = 0}^{k-1} \frac1{(k-\tilde l)(k-\tilde l+1)} + \frac1{(k+1)(k+2)}\\
&= \frac{k}{k+1} + \frac1{(k+1)(k+2)} = \frac{k+1}{k+2}.
\end{align*}
This proves \eqref{eq:sum3}. Setting $m=l+i-1$ and $N=n-i$ proves \eqref{eq:sum1}. Setting $m=l+1$ and $N=n-2$ proves \eqref{eq:sum2}.
\end{proof}

The following corollary is used to prove \eqref{eq:moresums}.
\begin{corol}\label{cor:moresums}
Let $N,q\in \N$ such that $1\leq q+1\leq N$. Then
\[
\sum_{l=2}^{q+1} \frac1{(N-l)(N-l+1)} = \frac{q}{(N-1)(N-q-1)}.
\]
\end{corol}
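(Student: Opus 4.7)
The plan is to prove the identity by the standard partial-fraction telescoping trick, which is in fact the same mechanism underlying the preceding Lemma~\ref{lem:proofsums}. The key observation is that
\[
\frac{1}{(N-l)(N-l+1)} = \frac{1}{N-l} - \frac{1}{N-l+1},
\]
as can be checked by combining the right-hand side over a common denominator. This reduces the corollary to a purely telescopic computation.

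With that decomposition in hand, I would substitute into the sum
\[
\sum_{l=2}^{q+1} \frac{1}{(N-l)(N-l+1)} = \sum_{l=2}^{q+1}\left(\frac{1}{N-l} - \frac{1}{N-l+1}\right).
\]
Writing out the terms $l=2,3,\ldots,q+1$, each intermediate fraction $\tfrac{1}{N-l}$ from the $l$th term cancels with $-\tfrac{1}{N-(l+1)+1}=-\tfrac{1}{N-l}$ from the $(l+1)$st term. Only the extreme contributions survive, giving
\[
\sum_{l=2}^{q+1} \frac{1}{(N-l)(N-l+1)} = \frac{1}{N-q-1} - \frac{1}{N-1}.
\]

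The final step is the routine simplification
\[
\frac{1}{N-q-1} - \frac{1}{N-1} = \frac{(N-1) - (N-q-1)}{(N-1)(N-q-1)} = \frac{q}{(N-1)(N-q-1)},
\]
which matches the desired right-hand side. The hypothesis $1\leq q+1\leq N$ ensures that $N-q-1\geq 0$ and, combined with $N\geq 2$ (needed for the sum to be nonempty and the denominator $N-1$ to be nonzero), guarantees that no division by zero occurs in the telescoping argument. There is no real obstacle here; the only thing to be careful about is bookkeeping of the index range so that the telescoping indeed leaves exactly the two boundary terms above. As an alternative (slightly less direct) route, one could derive the formula by applying Lemma~\ref{lem:proofsums} with the reindexing $k = N-l$ to write the partial sums $\sum_{k=1}^{N-2}\tfrac{1}{k(k+1)}$ and $\sum_{k=1}^{N-q-2}\tfrac{1}{k(k+1)}$ explicitly and subtracting, but the telescoping route is shorter and self-contained.
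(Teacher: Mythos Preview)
Your proof is correct and uses essentially the same telescoping mechanism as the paper. The paper's version is slightly less direct: it reindexes the sum and then invokes the identity \eqref{eq:subtractsums} (itself obtained by subtracting \eqref{eq:sum1} from \eqref{eq:sum2}, both consequences of Lemma~\ref{lem:proofsums}), whereas you carry out the partial-fraction telescoping from scratch---which is exactly the ``alternative route'' you mention at the end, only in reverse.
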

\begin{proof}
Using \eqref{eq:subtractsums} we find
\[
\sum_{l=2}^{q+1} \frac1{(N-l)(N-l+1)} = \sum_{l=1}^q \frac1{(N-l)(N-l-1)} = \frac{q}{(N-q-1)(N-1)}.
\]
\end{proof}

\section{Gradient flows of $F_\e$}\label{sec:gradflows}

Let $u,v\in \mathcal{V}$ and $s\in \R$. Let $\varphi, \psi \in \mathcal{V}$ satisfy
$\Delta \varphi = u-\mathcal{A}(u)$ and $\Delta \psi = v-\mathcal{A}(v)$, respectively,
then $\Delta (\varphi+s\psi) = u+sv - \mathcal{A}(u+sv)$. Hence, using \eqref{eq:adjoint}, we find
\begin{align*}
\left.\frac{d}{ds} \|u+sv-\mathcal{A}(u+sv)\|_{H^{-1}}^2 \right|_{s=0} 
&= \left.\frac{d}{ds} \langle u-\mathcal{A}(u)+sv-s\mathcal{A}(v), \varphi+s\psi\rangle_{\mathcal{V}}  \right|_{s=0}\\
&= \langle u-\mathcal{A}(u), \psi\rangle_{\mathcal{V}} + \langle v-\mathcal{A}(v), \varphi\rangle_{\mathcal{V}}\\
&= \langle \Delta \varphi, \psi\rangle_{\mathcal{V}} + \langle v-\mathcal{A}(v), \varphi\rangle_{\mathcal{V}} = 2 \langle v-\mathcal{A}(v),\varphi\rangle_{\mathcal{V}}.
\end{align*}
We note that
\[
\langle \mathcal{A}(v),\varphi\rangle_{\mathcal{V}} = \frac1{\vol{V}} \sum_{i,j\in V} d_i^r v_i d_j^r \varphi_j = \left\langle v, \left(\frac1{\vol{V}} \sum_{j\in V} d_j^r \varphi_j\right) \chi_V\right\rangle_{\mathcal{V}} = \langle v, \mathcal{A}(\varphi)\rangle_{\mathcal{V}},
\]
hence
\[
\left.\frac{d}{ds} \|u+sv-\mathcal{A}(u+sv)\|_{H^{-1}}^2 \right|_{s=0} = 2 \langle v, \varphi - \mathcal{A}(\varphi)\rangle_{\mathcal{V}}.
\]
Using the gradient of the first terms in $F_\e$ as computed in \cite[Section 5]{vanGennipGuillenOstingBertozzi14}, we deduce that
\[
\left.\frac{d}{ds} F_\e(u+sv) \right|_{s=0} = \big\langle \Delta u + \frac1\e d^{-r} W'(u) + \gamma \big(\varphi-\mathcal{A}(\varphi)_i\big), v\big\rangle_{\mathcal{V}},
\]
where $d^{-r} W'(u)$ is to be interpreted as the function in $\mathcal{V}$ defined by $(d^{-r} W'(u))_i := d_i^{-r} W'(u_i)$, for all $i\in V$.
Using \eqref{eq:H-1innerrewrite} we can also write
\[
\left.\frac{d}{ds} F_\e(u+sv) \right|_{s=0} = \big\langle \Delta\big\{\Delta u + \frac1\e d^{-r} W'(u) + \gamma \big(\varphi-\mathcal{A}(\varphi)\big)\big\}, v\big\rangle_{H^{-1}}.
\]
We note that, as expected, the freedom to add an arbitrary constant to $\varphi$ has no influence on the final result.

Hence, the $\mathcal{V}$ gradient flow is the \textit{Allen-Cahn type system of equations}
\begin{equation}\label{eq:AllenCahn}
\frac{d}{dt} u_i = -(\Delta u)_i - \frac1\e d_i^{-r} W'(u_i) - \gamma (\varphi_i-\mathcal{A}(\varphi)_i), \quad \text{for } i\in V,
\end{equation}
while the $H^{-1}$ gradient flow leads to the \textit{Cahn-Hillard type system of equations}
\begin{equation}\label{eq:CahnHilliard}
\frac{d}{dt} u_i = -(\Delta(\Delta u))_i - \frac1\e \Delta(d^{-r} W'(u))_i - \gamma (u_i-\mathcal{A}(u)_i), \quad \text{for } i\in V.
\end{equation}
The functions $u$ and $\varphi$ are in $\mathcal{V}_\infty$ (which is defined near the start of Section~\ref{sec:OKMBOscheme}) as is usual for gradient flows\footnote{Note that Peano's existence theorem \cite[Theorem 1.1]{Hale2009} guarantees existence of a continuously-differentiable-in-$t$ solution $u$ of equations \eqref{eq:AllenCahn} \eqref{eq:CahnHilliard}, and \eqref{eq:constrainedAllenCahn}, because in each of these the right hand side can be written as $Ou$, where $O$ is a continuous operator from $\mathcal{V}$ to $\mathcal{V}$. Continuity of $O$ follows from continuity of $W'$ and \eqref{eq:Leigexp}.}. We did not write the explicit dependence on $t$ here.

Since we are interested in minimising $F_\e$ over the set $\mathcal{V}_M$ of node functions with mass $M$, as defined in \eqref{eq:VM}, we need to ensure that the mass of $u$ does not change along the gradient flow. Because the right hand side of \eqref{eq:CahnHilliard} is of the form $\Delta f(u(t))$, with $f: \mathcal{V} \to \mathcal{V}$ determined by \eqref{eq:CahnHilliard}, for any solution of the $H^{-1}$ gradient flow above we have, by \eqref{eq:Laplacemass},
\[
\frac{d}{dt} \mathcal{M}(u(t)) = \mathcal{M}\left(\frac{du(t)}{dt}\right)= \mathcal{M}\big(\Delta f(u(t))\big) = 0.
\] 

For the $\mathcal{V}$ gradient flow mass conservation is not guaranteed and we need to introduce a Langrange multiplier $\mu: [0,\infty) \to \R$ in the equation:
\[
\frac{d}{dt} u_i = -(\Delta u)_i - \frac1\e d_i^{-r} W'(u_i) - \gamma \big(\varphi_i - \mathcal{A}(\varphi)\big) - \mu,
\]
such that
\[
0=\frac{d}{dt} \mathcal{M}(u) = -\mathcal{M}\big(\Delta u + \gamma (\varphi_i-\mathcal{A}(\varphi))\big) - \frac1\e \sum_{i\in V}  W'(u_i) + \mu \vol{V}  = - \frac1\e \sum_{i\in V}  W'(u_i) + \mu \vol{V}.
\]
Hence $\frac{d}{dt} \mathcal{M}(u)=0$ if and only if
$\displaystyle
\mu = \frac1{\e\,\vol{V}} \sum_{i\in V}  W'(u_i).
$
Therefore the mass constrained Allen-Cahn equation becomes
\begin{align}\label{eq:constrainedAllenCahn}
\frac{d}{dt} u_i &= -(\Delta u)_i - \frac1\e \left(d_i^{-r} W'(u_i) - (\vol{V})^{-1} \sum_{j\in V}  W'(u_j)\right) - \gamma \big(\varphi_i - \mathcal{A}(\varphi)\big)\notag\\
&=  -(\Delta u)_i - \frac1\e \big(d_i^{-r} W'(u_i) - \mathcal{A}\left(d^{-r} W'(u)\right)_i\big) - \gamma \big(\varphi_i - \mathcal{A}(\varphi)\big).
\end{align}

\newcommand{\etalchar}[1]{$^{#1}$}
\def\cprime{$'$} \def\cprime{$'$}
\providecommand{\bysame}{\leavevmode\hbox to3em{\hrulefill}\thinspace}
\providecommand{\MR}{\relax\ifhmode\unskip\space\fi MR }
\providecommand{\MRhref}[2]{%
  \href{http://www.ams.org/mathscinet-getitem?mr=#1}{#2}
}
\providecommand{\href}[2]{#2}

\end{document}